\long\def\@savemarbox#1#2{\global\setbox#1\vtop{\hsize\marginparwidth 
  \@parboxrestore\tiny\raggedright #2}}
\newcommand\lref[1]{\ref{#1}%
\@ifundefined{r@DisplaY #1}{}{ (#1)}}
\newcommand\fakelabel[2]{\@bsphack\if@filesw {\let\thepage\relax
   \newcommand\protect{\noexpand\noexpand\noexpand}%
\xdef\@gtempa{\write\@auxout{\string
      \newlabel{#1}{{#2}{\thepage}}}}}\@gtempa
   \if@nobreak \ifvmode\nobreak\fi\fi\fi\@esphack}
\def\SL@margintext#1{{\showlabelsetlabel{\tiny\{\SL@prlabelname{#1}\}}}}
\theoremstyle{plain}
\newtheorem{theorem}{Theorem}[section]
\newtheorem{corollary}[theorem]{Corollary}
\newtheorem{lemma}[theorem]{Lemma}
\newtheorem{proposition}[theorem]{Proposition}
\newtheorem*{theorem*}{Theorem}
\newtheorem*{lemma*}{Lemma}
\newtheorem*{claim*}{Claim}
\theoremstyle{definition}
\numberwithin{equation}{section}
\newcommand{\BC}{\mathbb C}
\newcommand{\BH}{\mathbb H}
\newcommand{\C}{\BC}
\newcommand{\BR}{\mathbb R}
\newcommand{\R}{\BR}
\newcommand{\BM}{\mathbb M}
\newcommand{\CA}{\mathcal A}		
\newcommand{\CC}{\mathcal C}		
\newcommand{\CE}{\mathcal E}		\newcommand{\CF}{\mathcal F}
\newcommand{\CM}{\mathcal M}		
\newcommand{\MM}{\CM}
\newcommand{\CO}{\mathcal O}		
\newcommand{\CS}{\mathcal S}		\newcommand{\CT}{\mathcal T}
\newcommand{\actson}{\curvearrowright}
\newcommand{\D}{\partial}
\newcommand{\cover}{\widetilde}
\newcommand{\closure}{\overline}
\newcommand{\ep}{\epsilon}
\newcommand{\hhat}{\widehat}
\newcommand{\boundary}{\D}
\newcommand{\union}{\cup}
\newcommand{\intersect}{\cap}
\newcommand{\KK}{{\mathcal K}}
\newcommand{\XX}{{\mathcal X}}
\DeclareMathOperator{\PSL}{PSL}		
\DeclareMathOperator{\Isom}{Isom}	
\DeclareMathOperator{\ML}{\mathcal{ML}}
\DeclareMathOperator{\PML}{\mathcal{PML}}
\DeclareMathOperator{\EL}{\mathcal{EL}}
\DeclareMathOperator{\Mod}{\mathrm{Mod}}
\DeclareMathOperator{\diam}{\mathrm{diam}}
\DeclareMathOperator{\base}{base}
\newcommand\Hyp{{\mathbb H}}
\title[Bounded combinatorics and uniform models]{Bounded combinatorics and uniform models for hyperbolic 3-manifolds}
\date{\today}
\author{Jeffrey Brock, Yair Minsky, Hossein Namazi, Juan Souto}
\thanks{The first three authors have been partially supported by NSF grants DMS-1207572, DMS-1005973, and DMS-0852418. Juan Souto has been partially supported by NSERC Discovery and Accelerator Supplement grants.}
\begin{document}

\maketitle

\begin{abstract}
Bounded-type 3-manifolds arise as combinatorially bounded gluings of irreducible 3-manifolds chosen from a finite list.  We prove effective hyperbolization and effective rigidity for a broad class of 3-manifolds of bounded type and large gluing heights.  Specifically, we show the existence and uniqueness of hyperbolic metrics on 3-manifolds of bounded type and large heights, and prove existence of a bilipschitz diffeomorphism to a combinatorial model described explicitly in terms of the list of irreducible manifolds, the topology of the identification, and the combinatorics of the gluing maps.
\end{abstract}

\section{Introduction} \label{sec: intro}

Taken together with Mostow's Rigidity Theorem, 
Perelman's celebrated proof of Thurston's {\em Geometrization Conjecture}
settles the existence and uniqueness of finite-volume hyperbolic structures on
3-manifolds in terms of simple topological criteria.
Still missing from this picture, however, is a complete, explicit and effective
means of relating geometric features of a hyperbolic 3-manifold to its
topological description and vice versa. 

Of course such means are available in many particular settings,
notably:
\begin{enumerate}
\item Thurston's Dehn-Filling 
Theorem, which describes infinite families of hyperbolic 3-manifolds
in terms of the geometry of a single cusped one and topological
filling data,
\item Gromov's Volume Theorem, which
relates a homological invariant to hyperbolic volume, and
\item the Ending Lamination Theorem, which provides a bilipschitz model
  for fibered 3-manifolds $M_\psi$ in terms of the stable and
  unstable laminations for the monodromy $\psi$;
\end{enumerate} 
and there are many others.

In this paper we seek to extend the last of these, studying families
of hyperbolic 3-manifolds obtained by gluing pieces of a predetermined
type using boundary identifications with certain topological
constraints.  For such families we describe {\em uniform models}:
explicit metrics on the manifolds that are guaranteed to be uniformly
bilipschitz to the hyperbolic metrics.

To be more specific, consider a finite collection $\MM$ of {\em
  decorated 3-manifolds}, which are compact, oriented, irreducible,
atoroidal 3-manifolds with no torus or sphere boundary components,
equipped with complete boundary markings. An {\em $\MM$-gluing} is a
3-manifold $X$ obtained from copies of elements of $\MM$ by gluing
paired boundary components. We define a notion of {\em height} for
each boundary pairing, namely, the distance between the given boundary
markings in the curve complex of the boundary components to be
identified.  We further define, for a positive number $R$, a notion of
``$R$-bounded combinatorics,'' a restriction on the
complexity of the gluing maps analogous to a bound on
continued fraction coefficients of real numbers.

In Section \ref{sec:bounded combinatorics} we make these definitions
precise and describe for each $\MM$-gluing $X$ with $R$-bounded
combinatorics a {\em model metric}, denoted $\BM_X$.  In this metric
$X$ is built from a fixed metric on each element of $\MM$, with
$I$-bundles interpolating between corresponding boundary components.
Here, each such $I$-bundle is equipped with a metric that closely
resembles the universal curve over an appropriate (thick)
Teichm\"uller geodesic segment.

Our main theorem shows such models are uniform:

\medskip\par\noindent
{\bf Theorem \ref{bilipschitz models for bounded type manifolds}.}
{\em	Let $\CM$ be a finite collection of decorated manifolds and
        fix $R>0$. There exist $D$ and $K$ such that, for any
        $\CM$-gluing $X$ with $R$-bounded combinatorics and all heights greater than $D$, $X$
        admits a unique hyperbolic metric $\sigma$. Moreover, there exists
        a $K$-bilipschitz homeomorphism from the model $\BM_X$ to
        $(X,\sigma)$ in the correct isotopy class, whose image is the
        complement of the rank 2 cusps in $X$. 
}
\par\medskip

We note that while $\MM$ is finite we do not limit the number of copies
of each element of $\MM$ that can be used in a gluing; in fact we
allow the consideration of manifolds with infinitely generated
fundamental groups. It is in these cases that the uniqueness of the
hyperbolic metric $\sigma$ is not a direct consequence of Mostow rigidity.

We remark also that the model $\BM_X$ maps to the complement of the
cusps merely because we wish to make $\BM_X$ out of compact
pieces. It is a simple matter to extend the models to include regions
that correspond to the cusps.

In a followup paper \cite{BMNS:Heegaard} we use this theorem to
topologically characterize fixed genus Heegaard splittings of
hyperbolic 3-manifolds with a lower bound on injectivity radius.
We use gluings with $R$-bounded combinatorics to define a 
property of a Heegaard splitting which we call {\em 
$R$-bounded compressions}.
Indeed, all such splittings are obtained as finite $\MM$-gluings with
$R$-bounded combinatorics where $\MM$ is a finite
collection of decorated manifolds determined depending on
the genus of the splitting. We prove the 
following theorem, where the first part is a simple
corollary of the above theorem.

\begin{theorem*}\cite{BMNS:Heegaard}
  Given $g$ and $R$, there exists $\ep>0$ so that all but finitely
  many closed 3-manifolds with a genus $g$ Heegaard splitting with
  $R$-bounded compressions admit a hyperbolic metric with injectivity
  radius at least $\ep$.

Conversely, given $g$ and $\ep>0$, there exists $R$ so that for all
but finitely many closed hyperbolic 3-manifolds with injectivity radius
at least $\ep$, every genus $g$ Heegaard splitting has $R$-bounded
compressions.
\end{theorem*}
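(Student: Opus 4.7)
The plan is to separate the two directions; the forward direction is a direct corollary of Theorem \ref{bilipschitz models for bounded type manifolds}, whereas the converse requires a compactness/limiting argument built on top of it.

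\textbf{Forward direction.} The first step is to observe that the $R$-bounded compressions hypothesis is arranged precisely so that every closed 3-manifold carrying such a genus-$g$ Heegaard splitting is realized as an $\CM$-gluing with $R$-bounded combinatorics, where $\CM = \CM(g)$ is a finite collection of decorated compression bodies depending only on $g$. The pieces are the two (generalized) handlebodies of the splitting, and the boundary markings are chosen to track the disk sets, so that bounded combinatorics of the gluings corresponds to bounded subsurface projections between disk sets. Once this is in place, apply Theorem \ref{bilipschitz models for bounded type manifolds} to obtain constants $D$ and $K$: any such $\CM$-gluing with all heights exceeding $D$ admits a hyperbolic metric $K$-bilipschitz to $\BM_X$. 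Since $\BM_X$ is assembled from a fixed finite list of compact pieces together with $I$-bundle regions modeled on thick Teichm\"uller geodesics, it has a uniform lower bound $\ep_0 > 0$ on its injectivity radius; setting $\ep = \ep_0/K$ yields the required bound on the hyperbolic side. The exceptional gluings, those in which some height is at most $D$, form a finite set: because $\CM$ is finite, the number of pieces in a genus-$g$ splitting is bounded, and $R$-bounded combinatorics together with heights $\leq D$ leave only finitely many distinct gluings up to homeomorphism.

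\textbf{Converse direction.} Suppose for contradiction that for some $g$ and $\ep > 0$ no $R$ works. Then we obtain a sequence of distinct closed hyperbolic 3-manifolds $(M_n,\sigma_n)$ with $\inj(M_n) \geq \ep$, each carrying a genus-$g$ splitting $M_n = V_{1,n} \cup_{\Sigma_n} V_{2,n}$ and a proper subsurface $Y_n \subset \Sigma_n$ with $d_{Y_n}(\CD(V_{1,n}), \CD(V_{2,n})) \to \infty$. Realize $\Sigma_n$ by a pleated surface (Thurston--Canary); using the uniform injectivity radius bound and the bounded topology of $\Sigma_n$, choose basepoints and pass to a geometric limit. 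The Masur--Minsky/Rafi dictionary between large subsurface projections of compressing disks and geometric features of surface realizations should then force either a short curve along the sweep-out or a long product region in $M_n$, contradicting $\inj(M_n)\geq\ep$.

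The hard part is the converse. The forward direction is bookkeeping, matching the combinatorial hypothesis to the input format of the main theorem and invoking it. The converse, however, requires transforming the purely combinatorial failure of $R$-bounded compressions into a genuine geometric obstruction inside the hyperbolic manifold $M_n$: one must analyze how the compressing disks of a genus-$g$ splitting are realized by surfaces or pleated surfaces sitting inside $M_n$, and show that their disk projections to subsurfaces genuinely translate to curve-complex distances that are realized as geometry in $M_n$. This is the technical heart of the companion argument, and where the auxiliary machinery of subsurface projection estimates, disk-set stability, and surface realizations must be combined with the uniform models of Theorem \ref{bilipschitz models for bounded type manifolds}.
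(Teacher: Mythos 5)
First, note that the paper does not actually prove this statement: it is quoted from the companion paper \cite{BMNS:Heegaard}, and the only claim made here is that the first half is a simple corollary of Theorem \ref{bilipschitz models for bounded type manifolds}. Your forward direction is consistent with that remark: realize a genus-$g$ splitting with $R$-bounded compressions as an $(\CM,R)$-gluing of decorated compression bodies (this is essentially the definition of $R$-bounded compressions given in the companion paper), apply the main theorem to get a uniformly bilipschitz model, and use the uniform thickness of $\BM_X$ to bound the injectivity radius below. Two caveats, both of the bookkeeping type the paper defers to \cite{BMNS:Heegaard}: the pieces are compression bodies of a generalized splitting rather than just the two handlebodies, and your finiteness claim for the exceptional set (some height at most $D$) needs an argument, e.g.\ the Masur--Minsky distance formula, to pass from ``all projections and the height bounded'' to ``finitely many gluing maps up to the relevant stabilizers.''

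The converse is where your proposal has a genuine gap, and you say so yourself. Concretely, the step ``the Masur--Minsky/Rafi dictionary \ldots should then force either a short curve along the sweep-out or a long product region in $M_n$, contradicting $\inj(M_n)\ge\ep$'' is not an argument. A long product region is perfectly compatible with a lower bound on injectivity radius, so half of your claimed dichotomy yields no contradiction; and the other half --- that an unbounded projection $d_{Y_n}(\CD(V_{1,n}),\CD(V_{2,n}))$ between disk sets forces a short geodesic (e.g.\ the geodesic representative of $\D Y_n$) in the hyperbolic metric on $M_n$ --- is precisely the assertion that must be proved. Unlike the surface-group setting, where large subsurface coefficients of the \emph{end invariants} are known to produce short curves, the disk sets of a Heegaard surface are not end invariants of the closed manifold $M_n$, and converting their combinatorics into geometry requires the machinery (realizing the splitting as a gluing, a priori length bounds, pleated surfaces and geometric limits) that the companion paper develops. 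As written, your converse is a plan rather than a proof, so the proposal establishes only the half that this paper itself describes as a simple corollary of Theorem \ref{bilipschitz models for bounded type manifolds}.
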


These results are applied in \cite{OS} to analyze subgroups
of mapping class groups generated by pairs of handlebody groups
associated to Heegaard splittings. When these splittings satisfy
$R$-bounded combinatorics and large height conditions,
Ohshika-Sakuma show among other things that such groups act
with non-empty domain of discontinuity on the sphere of projective
measured laminations.

A further application of \cite{BMNS:Heegaard} described in
\cite[Remark 2.8]{BD} employs models for Heegaard splittings to show
the existence of families of knot complements in $S^3$ whose
$(1,n)$-Dehn fillings are integer homology spheres with larger and
larger injectivity radius on a larger and larger portion of their
volume.  Such examples give rise to integer homology spheres that {\em
  Benjamini-Schramm converge} to $\mathbb{H}^3$, answering a question
of Bergeron in the negative.

A crucial feature of our discussion is the possible presence of {\em
  compressible} boundary components in the elements of $\MM$.  When
elements of $\MM$ are constrained to have incompressible boundary, the
structure of a gluing, its fundamental group, and its hyperbolic
structure are considerably more accessible.  Indeed, the existence of
hyperbolic structures in this case follows from Thurston's original
hyperbolization theorem.  
Uniform models in the incompressible setting
are available even without the $R$-bounded combinatorics condition,
using the technology developed in \cite{MM2,ELC1,ELC2}.

The compressibility of boundary components presents immediate
difficulties even in determining whether a gluing $X$ satisfies the
topological conditions of the geometrization theorem, since the
fundamental groups of the pieces of $\MM$ may fail to inject into that
of $X$. In Namazi \cite{Na05} and Namazi-Souto \cite{NS09}, this issue
is addressed in the special case of gluings of handlebodies with
restrictions related to but stronger than the ones we impose here. Our
work here extends many of the ideas from those two papers.

One should also mention the work of Lackenby \cite{La02}, in which
conditions are given on a gluing of a handlebody to a manifold with
incompressible, acyclindrical boundary which guarantee that the result
of the gluing admits (given the geometrization theorem) a hyperbolic
structure. Lackenby's conditions, while considerably more permissive
than ours, provide only topological conclusions without further
implications for the geometry of the hyperbolic structure.

We note that the $R$-bounded combinatorics condition is quite
restrictive and is by no means the final word in understanding
hyperbolic structures on $\MM$-gluings.  Without a bound on the
combinatorics the appearance of a non-trivial thick-thin decomposition
of the manifold leads to considerable combinatorial
difficulties. While such intricacies are completely understood in the
setting of incompressible boundary, the full picture for
compressible boundary remains incomplete.

The other substantial restriction on these results is the finiteness
of the set $\MM$, on which the uniformity of our models strongly
depends.  When elements of $\MM$ arise from compression bodies in a
generalized Heegaard splitting, the finiteness of $\MM$ corresponds to
a bound on the genus of the splitting.  Clearly, a ``generic'' family
of 3-manifolds would not arise as gluings of a fixed set $\MM$, but
our techniques do not presently suggest a way to handle the most
general situation.  We consider this to be a fascinating and
challenging direction for further research.

\subsection{Outline of the proof}
Compactness theorems in representation spaces play a significant role
in our proof -- in fact a double role.

In Section \ref{sec:convergence theorems} we prove Theorem
\ref{eventually faithful convergence}, which gives conditions under
which a sequence of discrete representations $\rho_n:\pi_1(M) \to
\PSL_2(\C)$ has a convergent subsequence, where $M$ is a decorated
3-manifold.  In this theorem, we assume that for each boundary
component of $M$ there is a sequence of curve systems whose lengths
under $\rho_n$ stay bounded, whose heights grow without bound, and
whose combinatorics satisfy the ``$R$-bounded combinatorics''
condition detailed in Section \ref{subsec: bounded combinatorics}.
The theorem elaborates on the convergence and compactness theorems of
Thurston (the Double Limit Theorem, and the compactness theorem for
representations of fundamental groups of incompressible-boundary
manifolds) and the work Kleineidam-Souto (for representations of
fundamental groups of handlebodies and compression bodies).  It allows
weaker hypotheses in one essential respect -- the representations
$\rho_n$ are discrete but {\em not necessarily faithful}. We require
only {\em eventual faithfulness}, that is that every nontrivial
element of $\pi_1(M)$ is eventually not in the kernel of $\rho_n$.

This limiting result is used in Section \ref{sec:uniform immersions}
to find uniform immersions of pieces of $\MM$ equipped with model
metrics into hyperbolic manifolds in homotopy classes that satisfy
suitable conditions, specifically $R$-bounded combinatorics and {\em
  almost-injectivity}. Theorem \ref{bilipschitz embedding of models}
is the main result of this type. Its somewhat intricate hypothesis
involves a discrete and ``sufficiently'' injective representation in
which some marking data on the boundary of a decorated manifold admits
length upper bounds, large heights and bounded combinatorics, and its
conclusion is a uniformly locally bilipschitz immersion of the model
manifold into the quotient hyperbolic manifold.  Uniformity is
obtained from a contradiction argument, involving sequences of
examples for which Theorem \ref{eventually faithful convergence}
provides the limit that yields a contradiction.

The first use of Theorem \ref{bilipschitz embedding of models}, in
Section \ref{sec:bilipschitz models for convex cocompact}, is to build
and model hyperbolic metrics on the pieces of $\MM$ with prescribed
structure in the ends. That is, we consider convex-cocompact
representations with suitably chosen conformal boundaries, and use
Theorem \ref{bilipschitz embedding of models} to obtain bilipschitz
models for them.  In Section \ref{sec:nearly hyperbolic gluings}, we
use these structures to construct {\em negatively curved} (but not
necessarily hyperbolic) structures on our $\MM$-gluings, by arranging
for the gluing maps to be nearly isometric in suitable regions of the
ends of each piece and then interpolating between the hyperbolic
metrics via convex combination.  The outcome, Theorem \ref{existence
  of nearly hyperbolic metrics}, states that each gluing $X$ with
$R$-bounded combinatorics and sufficiently large heights possesses a
metric of negative curvature (pinched close to $-1$) that is 
hyperbolic outside of bounded regions associated to the end-gluings
and nearly hyperbolic in those regions.

It is worth pausing to explain why the proof is not essentially
complete at this point in the argument: while it appears that the
constructed {\em nearly hyperbolic} metric on $X$ should be close to
an actual hyperbolic metric, there is no tool for perturbing the
nearly hyperbolic metric to an actual hyperbolic metric in our
setting.  A theorem of Tian \cite{tian} provides such a perturbation
provided given small $L^\infty$ bounds on the deviation from constant
sectional curvature (which we do have), as well as small $L^2$ bounds
on the traceless Ricci curvature.  But the latter criterion is not
satisfied here as we assume no bound on the number of pieces in $X$,
and hence on the number of gluing regions, each of which contributes
to the $L^2$ norm.  We must therefore employ a somewhat indirect
argument to compare the model metrics to the true hyperbolic metrics.

Theorem \ref{existence of nearly hyperbolic metrics} gives us two
pieces of information. First, it implies 
that such gluings $X$ (in the closed case) indeed satisfy the topological
conditions for the existence of a hyperbolic
structure; hence, by Perelman's theorem, $X$ admits such a structure.
Second, it implies by a simple geometric argument that the
inclusion of each piece of $\MM$ into $X$ is {\em almost injective}
on $\pi_1$ -- that is, as heights go to infinity, any given element is
eventually not in the kernel.

Our next step is to use Theorem \ref{bilipschitz embedding of models}
to obtain uniform estimates on the inclusions
of the pieces of $\MM$ into the hyperbolic structures on gluings with
sufficiently large heights. In addition to almost-injectivity,
however, we also need some {\em a priori} control: namely, for each gluing
surface we must obtain upper bounds on the lengths, in the hyperbolic
metric on $X$, of certain curves coming from the model pieces on the
two sides of the surface. Such bounds are actually easy to obtain in
when the surface is compressible (we can use compressible curves whose
length is 0), but in general we must return to some of the compactness
theorems of Thurston, with a certain variation.

Thurston's {\em only windows break} theorem gives such {\em a priori} bounds on
the lengths of certain curves in the boundary of a 3-manifold, for
every discrete faithful representation of its fundamental group. The
curves are identified using the JSJ decomposition of the manifold.
In our setting we have representations that are not faithful, so to
recover the bounds we re-organize the gluings to obtain configurations
in $X$ restricted to which the representations are injective, and for
which we can control the JSJ decompositions. This is done in Section
\ref{sec: stability of JSJ decomposition}
by means of a stability theorem for JSJ decompositions of certain
families of gluings.

Finally, in Section \ref{sec:bilipschitz models for bounded type
  manifolds} we put these ingredients together, getting 
bilipschitz immersions of the pieces, which combine to give the
final bilipschitz equivalence between the model and the hyperbolic
metric. 

In an appendix, we review Thurston's ``only windows break'' theorem and give an
alternate proof of it, paying particular attention to the fact that
the constants in this theorem depend only on the boundary genus and
not the 3-manifold -- a crucial fact in the stability and {\em a priori}
bounds results of Section \ref{sec: stability of JSJ decomposition}.

We should also remark on the use of Perelman's hyperbolization theorem
in the above outline: when the gluing $X$ has sufficiently many pieces
$X$ must be Haken, so Perelman's Theorem may be replaced with
Thurston's original hyperbolization theorem.  This point is discussed
in more detail at the end of the proof of Theorem \ref{bilipschitz
  models for bounded type manifolds}.

Moreover, in the case that $X$ is composed of infinitely many pieces,
neither of the geometrization theorems, nor Mostow rigidity, can be
directly applied. Existence of the hyperbolic structures in this case
is obtained by a limiting process using the existence of our
bilipschitz models to maintain control, and uniqueness is obtained by
an appeal to McMullen's rigidity theorem, where again the models
provide the needed geometric hypotheses.

\subsection*{Acknowledgements}
The authors greatly benefitted with conversations from many people 
during the course of this work, particularly Dick Canary, Ken Bromberg, 
Ian Biringer, and Cyril Lecuire. Hossein Namazi would like also to 
acknowledge the hospitality of Yale University's mathematics department 
for their hospitality while this work was being completed.

\section{Bounded Combinatorics and Decorated Manifolds} \label{sec:bounded combinatorics}

In this section we recall some background associated to surfaces,
develop the definitions and notation for {\em decorated 3-manifolds},
and make explicit our notion of gluings with {\em bounded combinatorics}.

\subsection{Curves and Markings}\label{subsec: markings}

Given a compact orientable surface $F$, various complexes are now
commonly used to encode and organize the isotopy classes of simple
closed curves and multicurves on $F$.  We briefly introduce these and
refer the reader to \cite{MM1, MM2} for detailed discussion.  The {\em
  complex of curves} of $F$, denoted $\CC(F)$, is a simplicial complex
whose vertices are associated bijectively with isotopy classes of
homotopically nontrivial and nonperipheral, or {\em essential}, simple
closed curves on $F$.  In particular, with the natural path metric in
which each simplex is standard, $\CC(F)$ is {\em Gromov hyperbolic}
\cite{MM1} when $\CC(F)$ is infinite. We use $d_F(\cdot,\cdot)$ to
represent this natural path metric.

When $Y \subset F$ is an essential subsurface of $F$ (its boundary
consists of essential loops or boundary components of $F$)
the {\em subsurface projection}
$$\pi_Y: \CC(F)\to \mathcal{P} (\CC(Y))\cup\{\emptyset\}$$ associates to each
simplex in $\CC(F)$ a uniformly bounded diameter subset of vertices in
$\CC(Y)$ -- if $\alpha \in \CC(F)$ fails to intersect $Y$, then
$\pi_Y(\alpha) = \emptyset$.  The subsurface projection behaves
similarly to a nearest point projection.  Specifically, when $Y$ is
not an annulus and $\alpha$ is a vertex of $\CC(F)$, $\pi_Y(\alpha)$
is obtained via a surgery from the components of the essential
intersection of $\alpha$ with $Y$.  Note that when $Y$ is an annulus
the definition $\CC(Y)$ and $\pi_Y$ are slightly different; details
are to be found in \cite{MM2}.

Two subsurfaces are considered the same if they are isotopic to each
other. Given subsets $\alpha$ and $\beta$ of $\CC(F)$, 
because the subsurface projection is coarsely well-defined, the
definition
$$d_Y(\alpha,\beta) = \diam_{\CC(Y)}(\pi_Y(\alpha)\cup\pi_Y(\beta))$$
plays the role of a coarse distance `relative to $Y$' when both projections are
nonempty.

A {\em multicurve} is a collection of pairwise disjoint and
non-parallel essential simple loops on $F$. A {\em marking} $\mu$ on
$F$ is a set $\mu$ of essential simple loops on $F$ consisting of a
multicurve $\base(\mu)$, called the {\em base} of $\mu$, and at most
one {\em transversal} associated to each element of the base. A {\em
  transversal} for $\alpha \in \base(\mu)$ is an essential simple loop
$\beta$ which does not intersect any other element of $\base(\mu)$,
and such that either $\beta$ intersects $\alpha$ exactly once or
$\alpha$ and $\beta$ intersect twice and the regular neighborhood of
$\alpha\cup\beta$ is an essential subsurface of $F$ homeomorphic to a
4-holed sphere.  A {\em complete marking} is one that is maximal: the
base of a complete marking is a pants decomposition and there is a
transversal for every element of the base.  

We frequently refer to a marking as a subset of the set of vertices of 
the curve complex and a multi-curve as a special (partial) marking. 
Note that the diameter of a marking as a subset of $\CC(F)$ is at 
most 2. The distance between two markings $\mu_1, \mu_2$ on $F$ 
is the minimum distance in $\CC(F)$ between an element of $\mu_1$ 
and an element of $\mu_2$.


\subsection{Laminations}\label{subsec: laminations}
We will use the notion of {\em geodesic laminations}, equipped with
{\em transverse measures} and the associated notion of {\em geometric
  intersection} on surfaces; for more precise definitions we refer the
reader to \cite{FLP79, Thu79,CB88}.  In particular, we will refer to
the spaces $\ML(F), \PML(F)$ of {\em measured laminations} and {\em
  projective measured laminations} on $F$ respectively.  The set $\ML$
is equipped with an intersection pairing $i(.,.)$ that continuously
extends the notion of weighted geometric intersection of simple closed
curves equipped with positive real weights. Two elements of $\PML$
{\em intersect} if their projective classes are represented in $\ML$
by elements with nonzero intersection.  Recall that $\PML(F)$ is
homeomorphic to a sphere, and is in particular compact.  The weighted
simple closed curves are dense in $\ML(F)$, and since a simple closed
curve carries a unique transverse measure up to scale, the sphere
$\PML(F)$ naturally contains the set of isotopy classes of essential
simple loops of $F$ as a dense subset.

A lamination is 
{\em filling} if it intersects every essential simple loop on $F$. 
The space $\EL(F)$, the {\em space of ending laminations} of $F$, 
is obtained as a quotient of the set of filling measured laminations 
by forgetting the measure. A theorem of Klarreich \cite{Kla} 
shows that $\EL(F)$ is naturally identified with the Gromov boundary 
of $\CC(F)$. In particular, a sequence of markings 
$(\mu_n)$, as subsets of $\CC(F)$, converges to 
$\lambda\in\EL(F)$ if and only if every $\PML$-limit of a subsequence 
$(\alpha_{n_i})$, with $\alpha_{n_i}\subset \mu_{n_i}$ a component 
of $\mu_{n_i}$, is a projective measured lamination which projects 
to $\lambda$. In this article, we often do not distinguish between 
an element $\lambda\in\EL(F)$ and elements of its pre-image in 
$\PML(F)$ or $\ML(F)$.


\subsection{Bounded Combinatorics}\label{subsec: bounded combinatorics}
  Given markings $\mu_1, \mu_2$ on $F$ and constant $R>0$, we say the pair
  $(\mu_1,\mu_2)$ has {\em $R$-bounded combinatorics} if
  $$d_Y(\mu_1,\mu_2)\le R$$ 
for every proper essential subsurface
  $Y\subset F$. This notion and its applications appear in a number of
places, for example  \cite{Min01, Rafi,
 Na05}. The following lemma shows how bounded
  combinatorics controls local behavior in the (locally infinite)
  curve complex.

\begin{lemma}\label{limits of bounded combinatorics fill}
  Given $R>0$, suppose $(\mu_n)$ is a sequence of markings on $F$ so
  that the pair $(\mu_0,\mu_n)$ has $R$-bounded combinatorics for each
  $n$.  Then there is a subsequence  $(\mu_{n_k})$ that is either
  constant or converges to an element in
  the Gromov boundary of $\CC(F)$.
\end{lemma}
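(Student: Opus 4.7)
The plan is to combine compactness of $\PML(F)$, Klarreich's identification $\partial \CC(F) = \EL(F)$ (applied both to $F$ and to its proper essential subsurfaces), and the characterization recorded in Section~\ref{subsec: laminations} of convergence of markings to $\partial \CC(F)$ via $\PML$-limits of their components. Since $|\mu_n|$ is bounded in terms of the topology of $F$, we first pass to a subsequence on which $|\mu_n|$ equals some constant $k$; labeling the components arbitrarily as $\alpha_n^1,\ldots,\alpha_n^k$ and extracting further (diagonal) subsequences using compactness of $\PML(F)$, we arrange that $\alpha_n^i \to [\lambda^i]$ in $\PML(F)$ for each $i$.

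The crux will be the following dichotomy for each $i$: either (a) $[\lambda^i]$ is a filling measured lamination, or (b) $[\lambda^i] = c\gamma^i$ is a weighted simple closed curve with $\alpha_n^i = \gamma^i$ for all large $n$. Granting this, the lemma follows in two cases. If (a) holds for some $i$, Klarreich's theorem yields $\alpha_n^i \to [\lambda^i]$ in $\partial \CC(F)$, and since $\diam_{\CC(F)}(\mu_n) \le 2$ the whole marking converges to $[\lambda^i]$ in the Gromov compactification of $\CC(F)$ by the standard Gromov-hyperbolic fact that bounded-neighborhood sequences converge to the same boundary point. If instead (b) holds for every $i$, then the $\gamma^i$ must be pairwise distinct (otherwise two distinct components of the same $\mu_n$ would both equal a common $\gamma^i$ for large $n$), so $\mu_n$ eventually equals the fixed set $\{\gamma^1,\ldots,\gamma^k\}$; a final subsequence stabilizing the finitely many possible base/transversal structures on this set makes $(\mu_{n_k})$ constant as a marking.

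To prove the dichotomy I argue by contradiction: assume neither (a) nor (b) holds for some $i$; set $\alpha_n := \alpha_n^i$ and $\lambda := \lambda^i$, and in the subcase $\lambda = c\gamma$ pass to a further subsequence so $\alpha_n \ne \gamma$ for all $n$. The goal is to produce a proper essential subsurface $Y \subsetneq F$ with $d_Y(\mu_0,\mu_n) \to \infty$, contradicting $R$-bounded combinatorics. If $\lambda = c\gamma$, take $Y$ to be the annular neighborhood of $\gamma$: each $\alpha_n$ must cross $\gamma$ (a simple closed curve disjoint from $\gamma$ that converges projectively to $[\gamma]$ in $\PML(F)$ is forced to be isotopic to $\gamma$), and comparing $i(\alpha_n,\gamma) = o(\|\alpha_n\|)$ with $i(\alpha_n,\beta) \asymp \|\alpha_n\|$ for any transversal $\beta$ of $\gamma$ drives the twist parameter $d_Y(\alpha_n,\alpha_0)$ to $\infty$ for any fixed $\alpha_0 \in \mu_0$ crossing $\gamma$. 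Otherwise $\lambda$ has more complicated support; take $Y$ to be the smallest essential subsurface of $F$ into which $\mathrm{supp}(\lambda)$ isotopes. Then $\lambda$ fills $Y$ and $Y \ne F$, and continuity of subsurface projection gives $\pi_Y(\alpha_n) \to [\lambda]$ in $\PML(Y)$; Klarreich's theorem applied to $\CC(Y)$ then yields $\pi_Y(\alpha_n) \to [\lambda] \in \partial \CC(Y)$, so that $d_Y(\pi_Y(\alpha_n),\pi_Y(\alpha_0)) \to \infty$ for any $\alpha_0 \in \mu_0$ projecting nontrivially to $Y$.

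The hardest step will be the non-curve subcase of the dichotomy, where one must correctly locate the proper subsurface $Y$ filled by $\lambda$ and justify convergence of the projections $\pi_Y(\alpha_n)$ in $\PML(Y)$. A secondary technical issue is ensuring that some component of $\mu_0$ projects nontrivially to the witnessing $Y$; this is automatic when $\mu_0$ is a complete marking and can always be arranged in general by enlarging $\mu_0$ to a complete marking, which alters the bounded-combinatorics constant by at most a controlled amount.
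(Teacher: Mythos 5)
Your overall strategy is the same as the paper's: use compactness of $\PML(F)$ to extract projective limits of components, reduce to showing the limit fills via Klarreich's theorem, and rule out non-filling limits by exhibiting a proper subsurface whose projection coefficient blows up, contradicting $R$-bounded combinatorics. The per-component dichotomy versus the paper's ``finitely many loops appear or not'' split is only a cosmetic reorganization. However, there is a genuine gap in the ``otherwise'' branch of your dichotomy. When the limit $\lambda$ is a multicurve with two or more components (or, more generally, has closed minimal components), the smallest essential subsurface containing $\mathrm{supp}(\lambda)$ is an annulus or a disjoint union containing annuli. For such $Y$ the objects you invoke do not exist or do not behave as claimed: $\PML(Y)$ is not defined for an annulus, $\CC(Y)$ is the annular complex (quasi-isometric to $\BZ$), Klarreich's theorem does not apply, and for disconnected $Y$ the quantity $d_Y$ is not even one of the coefficients appearing in the bounded-combinatorics hypothesis. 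This is exactly the case the paper treats separately with the ``spiraling'' argument: one must single out one closed component $\lambda_0$, check that the curves $\alpha_n$ eventually cross $\lambda_0$ (so the annular projection is nonempty), and show the twisting of $\alpha_n$ about $\lambda_0$ relative to $\mu_0$ tends to infinity, e.g.\ via Hausdorff limits of geodesic representatives fellow-travelling $\lambda_0$ in its collar. Your plan, as written, routes this case into a Klarreich argument that cannot be carried out. (When $\lambda$ has a non-closed minimal component, restricting to the non-annular subsurface filled by that single component — rather than by all of $\mathrm{supp}(\lambda)$ — recovers the paper's first subcase and works.)

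A related soft spot is the justification in your single-curve subcase: the comparison $i(\alpha_n,\gamma)=o(\|\alpha_n\|)$ versus $i(\alpha_n,\beta)\asymp\|\alpha_n\|$ does not by itself force the annular coefficient to blow up, since large intersection number never implies large relative twisting; the standard inequalities go the other way. The conclusion is correct, but it needs the same spiraling/Hausdorff-limit argument (or a Dehn--Thurston coordinate argument), which would then also repair the multicurve case above. Finally, be careful with the remark that one may ``enlarge $\mu_0$ to a complete marking'' at bounded cost: if $\mu_0$ has empty projection to the witnessing subsurface $Y$, completing $\mu_0$ does not preserve the bounded-combinatorics hypothesis, so this reduction is not available in general; in the paper's applications of the lemma the reference marking is complete, which is the honest hypothesis under which the contradiction with $d_Y(\mu_0,\mu_n)\le R$ has force.
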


\begin{proof}
  Consider the set $\{ \alpha \in \CC(F) \vert \alpha \in \mu_n \}$ of
  simple loops which appear as components of $\mu_n$ for some $n$.  If
  this set is finite, then there is a subsequence of $(\mu_n)$ that is
  constant.  Assume this set is infinite. Since $\PML(F)$ is compact,
  a subsequence $(\mu_{n_k})$ and loops $\alpha_k\subset \mu_{n_k}$
  have the property that $(\alpha_k)$ converges in $\lambda$ in the
  space $\PML(F)$.  Since $i(\alpha_k,\beta_k)\le 2$ for every other
  component $\beta_k\subset\mu_{n_k}$, the continuity of the
  intersection pairing $i(\cdot,\cdot)$ on $\ML(F)$, guarantees that
  each limit $\lambda'$ of $(\beta_k)$ satisfies $i(\lambda,\lambda') = 0$.
  Klarreich's characterization of the Gromov boundary of $\CC(F)$,
  then, reduces the problem to showing that $\lambda$ is filling.

  If $\lambda$ is not filling then either $\lambda$ is supported on a
  multicurve, or it has a component that fills a proper essential
  subsurface $W$.  In the latter case, the projections
  $\pi_W(\mu_{n_k})$ to $\CC(W)$ converge to $\lambda$, in the sense
  of Klarreich's theorem, from which it follows that
  $d_W(\mu_{n_k},\mu_0)\to\infty$ as $k\to\infty$.  This contradicts
  the assumption that $d_W(\mu_{n_k},\mu_0)<R$.  If $\lambda$ is a
  multi-curve, then convergence of the infinite collection
  $(\alpha_k)$ to $\lambda$ guarantees that there is a component
  $\lambda_0$ of $\lambda$ about which $\alpha_k$ `spirals': geodesic
  representatives of $\alpha_k$ intersect a collar neighborhood $W$ of
  the geodesic representative of $\lambda_0$ in longer and longer
  segments.  This implies that $d_W(\mu_{n_k},\mu_0)\to\infty$ as
  $k\to\infty$, which is again a contradiction.
\end{proof}


\subsection{Pared manifolds} \label{subsec: pared manifolds} A {\em
  pared manifold} is a pair $(M,P)$ where $M$ is a compact orientable
atoroidal 3-manifold and $P\subset \D M$ is a collection of essential
annuli and tori containing every toroidal boundary component of $M$
with the property that if $A$ is a compact annulus, every homotopically
nontrivial embedding of pairs $(A,\D A)\to (M,P)$ is homotopic through
maps of pairs to a map into $P$.  We use the notation $\D_0 (M,P)$ to
denote the closure of $\D M\setminus P$, and a component of $\D_0
(M,P)$ is called a {\em free side}. We also use the notation $\D P$ to
denote the boundary components of the annular components of $P$.  By
an {\em essential} annulus or disk in $(M,P)$, we mean a
$\pi_1$-injective immersion $f:(A,\D A) \to (M,\D M\setminus \D P)$
with $A$ either a compact annulus or disk that cannot be homotoped
through maps $(A,\D A)\to (M,\D M\setminus \D P)$ to a map with image
in $\D M$.  By the boundary of an essential annulus or disk $f:(A,\D
A)\to (M,\D M\setminus \D P)$, we mean the restriction of $f$ to 
$\D A$.  The pared manifold $(M,P)$ is {\em acylindrical} if it admits 
no essential disk or annulus.

As a special case, an interval bundle $M$ over a compact surface $F$
(possibly with boundary) naturally has the structure of a pared 
manifold $(M,P)$, where $P$ is the union of the annuli that project 
to $\D F$. In this case, we call $\D_0 M$ the {\em horizontal boundary} 
of $M$ and $P$ the {\em vertical boundary.}

Considerations in this article will be constrained to pared manifolds
$(M,P)$ for which
\begin{itemize}
\item the free sides are incompressible, or
\item $P$ consists of the toroidal components of $\D M$ and has no 
annular component. 
\end{itemize}
Given $M$ with unspecified pared locus we take $M$ to have the
structure of a pared manifold with the pared locus the union of the
toroidal components of $\D M$; then $\D_0 M$ is the set of
non-toroidal components of $\D M$.


\subsection{JSJ-decomposition.}\label{subsec: jsj decomposition}

Let $(M,P)$ be a pared manifold with incompressible free sides. 
The JSJ decomposition \cite{Jaco-Shalen, Johannson} of $(M,P)$ is 
a splitting of $(M,P)$ along a collection $\CA$ of disjoint 
properly embedded essential annuli in $(M,P)$ such that;
  \begin{itemize}
    \item[(a)] if $U$ is a component obtained after cutting $M$ 
			along $ \CA$ then
      \begin{itemize}
        \item either $U$ is a solid torus and $(\CA\cup P)\cap U$ 
				is a collection of parallel non-meridional annuli in $\D U$,
        \item $(U, (\CA\cup P)\cap U)$ is an interval bundle, or
        \item $(U,(\CA\cup P)\cap U)$ is acylindrical, and
      \end{itemize}
    \item[(b)] any essential annulus in $(M,P)$ can be properly 
    isotoped into one of the components of $M\setminus\CA$.
  \end{itemize}
  Moreover if $\CA$ is chosen to be minimal with respect to these
  properties, then $\CA$ is unique up to what is called an
  {\em admissible isotopy}. We refer the reader to \cite{CMc} for
  more on this decomposition.
  We follow Thurston \cite{ThuIII} and
  identify in a JSJ decomposition of the pared manifold $(M,P)$ a
  subset of the characteristic submanifold called the {\em window}.
  The window contains all the interval bundles in the above
  decomposition. 
  Moreover, we thicken each component of $\CA$ which cannot be properly
  homotoped into an interval bundle, 
  obtaining an interval bundle with annulus base, and include these
  in the window. 

The union of the horizontal boundaries of the components of the 
window is a subsurface of boundary of $M$. The components of the 
boundary of this subsurface that  are not isotopic into 
components of $P$ are called {\em window frames}.


\subsection{Compression bodies}\label{subsec: compression bodies}

A {\em compression body} is a compact orientable 3-manifold with 
a boundary component, called the {\em exterior boundary} and 
denoted by $\D_e C$, so that the homomorphism 
$\pi_1(\D_e C) \to \pi_1(C)$ induced by the inclusion is surjective. 
Every other component of $\D C$ is called an {\em interior boundary 
component}. The compression body $C$ is {\em nontrivial} if the 
homomorphism $\pi_1(\D_e C) \to \pi_1(C)$ is not injective and 
$C$ is not a solid torus. 

Given a compact orientable irreducible 3-manifold $M$ and a 
component $E$ of $\D M$, following Bonahon \cite{Bo83}, we 
define $C_E$, {\em the relative compression body} associated to 
$E$, to be an embedded compression body with $E$ as the exterior 
boundary whose inclusion into $M$ is $\pi_1$-injective.
and is nontrivial when $M$ is not a solid torus and $E$ is
compressible. After isotopy, we assume the nontrivial relative 
compression bodies are mutually disjoint. The complement of the 
union of all nontrivial relative compression bodies is a submanifold 
with incompressible boundary; the union of the components of this 
complement, which are not trivial $I$-bundles, is referred to 
as the {\em incompressible core} of $M$.


\subsection{Binding laminations and the Masur Domain}\label{subsec: masur domain}

Given a compression body $C$, we say a homotopically nontrivial 
simple loop on the exterior boundary $\D_e C$ is a {\em meridian} 
if it bounds a disk in $C$. The set of meridians on $\D_e C$ is 
denoted by $\Delta(C)$ and is considered as a subset of the 
vertices of $\CC(\D_e C)$. Masur-Minsky \cite{MM3} showed that 
$\Delta(C)$ is $K$-quasi-convex in $\CC(\D_e C)$, with $K$ 
depending only on the topology of $\D_e C$.
Masur \cite{Mas86} and Otal \cite{Ota88} studied a subset 
$\CO(C)$ of $\ML(\D_e C)$ called the {\em Masur domain}. 
We refer the reader to \cite{Mas86, Ota88, Le06b} for more on 
this set in general. 
We only state the following lemma which for us serves as the 
definition of the set of filling laminations in the Masur 
domain. The lemma is a simple consequence of the definition 
of the Masur domain. 
  \begin{lemma}\label{filling and masur domain}
    A filling lamination $\lambda$ on the exterior boundary 
$\D_e C$ of a non-trivial compression body $C$ is in the Masur 
domain if and only if $\lambda$ (as a point of the Gromov boundary of 
$\CC(\D_e C)$) is not a limit of any sequence of meridians. \hfill \qed
  \end{lemma}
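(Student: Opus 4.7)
The plan is to combine Klarreich's identification of $\EL(\D_e C)$ with the Gromov boundary of $\CC(\D_e C)$, recalled in Section~\ref{subsec: laminations}, with the standard description of the Masur domain from \cite{Mas86, Ota88, Le06b}. By definition, $\CO(C)$ consists of those laminations in $\ML(\D_e C)$ having positive intersection with every $\mu$ in the $\PML$-closure $\overline{\Delta(C)}$ of the set of meridians; in the non-handlebody case this is modified to involve iterated limits, but for a filling $\lambda$ any $\mu$ satisfying $i(\lambda,\mu)=0$ is automatically supported on $\lambda$ itself, so the two formulations agree at filling laminations and we may work with the simpler one.

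With this reformulation in hand, the lemma reduces to the assertion that, for filling $\lambda$, the existence of a sequence of meridians converging to $\lambda$ in the Gromov boundary of $\CC(\D_e C)$ is equivalent to the existence of a $\PML$-limit $\mu$ of meridians with $i(\lambda,\mu)=0$. This is a direct consequence of the characterization of convergence at infinity stated in Section~\ref{subsec: laminations}, together with the observation that for filling $\lambda$, $i(\lambda,\mu)=0$ is equivalent to $\mu$ being supported on $\lambda$, and hence to $\mu$ and $\lambda$ defining the same point of $\EL(\D_e C)$.

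I would then carry out the two directions explicitly. For the forward direction, suppose $\lambda\in\CO(C)$ but a sequence of meridians $m_n$ converges to $\lambda$ in the Gromov boundary. Extracting a $\PML$-subsequential limit $\mu$ of $(m_n)$, Klarreich's theorem identifies the projective class of $\mu$ with $\lambda$ in $\EL(\D_e C)$, so $i(\lambda,\mu)=0$ with $\mu\in\overline{\Delta(C)}$, contradicting $\lambda\in\CO(C)$. For the converse, suppose $\lambda$ is filling and not in $\CO(C)$; then some $\mu\in\overline{\Delta(C)}$ satisfies $i(\lambda,\mu)=0$, so $\mu$ is supported on $\lambda$ and represents the same point of $\EL(\D_e C)$, and any approximating sequence of meridians $m_n\to\mu$ in $\PML(\D_e C)$ converges to $\lambda$ in the Gromov boundary by Klarreich's theorem.

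The only genuine point requiring care is verifying that the iterated-limit subtleties in the general Masur domain definition collapse when $\lambda$ is filling; everything else is a direct unpacking of the definitions and of Klarreich's theorem, which is why the paper states the result as a simple consequence of the definition.
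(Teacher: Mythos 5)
Your argument is correct and is exactly the routine unpacking the paper has in mind: the lemma is stated there with no proof, being called a simple consequence of the definition of the Masur domain, and your combination of Klarreich's criterion (as recalled in \S\ref{subsec: laminations}) with the observation that any measured lamination having zero intersection with a filling $\lambda$ must be supported on $\lambda$ is the standard way to make that precise. The one point you flag, that the iterated-limit form of the definition for small compression bodies collapses to the simple one at filling laminations, is settled by the same support argument (if $i(\lambda,\mu)=0$ and $i(\mu,\nu)=0$ with $\nu$ a limit of meridians, then $\nu$ is also supported on $\lambda$, so $i(\lambda,\nu)=0$), so no gap remains.
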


It is immediate from the definition of the Masur domain and 
in the case of filling laminations, the above lemma, that 
belonging to the Masur domain depends only on the supporting 
geodesic lamination. Hence it is legitimate to refer to the 
projections of the Masur domain to $\PML(\D_e C)$ and 
$\EL(\D_e C)$ as Masur domain.

Assume $(M,P)$ is a pared manifold. Recall from our convention 
mentioned in \S\ref{subsec: pared manifolds} that either $(M,P)$ 
has incompressible free sides or $P$ is the set of toroidal 
components of $\D M$. In the latter case if $E$ is a compressible 
component of $\D_0 M = \D_0(M,P)$, we use $\CO(E)$ to denote the 
Masur domain of $C_E$, the relative compression body of $M$ 
associated to $E$. We say a lamination $\lambda$ on $\D_0 (M,P)$ 
is {\em binding} if  
  \begin{enumerate}
\item every essential disk or annulus in $(M,P)$ has a boundary 
component on a component $E$ of $\D_0 M$ with $\lambda|_E$, 
the restriction of $\lambda$ to $E$, a filling lamination,
\item if $E$ is a compressible component of $\D_0 M$, then 
$\lambda|_E \in\CO(E)$, and finally
    \item if $(M,P)$ is an $I$-bundle over a compact surface 
$(F,\D F)$, the projection of $\lambda$ to $F$ has transverse 
self intersection.
  \end{enumerate}

Note that by the Loop Theorem and Annulus Theorem, we can 
consider only embedded essential disks or annuli in the 
first condition above. 
We say $\lambda$ is {\em full} if $\lambda|_E$ is a filling lamination for every component $E$ of $\D_0 (M,P)$.


\subsection{Decorated manifolds} \label{subsec:decorated manifolds}

A {\em decorated manifold} is a compact, oriented, irreducible,
atoroidal 3-manifold $M$ with nonabelian fundamental group, and equipped with
a complete marking $\mu$ on the non-toroidal part of the boundary, i.e. the {\em decoration}. A decorated manifold is always considered as a pared manifold with pared locus the union of the toroidal boundary components and $\D_0 M$ the union of the non-toroidal components. Given a decorated manifold $M$, $\mu(M)$ denotes the decoration and if $E$ is a component of $\D_0 M$, we use the notation $\mu(M,E)$, or $\mu_E$ when there is no ambiguity, to denote the restriction of $\mu(M)$ to $E$. 


\subsection{Special decorations on trivial $I$-bundles.}\label{subsec: special decorations}
Trivial $I$-bundles play an important role in our construction and in
a few places have to be treated separately. Suppose $F\times[0,1]$ is
a trivial interval bundle over the closed orientable surface $F$ and
$\mu_0$ and $\mu_1$ are complete markings on $F$. Identifying
$F\times\{0\}$ and $F\times\{1\}$ with $F$,
we obtain decorations $\mu_i$ on $F\times\{i\}$, and 
we let $I_F[\mu_0,\mu_1]$ denote the resulting decorated manifold. 
It will also be convenient to let $I_F[\mu]$ be shorthand for $I_F[\mu,\mu]$.


\subsection{Bounded Combinatorics for decorated manifolds}\label{subsec: bounded combinatorics for decorated manifolds}
If $M$ is a decorated manifold and $E\subset \D_0 M$ is a
non-toroidal component of the boundary, we say a marking (possibly a
partial marking) $\ep_E$ on $E$ has {\em $R$-bounded combinatorics}
if the pair  $(\ep_E, \mu_E)$ has $R$-bounded combinatorics and,  when $E$
is a compressible component of $\D_0 M$, we have
\begin{equation}\label{eqn: mu nearest}
 d_E(\mu_E,\ep_E) \le d_E(\Delta(E),\ep_E) + R
\end{equation}
where $\Delta(E)$ is the set of all meridians on $E$. The last
property can be interpreted to mean that $\mu_E$ is $R$-close to a
``projection'' of $\ep_E$ to $\Delta(E)$.

If $\ep$ is a (possibly partial) marking on $\D_0 M$, we say $\ep$ has
{\em $R$-bounded combinatorics with respect to $M$} if each nonempty
restriction $\ep|_E$ of $\ep$ to a component of $\D_0 M$ has
$R$-bounded combinatorics, and in addition:
  \begin{enumerate}
    \item Every essential disk or annulus in $M$ has at least one boundary component on a component $E$ of $\D_0 M$ with $\ep|_E$ nonempty.
    \item If $M$ is a decorated $I$-bundle $I_F[\mu_0,\mu_1]$, we require $\ep$ to have a
      component $\ep_i$ on each boundary component $F\times\{i\}$ of
      $M$, such that  $\mu_0$ and $\mu_1$ lie within
distance $R$ in $\CC(F)$ of a $\CC(F)$-geodesic connecting $\ep_0$
and $\ep_1$.

\item If $M$ is a decorated twisted $I$-bundle, it is double-covered by a product
$I$-bundle, and we require that the lifts of $\ep$ and $\mu(M)$
to this cover satisfy condition (2).

  \end{enumerate}

For every component $E \subset \D_0 M$, we call $d_E(\mu_E, \ep|_E)$ the {\em height of $\ep$ on $E$.} The next lemma explains how binding laminations and markings with bounded combinatorics are related.
  
\begin{lemma}\label{limits of bounded combinatorics bind}
  Suppose $M$ is a decorated manifold and $(\ep_n)$ is a sequence of
  markings on $\D_0 M$ with $R$-bounded combinatorics and heights
  tending to infinity as $n\to\infty$. Then after passing to a subsequence, the sequence $(\ep_n)$
  converges to a binding lamination $\lambda$. Moreover if $\ep_n|_E$ is
  nonempty for every component $E$ of $\D_0 M$ and every $n$, then
  $\lambda$ is full. 
\end{lemma}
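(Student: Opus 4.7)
My plan is to construct $\lambda$ componentwise from the limits of $\ep_n|_E$ and then verify the three defining conditions of a binding lamination.

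For each component $E$ of $\partial_0 M$ on which $\ep_n|_E$ is eventually nonempty, the $R$-bounded combinatorics hypothesis places the pair $(\mu_E,\ep_n|_E)$ in the setting of Lemma \ref{limits of bounded combinatorics fill}. Since the height $d_E(\mu_E,\ep_n|_E)\to\infty$ rules out the eventually constant alternative, after a diagonal extraction across the finitely many components we obtain filling laminations $\lambda_E\in\EL(E)$ with $\ep_n|_E\to\lambda_E$ in the Gromov bordification of $\CC(E)$. Define $\lambda$ to be the disjoint union of the $\lambda_E$.

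The first clause of the binding definition is then immediate: by clause (1) of the definition of bounded combinatorics with respect to $M$, every essential disk or annulus has a boundary on some $E$ for which $\ep_n|_E$ is nonempty, and $\lambda_E$ is filling for such $E$. For clause (3) (the $I$-bundle case), passing to the orientation double cover if necessary reduces to a product $F\times I$ with boundary markings $\ep_{n,0},\ep_{n,1}$ converging to some $\lambda_0,\lambda_1\in\EL(F)$. The defining condition on $\ep$ forces $d_F(\mu_0,[\ep_{n,0},\ep_{n,1}])\le R$, which via the standard estimate $d_F(\mu_0,[\ep_{n,0},\ep_{n,1}])=(\ep_{n,0}\mid\ep_{n,1})_{\mu_0}+O(\delta)$ in the Gromov hyperbolic space $\CC(F)$ keeps the Gromov product bounded; if instead $\lambda_0=\lambda_1$, this product would have to tend to infinity. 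So $\lambda_0\ne\lambda_1$, and two distinct filling laminations on $F$ always intersect transversely, since the complement of a filling lamination consists of disk-like regions admitting no further laminations.

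The principal technical step is clause (2), that $\lambda_E$ lies in the Masur domain $\CO(E)$ whenever $E$ is compressible. From \eqref{eqn: mu nearest} together with the height assumption, $d_E(\Delta(E),\ep_n|_E)\ge d_E(\mu_E,\ep_n|_E)-R\to\infty$, so $\ep_n|_E$ escapes the set of meridians. Moreover, \eqref{eqn: mu nearest} encodes that $\mu_E$ is within $R$ of a coarse nearest-point projection of $\ep_n|_E$ onto $\Delta(E)$; combined with the Masur-Minsky quasi-convexity of $\Delta(E)$ in $\CC(E)$, this keeps $\pi_{\Delta(E)}(\ep_n|_E)$ within a bounded neighborhood of $\mu_E$. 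Were $\lambda_E$ a limit of meridians, quasi-convexity would instead force these projections to escape to infinity along $\Delta(E)$, a contradiction. Hence $\lambda_E$ is not a limit of meridians, and Lemma \ref{filling and masur domain} gives $\lambda_E\in\CO(E)$. Finally, when each $\ep_n|_E$ is nonempty, the claim that $\lambda$ is full follows at once from the construction. I expect the main obstacle to lie in clause (2): honestly extracting a bound on $\pi_{\Delta(E)}(\ep_n|_E)$ from \eqref{eqn: mu nearest} requires careful deployment of the nearest-point projection onto a quasi-convex subset of a Gromov hyperbolic space, as opposed to the cleaner Gromov-product argument that disposes of clause (3).
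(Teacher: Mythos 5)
Your proof is correct and follows the same overall route as the paper's: pass to a subsequence via Lemma~\ref{limits of bounded combinatorics fill} to produce the componentwise filling laminations $\lambda_E$, check the disk/annulus condition directly from the definition, and handle the Masur-domain and $I$-bundle clauses by $\delta$-hyperbolicity. The one place you diverge is in clause~(2): the paper argues by supposing $\lambda_E=\lim m_n$ for meridians $m_n$, noting that the geodesics $[m_0,\ep_n|_E]$ and $[m_0,m_n]$ share arbitrarily long initial segments, and using quasi-convexity of $\Delta(E)$ to conclude that $d_E(\ep_n|_E,\Delta(E))$ is smaller than $d_E(\ep_n|_E,\mu_E)$ by an unbounded amount, directly contradicting~\eqref{eqn: mu nearest}. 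You instead reformulate~\eqref{eqn: mu nearest} as a statement that the coarse nearest-point projection $\pi_{\Delta(E)}(\ep_n|_E)$ stays in a bounded neighborhood of $\mu_E$, and derive the contradiction from the fact that projections onto a quasi-convex set escape to infinity when the sequence converges to a point of the limit set. Both arguments encode the same geometric picture, but the paper's is a bit more self-contained: your version leans on the auxiliary fact that~\eqref{eqn: mu nearest} (which only gives $d_E(\mu_E,\ep_n)\le d_E(\Delta(E),\ep_n)+R$, with $\mu_E$ not assumed to lie on $\Delta(E)$) really does pin $\pi_{\Delta(E)}(\ep_n)$ near $\mu_E$ -- you flag this yourself at the end, and it does require a short argument (splitting into the cases where the geodesic $[\mu_E,\ep_n]$ does or does not enter a uniform neighborhood of $\Delta(E)$, and invoking the contraction property of projections onto quasi-convex sets in the former case). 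Once that is supplied the proofs are equivalent.
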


\begin{proof}
By lemma \ref{limits of bounded combinatorics fill}, we can pass to a
subsequence and assume for every component $E$ of $\D_0 M$ that
either $(\ep_n|_E)$ converges to a filling lamination $\lambda_E$ or
$\ep_n|_E$ is empty for every $n$. We let
$\lambda=\bigcup_{E\subset\D_0 M}\lambda_E$. The definition of
$R$-bounded combinatorics with respect to $M$ shows that every
essential disk or annulus in $M$ has a boundary component on a
component $E$ of $\D_0 M$ with $\lambda_E$ non-empty and therefore a
filling lamination. To show that $\lambda $ is binding, 
it remains to show that $\lambda_E$ is in the
Masur domain if $E$ is compressible, and when $M$ is an interval bundle
over a compact surface $F$, to prove that the projection of $\lambda$
to $F$ has nontrivial self-intersection. 

Suppose $E$ is compressible.  To prove that $\lambda_E\in\CO(E)$, it
suffices by lemma \ref{filling and masur domain}
to show $\lambda_E$ is not a limit of a sequence
$(m_n)\subset\Delta(E)$ of meridians on $E$. On the contrary, assume
$\lambda_E$ is such a limit.
The hyperbolicity of $\CC(E)$ then implies that a $\CC(E)$-geodesic
$[m_0, \ep_n|_E]$
and a $\CC(E)$-geodesic $[m_0,m_n]$
have initial segments of length $T(n) \to \infty$ that stay within 
uniformly bounded distance of each other.
However, the quasiconvexity of $\Delta(E)$
\cite{MM3} implies that 
$[m_0,m_n]$ stays uniformly close to $\Delta(E)$ for its whole
length, which means that the distance from
$\ep_n|_E$ to $\Delta(E)$ is much smaller (roughly by $T(n)$) than
its distance to $m_0$. This immediately implies that the distance from
$\ep_n|_E$ to $\Delta(E)$ is much smaller (roughly by
$T(n)-d_E(\mu(M,E),m_0)$) than its distance to $\mu(M,E)$.  
This contradicts  property (\ref{eqn: mu nearest}) of  $R$-bounded
combinatorics, and hence we conclude that $\lambda_E\in\CO(E)$.

Now suppose $M=F\times[0,1]$ is a product with boundary components
$F_0, F_1$, both identified with $F$ via the product structure.  All
we need to show in this case is that $\lambda_{F_0}$ and
$\lambda_{F_1}$ intersect as laminations on $F$. Otherwise they
represent the same point in the Gromov boundary of $\CC(F)$ as they
are filling laminations.  Hyperbolicity of $\CC(F)$ then guarantees
that for $n$ sufficiently large every $\CC(F)$-geodesic connecting
$\ep_n|_{F_0}$ and $\ep_n|_{F_1}$ must be far from any fixed base
point of $\CC(F)$.  This contradicts condition (2) in the definition
of $R$-bounded combinatorics with respect to $M$.

The case of a twisted $I$-bundle follows similarly, by lifting to the
untwisted double cover.
Hence in all cases we have established that $\lambda$ is binding.
The last part of the statement of the lemma follows immediately.
\end{proof}


\subsection{Gluings}\label{subsec: gluings}
Suppose $\CM$ is a fixed collection (usually finite) of decorated
manifolds and $\Xi$ is a disjoint union of {\em copies} 
(possibly multiple) of elements of $\CM$. 
A {\em boundary identification} within $\Xi$ is
a fixed-point-free orientation reversing homeomorphism $\psi : E\to
E'$, where $E\subset \D_0 M$,
$E'\subset\D_0 M'$ for $M,M'\in \Xi$. (We allow $M=M'$ and $E=E'$.)
A {\em gluing map} is an involution $\Psi$, defined on a subset of  
\[ \D_0\Xi \equiv \bigcup_{M\in\Xi} \D_0 M\]
which is a union of boundary identifications. 
When $\Psi|_E : E\to E'$ is one of these identifications, we say 
that the corresponding components $M$ and $M'$ of $\Xi$ are 
{\em adjacent} along $\Psi|_E$ and $E$ (or $E'$) is the {\em
gluing surface}.
Given a gluing map $\Psi$, we obtain an oriented 3-manifold
$X$ as the identification space $\Xi / \Psi$. We say $X$ is an {\em
  $\CM$-gluing} if it is connected and every component $M$ of $Xi$
is a {\em piece} of $X$. 
When $X$ has infinitely many pieces, we also require that $X$ does not include an
{\em infinite row of trivial $I$-bundles}, i.e. a submanifold
homeomorphic to $F\times[0,\infty)$ composed of infinitely many pieces
  homeomorphic to $F\times[0,1]$. 

We use the notation $\D_0
X$ to denote the set of non-toroidal components of $\D X$. 
Given a piece $M$ of
$X$, we say a component $E$ of $\D_0 M$ is {\em buried} if it is in
the domain of the gluing map; otherwise $E$ is a component of $\D_0 X$,
and we say $E$ is {\em unburied}.
 

\subsection{Gluings with bounded combinatorics}\label{subsec: gluings with bounded combinatorics}

Given an $\CM$-gluing $X$, built from a
disjoint union $\Xi$ with the gluing involution $\Psi$, we let $\mu_X$ 
be the marking on $\boundary_0 \Xi$ obtained from the disjoint union 
of $\mu(M,E)$ over all pieces $M$ in $\Xi$ and their boundary
components. Let 
\begin{equation}\label{def:nu}
\nu_X = \Psi(\mu_X).
\end{equation}
More generally given an $\CM$-gluing $X$ possibly with boundary and a
marking $\lambda$ on $\D_0 X$, we define $\nu_{(X,\lambda)}$ to be
the union of $\Psi(\mu_X)$ and $\lambda$. 

Writing $\nu \equiv \nu_{(X,\lambda)}$ for short, we let $\nu(M)$
denote the restriction of $\nu$ to $\D_0 M$, and $\nu(M,E)$ denote
the restriction to a boundary component $E\subset \D_0 M$. 
In particular we have
\begin{itemize}
\item [-] if $E$ is buried,
$$
\nu(M,E)\equiv \Psi|_{E'}(\mu(M',E'))
$$
where the restriction $\Psi|_{E'}:E'\to E $ of $\Psi$ identifies $E'\subset\D_0 M'$ with $E$, and
\item[-] if $E$ is unburied, $\nu(M,E)$ is the restriction $\lambda|_E$.
\end{itemize}

We say $(X,\lambda)$ has {\em $R$-bounded combinatorics}, or
$(X,\lambda)$ is an {\em $(\CM,R)$-gluing}, if for every piece $M\subset\Xi$ 
the marking $\nu(M)$ has $R$-bounded combinatorics with respect to $M$. 
The {\em heights} of $(X,\lambda)$ are the heights of $\nu(M,E)$ for
each component $E$ of $\D_0 M$, as defined in
\S\ref{subsec: bounded combinatorics for decorated manifolds}.


\subsection{Compressions.}\label{subsec: compressions}

Here we describe a special type of gluings that will be important in what comes later.
Suppose $M$ is a decorated manifold and let $\CM$ denote a collection
of decorated manifolds consisting of  $M$ and a finite number of
nontrivial compression bodies. We say $X$ is a {\em compression of $M$
  with respect to $\CM$} or simply a {\em compression of $M$} if $X =
M$ or if $X$ is obtained inductively from a compression $Y$ of $M$ 
by gluing a nontrivial compression body
$C\in\CM$ to $Y$ along the exterior
boundary of $C$. Note that
this inductive definition allows only for a bounded number of pieces
depending on the Euler characteristic of $\D_0 M$. 


\subsection{Geodesics in the thick part of the Teichm\"uller 
space}\label{subsec: teichmuller}

Given a closed surface $F$ of genus at least 2, we use 
$\CT(F)$ to denote the Teichm\"uller space of $F$ equipped 
with the Teichm\"uller metric. We consider an element
$\sigma$ of $\CT(F)$ as a conformal structure on $F$ and 
at the same time as the complete hyperbolic surface
in that conformal class and unless explicitly stated, 
lengths and measurements are in the hyperbolic metric.

It is well known that there is a constant $L$ 
depending only on the topology of $F$ that for every 
complete marking $\nu$ on $F$, there is a hyperbolic metric 
on $F$ where the total length of $\nu$ is at most $L$. We make 
a fixed choice $\sigma_\nu\in\CT(F)$ of such a choice. (The 
distance between any two candidates is bounded from above 
independently of $\nu$.) In the opposite direction, given a 
point $\sigma\in\CT(F)$, we make a choice of a complete marking 
$\nu_\sigma$ which has the smallest length among all complete 
markings. (If $\sigma$ is in the thick part of $\CT(F)$, the 
distance in the marking graph between any two candidates is 
bounded from above independently of $\sigma$; in particular 
there are a bounded number of candidates with the bound 
depending only on topology of $F$ and the injectivity radius of
$\sigma$.) The length of $\nu_\sigma$ in $\sigma$ is bounded 
from above by a function of the topology of $F$ and the 
injectivity radius of $\sigma$. We assume the functions 
$\sigma\to\nu_\sigma$ and $\nu\to\sigma_\nu$ are equivariant 
with respect to the natural action of $\Mod(F)$, the mapping 
class group of $F$, on $\CT(F)$ and the set of markings on $F$. 

It is a consequence of work of Minsky \cite{ELC0} and Rafi
\cite{Rafi} that given $R$, there exists a constant 
$\ep_0>0$ so that if $\mu$ and $\nu$ have $R$-bounded combinatorics 
then the Teichm\"uller geodesic, $[\sigma_{\mu},\sigma_{\nu}]$, 
connecting them, is contained in the $\epsilon_0$-thick part of 
$\CT(F)$. In the opposite direction, there exists $R_0\ge R$ so that 
if the Teichm\"uller geodesic connecting $\sigma$ and $\tau$ is 
in the $\epsilon_0$-thick part of $\CT(F)$, then $\nu_{\sigma}$ 
and $\nu_{\tau}$ have $R_0$-bounded combinatorics. In addition 
there is a positive constants $c$ depending only on $R$ and the 
topology of $F$ so that
\[ \frac1{c}d_\CC(\mu,\nu)-c \le d_\CT(\sigma_{\mu},\sigma_{\nu})
	\le c d_\CC(\mu,\nu) + c \]
and
\[ \frac1{c}d_\CT(\sigma,\tau)-c \le d_\CC(\nu_{\sigma},\nu_{\tau})
	\le c d_\CT(\sigma,\tau) + c, \]
where $d_\CC$ and $d_\CT$ denote distances in $\CC(F)$ and $\CT(F)$ 
respectively. Obviously the choices of $\ep_0, R_0,$ and $c$ depend on the 
topology of the surface $F$ but to simplify the explanations we keep
this dependence implicit and do not write the dependence on the
topology of $F$. 
In fact, in most of our discussions, $R$ is fixed and we are dealing 
only with finitely many topological types of surfaces; in particular 
we assume the choices of $\ep_0, R_0,$ and $c$ are made uniformly.

We also use the discussion above and if $\sigma_F$ is 
a hyperbolic structure on the component $F\subset\D_0 M$ of the 
decorated manifold $M$, we say $\sigma_F$ has {\em $R$-bounded 
combinatorics} if $\sigma_F$ is $\epsilon_0$-thick and the
marking $\nu_{\sigma_F}$ has $R$-bounded combinatorics. The 
{\em height} of $\sigma_F$ is defined to be equal to the
height of $\nu_{\sigma_F}$. Similarly if $\sigma$ is a 
hyperbolic structure on $\D_0 M$, we say $\sigma$ has 
{\em $R$-bounded combinatorics (with respect to $M$)}, if
it is $\ep_0$-thick and 
 \[ \nu_\sigma=\bigcup_{F\subset\D_0 M}\nu_{\sigma|F} \]
has $R$-bounded combinatorics as a complete marking on
$\D_0 M$.


\subsection{Models}\label{subsec: models}
We will now define a class of {\em model metrics} for our decorated
manifolds and gluings. Given an $(\CM,R)$-gluing $(X,\lambda)$ we will
let $\BM_X[\lambda]$ (or just $\BM_X$ when $\lambda=\emptyset$)
denote  $X$ equipped with this model metric.

Given a closed surface $F$ of genus at least 2, the {\em universal
  curve} over $\CT=\CT(F)$, the Teichm\"uller space 
of $F$, is a smooth bundle $\CS\to \CT$ whose fiber $\CS_\sigma$
over $\sigma\in\CT$ is a hyperbolic surface representing $\sigma$. 
The action of the mapping class group of $F$ lifts to an action
on $\CS$ which is isometric on fibers. Let $T_v\CS$, the vertical
tangent bundle, denote the
sub-bundle of $T\CS$ of $\CS$ whose fibers are
the kernels of the derivative of the fibration $\CS\to\CT$. It is
not hard to show there exists a smooth mapping class group invariant 
sub-bundle $T_h\CS$ of $T\CS$, which is complementary to $T_v\CS$, 
i.e. $T\CS= T_v\CS \oplus T_h\CS$. (See \cite{FM02}.) 
We fix such a choice once and for all.

Given a geodesic path $\gamma:I\to\CT(F)$, 
let $\CS_\gamma \to I$ denote the pullback bundle and
$T_v\CS$ its vertical tangent bundle. 
There is a complementary 1-dimensional smooth sub-bundle
$T_h\CS_\gamma\subset T\CS_\gamma$ defined as those vectors whose
image in $T\CS$ lies in $T_h\CS$ and projects to the 
image of $d\gamma$.
This uniquely determines
a vector field $V$ on $\CS_\gamma$ which projects to the
positive unit vector field in $I$.
Hence we can extend the Riemannian metric
on fibers of $\CS_\gamma$ to a Riemannian metric on $\CS_\gamma$ by declaring
$V$ to be of length 1 and orthogonal to $T_v\CS_\gamma$.
We should point out that given a cocompact subset of $\CT(F)$,
for example the $\ep_0$-thick part of $\CT(F)$, there exists $K$
so that for every geodesic $\gamma$ in this cocompact subset,
different choices for the sub-bundle $T_h\CS$ result in 
$K$-bilipschitz metrics on $\CS_\gamma$. In particular, up to
$K$-bilipschitz diffeomorphisms, the construction of the models 
that follows will be independent of the choice of the equivariant
sub-bundle $T_h\CS$, where $K$ will depend on $F$ and the constant
$R$.

Given distinct points $\sigma,\tau\in\CT(F)$, we use
$\BM_F[\sigma,\tau]$ to denote 
$\CS_\gamma$ equipped with the above metric, where $\gamma$ is the Teichm\"uller
geodesic starting from $\sigma$ and ending at $\tau$.
When $\mu$ and $\nu$ are complete 
markings on $F$ with $R$-bounded combinatorics and 
$\sigma_\mu\neq\sigma_\nu$, we use the notation $\BM_F[\mu,\nu]$
to denote $\BM_F[\sigma_\mu,\sigma_\nu]$. In case 
$\sigma_\mu = \sigma_\nu$, to avoid degeneracy we identify 
$\BM_F[\mu,\nu]$ with the product $\sigma_\mu\times [0,1]$ with
the product metric. Also we maintain 
the shorthand $\BM_F[\mu] = \BM_F[\mu,\mu]$. See \cite{ELC0}, 
and also \cite{ELC1} for a more general construction of such models 
in the case of surface groups. 

For every decorated manifold $M$, 
we endow $M$ with a fixed complete metric whose restriction 
to every component $E$ of $\D_0 M$ is $\sigma_{\mu_E}$. 
Such metrics can be easily constructed by a modification 
of the metric on the convex core of a geometrically finite 
hyperbolic structure on $M$, after removing standard cusp 
neighborhoods. We let $\BM_M$ denote $M$ endowed with this 
metric. In the case of $I$-bundles over a closed surface, 
we assume the choices of the model metrics are made so
that they are equivariant with respect to the action of
the mapping class group of the surface.

Now suppose $(X,\lambda)$ is an $(\CM,R)$-gluing with complete 
marking $\lambda$ on $\D_0 X$ and the gluing involution $\Psi$. 
We define $\BM_X[\lambda]$ as follows: We begin with the disjoint 
union of models  $\BM_M$ for all the pieces $M$ of $X$. Suppose 
$\Psi|_{E}:E\to E'$ is a restriction of $\Psi$ for
boundary components $E$ and $E'$ of pieces $M$ and $M'$ of
$X$ respectively. If $E\neq E'$, we attach
\[ \BM_{E}[\mu(M,E),\Psi(\mu(M',E')] \] 
to $E$ by the identity
and to $E'$ by $\Psi|_E$, on the appropriate side. If $E=E'$,
$\Psi$ restricts to an involution of $E$. The construction
of the metric on $\BM_E[\mu_E,\Psi(\mu_E)]$ is such that
$\Psi$ induces a free orientation preserving isometry of 
$\BM_E[\mu_E,\Psi(\mu_E)]$ which interchanges the boundary
components. The quotient of $\BM_E[\mu_E,\Psi(\mu_E)]$ is
therefore a twisted $I$-bundle and is attached to $E$ by
the identity map. Finally to each boundary component 
$E\subset \D_0 M$ which is unburied, we attach
\[ \BM_{E}[\mu_E,\lambda|_E] \]
on the appropriate side.
The resulting manifold is the desired model $\BM_X[\lambda]$, which we
note admits a natural isotopy class of identifications with $X$.
As an especial case, when $\D_0 X =\emptyset$ we denote this by $\BM_X$. 

Note that when $\ep$ is a complete marking on $\D_0 M$ with $R$-bounded 
combinatorics for a decorated manifold $M$, the model 
$\BM_M[\ep]$ is a special case, viewed as a gluing with 
the single piece $M$. In this case and when $\tau$ is a conformal
structure on $\D_0 M$ with $R$-bounded combinatorics, we sometimes 
use the notation $\BM_M[\tau]$ to denote the model obtained as
above, except that instead we use the interval bundle 
$\BM_E[\sigma_{\mu_E},\tau|_E]$ for each component $E$ of $\D_0 M$,
and glue that to the boundary $E$ of $\BM_M$ with the identity map.

Also note that the metric on the model $\BM_X[\lambda]$ constructed above 
is Riemannian on the gluing regions. This is a property that will be
used later in the proof of Theorem \ref{bilipschitz embedding of models}.


\subsection{Collapsing $I$-bundles.}\label{subsec: looking through}
It will be convenient in several parts of the proof to work in the
case where a gluing has no $I$-bundles, and we discuss the details of
that here. 

Let $X = \Xi/\Psi$ be an $\MM$-gluing.
When $X$ has at least one piece that is not an $I$-bundle, we can
identify each fibre of each $I$-bundle to a point, obtaining a
quotient map 
$$
\beta : X \to X^*
$$
where $X^*$ is still a 3-manifold, and is obtained as a gluing of
the non-$I$-bundle pieces  $\Xi^*\subset \Xi$ by a gluing map $\Psi^*$ which is
induced by $\Psi$ and the quotient map. 
If $X$ is entirely composed of $I$-bundles we can do almost the same thing,
but must leave one $I$-bundle
uncollapsed, thus exhibiting $X$ (and $X^*$) as a fibration.
Denote this the {\em fibered gluing case.}  We call this process
``collapsing $I$-bundles''. 

The reduction to the case without $I$-bundles is made possible by this lemma:
\begin{lemma}\label{looking through}
  Given a finite set $\CM$ of decorated manifolds and $R>0$ there 
exist positive $c$, $h$ and $R'$, so that if $X$ is an
$(\CM,R)$-gluing  with heights at least $h$ 
 and $X^* = \beta(X)$ is obtained by 
collapsing $I$-bundles on $X$, then
\begin{enumerate}
\item $X^*$ is an $(\CM,R')$-gluing.
\item The height  
of each gluing surface $E$ in $X^*$ is at least 
$cH-R'$, where $H$ is the sum of the heights in $X$ of
the gluing surfaces that map to $E$.
\item $\beta$ is homotopic to an $R'$-bilipschitz map
$$\beta^* : \BM_X \to \BM_{X^*}$$
which is the identity on each non-$I$-bundle piece of $X$.
\end{enumerate}
\end{lemma}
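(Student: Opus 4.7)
The plan is to reduce to a single maximal chain $M_0,I_1,\ldots,I_{k-1},M_k$ of consecutive pieces of $X$ in which $I_1,\ldots,I_{k-1}$ are $I$-bundles that collapse to a single gluing surface $E$ of $X^*$, while $M_0,M_k$ are non-$I$-bundle pieces (or, in the fibered gluing case, $M_0=M_k$ is the one retained $I$-bundle). The three conclusions will be established chain-by-chain and then assembled across all chains of $X$.

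Identifying the relevant gluing surfaces with a common fibre $F$ via the $I$-bundle structures, I obtain a sequence of markings
\[ \alpha_0,\ \mu_0^1,\ \mu_1^1,\ \mu_0^2,\ \ldots,\ \mu_1^{k-1},\ \alpha_k \]
in $\CC(F)$, where $\alpha_0,\alpha_k$ are induced by $\mu(M_0,\cdot)$ and $\mu(M_k,\cdot)$, and $\mu_0^j,\mu_1^j$ are the two decoration markings of $I_j$. Condition (2) of $R$-bounded combinatorics for decorated $I$-bundles (Section~\ref{subsec: bounded combinatorics for decorated manifolds}) forces $\mu_0^j,\mu_1^j$ to lie within $R$ of the $\CC(F)$-geodesic between the adjacent markings, and $R$-bounded combinatorics across each gluing surface keeps consecutive markings in the sequence at $\CC(F)$-distance at most $R$. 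Via the dictionary in Section~\ref{subsec: teichmuller}, each segment is shadowed by an $\ep_0$-thick Teichm\"uller geodesic, and consecutive segments join at $\ep_0$-thick markings that lie in a $\Mod(F)$-cocompact region. I will argue that the concatenated piecewise Teichm\"uller path is a uniform quasigeodesic in both $\CC(F)$ and $\CT(F)$, with constants depending only on $\CM$ and $R$. Rafi's bounded subsurface projection for thick Teichm\"uller geodesics then yields $d_Y(\alpha_0,\alpha_k)\le R'$ for every proper subsurface $Y\subsetneq F$, which together with the unchanged meridian and non-$I$-bundle conditions at $M_0$ and $M_k$ gives (1). The same quasigeodesic property, combined with the quasi-isometry between thick Teichm\"uller geodesics and $\CC(F)$-geodesics, gives the bound $d_E(\alpha_0,\alpha_k)\ge cH - R'$ of (2), once the additive errors are absorbed by taking $h$ sufficiently large.

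For conclusion (3), the region of $\BM_X$ supported on the chain is, by the construction in Section~\ref{subsec: models}, a universal curve over the piecewise Teichm\"uller path above (alternating model $I$-bundle pieces $\BM_{I_j}$ with interpolating gluing-region models $\BM_F[\cdot,\cdot]$). The corresponding region of $\BM_{X^*}$ is the single universal curve $\BM_F[\alpha_0,\alpha_k]$ over the Teichm\"uller geodesic $[\sigma_{\alpha_0},\sigma_{\alpha_k}]$. Because thick quasigeodesics fellow-travel thick geodesics at uniformly bounded Hausdorff distance, there is a $\Mod(F)$-equivariant smooth map between the two bases that lifts to a uniform bilipschitz map between universal curves, using the flexibility in the choice of horizontal sub-bundle noted in Section~\ref{subsec: models}. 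Extending by the identity on the non-$I$-bundle pieces and assembling across all chains of $X$ gives the required $\beta^*$.

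The hardest step I anticipate is verifying that the concatenated piecewise Teichm\"uller path is a uniform quasigeodesic with constants depending only on $\CM$ and $R$. Each segment is thick, but a priori consecutive segments could fold back at their junctions. The resolution should come from condition (2) in Section~\ref{subsec: bounded combinatorics for decorated manifolds}: requiring each $\mu_i^j$ to be $\CC(F)$-close to the geodesic between adjacent markings forbids folding beyond the additive error $R$, and the hypothesis that all heights exceed $h$ guarantees each Teichm\"uller segment is long enough for this forward progress to dominate the additive error. Once the uniform quasigeodesic constants are secured, the subsurface projection bound and the bilipschitz equivalence of universal curves over fellow-traveling thick paths are both direct consequences of the machinery already cited in Sections~\ref{subsec: teichmuller} and~\ref{subsec: models}.
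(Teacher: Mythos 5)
There is a genuine gap in your verification of conclusion (1): the condition (\ref{eqn: mu nearest}) at compressible gluing surfaces is \emph{not} ``unchanged'' under collapsing. If $E_0\subset\D_0 M_0$ is compressible, the $(\CM,R)$-hypothesis for $X$ only tells you that the \emph{old} adjacent marking $\nu=\Psi(\mu(I_1,\cdot))$, at distance roughly $h$ from $\mu(M_0,E_0)$, satisfies $d_{E_0}(\mu(M_0,E_0),\nu)\le d_{E_0}(\Delta(E_0),\nu)+R$. After collapsing, the marking you must test against the disk set is the \emph{new} one, $\Psi^*(\mu(M_k,E_k))$, which lies much farther out, roughly along the continuation of the geodesic from $\Delta(E_0)$ through $\mu(M_0,E_0)$ and $\nu$. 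That the farther point is still nearly closest to $\mu(M_0,E_0)$ among points seen from $\Delta(E_0)$ is not automatic; it requires the quasiconvexity of $\Delta(E_0)$ in $\CC(E_0)$ (Masur--Minsky) together with a hyperbolicity stability argument (this is exactly the content of Lemma \ref{qconvex stability} in the paper, applied with $C=\Delta(E_0)$, $z=\Psi(\nu_1)$, and $h$ large). Your proposal never engages the disk set at all, so as written conclusion (1) is not established at compressible gluing surfaces, which is the case the whole paper cares about.

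A secondary issue: your claim that the concatenated piecewise Teichm\"uller path is a uniform quasigeodesic ``in both $\CC(F)$ and $\CT(F)$'' cannot be obtained from a local-to-global principle in $\CT(F)$, which is not Gromov hyperbolic; the paper runs the local-to-global argument (Bowditch) on the concatenation of $\CC(F)$-geodesics $[\nu_0,\nu_1]*\cdots*[\nu_{n-1},\nu_n]$, using conditions \emph{(i)}--\emph{(iii)}, and only then transfers to Teichm\"uller geodesics via the thick-part dictionary of \S\ref{subsec: teichmuller}; the subsurface bound $d_W(\nu_0,\nu_n)\le R'$ is then extracted with the bounded geodesic projection lemma of \cite{MM2} rather than from Rafi's theorem applied to a path that has not yet been shown thick. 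Relatedly, your assertion that bounded combinatorics ``keeps consecutive markings in the sequence at $\CC(F)$-distance at most $R$'' is incorrect: across a gluing surface the $\CC(F)$-distance of consecutive markings is the height, which is at least $h$ and hence large; what is bounded (by a constant depending on $\CM$, not $R$) is the distance between the two decorations of a single $I$-bundle piece, and what is bounded by $R$ is the subsurface projection data. The fellow-traveling argument for conclusion (3) and the height estimate (2) are in the spirit of the paper and go through once the quasigeodesity is set up correctly at the curve-complex level, but the disk-set verification above must be added to complete (1).
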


\begin{proof}
Let us first consider what happens when we collapse a single $I$-bundle $B$ 
which is not adjacent to any other $I$-bundles. 

Let $G = \Psi(\boundary B)$, the part of $\boundary_0 \Xi$
glued to $\boundary B$ -- this is composed of two components $E_0$ and
$E_1$ when $B$ is untwisted, and one component $E_0=E_1=G$ in the twisted
case. Let $M_i$ be the adjacent piece of $\Xi$ containing $E_i$. Let
$\phi:\boundary B \to \boundary B$ be the involution 
exchanging the endpoints of each interval fiber of $B$. Then the
induced involution $\Psi^*$ on $G$ is given by
$$
\Psi^*|_G = \Psi \circ \phi \circ \Psi.
$$

If $B$ is untwisted it has the form $I_F[\mu_0,\mu_1]$, and if it is
twisted its double cover $\hhat B$ has this form. 
We identify $F$ with $F\times\{0\}$ in $\boundary I_F[\mu_0,\mu_1]$.
As in \S\ref{subsec: gluings with bounded combinatorics}, 
the marking $\nu(B)$ is obtained from the decorations $\mu(M_0,E_0)$ and
$\mu(M_1,E_1)$ of the
neighbors of $B$ by applying $\Psi$. We then use $\phi$ to compare these
on one copy of $F$, obtaining
$\nu_0=\Psi(\mu(M_0,E_0))$ and
$\nu_1=\phi(\Psi(\mu(M_1,E_1)))$.
The $R$-bounded combinatorics
assumption on $B$ (as in 
\S\ref{subsec: bounded combinatorics for decorated manifolds})
specifies first that
\begin{equation}\label{eq:W bound}
d_W(\mu_i,\nu_i) \le R, \ \ i=0,1, 
\end{equation}
for any proper subsurface $W\subset F$. 
Moreover the additional condition for an $I$-bundle is that
\begin{equation}\label{eq:sigma close to line}
d_F(\mu_j,[\nu_0,\nu_1]) \le R, \ \ j=0,1
\end{equation}
where $[\nu_0,\nu_1]$ is a $\CC(F)$-geodesic connecting 
$\nu_0$ and $\nu_1$.
The heights of the gluing surfaces are given by $d_F(\mu_i,\nu_i)$,
and our hypothesis states that 
\begin{equation}\label{eq:sigma height}
d_F(\mu_i,\nu_i) \ge h.
\end{equation}
Notice also that $d_F(\mu_0,\mu_1)$ has some upper bound depending on
the finite set $\CM$.

Now the markings $\nu(M_i)$ are similarly obtained as $\Psi(\mu_i)$
(with $\Psi$ restricted to $F\times\{i\}$ for $i=0,1$). $R$-bounded
combinatorics as viewed from $M_i$ again requires (\ref{eq:W bound})
to hold, and if $E_i$ is compressible then we also have (as in
(\ref{eqn: mu nearest})) that
\begin{equation}\label{eq:Ei disks}
d_{E_i}(\Psi(\mu_i),\mu(M_i,E_i)) \le d_{E_i}(\Psi(\mu_i),\Delta(E_i)) + R.
\end{equation}

To establish the bounded-combinatorics and height conditions for the
new gluing map $\Psi^*$, let us consider $E_0$ (the case of $E_1$ is
similar). The new marking $\nu^*(M_0,E_0)$ is given by
$\Psi^*(\mu(M_1,E_1)) = \Psi(\phi(\Psi(\mu(M_1,E_1)))) = \Psi(\nu_1)$,
and we note that
$\mu(M_0,E_0) = \Psi(\nu_0)$.

The height of the new gluing is therefore 
$d_{E_0}(\Psi(\nu_0),\Psi(\nu_1))$ which equals $d_F(\nu_0,\nu_1)$,
and by the triangle inequality  and (\ref{eq:sigma close to line})
must satisfy
$$
d_F(\nu_0,\nu_1) \ge d_F(\mu_0,\nu_0) + d_F(\mu_0,\nu_1) - 2R.
$$
Using the upper bound on $d_F(\mu_0,\mu_1)$, the second term becomes
$d_F(\mu_1,\nu_1)$ up to an error depending on $\CM$. Hence we
obtain conclusion (2) on heights for this gluing surface. 

From (\ref{eq:W bound}) we obtain the subsurface projection bound
$d_W(\nu_0,\nu_1) \le 2R$. 
When $E_0$ is compressible, (\ref{eq:Ei disks}) holds for $\Psi(\mu_0)$
and we need to establish the corresponding inequality for
$\Psi(\nu_1)$. 

This follows from the following easy exercise in
$\delta$-hyperbolicity, whose proof we leave to the reader: 

\begin{lemma}\label{qconvex stability}
Suppose $Y$ is a $\delta$-hyperbolic space and
$C$ an $a$-quasiconvex subset. Let $x,y,z$ be such that $y\in C$,
$d(x,y) \le d(x,C) + r$, and $x\in[y,z]$. There are constants $r',h_0$
depending only on $\delta, a$ and $r$ such that, if $d(x,y) > h_0$, then
$d(z,y) \le d(z,C) + r'$.
\end{lemma}

In our setting, we can apply this with $C = \Delta(E_0)$, which is
quasiconvex by Masur-Minsky \cite{MM3}, with $y$ a point in
$\Delta(E_0)$ closest to $\mu(M_0,E_0)$, $z = \Psi(\nu_1)$ and $x$ the
nearest point in $[y,z]$ to $\Psi(\mu_0)$. Then (\ref{eq:sigma close to
line}), (\ref{eq:sigma height}) with $h$ chosen sufficiently large,
  and  (\ref{eq:Ei disks}) allow us to use the lemma. This establishes
  the inequality (\ref{eqn: mu nearest}) for $E_0$, and shows that we
  have $R'$-bounded combinatorics (for suitable $R'$) at this surface.

We next consider what happens when we collapse a stack of $I$-bundles
to obtain a single $I$-bundle; the point here is to show that
the height and bounded-combinatorics estimates for the combined bundle do not
depend on the number of $I$-bundles in the stack. We give the argument
in the case where all the $I$-bundles are untwisted; the case
involving a twisted bundle is easily handled by passage to a double
cover, and we leave this to the reader. 

For notational simplicity we also consider bundles of the form
$I_F[\mu]$. The case where the two decorations are not equal can be
obtained from this case at the price of a bounded change in markings
and bilipschitz constants (since we are working with a finite set $\CM$).

Thus consider a sequence $B_0,\ldots,B_n$ where $B_i = I_F[\mu_i]$,
and write $\boundary B_i$ as $\boundary_0 B_i \union \boundary_1 B_i$,
where $\boundary_j B_i$ denotes the surface identified with $F\times\{j\}$.
The gluing involution $\Psi$ takes $\boundary_1 B_i$ to $\boundary_0
B_{i+1}$, for $i=0,\ldots,n-1$.

If we collapse all but the first bundle, the identification sends each 
$\mu_i$ to 
$$
\nu_i = (\phi\circ \Psi)^i(\mu_i)
$$
where $\mu_i$ is understood to be on $\boundary_0 B_i$, and
$\phi|_{\boundary B_i}$ is the endpoint-exchanging involution on each $B_i$.
The resulting sequence $\nu_0,\ldots,\nu_n$, which can be considered
as markings in $\boundary_0 B_0$ which we identify with $F$, 
satisfies the following conditions: 

\begin{enumerate}[\qquad\em (i)]
\item $d_F(\nu_i,\nu_{i+1}) > h$
\item For all $W\subsetneq F$, $d_W(\nu_i,\nu_{i+1}) < R$.
\item $d_F(\nu_i,[\nu_{i-1},\nu_{i+1}]) < R$
\end{enumerate}
The first condition comes from the height lower bound for each
gluing. The second is the $R$-bounded combinatorics condition on
subsurface projections, 
and the third is the additional $R$-bounded
combinatorics condition for  $I$-bundles in 
\S\ref{subsec: bounded combinatorics for decorated manifolds}).

Our goal then is to show, for suitable $c,R'$ and $h$ sufficiently
large and independent of $n$, that the pair
$(\nu_0,\nu_n)$ has $R'$-bdd combinatorics, and that $d_F(\nu_0,\nu_n)
> c \sum d_F(\nu_i,\nu_{i+1}) - R'$.

Hyperbolicity implies that a local quasi-geodesic condition implies a
global one; namely: Given $K$ there exists an $K'$ and $L$ such that, if a path $\gamma$
satisfies the property that every subpath of length $L$ is
$K$-quasigeodesic, then $\gamma$ is $K'$-quasigeodesic \cite{Bow91}.
Applying this to the concatenation $\gamma = [\nu_0,\nu_1]*\cdots*[\nu_{n-1},\nu_n]$,
we see from conditions {\em (i)} and {\em (ii)} that if $h$ is sufficiently large
(independently of $n$) then $\gamma$ is a $K'$-quasigeodesic. In
particular we have $d(\nu_0,\nu_{n}) > 1/K' \sum d(\nu_i,\nu_{i+1}) -
K'$, and  for some $R_2$ we find that $\gamma$ lies in the
$R_2$-neighborhood of the geodesic $[\nu_0,\nu_n]$.

Now consider $W$. If $d_F(\boundary W,\gamma) > R_2 + 2$ then 
$d_F(\boundary W,[\nu_0,\nu_n])>2$, and 
the bounded geodesic projection lemma 
\cite{MM2}
gives a uniform upper bound on
$d_W(\nu_0,\nu_n)$. 

Now suppose $d_F(\boundary W,\gamma) \le R_2 + 2$. If $\nu_i$ is the
closest point to $\boundary W$ among $\nu_0,\ldots,\nu_n$ 
then $d_F(\boundary
W,[\nu_{i-1},\nu_i]\union[\nu_i,\nu_{i+1}]) \le R_2+2$, while
$d_F(\boundary W,\nu_{i\pm 1}) > h/2-R_2-2$. 

Now we notice that $d_W(\nu_{i-1},\nu_{i+1}) \le 2R$ by the triangle
inequality. Moreover (if $h$ is sufficiently large, depending on $K'$) then
$d(\boundary W,[\nu_{i+1},\nu_n])>2$, and similarly for
$[\nu_{0},\nu_{i-1}]$. The bounded geodesic projection lemma then
bounds $d_W(\nu_{i+1},\nu_n)$ and $d_W(\nu_{0},\nu_{i-1})$, and this
gives the desired bound on $d_W(\nu_0,\nu_n)$, establishing the bounded
combinatorics condition for the stack of $I$-bundles.

We may now obtain conclusions (1) and (2) of Lemma \ref{looking through} in two steps:
First we combine each maximal stack of $I$-bundles into a single $I$-bundle,
obtaining bounded combinatorics as above. If $X$ has any
non-$I$-bundle pieces we now collapse using the first case, to obtain
$\Xi^*/\Phi^*$ with $R'$-bounded combinatorics, and height estimates
as in (2). If $X$ is made up only of $I$-bundles then what we now have
is a single $I$-bundle identified to give a fibration, and (1) and (2)
follow easily. 

It remains to discuss the bilipschitz map $\beta^*$. For a stack of
$I$-bundles as above, the model described by the gluing is equivalent
to the union $\BM_F[\nu_0,\nu_1] \union\cdots\union
\BM_F[\nu_{n-1},\nu_n]$ along successive boundary components, whereas
the model for the single resulting $I$-bundle is
$\BM_F[\nu_0,\nu_n]$.  The bilipschitz correspondence between these
comes from the comparison of the Teichm\"uller geodesic
$[\sigma_{\nu_0},\sigma_{\nu_n}]$ with the broken path composed of
segments
$[\sigma_{\nu_i},\sigma_{\nu_{i+1}}]$. These are fellow travelers,
with uniform constants, because of the $R$-bounded combinatorics
condition and the fellow-traveller property of the corresponding paths
in $\CC(F)$.

After we have combined stacks of $I$-bundles into single $I$-bundles,
in the last step we see directly that the models match. When gluing
$M_0$ and $M_1$ along $B$, as in the beginning, we obtain the model
from $\BM_{M_0}$, $\BM_{M_1}$, and $\BM_B$, attaching
them using the $I$-bundle models $\BM_F[\mu(M_0,E_0),\Psi(\nu_0)]$ and 
$\BM_F[\Psi(\nu_1),\phi(\Psi(\mu(M_1,E_1)))]$. After collapsing $B$, we
have just $\BM_{M_0}$ and $\BM_{M_1}$, attached along 
$\BM_F[\mu(M_0,E_0),\Psi^*(\mu(M_1,E_1))]$. But this bundle is uniformly
bilipschitz equivalent to the gluing of the three previous $I$-bundles
$\BM_F[\mu(M_0,E_0),\Psi(\nu_0)]$,  
$\BM_F[\Psi(\nu_1),\phi(\Psi(\mu(M_1,E_1)))]$ and $\BM_B$, 
using the same appeal to Teichm\"uller geodesics
as in the paragraph above.  
\end{proof}
\section{Hyperbolic Structures and Convergence Theorems}\label{sec:convergence theorems}

Suppose $\Gamma$ is a discrete subgroup of $\Isom(\BH^3)=\PSL_2(\BC)$.
We let $\chi(\Gamma)$ denote the character variety of representations
of $\Gamma$ into $\PSL_2(\BC)$ as the GIT quotient of the variety
of representations of $\Gamma$. In this article we do not 
distinguish between a representation and its character and 
use them interchangeably. We refer to the induced topology on
$\chi(\Gamma)$ as the {\em algebraic topology}; in this topology
a sequence of characters $(\rho_n)$ converges to $\rho$ if 
representations $\rho_n'$ and $\rho'$ can be chosen that project
to $\rho_n$ and $\rho$ respectively and for every
$\gamma\in\Gamma$, $(\rho_n(\gamma))$ converges to 
$\rho(\gamma)$. 

An element of $\chi(\Gamma)$ is {\em discrete} if the
corresponding representations have discrete images,
and it is {\em faithful} if the representations are injective.
We say a sequence of homomorphisms $\rho_n:G\to H_n$ is 
{\em eventually faithful (or eventually injective)} if for each 
$g\in G$, $g \ne 1$, there exists $N$ such that $\rho_n(g)\ne 1$ for all 
$n>N$. We similarly define an eventually faithful sequence in 
$\chi(\Gamma)$.

In this section we gather a number of results on convergence
of sequences of characters in $\chi(M)=\chi(\pi_1(M))$, where 
$(M,P)$ is a pared manifold, and we try to understand the geometry 
of the limit when it exists. In particular the next theorem is
used whenever we need to show a sequence of representations
has a convergent subsequence.

\begin{theorem}\label{eventually faithful convergence}
Let $(M,P)$ be a pared manifold with either incompressible free
sides or pared locus consisting only of tori. Let $(\rho_n)$ be an eventually 
faithful sequence of discrete elements of $\chi(M)$ satisfying
the following: 
\begin{enumerate}
	\item For every element $\gamma\in\pi_1(M)$ representing 
the core of an annular component of $P$, $\ell(\rho_n(\gamma))$ is
bounded for all $n$.
	\item The sequence $(\ep_n)$ of markings on $\D_{0}(M,P)$
converges to a binding lamination $\lambda$ on $\D_0(M,P)$ and the 
translation lengths $\ell(\rho_n(\ep_n))$ are bounded.
\end{enumerate}  
Then a subsequence of $(\rho_n)$ converges to a
discrete and faithful element of $\chi(M)$.
\end{theorem}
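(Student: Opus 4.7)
My plan is to run the standard Thurston–Morgan–Shalen divergence argument, adapted to the eventually-faithful setting via a careful appeal to the results on small actions of 3-manifold and surface groups on $\BR$-trees, in the spirit of Thurston's Double Limit/Compactness theorems for incompressible boundary and the Kleineidam–Souto extension for compression bodies.

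First, I would suppose for contradiction that no subsequence of $(\rho_n)$ converges in $\chi(\pi_1(M))$. Since $\chi$ is not compact in general, divergence means that after passing to a subsequence we may rescale by a diverging sequence $\lambda_n\to\infty$ (controlling for instance $\max_{g\in S}\ell(\rho_n(g))$ for a finite generating set $S$) and obtain, by the Culler–Morgan–Shalen construction, a nontrivial minimal isometric action of $\pi_1(M)$ on an $\BR$-tree $T$ with no global fixed point. Hypothesis (1) forces every element representing the core of an annular component of $P$ to have translation length $0$ on $T$ (hence to fix an arc). Hypothesis (2), together with continuity of intersection/length pairings and $\ep_n\to\lambda$, yields $\ell_T(\lambda)=0$; thus $\lambda$ is realized in $T$ in the sense that its leaves lift to fixed arcs.

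Next I would verify that the action on $T$ has \emph{small} arc stabilizers. Here the eventual-faithfulness plays its role: although we do not have a faithful limit action on $T$, the standard Rips/Bestvina–Feighn analysis for $\pi_1$ of a compact irreducible atoroidal 3-manifold with pared locus as specified depends only on the restriction of the action to incompressible surface subgroups and to the peripheral structure. In particular, for any boundary component $E\subset\D_0(M,P)$, the subgroup $\pi_1(E)$ is a surface group; since $\rho_n|_{\pi_1(E)}$ is discrete and eventually faithful, the induced action of $\pi_1(E)$ on its minimal invariant subtree is itself eventually faithful, and by Skora's theorem is dual to a measured lamination $\mu_E$ on $E$ (or has a global fixed point). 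The vanishing of translation length on the annular-peripheral elements handles the pared locus, and in the compression-body/compressible-boundary case (where $P$ is purely toroidal) the Masur-domain part of the binding hypothesis prevents $\mu_E$ from being supported on meridional data, via the Kleineidam–Souto realization obstruction.

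Finally, I would derive the contradiction directly from the binding condition. By definition, $\lambda$ being binding means that every essential annulus or disk in $(M,P)$ meets a component where $\lambda$ fills (and is in the Masur domain for compressible boundaries, and has nontrivial self-intersection on $I$-bundles). But the dual laminations $\mu_E$ extracted above on the $\D_0 M$ components must together be compatible with the existence of the nontrivial small action on $T$: in the incompressible-boundary case Thurston's ``only windows break'' / realization theorem forces some essential annulus in $(M,P)$ to avoid every filling $\mu_E$; in the compression-body case, Kleineidam–Souto forces $\mu_E$ to lie outside the Masur domain on some compressible $E$; and in the $I$-bundle case the projection of $\lambda$ would have to be non-self-intersecting. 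Each alternative contradicts the binding hypothesis. Therefore the initial assumption of divergence was wrong and some subsequence converges algebraically to a character $\rho_\infty$. A standard argument (discreteness is preserved under algebraic limits by J\o rgensen when no parabolics are introduced uncontrollably, and here the bounded-length hypotheses give the needed control) shows $\rho_\infty$ is discrete, and eventual faithfulness of $(\rho_n)$ together with discreteness of the limit (so that convergence of $\rho_n(g)\to 1$ would force $\rho_n(g)=1$ eventually) shows $\rho_\infty$ is faithful.

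\textbf{Main obstacle.} The delicate point is the step where the limit action on $T$ must be shown to be small and amenable to Skora/Rips analysis despite $(\rho_n)$ being only eventually and not strictly faithful, since the classical statements are phrased for faithful $\rho_n$. I expect to handle this by passing to the quotient of $\pi_1(M)$ by the (possibly trivial) kernel of the $T$-action and checking that binding is preserved under this quotient thanks to the hypothesis that every essential annulus/disk has boundary on a component where $\lambda$ is filling; combined with Masur-domain input on compressible sides.
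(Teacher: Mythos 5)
Your framework matches the paper's in outline: pass to a Morgan--Shalen limit action on an $\BR$-tree $T$, note smallness and the relative ellipticity coming from hypothesis (1), observe that hypothesis (2) forces every component of $\lambda$ to have zero translation length on $T$ (be careful with terminology here: $\ell_T(\lambda)=0$ means $\lambda$ is \emph{not} realized in Otal's sense, which is the hypothesis you argue from, not a conclusion), and then try to contradict this using the binding property together with the Skora and Kleineidam--Souto realization results. The genuine gap is the final step. You assert that the binding condition, combined with the dual laminations on the boundary pieces, yields a contradiction, but you never close the loop. What the paper actually proves is that the actions of $\pi_1(F)$ on $T$ are trivial for each component $F$ of a suitable collection $\CF$ of free sides meeting every essential disk and annulus, and then invokes a separate and substantial lemma (Lemma~\ref{triviality of an action}) to upgrade this to global triviality of the $\pi_1(M)$-action on $T$ --- contradicting the nontriviality of the Morgan--Shalen limit. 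That lemma is a graph-of-groups argument over the JSJ decomposition of the incompressible core of $M$, handling solid-torus pieces and amalgamated products with care; nothing in your sketch supplies this glue, and your three case-checks do not chain into a contradiction without it. Also, the ellipticity of window-frame classes that you need comes from the Morgan--Shalen $\BR$-tree results, not from Thurston's only-windows-break theorem for hyperbolic representations (which the paper uses in a later section for a different purpose).

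Your proposed resolution of the smallness issue --- quotienting $\pi_1(M)$ by the kernel of the $T$-action and hoping binding is preserved --- is also not safe. The quotient need not be a $3$-manifold group, so the Morgan--Shalen and Rips structure theory (and even the formulation of the binding condition, which is about disks, annuli and boundary surfaces in $(M,P)$) would no longer make sense for it. The paper instead cites a direct observation from Namazi--Souto that the limit action of a discrete, \emph{eventually} faithful sequence already has cyclic stabilizers of non-degenerate arcs; there is no need for the individual $\rho_n$ to be faithful and no quotienting is required.
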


{\bf Remarks:} The restriction to a special class of pared manifolds,
as in \S\ref{subsec: pared manifolds}, simplifies our discussion and
suits the needs of this paper. However we remark that it is not difficult
to extend the definitions and proof to the general setting.

\begin{proof}
The proof is a fairly standard corollary of what we refer to as 
Morgan-Shalen Theory and its generalization known as the Rips Machine. 
We we outline the proof briefly; for more on Morgan-Shalen Theory
and ideas used here we refer the reader to \cite{MS1,MS2,MS3,Ota96,
Kap01}. 

By Morgan-Shalen Theory \cite{MS1}, if $(\rho_n)$ has no convergent
subsequence in $\chi(M)$ then we can pass to a subsequence and choose constants 
$\delta_n\to 0$ so that, after rescaling $\Hyp^3$ by 
$\delta_n$, the sequence of actions converges to a nontrivial action
of $\pi_1(M)$ on an $\BR$-tree $T$. Here, convergence means that for every
$\alpha\in\pi_1(M)$, the translation length $\ell_T(\alpha)$ of the
action of $\alpha$ on $T$ is equal to $\lim_n
\delta_n\ell(\rho_n(\alpha))$. Nontriviality guarantees that the action has
no global fixed point.
It was shown by Morgan-Shalen that when $(\rho_n)$ are faithful
and discrete, the resulting action on $T$ is {\em small}, meaning that the
stabilizers of non-degenerate arcs are cyclic. By an observation in
\cite{NS09}, the same holds when the sequence $(\rho_n)$ is
discrete and {\em eventually} faithful. 

For $n$ sufficiently large, it follows easily that the $\rho_n$
image of the fundamental group of every toroidal boundary
of $M$ is a parabolic subgroup and as a result in 
the limit, the action of this subgroup on $T$ is elliptic. In
addition, it follows easily from assumption (1) that for every 
element $\alpha$ representing the core of an annular component 
of $P$, we have $\ell_T(\alpha)=0$; we say this
is a {\em relatively elliptic} action of $(M,P)$ on $T$.

A closed curve $\alpha$ in $(M,P)$ is {\em realizable} in $T$ if
elements of the conjugacy class in $\pi_1(M)$ represented by $\alpha$
have positive translation lengths in $T$. This notion of realizability
can be extended to laminations on $\D_0 M$; we refer to \cite{Ota96}
for the precise definition. We will use known results to prove at
least a component of $\lambda$ is realized in $T$. Once this is
proved, it is a consequence of properties of realizability (see
\cite{Ota96}) that $\delta_n \ell(\rho_n(\epsilon_n)) >c>0$ for a
constant $c>0$ and $n$ sufficiently large. This contradicts assumption
(2) and proves the sequence $(\rho_n)$ must have a convergent
subsequence. The limit is faithful because of the eventual
faithfulness of the sequence and discrete by Chuckrow's Theorem
\cite{Chu68} on Kleinian groups.  (See \cite[Prop. 3.1]{ThuI}.)

To prove the Theorem, it remains to show that in a small nontrivial
relatively elliptic action of $(M,P)$ on $T$, at least one component
of a binding lamination, such as $\lambda$, has to be realized.

In the case $(M,P)$ is an interval bundle over a compact surface $F$,
we obtain a small minimal nontrivial action of $\pi_1(F)$ on $T$ with
the property that $\ell_T(\alpha)=0$ if $\alpha$ is represented by a
component of $\D F$. It follows from Skora's theorem \cite{Sko96} that
such an action is dual to a measured lamination $\mu_T$ on $F$. As a
consequence, every measured lamination which intersects $\mu_T$ is
realized. The binding condition implies that the projection of
$\lambda$ to $F$ has nonzero intersection with $\mu_T$ and therefore
at least one component of $\lambda$ is realizable.

Now assume $(M,P)$ is not an interval bundle.  We suppose by
contradiction that we have a small relatively elliptic action of
$\pi_1(M)$ on $T$ for which no component of $\lambda$ is realizable,
and prove that the action is trivial.

We first claim that there is a union $\CF$ of components of
$\D_0(M,P)$ that meets every essential disk and annulus in $(M,P)$,
and such that the action restricted to $\pi_1(F)$ for each component
$F$ of $\CF$ is trivial.

If $F$ is a compressible component of $\D_0(M,P)$ then it must be
included in $\CF$, and by definition of a binding 
lamination $\lambda|_F$ is
a filling Masur domain lamination for the relative compression
body $C_F$. By results of Kleineidam-Souto \cite{KS02}, a filling
Masur domain lamination is always realizable for a nontrivial small
action of the fundamental group of a compression body on an $\BR$-tree;
so with the assumption that $\lambda|_F$ is not realizable, we 
conclude $\pi_1(C_F)$ acts on $T$ with a global fixed point, i.e. trivially.

Now consider any essential annulus $A$. If $A$ meets no compressible
boundary component then it must meet a boundary component $F$ of the
incompressible core such that (by the definition of binding)
$\lambda|_F$ is filling. Include this $F$ in $\CF$.
Note that $A$ cannot meet any of the compression bodies
and hence is contained in a single component $(M',P')$ of the
incompressible core. In this case, we may assume $(M',P')$ is not an
$I$-bundle, since then it would be all of $(M,P)$, and we are in the
previous case.
Hence, $F$ cannot be completely inside the window part of the JSJ 
decomposition of $(M',P')$ and therefore contains a component $\gamma$ 
of the window frame of the JSJ decomposition of $(M',P')$. 
It is a consequence of the results of Morgan-Shalen \cite{MS3} that in a 
small nontrivial relatively elliptic action of $(M',P')$ on an 
$\BR$-tree, the conjugacy classes associated to window frames act 
as elliptic isometries, hence $\ell_T(\gamma)=0$. But $\lambda|_F$ 
is not realizable in $T$, and another application of Skora's Theorem 
shows that the action of $\pi_1(F)$ on $T$ has to be trivial as 
claimed. 

This gives the desired set $\CF$, and
the proof is now completed by the following lemma:

\begin{lemma}\label{triviality of an action}
	Assume $(M,P)$ is a pared manifold and $\pi_1(M)\actson T$ is a small
relatively elliptic action on an $\BR$-tree $T$. Also assume $\CF$ is
the union of a collection of free sides of $(M,P)$ with the property that
\begin{enumerate}
	\item every essential disk or annulus in $(M,P)$ has a boundary
	component on $\CF$, and
	\item for every component $F$ of $\CF$, the induced action of $\pi_1(F)$
on $T$ is trivial.
\end{enumerate}
Then the action $\pi_1(M)\actson T$ is trivial. 
\end{lemma}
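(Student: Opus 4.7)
My plan is to argue by contradiction: assume $\pi_1(M)\actson T$ is nontrivial, and from this produce an essential disk or annulus in $(M,P)$ disjoint from $\mathcal{F}$, contradicting condition~(1). The mechanism is the standard one for studying small actions of $3$-manifold groups on $\mathbb{R}$-trees: build a $\pi_1(M)$-equivariant map from $\tilde M$ to $T$ that is constant on lifts of $\mathcal{F}$, and extract a measured $2$-lamination in $M$ dual to the action and supported away from $\mathcal{F}$.

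By condition~(2), for each component $F\subset\mathcal{F}$ the fixed-point set $\mathrm{Fix}(\pi_1(F))\subset T$ is nonempty, and by the relative-elliptic hypothesis the same holds for each cyclic subgroup carrying an annular component of $P$. I choose such fixed points $\pi_1(M)$-equivariantly along the orbits of the lifts in $\tilde M$ of the components of $\mathcal{F}$ and of the annuli in $P$. Since $T$ is contractible, this data extends $\pi_1(M)$-equivariantly to a continuous map $\phi:\tilde M\to T$ which collapses each lift of $\mathcal{F}$ and each lift of the annular part of $P$ to its chosen fixed point.

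Next, I follow the Morgan--Shalen/Otal construction (compare \cite{MS1,Ota96}): perturb $\phi$ to be transverse to a generic point of $T$, so that its preimage descends to a measured $2$-lamination $\Lambda\subset M$ realizing the action, and minimize transverse measure among equivariant maps by performing the standard disk-, annulus-, monogon- and bigon-surgeries. The resulting $\Lambda$ is essential, i.e.\ each of its components is either an essential disk in $(M,P)$, an essential annulus in $(M,P)$, or a $2$-dimensional sublamination fully carried by an essential subsurface $S\subset\partial_0(M,P)$; moreover $\Lambda$ is disjoint from $\mathcal{F}\cup P$ by construction. Since the action is nontrivial, $\Lambda$ is nonempty.

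Finally, I derive the contradiction. An essential disk or annulus component of $\Lambda$ is automatically disjoint from $\mathcal{F}$, violating~(1) outright. Hence $\Lambda$ must contain a $2$-dimensional sublamination $\Lambda_S$ carried by some essential $S\subset\partial_0(M,P)\setminus\mathcal{F}$. Extracting the associated ``geometric'' piece of the Rips decomposition (compare \cite{Kap01}) yields a small, nontrivial, minimal action of $\pi_1(S)$ on a subtree $T_S\subset T$ in which every component of $\partial S$ acts elliptically: the ones lying in $P$ by the relative-elliptic hypothesis, and those abutting $\mathcal{F}$ because they are conjugate into some $\pi_1(F)$. Skora's theorem \cite{Sko96} then furnishes a measured lamination $\mu_S$ on $S$ dual to this action, and the boundary of a regular neighborhood of a leaf of $\mu_S$, pushed across the adjacent characteristic piece of $(M,P)$, gives an essential annulus in $(M,P)$ disjoint from $\mathcal{F}$, again contradicting~(1). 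The main obstacle lies precisely here: trading a $2$-dimensional sublamination of $\Lambda$ for an honest essential annulus of $(M,P)$ missing $\mathcal{F}$ requires combining the Rips-machine classification of small actions with the JSJ/window structure of $(M,P)$ and a careful analysis of how $\mathcal{F}$ meets the subsurface carrying $\Lambda_S$.
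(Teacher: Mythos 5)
There is a genuine gap, and it sits exactly where you flag your ``main obstacle.'' Your plan reduces everything to the claim that the dual object $\Lambda$ can be normalized so that each component is an essential disk, an essential annulus, or a $2$-dimensional sublamination carried by an essential subsurface $S\subset\D_0(M,P)$, and that in the last case you can extract an essential annulus of $(M,P)$ disjoint from $\CF$. Neither half is justified. The normal form you assert for $\Lambda$ is not a quotable consequence of the Morgan--Shalen/Otal surgery process: for a small action the dual lamination can perfectly well have noncompact leaves running through interval-bundle pieces of the JSJ decomposition, and proving that the ``interesting'' part of the action is concentrated in windows carried by the boundary is essentially the content of Morgan--Shalen III itself, not a preliminary step. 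Worse, the final extraction fails as described: a leaf of the Skora lamination $\mu_S$ dual to a nontrivial minimal action is generically not a closed curve, so ``the boundary of a regular neighborhood of a leaf'' is not an annulus; and even for a closed curve in a free side not in $\CF$, there is no reason it bounds an essential annulus in $(M,P)$ unless you already know it lies in the window of the characteristic submanifold --- which again presupposes the JSJ analysis you are deferring. There is also a bookkeeping slip: since $\CF$ is a union of entire free sides, a subsurface $S$ of a free side not in $\CF$ has no boundary curves ``abutting $\CF$,'' so the asserted ellipticity of $\partial S$ needs a different source. As written, the contradiction is never actually produced.

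For comparison, the paper's proof does not go through dual laminations at all inside the lemma. It quotes Morgan--Shalen only for two facts --- window frames act elliptically, and relatively acylindrical JSJ pieces (and, via hypotheses (1)--(2), interval-bundle pieces and relative compression bodies) have fixed points --- and then runs a purely combinatorial fixed-point argument: build a graph whose vertices are the non-solid-torus JSJ pieces, the compression bodies, and the components of $\CF$; each vertex group has a unique fixed point (uniqueness from smallness and non-elementarity), adjacent vertex groups share their fixed point because a non-elementary edge group cannot stabilize an arc, and the graph is connected (the solid-torus pieces and hypothesis (1) supply the connections). Connectivity then forces a single global fixed point. If you want to salvage your approach you would either have to carry out the full Rips/Morgan--Shalen analysis of where the dual lamination can live --- at which point you are reproving the cited results rather than using them --- or replace your last step by an argument of the paper's fixed-point type.
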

\begin{proof}
Suppose first that $(M,P)$ has incompressible free sides.

Let $G=(V,E)$ be a graph whose vertex set $V$ corresponds bijectively
to components of the JSJ decomposition of $(M,P)$ that are not solid
tori, as well as components of $\CF$.  We label each vertex by the
corresponding component, and we connect two vertices $X$ and $Y$ by an
edge in $E$ for each component $Z$ of their intersection in $M$ that has
non-elementary fundamental group, labeling the edge by $Z$.  We claim:
\begin{enumerate}[\qquad (i)]
\item for each $X\in V$, $\pi_1(X)$ acts with a (unique) fixed point
  in $T$,
\item whenever $X$ and $Y$ are connected by an edge associated to a
  component $Z$ of $X\intersect Y$, the amalgamation $\pi_1(X)*_{\pi_1(Z)}\pi_1(Y)$
  has a unique fixed point, and
\item the graph $G$ is connected.
\end{enumerate}
Note that each $\pi_1(X)$ is determined as a subgroup of $\pi_1(M)$
only up to conjugation, so that (i) is a statement about each of these
conjugates separately, and to make sense of  (ii) 
we consider $\pi_1(X)$, $\pi_1(Y)$ and $\pi_1(Z)$ with respect to 
a base point in $Z$.

Statements (i-iii) together imply the conclusion of the lemma, as
follows. The graph $G$, being connected, gives a graph-of-groups
decomposition of a group $\Gamma$ which surjects to $\pi_1(M)$. Let
$\widetilde G$ be the associated tree. The map that takes a vertex of
$\widetilde G$ to the unique fixed point in the $\R$-tree of the
associated stabilizer in $\Gamma$ (defined by virtue of (i)) is
equivariant; then (ii) and (iii) imply that this map is
constant.
Hence $\Gamma$ fixes
a point in the $\R$-tree, and so does $\pi_1(M)$.

To see (i), note first that a fixed point exists by hypothesis for
each component 
of $\CF$.  The Morgan-Shalen theory implies, as we remarked before,
that every conjugacy class associated to a window frame acts
trivially, and hence the induced action for each 
component $(U,Q)$ of the JSJ decomposition is relatively elliptic. The
Morgan-Shalen theory then further provides a fixed point for each
$(U,Q)$ which is not an $I$-bundle (i.e. is relatively
acylindrical). For an untwisted $I$-bundle component, we note by hypothesis (1)
that one of its free sides must be in $\CF$, and hence its fundamental
group fixes a point. For a twisted $I$-bundle the same holds for an
index 2 subgroup, and hence for the whole group.
Uniqueness of each of these fixed points follows from smallness of the
action, since the fundamental group of each $X\in V$ is
non-elementary.

To see (ii), if $Z$ is a component of $X\intersect Y$ with $\pi_1(Z)$
nonelementary, then $\pi_1(X)$ and $\pi_1(Y)$ must act with a common
fixed point -- otherwise $\pi_1(Z)$ would fix an arc, contradicting smallness.

To see (iii), consider first a solid-torus component $S$ of the JSJ
decomposition. The boundary of $S$ contains a  union of
essential parallel annuli $A = \boundary S \intersect \boundary M$. At most one
of these lies in the locus $P$, by the definition of a pared
manifold. If one component $A_0$ of $A$ lies in $P$, there is an
essential annulus from $P$ to each of the other components, which by
hypothesis (1) means that each component of $A\setminus P$ lies in
$\CF$. If no component of $A$ lies in $P$ then, by the same argument
we see that at most one of the components of $A$ (which we still call
$A_0$) can fail to lie in $\CF$. 

Any component of the JSJ decomposition that meets $S$ does so along an
annulus component of $B = \boundary S \setminus {\rm int}(A)$. If two
such components meet in successive components of $B$ separated by a
component $A'$ of $A$ other than $A_0$, then they are connected in $G$
through the component of $\CF$ containing $A'$. Hence all the
components of the JSJ decomposition meeting $S$ can be connected in
$G$.

If two non-solid-torus JSJ components meet along a single annulus,
that annulus must have at least one boundary in $\CF$, again by
hypothesis (1), so we can again connect these components in $G$.

Since $M$ is connected, it follows immediately that $G$ is connected.

\medskip

It remains to consider the case when $(M,P)$ 
has compressible free sides. We may decompose $(M,P)$ into a (possibly
disconnected) incompressible core and a union of relative
compression bodies. The outer boundary of each compression body is in $\CF$ by hypothesis
(1), and hence its fundamental group acts with a fixed point by
hypothesis (2). 
The previous argument applies taking the graph $G$ to
be the union of graphs of the components of the incompressible core,
together with one new vertex for each relative compression body,
joined by an edge to a previous vertex for each subsurface 
of its inner boundary along which it is attached to the corresponding
component in $M$. 
\end{proof}

\end{proof}


\subsection{Hyperbolic structures}\label{subsec: hyperbolic structures}

Given an orientable compact atoroidal 3-manifold (such as
a decorated manifold), a {\em hyperbolic structure} on $M$ is a 
complete hyperbolic metric on the interior of $M$ or more precisely, a
3-manifold $N$ equipped with a complete Riemannian metric with all
sectional curvatures equal to $-1$, and an embedding 
$M\hookrightarrow N$ with the property that the complement of the
image of the embedding is a trivial interval bundle over $\D M$.
The image of this embedding is called a {\em standard compact 
core for $N$.} Two hyperbolic structures on $M$ are equivalent if 
there is an isometry between them which induces a homeomorphism
{\em isotopic} to the identity map on $M$. Note that a hyperbolic 
structure on $M$ includes a preferred isotopy class of an embedding 
of $M$ as a compact core.

In particular, writing $N=\BH^3/\Gamma$ where $\Gamma$ is a
discrete subgroup of $\Isom(\BH^3)=\PSL_2(\BC)$, the embedding of
$M$ in $N$ induces a representation $\pi_1(M)\to\Gamma\le\PSL_2(\BC)$,
and therefore a discrete and faithful element of 
$\chi(M)=\chi(\pi_1(M))$. A {\em cusp} in a hyperbolic 3-manifold
is a subset isometric to the quotient of a horoball in $\BH^3$ by 
a rank 1 or rank 2 discrete parabolic subgroup preserving the 
horoball. We call this a {\em rank 1} or {\em rank 2 cusp} 
accordingly. It follows that a toroidal boundary of a compact core
is isotopic to the boundary of a rank 2 cusp.

It is worth emphasizing that not every discrete faithful element of
$\chi(M)$ corresponds to a hyperbolic structure on $M$. Thanks to the
Tameness Theorem \cite{Ag, CG}, we know every discrete and faithful
element of $\chi(M)$ corresponds to a hyperbolic structure on a
compact atoroidal manifold $M'$ and obviously $M$ and $M'$ are
homotopy equivalent, but it is possible for $M$ and $M'$ not to be
homeomorphic. In what follows we appeal to results of \cite{NS12} to
guarantee that a limit representation is in fact a hyperbolic
structure on $M$.

\subsection{Convex cocompact and simply degenerate ends}
\label{subsec: geometry of ends}
Suppose $N$ is a hyperbolic structure on $M$. The end of $N$ 
associated to a component $E\subset\D_0 M$ is {\em convex 
cocompact} if it has a neighborhood which is disjoint from the 
convex core of $N$.  By Ahlfors-Bers theory, the
convex cocompact ends associated to $E$ are parametrized by $\CT(E)$,
the Teichm\"uller space of $E$, via the {\em conformal structure 
at infinity} given by a component of the domain of discontinuity
associated to $E$.

The end $\CE$ of $N$ associated to $E$ is
{\em simply degenerate} if it is not convex cocompact and it does not
contain a cusp. It is a consequence of the work of Thurston
\cite{Thu79} and Canary \cite{Can93b} that associated to every such
end there is an {\em ending lamination} $\lambda_E$ which is an element of
$\EL(E)$ and belongs to the Masur domain. 
This in particular implies that if $(\gamma_n)$ is a sequence of 
essential simple loops on $E$, which
converges to $\lambda_E$ in the Gromov boundary of $\CC(E)$, then the
geodesic representatives of $(\gamma_n)$ in $N$ {\em exit} $\CE$, i.e. 
given every compact subset $K$ of $\CE$, the geodesic representative
of $\gamma_n$ is in $\CE\setminus K$ for $n$ sufficiently large. 

In the opposite direction, assume $\rho$ is a discrete and faithful
representation of $\pi_1(M)$, $N_\rho=\BH^3/\rho(\pi_1(M))$ is a 
hyperbolic structure on the compact manifold $M'$, and $(\gamma_n)$ 
is a sequence of essential simple loops on $E$ that converges to 
the filling Masur domain lamination $\lambda_E$. 
It follows from simple properties of the Masur domain that 
$(\gamma_n)$ represent infinitely many free homotopy classes in $M$.
(See \cite{Ota88}.) If in addition the lengths $\ell(\rho(\gamma_n))$ 
are bounded uniformly, then 
	\begin{enumerate}
		\item there is a homotopy equivalence $\phi:M\to M'$ in the 
			homotopy class of $\rho$, which restricts to a homeomorphism 
			from $E$ to a component $E'$ of $\D_0 M'$, 
		\item the end associated to $E'$ is simply degenerate with 
			ending lamination $\phi(\lambda_E)$, and 
		\item the geodesic representatives of $\rho(\gamma_n)$ exit this 
			end. 
	\end{enumerate}
This follows from basic properties of ending laminations when $E$ is 
incompressible and from \cite[Thm. 1.4]{NS12} in the general case.


\subsection{Homeomorphism type of the algebraic limit}
\label{subsec: homeo type of limit}

It is not a priori clear that in the situation of theorem
\ref{eventually faithful convergence} the algebraic limit gives a
hyperbolic structure on $M$ or that its ends have the expected
geometry.  When $\ep_n$ have $R$-bounded combinatorics with respect to
a decorated manifold $M$, with heights tending to infinity, and
$\ep_n$ nonempty for every component of $\D_0 M$, we can say more via
an argument similar to that of \cite[Thm 8.1]{NS12}.

\begin{theorem}\label{eventually faithful convergence and 
bounded combinatorics}
	Let $M$ be a decorated manifold, and assume
  \begin{enumerate}
    \item $(\rho_n)$ is a sequence of discrete, eventually 
		faithful elements of $\chi(M)$,
    \item $(\ep_n)$ is a sequence of markings with $R$-bounded 
		combinatorics for a fixed $R$, and for every component $E$ 
		of $\D_0 M$, $\ep_n|_E$ is nonempty with heights tending to 
		infinity, and
    \item the translation lengths $\ell(\rho_n(\ep_n))$ are bounded. 
  \end{enumerate}
  Then a subsequence of $(\rho_n)$ converges to some $\rho\in\chi(M)$ 
	which is induced by a hyperbolic structure on $M$, and the end of 
	$N_\rho=\BH^3/\rho(\pi_1(M))$ associated to each component of 
	$\D_0 M$ is simply degenerate. 
\end{theorem}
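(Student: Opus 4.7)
The plan is to combine the two main ingredients already set up in the excerpt: Lemma \ref{limits of bounded combinatorics bind}, which turns bounded combinatorics plus large heights into a binding lamination, and Theorem \ref{eventually faithful convergence}, which promotes an eventually faithful discrete sequence with bounded length on a binding lamination to a convergent subsequence with discrete faithful limit. The remaining work is to upgrade the abstract algebraic limit to an honest hyperbolic structure on $M$ with simply degenerate ends at every component of $\boundary_0 M$.

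First, apply Lemma \ref{limits of bounded combinatorics bind} to the sequence $(\ep_n)$: the hypotheses of $R$-bounded combinatorics, heights tending to infinity, and $\ep_n|_E \neq \emptyset$ for every component $E$ of $\boundary_0 M$ ensure (after passing to a subsequence) that $\ep_n \to \lambda$ for a full binding lamination $\lambda$ on $\boundary_0 M$. Next, view $M$ as a pared manifold with pared locus equal to the toroidal components of $\boundary M$, so that $P$ has no annular components and hypothesis (1) of Theorem \ref{eventually faithful convergence} is vacuous. Hypothesis (2) of that theorem is exactly the binding limit together with the given length bound on $\ell(\rho_n(\ep_n))$. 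So a further subsequence of $(\rho_n)$ converges algebraically to some discrete and faithful $\rho \in \chi(M)$; set $N_\rho = \BH^3/\rho(\pi_1(M))$.

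The main obstacle is now identifying the homeomorphism type and end structure of $N_\rho$: by Tameness \cite{Ag,CG}, $\rho$ is induced by a hyperbolic structure on \emph{some} compact atoroidal manifold $M'$ homotopy equivalent to $M$, but we must show we can take $M' = M$ and that each end facing $\boundary_0 M$ is simply degenerate. This is where the full binding hypothesis, and particularly the Masur-domain condition for compressible components, is essential. The strategy is the one used in \cite[Thm 8.1]{NS12}: on each component $E \subset \boundary_0 M$, the restriction $\ep_n|_E$ is a sequence of markings converging in the Gromov boundary of $\CC(E)$ to the filling lamination $\lambda|_E$, which lies in the Masur domain of $E$ (filling and binding, together with the definition of binding, gives this when $E$ is compressible). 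A standard argument shows that $(\ep_n|_E)$ must represent infinitely many free homotopy classes in $M$, and the length bound $\ell(\rho_n(\ep_n)) \le L$ passes to the limit $\ell(\rho(\ep_n|_E)) \le L$. We can then apply \cite[Thm.~1.4]{NS12} (the compressible-boundary generalization of the standard incompressible case recalled in \S\ref{subsec: geometry of ends}): it produces a homotopy equivalence $\phi: M \to M'$ in the class of $\rho$ that restricts to a homeomorphism of $E$ onto a boundary component $E'$ of $M'$, such that the end of $N_\rho$ at $E'$ is simply degenerate with ending lamination $\phi(\lambda|_E)$, and the $\rho$-geodesic representatives of $\ep_n|_E$ exit this end.

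Doing this for each component $E$ of $\boundary_0 M$, the homotopy equivalence $\phi$ is a homeomorphism on all of $\boundary_0 M$. The toroidal boundary components of $M$ map under $\rho$ to rank-$2$ parabolic subgroups, so they correspond bijectively to rank-$2$ cusps of $N_\rho$ and hence to toroidal boundary components of $M'$. Thus $\phi$ restricts to a homeomorphism $\boundary M \to \boundary M'$ of homotopy equivalent compact, orientable, irreducible, atoroidal 3-manifolds, so Waldhausen's theorem (or the standard consequence used in \cite{NS12}) promotes $\phi$ to a homeomorphism $M \to M'$. Pulling back the metric by $\phi$, we obtain a hyperbolic structure on $M$ inducing $\rho$, in which each end corresponding to a component of $\boundary_0 M$ is simply degenerate, completing the proof.
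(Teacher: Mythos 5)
Your proposal follows essentially the same route as the paper: Lemma \ref{limits of bounded combinatorics bind} to produce the full binding lamination, Theorem \ref{eventually faithful convergence} for the convergent subsequence with a discrete faithful limit, Tameness plus the discussion of \S\ref{subsec: geometry of ends} (i.e.\ \cite[Thm.~1.4]{NS12}) for the simply degenerate ends and the boundary homeomorphisms, and Waldhausen's theorem to upgrade the homotopy equivalence to a homeomorphism. The one point you gloss over, and which the paper treats explicitly, is the case where $M$ is a trivial $I$-bundle $F\times[0,1]$: a homotopy equivalence $\phi:M\to M'$ could a priori send both components of $\D_0 M$ onto the same component of $\D M'$, in which case $\phi|_{\D M}$ is not a homeomorphism onto $\D M'$ and Waldhausen cannot be applied; your blanket claim that ``$\phi$ is a homeomorphism on all of $\D_0 M$'' needs this ruled out. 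The fix uses the binding condition: the projections of $\lambda|_{F\times\{0\}}$ and $\lambda|_{F\times\{1\}}$ to $F$ intersect transversely, so these are distinct filling laminations, and since they are the ending laminations of the corresponding ends of $N_\rho$, those ends --- hence the $\phi$-images of the two boundary components --- must be distinct. With that observation added, your argument coincides with the paper's proof.
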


\begin{proof}
By lemma \ref{limits of bounded combinatorics bind}, we can pass to a
subsequence and assume that $(\ep_n)$ converges to a binding
lamination $\lambda$. In addition the restriction of $\lambda$ to
each component $E$ of $\D_0 M$ is a nonempty filling lamination. 

By Theorem \ref{eventually faithful convergence}, after conjugation
and passing to a subsequence, we can assume $\rho_n$ converges to
$\rho$.  By the Tameness Theorem \cite{Ag, CG}, $N_\rho$ is a
hyperbolic structure on a compact 3-manifold $M'$. The bounds on
$\ell(\rho_n(\ep_n))$ imply, as explained in \S \ref{subsec: geometry
  of ends}, there is a homotopy equivalence $\phi:M\to M'$ which
restricts to a homeomorphism on $\D_0 M$, and for every component $E$
of $\D_0 M$, the end of $N_\rho$ associated to $\phi(E)$ is simply
degenerate with ending lamination $\phi(\lambda|_E)$. Also in the
special case that $M$ is a trivial $I$-bundle, the binding condition
implies the $\phi$-image of the components of $\D M$ are distinct
components of $\D M'$. Since $\phi$ is a homotopy equivalence we can
also assume that it is a homeomorphism restricted to every toroidal
component of $\D M$. It is now a consequence of a generalized version
of a theorem of Waldhausen \cite{Wa} (cf. \cite{Jaco, Tu}) that $\phi$
is homotopic to a homeomorphism with a homotopy that does not move any
point on $\D M$.
\end{proof}


\subsection{Geometric and algebraic limits}\label{subsec: geometric limit}
We say a sequence of pointed metric spaces $(X_n,x_n)$ converges
{\em geometrically} to the pointed metric space $(X,x)$ if 
the sequence $(X_n,x_n)$ converges to $(X,x)$ in the 
Gromov-Hausdorff topology. It is due to Gromov \cite{Gromov-book} that if for every $n$,
$(X_n,x_n)$ is a pointed Riemannian $d$-manifold with an upper bound
on the sectional curvatures and a lower bound for the injectivity radius
at $x_n$ independently of $n$, then $X$ is a smooth manifold
endowed with a $\CC^{1,1}$-Riemannian metric and $(X_n,x_n)$ converges
to $(X,x)$ in the $\CC^{1,\alpha}$-topology for all $\alpha<1$. 

Recall that a sequence $(X_n,x_n)$ of pointed Riemannian manifolds
converges in the $\CC^{1,\alpha}$-topology to a pointed Riemannian 
manifold $(X,x)$ if for all $D>0$, there is a sequence of {\em approximating
maps}
\[ \kappa_n:(B_{X}(x,D),x) \to (X_n,x_n) \]
on the ball $B_X(x,D)$ of radius $D$ centered at $x$ in $X$, so that the pulled-back metrics converge in the $\CC^{1,\alpha}$-topology
on tensors on $B_X(x,D)$ to the restriction of the metric of $X$.
The $\CC^{1,1}$-Riemannian metric on a manifold is one whose first
derivatives are Lipschitz continuous. See \cite{Petersen} for more details.

An algebraically convergent sequence of discrete, faithful representations 
$(\rho_n:G\to\PSL_2(\BC))$ 
converges {\em geometrically} to a hyperbolic manifold $N_G$ if there
are choices of base points $x_n\in N_n=\BH^3/\rho_n(G)$ 
and $x_G\in N_G$ 
so that for each $g \in G$ the translation distance
of $\rho_n(g)$ at $x_n$ is uniformly bounded,
and the sequence of pointed manifolds $(N_n,x_n)$ 
converges geometrically to $(N_G,x_G)$. In this
case, we can assume the convergence is smooth on compact sets 
\cite{BP}. It is 
standard that if $(\rho_n)$ converges algebraically to $\rho$, one 
can pass to a further subsequence and assume $(\rho_n)$ also converges 
geometrically to a hyperbolic manifold $N_G$; then the image of 
$\rho$ is naturally conjugate to a subgroup of $\pi_1(N_G)$, or 
equivalently $N_\rho=\BH^3/\rho(G)$ covers $N_G$. We say 
$(\rho_n)$ converges {\em strongly} to $\rho$ if the covering 
map is a homeomorphism. 

In the setting of the above theorem, we can use Thurston 's 
Covering Theorem \cite{Thu79, Can96} and show:

\begin{corollary}\label{algebraic limit is a finite cover}
  With the same hypothesis as in Theorem \ref{eventually faithful
  convergence and bounded combinatorics} and after passing to 
  a further subsequence we may conclude that the
  sequence of hyperbolic manifolds $\BH^3/\rho_n(\pi_1(M))$ converges
  in the Gromov-Hausdorff topology to a hyperbolic manifold $N_G$ and
  the covering map $N_\rho\to N_G$ is finite-to-one. 
\end{corollary}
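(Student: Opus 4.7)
The plan is to extract a geometric limit by a standard compactness argument and then appeal to Thurston's Covering Theorem to force the resulting covering to have finite degree.

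For the first step, the algebraic convergence $\rho_n\to\rho$ provided by Theorem \ref{eventually faithful convergence and bounded combinatorics} lets us fix a basepoint $\tilde x\in\BH^3$ whose projections $x_n\in N_n=\BH^3/\rho_n(\pi_1(M))$ and $x_\rho\in N_\rho$ satisfy $d(\tilde x,\rho_n(g)\tilde x)\to d(\tilde x,\rho(g)\tilde x)$ for every $g\in\pi_1(M)$. Taking $g$ in a fixed finite generating set yields a uniform lower bound on the injectivity radius of $N_n$ at $x_n$, so Gromov's compactness theorem produces, after passing to a further subsequence, a pointed Gromov--Hausdorff limit $(N_G,x_G)$ that is itself a pointed hyperbolic 3-manifold. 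The standard construction (see e.g.\ \cite{BP}) identifies $\rho(\pi_1(M))$ with a subgroup of $\pi_1(N_G,x_G)$ and produces a Riemannian covering $p\colon N_\rho\to N_G$.

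For the second step, by Theorem \ref{eventually faithful convergence and bounded combinatorics} the manifold $N_\rho$ is topologically tame because it is a hyperbolic structure on the compact manifold $M$, and each end of $N_\rho$ associated with a component of $\D_0 M$ is simply degenerate with ending lamination in the Masur domain. Since $\pi_1(M)$ is nonabelian and $\D_0 M$ is therefore nonempty, $N_\rho$ has infinite volume. I now invoke Thurston's Covering Theorem in the form due to Canary \cite{Can96}: any infinite-degree cover $p\colon N_\rho\to N_G$ of hyperbolic 3-manifolds with $N_\rho$ tame and possessing a simply degenerate end must have $N_G$ of finite volume, or else correspond (up to a finite cover) to the fiber subgroup of a surface bundle structure on $N_G$. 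The finite-volume possibility is ruled out because $N_\rho$ has infinite volume and hence cannot cover a finite-volume manifold with infinite degree.

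The main obstacle is therefore to rule out the virtual-fiber possibility. In that case $M$ would itself be a (possibly twisted) $I$-bundle over a closed surface $F$, and the limit $\lambda$ of the $\ep_n$ on the two components of $\D_0 M$ would have to coincide with the stable and unstable laminations of a pseudo-Anosov monodromy of $N_G$. The deck translation of the associated cyclic cover would provide elements of $\pi_1(N_G)\smallsetminus\rho(\pi_1(M))$ arising as $\lim\rho_n(\gamma_n)$ for sequences $\gamma_n\in\pi_1(M)$ whose word length tends to infinity. I expect this to be incompatible with the hypothesis that the $\ep_n$ have $R$-bounded combinatorics and converge to the binding $\lambda$ (cf.\ Lemma \ref{limits of bounded combinatorics bind}), arguing as in the ending-lamination analysis of \cite{NS12}: the rigidity of the stable/unstable pair of a pseudo-Anosov constrains how the $\rho_n$ can arise from the specified marking data, producing a contradiction. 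Once the virtual-fiber case is excluded, $p$ has finite degree, as claimed.
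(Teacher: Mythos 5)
Your reduction to Thurston's Covering Theorem is the right first move, but there are two problems. First, your claim that the finite-volume alternative is excluded because an infinite-volume manifold ``cannot cover a finite-volume manifold with infinite degree'' is false: every infinite-degree cover of a finite-volume hyperbolic manifold has infinite volume (the infinite cyclic cover of a closed fibered manifold is exactly such an example), so no contradiction arises there. In the covering theorem the infinite-degree alternative already entails that $N_G$ has finite volume (indeed closed here, since $N_\rho$ has no rank one cusps) and is virtually fibered with $N_\rho$ corresponding to the fiber; the only case one must exclude is this virtual-fiber case, which you do go on to consider, so the misstatement does not derail your structure but it is an incorrect assertion.

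Second, and more seriously, your treatment of the virtual-fiber case is only a hope (``I expect this to be incompatible\ldots''), not an argument, and it heads in a direction that the corollary does not need and that is not obviously completable: nothing in the $R$-bounded combinatorics hypothesis by itself forbids the limiting laminations from being the stable/unstable pair of a pseudo-Anosov, so the contradiction you anticipate is not established. The paper's exclusion is elementary and rests on a fact you mention in your first step but never exploit: $N_n=\BH^3/\rho_n(\pi_1(M))$, so $\rho_n$ surjects onto $\pi_1(N_n)$ by definition. If the covering $N_\rho\to N_G$ were infinite-to-one, then $M$ would be an $I$-bundle over a closed surface and $N_G$ would be closed; geometric convergence to a closed manifold forces $N_n$ to be isometric to $N_G$ for all large $n$, and then $\rho_n$, viewed as a representation into the fixed discrete group $\pi_1(N_G)$ converging algebraically to $\rho$, must (up to conjugacy) eventually factor through the fiber subgroup of a finite cover of $N_G$, which has infinite index. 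This contradicts the surjectivity of $\rho_n$ onto $\pi_1(N_n)$. You should replace the speculative lamination argument with this surjectivity argument to close the gap.
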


\begin{proof}
By the above theorem, $N_\rho$ is a hyperbolic structure on $M$ with
simply degenerate ends and in particular has no rank 1 cusps. It follows 
from the Covering Theorem of Thurston that 
either the covering map $N_\rho\to N_G$ is finite-to-one or the covering
map $N_\rho\to N_G$ factors through the infinite cyclic covering of
a fibered finite cover of $N_G$. In that case, $N_\rho$ and 
therefore $M$ is an $I$-bundle over a closed surface and $N_G$ is closed.
Then $N_n=\BH^3/\rho_n(\pi_1(M))$ is isometric to
$N_G$ for $n$ sufficiently large, and the homomorphism
$\rho_n:\pi_1(M)\to \pi_1(N_n)$ is induced by a map that factors
through the fiber of a finite cover of $N_n=N_G$. But this is not 
possible since $\rho_n$ surjects onto $\pi_1(N_n)$. 
\end{proof}


\section{Uniform Immersions} \label{sec:uniform immersions}

In this section we prove immersion theorems for decorated
manifolds within the hyperbolic manifolds associated to
representations which satisfy a number of conditions, and are in particular sufficiently close to injective. Applications
of these results will be of two types: for convex-cocompact
representations, and for representations induced from maps of decorated manifolds into hyperbolic 3-manifolds with control on the injectivity and the image of the boundary.

First in Theorem \ref{thick lifted embedding}, we show how to obtain
embeddings from the model core $\BM_M$ of a decorated manifold $M$. We
show this in the presence of a sufficiently injective homomorphism
$\pi_1(M)\to\pi_1(N)$ for  a hyperbolic manifold $N$ and length bound
in $N$ for a marking on $\D_0 M$ with bounded combinatorics and large
heights. In general, we cannot find an embedding of $\BM_M$ but what
we call a {\em lifted embedding}, which factors as a covering map
followed by an embedding.  In the case when $M$ is an $I$-bundle,
we also need to exclude a possibility that the homomorphism is induced from
a virtual fibration which is not a fibration. We consider the case of
trivial $I$-bundles in Theorems \ref{thick lifted embeddings of
  I-bundles}, and \ref{interpolations}, where we also show how
lifted embeddings for different 
decorations relate to each other and can be connected via an immersed
interpolation.

The main result of this section is Theorem \ref{bilipschitz embedding
  of models}, which gives conditions under which for a hyperbolic
structure $\tau$ on $\D_0 M$, the model $\BM_M[\tau]$
admits a locally bilipschitz immersion $\BM_M[\tau]\to N$ in a given
homotopy class. The conditions involves existence of a suitable map from
$\tau$ into $N$, almost-injectivity of
$\pi_1(M)$, as well as almost-injectivity of the boundary groups of
$M$ with respect to markings that arise naturally in the model.


\subsection{Approximately injective}\label{subsec: approximately injective}
Fixing generators for a group $G$ and an associated word-length
$|\cdot|$, we say a representation
$\rho:G\to H$ is $L$-injective if every nontrivial $g\in G$ with
$|g|\le L$ has  nontrivial image.
For each decorated manifold $M$ we fix a generating set for
$\pi_1(M)$, and therefore define $L$-injective
representations of $\pi_1(M)$. 

In the case of decorated manifolds
$I_F[\mu]$ for a closed surface $F$ and complete marking $\mu$ on $F$,
we assume the choice of this generating set is invariant under the
natural action of the mapping class group of $F$. Also when $\sigma\in\CT(F)$
is a hyperbolic structure on $F$, we assume the choice of the generating
set of $\pi_1(F)$ is the same as the choice of the generating set 
for $I_F[\nu_\sigma]$, where $\nu_\sigma$ is the complete marking of 
smallest length on $\sigma$ chosen in \S \ref{subsec: teichmuller};
therefore makes sense to speak of a representation of $\pi_1(F)$ which is
$L$-injective with respect to $\sigma$.


\subsection{Lifted embeddings}\label{subsec: lifted embeddings}
We say a map $f: M\to N$ is a {\em lifted embedding} if $f$ factors as
$f\equiv g\circ \Pi$, with $g$ an embedding, $\Pi$ a covering map.
In the presence of metrics on $M$ and $N$, we say $f$ is a {\em lifted
  $K$-bilipschitz embedding} if there is a metric on the intermediate
space such that $\Pi$ is a local isometry and $g$ is a $K$-bilipschitz embedding.
We always assume $g$ is {\em collared}, i.e. it extends to an
embedding of an additional collar of width 1.

Our first theorem gives uniform conditions under which a homomorphism
$\rho:\pi_1(M)\to\pi_1(N)$, where $M$ is a decorated manifold and $N$
a hyperbolic 3-manifold, is induced by a lifted embedding $\BM_M\to N$.

\begin{theorem}\label{thick lifted embedding}
  Let $M$ be a decorated manifold which is not an $I$-bundle, and $R>0$. There exist $K, L$
        and $D$ such that if 
        \begin{enumerate}
           \item $\ep$ is a marking on $\D_0 M$ with
        $R$-bounded combinatorics, heights at least $D$ and $\ep|_E$ nonempty for every component $E$ of $\D_0 M$,
          \item $\rho:\pi_1(M)\to \pi_1(N)$ is an $L$-injective
        homomorphism for a hyperbolic 3-manifold
        $N$ and 
          \item $\ell_\rho(\ep) < R$, 
  \end{enumerate}
  then $\BM_M$ admits a
        lifted $K$-bilipschitz embedding $f:\BM_M\to N$
        in the homotopy class determined by $\rho$.
			Moreover, restricted to every toroidal
                        component of $\BM_M$, the map $f$ is a
                        covering of the boundary of a component of the thin part of $N$.
\end{theorem}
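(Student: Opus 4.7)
The plan is a compactness/contradiction argument built on Theorem \ref{eventually faithful convergence and bounded combinatorics} and Corollary \ref{algebraic limit is a finite cover}. Suppose no constants $K,L,D$ suffice; then for each $n$ we may choose a triple $(\ep_n,\rho_n,N_n)$ so that $\ep_n$ has $R$-bounded combinatorics with heights at least $n$ on each component of $\D_0 M$, $\rho_n\colon\pi_1(M)\to\pi_1(N_n)$ is $n$-injective, and $\ell_{\rho_n}(\ep_n)<R$, yet $\BM_M$ admits no lifted $n$-bilipschitz embedding into $N_n$ in the homotopy class of $\rho_n$. Since each $\rho_n$ has image inside the Kleinian group $\pi_1(N_n)$ it is discrete, and the sequence is eventually faithful because $n\to\infty$.

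After passing to a subsequence, Lemma \ref{limits of bounded combinatorics bind} provides a binding limit lamination $\lambda$ with $\lambda|_E$ filling on every component $E\subset\D_0 M$, so the hypotheses of Theorem \ref{eventually faithful convergence and bounded combinatorics} are satisfied. Along a further subsequence $\rho_n\to\rho$, and $N_\rho=\BH^3/\rho(\pi_1(M))$ is a hyperbolic structure on $M$ whose ends at $\D_0 M$ are simply degenerate; since the image of each toroidal boundary subgroup is rank-$2$ abelian and discrete in $\PSL_2(\BC)$, it must be parabolic, so the corresponding ends of $N_\rho$ are rank-$2$ cusps. Since $M$ is not an $I$-bundle, Corollary \ref{algebraic limit is a finite cover} gives (along a further subsequence) geometric convergence $N_n\to N_G$ with $\pi\colon N_\rho\to N_G$ a finite covering, and the rank-$2$ cusps of $N_\rho$ project to rank-$2$ cusps of $N_G$.

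Fix a standard compact core $C\subset N_\rho$, taken disjoint from standard cusp neighborhoods of the rank-$2$ cusps, realizing the isotopy class $M\hookrightarrow N_\rho$ determined by $\rho$. Since $\BM_M$ and $C$ are compact Riemannian manifolds on the same underlying smooth manifold, there is a diffeomorphism $\Phi\colon\BM_M\to C$ in the correct isotopy class which is $K_0$-bilipschitz for some $K_0=K_0(\rho)$ and which carries each toroidal component of $\partial\BM_M$ onto the boundary torus of the corresponding rank-$2$ cusp. The restriction $\pi|_C$ is a finite covering onto $\pi(C)\subset N_G$, and geometric convergence supplies approximating diffeomorphisms $\kappa_n$ from a neighborhood of $\pi(C)$ in $N_G$ to $N_n$ whose bilipschitz constants tend to $1$ and which carry rank-$2$ cusp neighborhoods to rank-$2$ cusp neighborhoods. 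The composition
\[
f_n=\kappa_n\circ\pi|_C\circ\Phi\colon\BM_M\to N_n
\]
factors as a covering onto $\kappa_n(\pi(C))$ followed by an embedding, so it is a lifted embedding in the sense of \S\ref{subsec: lifted embeddings}. For $n$ sufficiently large it is $2K_0$-bilipschitz, lies in the homotopy class of $\rho_n$ by algebraic convergence, and restricts on each toroidal component of $\partial\BM_M$ to a covering of the boundary of the corresponding rank-$2$ cusp (thin-part component) of $N_n$. Choosing $n>2K_0$ contradicts the failure assumption.

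The main obstacle is ensuring that the algebraic limit $\rho$ really corresponds to a hyperbolic structure on $M$ itself (rather than on a merely homotopy equivalent compact $3$-manifold) and that the corresponding geometric limit covers $N_\rho$ by a finite-to-one map; both ingredients are packaged in Theorem \ref{eventually faithful convergence and bounded combinatorics} and Corollary \ref{algebraic limit is a finite cover}, with the non-$I$-bundle hypothesis being precisely what rules out the infinite-cyclic-cover-of-a-fibration alternative in the latter. Once those structural inputs are in hand, the construction of the lifted embedding reduces to comparing two compact Riemannian metrics on the same underlying manifold and pushing the comparison through the nearly isometric approximations coming from geometric convergence.
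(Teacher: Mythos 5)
Your overall strategy is exactly the paper's: argue by contradiction, extract an algebraic limit via Theorem \ref{eventually faithful convergence and bounded combinatorics}, use the Covering Theorem together with the non-$I$-bundle hypothesis to get a geometric limit $N_G$ finitely covered by $N_\rho$, and then transport a compact core through the approximating maps to produce uniformly bilipschitz lifted embeddings into $N_n$ for large $n$.

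There is, however, a genuine flaw in the step where you produce the covering-followed-by-embedding factorization. You fix a standard compact core $C\subset N_\rho$ and assert that ``the restriction $\pi|_C$ is a finite covering onto $\pi(C)\subset N_G$.'' A finite covering restricted to an arbitrary compact core is in general \emph{not} a covering onto its image: unless $C$ is saturated, i.e.\ a union of components of $\pi^{-1}(\pi(C))$, points of $\pi(C)$ can have varying numbers of preimages in $C$, the evenly-covered condition fails, and the composition $\kappa_n\circ\pi|_C\circ\Phi$ then does not factor as a covering map followed by an embedding, which is precisely what ``lifted embedding'' requires. A standard compact core of $N_\rho$ has no reason to be invariant under the deck group of $\pi$, so this cannot simply be assumed. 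The paper avoids the problem by choosing the compact core \emph{downstairs}: take a standard compact core $C_G$ of $N_G$; since $\pi_1(C_G)\to\pi_1(N_G)$ is an isomorphism and $\pi$ is finite, the preimage $\widetilde C=\pi^{-1}(C_G)$ is connected, compact, and is a standard compact core of $N_\rho$, with $\pi|_{\widetilde C}:\widetilde C\to C_G$ an honest finite covering. One then identifies $\BM_M$ bilipschitzly with $\widetilde C$ (possible because $N_\rho$ is a hyperbolic structure on $M$ itself, with toroidal boundary components sent to rank-$2$ cusp boundaries) and composes with $\pi$ and the approximating embeddings $\kappa_n$ of a neighborhood of $C_G$. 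With that modification your argument goes through verbatim.

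A smaller imprecision: Corollary \ref{algebraic limit is a finite cover} as stated concerns the quotients $\BH^3/\rho_n(\pi_1(M))$, where surjectivity is automatic; in the present theorem the targets $N_n$ may be strictly larger, so one should apply Thurston's Covering Theorem directly to the covering $N_\rho\to N_G$ of the geometric limit of the $(N_n,x_n)$, using the non-$I$-bundle hypothesis (as you correctly identify) to exclude the infinite-cyclic-cover alternative. This is how the paper phrases it, and your appeal to the corollary should be adjusted accordingly, though the underlying argument is the same.
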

\begin{proof}
We argue by contradiction. If the theorem is false, there is a
sequence $(\rho_n:\pi_1(M)\to\pi_1(N_n))$ of homomorphisms
satisfying the hypothesis of the theorem with fixed $R$,
markings $\ep(n)$ with $L(n)$, $D(n),$ and $K(n)$ all tending to 
infinity such that there is no lifted $K(n)$-bilipschitz embedding 
from $\BM_M$ to $N_n$ in the homotopy class determined by $\rho_n$.  

The sequence $(\rho_n)$ is  eventually injective
(as representations to $\PSL_2(\BC)$), 
and $\ep(n)$ satisfy bounded combinatorics and
heights going to $\infty$, so we may apply
Theorem \ref{eventually faithful convergence and bounded
  combinatorics} to conclude that,
after possibly conjugating and passing to a subsequence,
$(\rho_n)$ converges to a representation
$\rho:\pi_1(M)\to\PSL_2(\BC)$ induced by
a hyperbolic structure on $M$ with the property that the end
associated to every component of $\D_0 M$ is simply
degenerate. Moreover by  Thurston's Covering Theorem
and the assumption
that $M$ is not an $I$-bundle, we can also assume the sequence of
hyperbolic manifolds $(N_n)$ converges in the Gromov-Hausdorff
topology to a hyperbolic 3-manifold $N_G$ which is finitely covered by
$N_\rho$ (compare Corollary \ref{algebraic limit is a finite cover},
where the hypothesis that $M$ is not an $I$-bundle is unnecessary
because $\rho_n$ are taken to be surjective).
Then a standard compact core $C$ of $N_G$ lifts to a
standard compact core $\cover C$ of $N_\rho$. Since $N_\rho$ is a
hyperbolic structure on $M$, there is an embedding
$\BM_M\hookrightarrow N_\rho$, in the homotopy class determined by
$\rho$, whose image is $\cover C$; we can assume this embedding is
bilipschitz and the image of every toroidal component of $\BM_M$ 
is the boundary of a rank 2 cusp of $N$. The post-composition of this 
embedding with the
projection $N_\rho\to N_G$ and the approximating map $N_G\to N_n$
provides lifted embeddings which factor through a local homeomorphism
from $\BM_M$ to $C$. The lifted embeddings satisfy a uniform
bilipschitz bound and every toroidal component of $\D\BM_M$
covers boundary components of thin parts of the approximates. 
This contradicts the choice of the sequence and completes the proof.
\end{proof}


\subsection{Virtual fibration}\label{subsec: virtual fibration}

In the case of $I$-bundles, the results of theorem
\ref{thick lifted embedding} can be extended, as the next theorem shows. 
However we need to exclude homomorphisms $\pi_1(F)\to\pi_1(N)$ which
come from a virtual fibration which is {\em not} a fibration.  

Recall that a closed orientable $N^3$ is {\em fibered with fiber $F$}
if $F$ has a two-sided embedding in $N$
so that $N\backslash\backslash  F$ (the complement of a regular neighborhood of $F$) is a
union of (one or two) $I$-bundles. Note that if there is one $I$-bundle
it is $F\times[0,1]$ and $N$ fibers over $S^1$, and if there are two
they are twisted and $N$ fibers over $[0,1]$ considered as a 1-orbifold.
We say that a map $f:F\to N$ is a {\em virtual fiber} if
it can be factored as $f=p \circ \hhat f$
where $\hhat f:F\to \hhat N$ embeds $F$ as a fiber of $\hhat N$ and
$p:\hhat N \to N$ is a finite cover. We say it {\em covers a fiber} if
it can be factored as $f=g \circ q$ where $q:F\to F'$ is a covering
map and $g$ embeds $F'$ as a fiber of $N$. Note that the latter of
these is a special case of the former.

The following lemma observes that a virtual fiber which covers an
embedding in fact covers a fiber. It will be used in the proof of Theorem
\ref{bilipschitz models for bounded type manifolds}.

\begin{lemma}\label{embedding implies fibration}
   Suppose $M$ is a compact 3-manifold and $F$ is a closed surface. If
   a map $\pi_1(F)\to\pi_1(M)$ is induced by a virtual fiber
   and also by a composition
   $F\to F'\hookrightarrow M$ where $F\to F'$ is a finite-to-one
   covering map and $F'\to M$ is an embedding, then $M$ is fibered
   with fiber $F'$. 
\end{lemma}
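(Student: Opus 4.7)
The plan is to set $H = \pi_1(F)$, $K = \pi_1(F')$, and $G = \pi_1(M)$, and then work with the finite cover $\hhat{M} \to M$ furnished by the virtual-fiber hypothesis, with fundamental group $\hhat{G}$. Since $\hhat{M}$ fibers over $S^1$ or the $1$-orbifold $[0,1]$ with fiber $F$, we have $H \trianglelefteq \hhat{G}$ with quotient $\BZ$ or $\BZ/2 * \BZ/2$; after passing to a double cover of $\hhat{M}$ if necessary, I may assume the quotient is $\BZ$. The composition $F \to F' \hookrightarrow M$ realizes the same injective map on $\pi_1$, so $F' \hookrightarrow M$ is $\pi_1$-injective, $K \le G$ naturally, and $H \le K$ with finite index equal to $\deg(F \to F')$. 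Because $M$ and $F'$ are both orientable (the latter as a finite cover of the orientable $F$), the embedding $F' \subset M$ is automatically two-sided. The strategy is to identify the cover $M' \to M$ corresponding to $K$ as the product $F' \times \BR$, and then to read off the fibration of $M$ by analyzing how preimages of $F'$ sit inside $M'$.

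The first key observation is $\hhat{G} \cap K = H$: the quotient $(\hhat{G} \cap K)/H$ embeds both in the finite group $K/H$ and in the infinite cyclic group $\hhat{G}/H$, and so must be trivial. Let $M''$ be the cover of $M$ corresponding to $H$. Then $M'' \to M$ factors through both $M'$ (since $H \le K$) and $\hhat{M}$ (since $H \le \hhat{G}$), and the cover $M'' \to \hhat{M}$ is precisely the infinite cyclic cover of the fibration $\hhat{M} \to S^1$. Hence $M'' \cong F \times \BR$, and $M'$ is finitely covered by $F \times \BR$. Since $M'$ is also irreducible (as a cover of the irreducible $M$) with $\pi_1 \cong \pi_1(F')$, Scott's compact core theorem combined with the classification of compact orientable aspherical 3-manifolds whose fundamental group is a closed orientable surface group yields a compact core of $M'$ homeomorphic to $F' \times I$; together with tameness of the ends of $M'$ (inherited through the finite cover $M'' = F \times \BR$) this gives $M' \cong F' \times \BR$.

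To conclude, I examine the preimage of $F' \subset M$ inside $M' = F' \times \BR$: it is a discrete disjoint union of closed embedded incompressible surfaces, and by Waldhausen's classification of incompressible surfaces in a product, each lift is isotopic to a slice $F' \times \{t\}$. After isotopy the preimage becomes $\bigsqcup_i F' \times \{t_i\}$ for a discrete set $\{t_i\} \subset \BR$, and each closed slab $F' \times [t_i, t_{i+1}]$ maps as a finite covering onto one of the components of $M \backslash\backslash F'$. Each such component is therefore a finite quotient of $F' \times I$ and hence an $I$-bundle (possibly twisted); since $F'$ is two-sided in $M$, there are one or two such components, matching precisely the paper's definition of $M$ being fibered with fiber $F'$. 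The main obstacle is the middle paragraph, specifically the identification $M' \cong F' \times \BR$ rather than a twisted $\BR$-bundle, which relies on carefully combining orientability, two-sidedness of $F'$, the explicit product structure of $M''$, and the classification of compact aspherical 3-manifolds with closed surface fundamental group.
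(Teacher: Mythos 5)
Your argument takes a genuinely different route from the paper's: rather than working inside the finite cover $\hhat M$, you pass to the infinite cover $M'$ of $M$ corresponding to $\pi_1(F')$, identify $M' \cong F'\times\BR$ via the intermediate cover $M''=F\times\BR$, and then read off the $I$-bundle decomposition of $M\backslash\backslash F'$ from the slabs between lifts of $F'$. This is a reasonable strategy, but two steps in it are not currently justified and in fact cannot be justified the way they are stated.

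First, the orientability of $F'$ is asserted with a backwards justification: you say $F'$ is orientable ``as a finite cover of the orientable $F$,'' but the hypothesis is that $F$ covers $F'$, and orientability does not pass to quotients (the torus double covers the Klein bottle). Everything downstream of this — two-sidedness of $F'$, $M'$ being a product rather than a twisted $\BR$-bundle, the classification of incompressible surfaces you invoke — depends on this claim, so it needs a correct argument (or the proof must be rewritten to tolerate a one-sided $F'$, as the paper's does). Second, and more substantively, the statement that the preimage of $F'$ in $M'$ is ``a discrete disjoint union of \emph{closed} embedded incompressible surfaces'' is the crux of the argument in an infinite-sheeted cover and cannot simply be asserted. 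A component of that preimage lying over the double coset $KgK$ (with $K=\pi_1(F')$, $G=\pi_1(M)$) has fundamental group $K\cap gKg^{-1}$, which a priori could have infinite index in $K$, making the component noncompact and outside the reach of Waldhausen's classification. It is in fact finite index — because $H=\pi_1(F)$ is normal in the finite-index subgroup $\hhat G$, so $K$ is commensurated in $G$ — but you never make this observation, and without it the final paragraph does not get off the ground. The paper sidesteps both issues by never leaving $\hhat M$: there $p^{-1}(F')$ is automatically a finite union of closed incompressible surfaces (one of them being $F$), cutting along them turns the $I$-bundles $\hhat M\backslash\backslash F$ into smaller $I$-bundles, and each component of $M\backslash\backslash F'$, being finitely covered by one of these, is itself an $I$-bundle. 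That argument is shorter, avoids compact cores, tameness, and the product structure of $M'$ altogether, and is indifferent to whether $F'$ is orientable.
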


\begin{proof}
After choosing maps appropriately, 
we can assume this diagram of homomorphisms is induced
by an inclusion $F\subset \hhat M$ that embeds $F$ as a fiber, and a finite
covering $p:\hhat M \to M$ that restricts to a covering from $F$ to
$F'\subset M$. 

We then have that  $\hhat M\backslash\backslash F$ is a union of (one or two)
$I$-bundles. The preimage $p^{-1}(F')$ in $\hhat M$ is a disjoint union of
incompressible surfaces one of which is $F$, and hence $p^{-1}(F')$
cuts $\hhat M$ into a union of $I$-bundles. 
Since a 3-manifold covered by an $I$-bundle is itself an $I$-bundle,
this means that every component of $M \backslash\backslash F'$ is an
$I$-bundle. It follows that $F'$ is a fiber of $M$. 
\end{proof}


\subsection{Lifted embeddings of $I$-bundles}\label{subsec: lifted embeddings of I-bundles}

Now we state the version of theorem \ref{thick lifted embedding} for $I$-bundles. In this case, we need only one of the heights of the ending data $\ep$ to be large.

\begin{theorem}\label{thick lifted embeddings of I-bundles}
Given $M$ a decorated trivial $I$-bundle and $R>0$, there exist $K,L,D$ so that
the following holds. Suppose $\rho:\pi_1(M)\to\pi_1(N)$ is an
$L$-injective homomorphism for a hyperbolic 3-manifold $N$, $\ep$ is a
marking on $\D M$ with $R$-bounded combinatorics, one height
at least $D$, and the $\rho$-length of $\ep$ is bounded by $R$. Then
either $\rho$ is induced by a virtual fiber which does not cover
a fiber, or there is a lifted $K$-bilipschitz embedding $\BM_M\to N$
in the homotopy class of $\rho$. 
\end{theorem}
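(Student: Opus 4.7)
The strategy is to adapt the contradiction argument of Theorem \ref{thick lifted embedding}, where the essential new difficulty is that Thurston's Covering Theorem permits a genuine alternative when $M$ is an $I$-bundle, and this alternative is precisely what the ``virtual fiber'' conclusion in the statement accounts for. Suppose for contradiction that no $K, L, D$ work, so we obtain sequences $L_n, D_n, K_n \to \infty$, $L_n$-injective discrete representations $\rho_n : \pi_1(M) \to \pi_1(N_n)$, and markings $\ep_n$ of $R$-bounded combinatorics with at least one height $\ge D_n$ and $\ell(\rho_n(\ep_n)) < R$, such that $\rho_n$ neither is induced by a virtual fiber failing to cover a fiber, nor admits a lifted $K_n$-bilipschitz model embedding. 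After passing to a subsequence we may assume the heights on one side, say $F_1 \subset \D M$, tend to infinity, so $\ep_n|_{F_1}$ converges to a filling lamination $\lambda_1 \in \EL(F_1)$; on the other side $\ep_n|_{F_0}$ either converges to a filling lamination or (after further subsequencing) is constant. In either case the projection of the limit $\lambda$ to $F$ contains the filling lamination $\lambda_1$, which has transverse intersection with any other lamination on $F$, so binding condition (3) for $I$-bundles holds. Theorem \ref{eventually faithful convergence} then supplies a discrete faithful limit $\rho$, and by the Tameness Theorem together with \cite[Thm. 1.4]{NS12} the manifold $N_\rho$ is a hyperbolic structure on a manifold that, after isotopy, we identify with $M$.

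Pass to a further subsequence so that the covers $\tilde N_n = \BH^3/\rho_n(\pi_1(M))$ converge geometrically to a limit $N_G^0$, and let $\pi : N_\rho \to N_G^0$ denote the induced covering. Thurston's Covering Theorem yields two cases. If $\pi$ is finite-to-one, the proof of Theorem \ref{thick lifted embedding} applies verbatim: a standard compact core $C \subset N_G^0$ lifts to a standard compact core $\tilde C \subset N_\rho$; an embedding $\BM_M \hookrightarrow \tilde C$ in the prescribed isotopy class pushes forward through $\pi$ and the geometric approximating maps $N_G^0 \to \tilde N_n$, composing with the covering $\tilde N_n \to N_n$, to give a lifted $K$-bilipschitz embedding $\BM_M \to N_n$ with $K$ independent of $n$, contradicting $K_n \to \infty$.

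Otherwise, $\pi$ factors through an infinite cyclic cover of a fibered finite cover of $N_G^0$, so $N_G^0$ is closed and fibered, and for all sufficiently large $n$ the representation $\rho_n$ is induced by a virtual fiber of $N_n$. By our contradictory hypothesis this virtual fiber must cover a fiber of $N_n$, so we have a factorization $F \to F'_n \hookrightarrow N_n$ with $F'_n$ an embedded fiber; thickening yields a lifted immersion $\BM_M = F \times I \to F'_n \times I \hookrightarrow N_n$. The principal obstacle, which is the real content of the $I$-bundle version of the theorem, is showing this lifted embedding is uniformly $K$-bilipschitz. This amounts to comparing the Teichm\"uller bundle metric on $\BM_F[\mu_0,\mu_1]$ with the monodromy-induced metric on a tubular neighborhood of $F'_n$ in $N_n$. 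The uniformity of the comparison follows from the bounded geometry of $N_n$ (inherited from geometric convergence to the compact $N_G^0$), together with the $R$-bounded combinatorics and the length bound $\ell(\rho_n(\ep_n)) < R$, which together force the Teichm\"uller geodesic $[\sigma_{\mu_0}, \sigma_{\mu_1}]$ to fellow-travel the Teichm\"uller geodesic realized by the fibration of $N_n$ restricted to a fundamental domain of the cyclic cover. The resulting uniform bilipschitz bound contradicts $K_n \to \infty$ and completes the proof.
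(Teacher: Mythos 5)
Your overall architecture (contradiction, algebraic plus geometric limits, casing on Thurston's Covering Theorem with the virtual-fiber dichotomy) matches the paper's proof, but there is a genuine error in the step where you verify the binding condition. You assert that the filling limit $\lambda_1\in\EL(F_1)$ ``has transverse intersection with any other lamination on $F$,'' and conclude binding condition (3) for $I$-bundles. This is false: a filling geodesic lamination has no transverse intersection with itself or with any lamination sharing its support. In the case where both heights $d_F(\ep_n|_{F_i},\mu_i)\to\infty$, the limits $\lambda_0,\lambda_1$ are both filling and could a priori have the same support, in which case the projection of $\lambda$ to $F$ would \emph{not} have transverse self-intersection, and Theorem~\ref{eventually faithful convergence} would not apply. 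Ruling this out is precisely what condition (2) in the definition of $R$-bounded combinatorics for $I$-bundles is for: it forces $\mu_0,\mu_1$ to lie within distance $R$ of a $\CC(F)$-geodesic $[\ep_0,\ep_1]$, and hyperbolicity of $\CC(F)$ then shows the endpoints must be distinct points of $\partial_\infty\CC(F)$. This is exactly the content of Lemma~\ref{limits of bounded combinatorics bind}, which the paper invokes (through Theorem~\ref{eventually faithful convergence and bounded combinatorics}) when both heights diverge; when one height stays bounded the paper instead notes directly that the representations stay in a compact set. You should cite the lemma rather than assert an intersection property that filling laminations do not have.

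As a secondary remark, your treatment of the fibered case is more labored than necessary and stops short of an actual estimate. You attempt to prove the bilipschitz bound by directly comparing the model metric on $\BM_M$ with a tubular neighborhood of the fiber in $N_n$, asserting that Teichm\"uller geodesics ``fellow-travel'' without justification. The paper's route is cleaner: since $N_G$ is closed, $N_n$ is isometric to $N_G$ for $n$ large, and one picks a single bilipschitz embedding $F'\times[0,1]\hookrightarrow N_G$ (where $F$ covers $F'$), lifts it to a bilipschitz embedding $\BM_M\hookrightarrow N_\rho$, and pushes forward through the approximating maps exactly as in the simply degenerate case. This avoids any direct estimate inside $N_n$ and makes the uniformity manifest. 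Your sketch is not wrong in spirit, but as written it does not establish the $K$ independent of $n$ that the contradiction requires.
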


\begin{proof}
   Assume $M=I_F[\mu_0,\mu_1]$ is a decorated trivial $I$-bundle. We proceed as in the proof of theorem \ref{thick lifted embedding}. Suppose $L(n), D(n)$ are constants tending to infinity as $n\to\infty$, $\rho_n:\pi_1(M)\to \pi_1(N_n)$ is a sequence of representation and $\ep(n)$ is a sequence of markings on $\D M$ satisfying the hypothesis of the theorem for the constants $L(n)$ and $D(n)$. Suppose $\ep_0(n)$ and $\ep_1(n)$ are the restrictions of $\ep(n)$ to the components of $\D M$ respectively.  

If $d_{\CC(F)}(\ep_i(n),\mu_i) \to \infty$ for both $i=0$ and $i=1$ then
we can directly apply Theorem \ref{eventually faithful convergence and bounded combinatorics} to
conclude that $(\rho_n)$ has a subsequence
that converges to a discrete and faithful representation
$\rho$, with two simply degenerate ends. If, say,
$d_{\CC(F)}(\ep_0(n),\mu_0)$ remains bounded on a subsequence, then
the representations remain in a compact set and again we have a
convergent subsequence, which as in the proof of Theorem \ref{eventually faithful convergence},
must have a discrete and faithful limit $\rho$. Moreover since
 $d_{\CC(F)}(\ep_1(n),\mu_1)\to\infty$, it follows from Thurston's theory of degenerate ends of surface groups that one end of $\rho$ must be
 simply degenerate. As usual, we also assume $N_G$ is the Gromov-Hausdorff limit of $(N_n)$ and is covered by $N_\rho$.

It is a consequence of Thurston's Covering Theorem that either $N_G$
has a simply degenerate end which is finitely covered by a simply
degenerate end of $N_\rho$, or that the covering map $N_\rho\to N_G$
is virtually induced by a fibration. If $N_G$ has a simply degenerate
end, associated to a surface $F'$ which is finitely covered by $F$, we
can find an embedding $F'\times[0,1]\hookrightarrow N_G$ which lifts
to an embedding $\BM_M\hookrightarrow N_\rho$ in the homotopy class
determined by $\rho$. As in the proof of Theorem \ref{thick lifted
  embedding}, post-composing these maps with the approximating maps
$N_G\to N_n$ provides lifted embeddings $\BM_M\to N_n$ in the homotopy
class determined by $\rho_n$ which factor through embeddings
$F'\times[0,1]\hookrightarrow N_n$. We may select the embedding
$\BM_M\hookrightarrow N_\rho$ to be bilipschitz, and it follows that
the maps $\BM_M\to N_n$ are lifted uniformly bilipschitz embeddings. 

If $\rho$ is induced by a virtual fibration, the cover factors through
$N_\rho\to \hhat N_G \to N_G$ where $\hhat N_G$ is fibered by an
embedding of $F$. If $N_G$ itself is fibered with fiber $F'$, which is
covered by $F$, we can find an embedding
$F'\times[0,1]\hookrightarrow N_G$, which lifts to an embedding
$\BM_M\hookrightarrow N_\rho$, in the homotopy class of $\rho$. 
Then the same argument as above applies. So for $n$ sufficiently large, we
find lifted $K$-bilipschitz embeddings of $\BM_M$ in the homotopy
class of $\rho$, with $K$ independent of $n$. This shows the given
sequence $(\rho_n)$ cannot give counterexamples to the conclusion of
the theorem, unless $\rho$ is
induced by a virtual fibration which is {\em not} a fibration. 
But in that case $N_G$ is closed, therefore $N_n$ is
isometric to $N_G$ for $n$ sufficiently large and $\rho_n=\rho$ is
also induced by a virtual fibration which does not cover a fiber. 
\end{proof}


\subsection{Interpolations of lifted embeddings}\label{subsec: interpolations}

The next theorem shows how lifted bilipschitz embeddings of decorated
$I$-bundles of the form $I_F[\mu]$ can be connected via immersed
locally bilipschitz interpolations. Before stating the theorem,
we recall some facts about the theory of {\em surface groups}.
The theory refers to a long line of work by Thurston, Bonahon,
Minsky, Brock-Canary-Minsky among others.
We restrict to when $F$ is a closed orientable surface of genus
at least 2 and by a {\em surface group}, we mean a discrete faithful
representation $\rho:\pi_1(F)\to\PSL_2(\BC)$. We state the
following corollary of the theory. The first part is a consequence of
the work of Minsky in \cite{ELC0} and more generally of the construction
of combinatorial models for general surface groups in \cite{ELC1,ELC2}.
The second part is really a corollary of an argument of Thurston
\cite{Thu79} about limits of surface groups. 

\begin{lemma}\label{lemma from surface groups}
	Given a closed surface $F$ of genus at least 2, $\ep_0$ fixed, 
and $k_0>0$, there exists $d_0$ so that the following holds.
Assume $\sigma_0,\sigma_1,\sigma_2$ are points ordered on a 
Teichm\"uller geodesic segment in the $\ep_0$-thick part of the surface
$F$ and with successive distances in $[d_0,2d_0]$. Also let $N$
be a hyperbolic manifold associated to a discrete and faithful
representation of $\pi_1(F)$. Then
	\begin{enumerate}
		\item If $f_i:\sigma_i\to N, i=1,2,$ are
$(k_0+1)$-bilipschitz embeddings in the preferred homotopy class, then there
exists a bilipschitz embedding $\BM_F[\sigma_0,\sigma_1]\to N$
that extends $f_1$ and $f_2$, with bilipschitz constant depending
on $F, \ep_0$ and $k_0$.
		\item If $f_i:\sigma_i\to N, i=0,1,2$, are $(k_0+1)$-lipschitz embeddings
in the preferred homotopy class, then the image of $f_1$ separates
the images of the other two.
	\end{enumerate}
\end{lemma}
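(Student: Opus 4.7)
The plan is to prove both parts by a contradiction-and-compactness argument, leveraging the theory of surface-group representations via Theorem \ref{eventually faithful convergence and bounded combinatorics}. Suppose, toward a contradiction, that the conclusion fails for some choice of $d_0^{(n)} \to \infty$: we obtain triples $\sigma_0^{(n)}, \sigma_1^{(n)}, \sigma_2^{(n)}$ ordered on Teichm\"uller geodesics in the $\epsilon_0$-thick part of $\Teich(F)$ with successive distances in $[d_0^{(n)}, 2d_0^{(n)}]$, together with $(k_0+1)$-bilipschitz (resp.\ Lipschitz) embeddings $f_i^{(n)}:\sigma_i^{(n)}\to N_n = \BH^3/\rho_n(\pi_1(F))$ in the preferred homotopy class, for which the conclusion fails.

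First I would normalize: by composing with elements of the mapping class group (which acts on $\Teich(F)$ with compact quotient on the $\epsilon_0$-thick part, and on representations by changing the marking on $\pi_1(F)$), we may assume that $\sigma_0^{(n)}$ stays in a fixed compact subset of the thick part. Since each $\sigma_i^{(n)}$ is $\epsilon_0$-thick, the Bers marking $\nu_{\sigma_i^{(n)}}$ has length at most $L$ on the hyperbolic surface $\sigma_i^{(n)}$, and the $(k_0+1)$-Lipschitz bound on $f_i^{(n)}$ then yields a uniform bound $\ell(\rho_n(\nu_{\sigma_i^{(n)}})) \le (k_0+1)L$. Because $\sigma_0^{(n)}, \sigma_1^{(n)}$ have bounded-combinatorics Bers markings in successive positions along a thick Teichm\"uller geodesic, Theorem \ref{eventually faithful convergence} applies and, after passing to a subsequence, yields convergence $\rho_n \to \rho$ to a discrete faithful surface-group representation, with simultaneous geometric convergence $N_n \to N_G$.

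For part (1), since $\sigma_0^{(n)}, \sigma_1^{(n)}$ stay in a compact subset of the thick part, a subsequence converges to $\sigma_0, \sigma_1 \in \Teich(F)$, and the images $f_i^{(n)}(\sigma_i^{(n)})$ converge (via the approximating maps) to bilipschitz embedded surfaces in $N_\rho$ whose conformal structures are $\sigma_0$ and $\sigma_1$. Minsky's bilipschitz model for thick surface groups \cite{ELC0} identifies the region of $N_\rho$ between these surfaces with $\BM_F[\sigma_0,\sigma_1]$ up to a uniform bilipschitz constant that depends only on $F, \epsilon_0$ and the upper distance bound $2d_0$. Pulling this bilipschitz equivalence back through the geometric approximation $N_G \to N_n$ produces the desired extension of $f_0^{(n)}$ and $f_1^{(n)}$ (I am assuming the indexing in the statement should read $i=0,1$; the case $i=1,2$ extending to $\BM_F[\sigma_1,\sigma_2]$ is identical), with a uniform bilipschitz constant for large $n$, contradicting the failure.

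For part (2), the additional ingredient is that $d_0^{(n)} \to \infty$ allows $\sigma_2^{(n)}$ to exit every compact subset of $\Teich(F)$. After passing to a subsequence, $\sigma_2^{(n)}$ converges in Thurston's boundary to a projective measured lamination $\lambda^+ \in \PML(F)$, which must be filling because the Teichm\"uller ray through $\sigma_0^{(n)}$ and $\sigma_2^{(n)}$ remains in the thick part. The length bounds on $\rho_n(\nu_{\sigma_2^{(n)}})$ then force $\lambda^+$ to be an ending lamination of the limit $\rho$, so the corresponding end $\CE^+$ of $N_\rho$ is simply degenerate. By Thurston's theory of ends of surface groups (compare \S\ref{subsec: geometry of ends}), bounded-Lipschitz pleated-like surfaces whose conformal structures are far along the Teichm\"uller ray toward $\lambda^+$ must exit $\CE^+$, and their images are separating surfaces ordered by the Teichm\"uller parameter. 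Transferring this ordering back to $N_n$ via the geometric approximation, $f_1^{(n)}(\sigma_1^{(n)})$ must separate $f_0^{(n)}(\sigma_0^{(n)})$ from $f_2^{(n)}(\sigma_2^{(n)})$ for $n$ large, the contradiction.

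The main obstacle is making precise the separation claim in part (2): a priori, the three Lipschitz-embedded surfaces could overlap in $N_n$ without being ordered. The resolution is a Thurston-style argument that in a surface-group hyperbolic manifold with a simply degenerate end, sequences of embedded surfaces whose conformal structures advance along a single thick Teichm\"uller ray toward the ending lamination must eventually be embedded in the expected nested order, with separation distances growing with the Teichm\"uller parameter.
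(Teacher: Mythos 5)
The paper does not in fact prove this lemma: it records it as a direct citation, attributing part (1) to Minsky's bounded-geometry model theorem \cite{ELC0} (and, more generally, \cite{ELC1,ELC2}) and part (2) to an argument of Thurston \cite{Thu79} about limits of surface groups, with the uniform constants built into those results. Your attempt to reprove it by a compactness-and-contradiction scheme is in the spirit of how the lemma is \emph{used} later in the section (Theorems \ref{thick lifted embedding}--\ref{interpolations}), but as written it has two genuine gaps.

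First, the part (1) argument is internally inconsistent. Negating the lemma forces you to take $d_0^{(n)}\to\infty$, so $d_{\CT(F)}(\sigma_0^{(n)},\sigma_1^{(n)})\ge d_0^{(n)}\to\infty$; after normalizing $\sigma_0^{(n)}$ into a fixed compact set of the thick part, $\sigma_1^{(n)}$ cannot also stay in a compact set, so the step ``$\sigma_0^{(n)},\sigma_1^{(n)}$ subconverge in $\CT(F)$ and the images $f_i^{(n)}(\sigma_i^{(n)})$ converge to embedded surfaces in $N_\rho$'' fails: the surfaces $f_1^{(n)}(\sigma_1^{(n)})$ leave every region on which the geometric approximating maps give control. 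The correct quantifier structure is either to fix $d_0$ first (chosen to make the separation/disjointness statement hold) and run the limiting argument over the bounded range $[d_0,2d_0]$ with only the bilipschitz constant degenerating, or to invoke the uniform bounded-geometry model of \cite{ELC0} directly, which is what the paper does; either way one still has to explain why the two limiting embedded surfaces bound a product region to which the model interpolation can be anchored along the given boundary maps. Second, in part (2) the actual content of the statement --- that Lipschitz embedded surfaces in the preferred homotopy class whose thick conformal structures are far apart along a Teichm\"uller geodesic have disjoint images nested in the corresponding order --- is precisely what you assert as the ``resolution'' in your last paragraph, without proof; and the proposed transfer of the ordering from the limit back to $N_n$ does not work as stated, again because $f_1^{(n)}(\sigma_1^{(n)})$ and $f_2^{(n)}(\sigma_2^{(n)})$ exit every compact set seen by the geometric convergence. (A smaller point: the constant sequence $\nu_{\sigma_0^{(n)}}$ does not converge to a binding lamination, so Theorems \ref{eventually faithful convergence} and \ref{eventually faithful convergence and bounded combinatorics} do not literally apply; subconvergence in $AH(F)$ from bounded $\rho_n$-length of a fixed filling marking is true, but needs a separate, standard Morgan--Shalen/Skora argument rather than those statements.)
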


We use this lemma for sufficiently injective lifted bilipschitz 
embeddings of $F$ in a general hyperbolic manifold. 

\begin{theorem}\label{interpolations}
  Suppose $F$ is a closed surface of genus $>1$ and $R, k_0>1$ are
  given, then $K,L,D$ and $d_0$ exist so that the following
  holds. Assume $\ep_0$ and $\ep_1$ are complete markings on $F$ with
  $R$-bounded combinatorics and $d_F(\ep_0,\ep_1)\ge D$. Let
  $N$ be a hyperbolic 3-manifold admitting a homomorphism
  $\rho:\pi_1(F)\to \pi_1(N)$ such that $\ell_\rho(\ep_0),
  \ell_\rho(\ep_1) \le R$; also assume $\sigma_0$ is a point
	on the Teichm\"uller geodesic $[\sigma_{\ep_0},\sigma_{\ep_1}]$
	with the property that $\rho$ is $L$-injective with respect 
	to $\sigma_0$. Then
  \begin{enumerate}
    \item Either $\rho$ is induced by a virtual fibration which is not a 
		 fibration or there is a lifted
    $K$-bilipschitz embedding $f:\sigma_0\to N$ in the homotopy
    class determined by $\rho$.
    \item Given $\sigma_1\in [\sigma_{\ep_0},\sigma_{\ep_1}]$ with $d_0\le
      d_{\CT(F)}(\sigma_0,\sigma_1)\le 2d_0$ and lifted $k_0$-bilipschitz embeddings $f_{i}:\sigma_i\to N$ in the homotopy class
      determined by $\rho$, $i=0,1$, there is a locally $K$-bilipschitz
      immersion
    \[ \BM_F[\sigma_0,\sigma_1]\to N\] 
    whose restrictions to the boundary components are $f_{0}$ and $f_{1}$.
    \item Given $\sigma_1,\sigma_2 \in [\sigma_{\ep_0},\sigma_{\ep_1}]$ with
      $\sigma_0<\sigma_1<\sigma_2$, in the order given by choosing an
		orientation on the Teichm\"uller geodesic $[\sigma_{\ep_0},\sigma_{\ep_1}]$,
		with successive $\CT(F)$ distances in $[d_0,2d_0]$, and lifted $k_0$-bilipschitz embeddings $f_i:\sigma_{i}\to N$ in the homotopy class determined by $\rho$, $i=0,1,2$, 
      every pair
      of locally $K$-Lipschitz immersions
    \[ \BM_F[\sigma_0,\sigma_1], \BM_F[\sigma_1,\sigma_2]\to N \] 
    that extend $f_0,f_1,$ and $f_2$, must have consistent orientations.
  \end{enumerate}
\end{theorem}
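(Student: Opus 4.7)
The plan is to prove all three parts by contradiction, following the compactness strategy used in Theorems \ref{thick lifted embedding} and \ref{thick lifted embeddings of I-bundles}, and then to invoke Lemma \ref{lemma from surface groups} once a discrete faithful limit has been extracted. Fix $F$, $R$, and $k_0$. For a putative failure of any of (1)--(3) we produce a sequence of markings $(\ep_0(n),\ep_1(n))$ with $R$-bounded combinatorics and $d_F(\ep_0(n),\ep_1(n))\to\infty$, hyperbolic manifolds $N_n$, points $\sigma_0(n)\in[\sigma_{\ep_0(n)},\sigma_{\ep_1(n)}]$ (and, for (2)--(3), further points $\sigma_1(n),\sigma_2(n)$ on the same geodesic with successive $\CT(F)$-distances in $[d_0,2d_0]$), and homomorphisms $\rho_n:\pi_1(F)\to\pi_1(N_n)$ which are $L(n)$-injective with respect to $\sigma_0(n)$ and satisfy $\ell_{\rho_n}(\ep_i(n))\le R$. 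The bounded-combinatorics hypothesis places the Teichm\"uller segment in the $\ep_0$-thick part of $\CT(F)$, so the $\sigma_i(n)$ have uniform injectivity radius, and after conjugation we may assume the translation distances of a fixed generating set of $\pi_1(F)$ are uniformly bounded at a basepoint in $\sigma_0(n)$. The sequence $(\rho_n)$ is then eventually faithful, and Theorem \ref{eventually faithful convergence and bounded combinatorics} applied (in its surface-group incarnation) yields a subsequence converging algebraically to a discrete faithful $\rho$ whose hyperbolic quotient $N_\rho$ has at least one simply degenerate end. Pass to a geometric limit $N_G$; Thurston's Covering Theorem leaves exactly two possibilities, $N_\rho\to N_G$ finite-to-one or $\rho$ a virtual fiber, the second of which either actually covers a fiber (in which case the embedding is available) or provides the exceptional outcome of part (1).

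For part (1), once the covering $N_\rho\to N_G$ is finite-to-one, a bilipschitz embedding of $\sigma_0$ into $N_\rho$ in the correct homotopy class is available from the thick combinatorial models of \cite{ELC0,ELC1,ELC2} (equivalently from pleated-surface theory applied at a thick parameter with bounded-combinatorics neighbours). Composing with $N_\rho\to N_G$ and the approximating maps $N_G\to N_n$ produces lifted $K$-bilipschitz embeddings for large $n$, with $K$ independent of $n$, contradicting the choice of the sequence. For part (2), both lifted $k_0$-bilipschitz embeddings $f_i(n)$ factor through the cover $\hhat N_n\to N_n$ corresponding to $\rho_n(\pi_1(F))\le\pi_1(N_n)$, producing genuine bilipschitz embeddings $\tilde f_i(n):\sigma_i(n)\hookrightarrow\hhat N_n$. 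In the algebraic/geometric limit these converge to bilipschitz embeddings of $\sigma_0,\sigma_1$ into $N_\rho$, and Lemma \ref{lemma from surface groups}(1) -- choosing $d_0$ as guaranteed there -- extends them to a bilipschitz embedding of $\BM_F[\sigma_0,\sigma_1]$ in $N_\rho$. Pulling back through the approximating maps gives a locally $K$-bilipschitz immersion $\BM_F[\sigma_0(n),\sigma_1(n)]\to N_n$ extending $f_0(n)$ and $f_1(n)$ with $K$ depending only on $F$, $\ep_0$ and $k_0$, the desired contradiction.

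Part (3) is handled by the same limiting procedure, using Lemma \ref{lemma from surface groups}(2) in place of (1). The two $K$-Lipschitz immersions $\BM_F[\sigma_0(n),\sigma_1(n)]\to N_n$ and $\BM_F[\sigma_1(n),\sigma_2(n)]\to N_n$ lift to $\hhat N_n$, and restrict on the three boundary copies of $\sigma_i$ to lifted $k_0$-bilipschitz embeddings; in the limit these become $(k_0+1)$-Lipschitz embeddings of $\sigma_0,\sigma_1,\sigma_2$ into $N_\rho$, and Lemma \ref{lemma from surface groups}(2) asserts that the image of $\sigma_1$ separates the other two. Were the two interpolations oppositely oriented, the images of $\sigma_0$ and $\sigma_2$ would lie on a common side of the image of $\sigma_1$, violating the separation; hence their orientations must agree. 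The main obstacle in all three parts is the standard but delicate packaging of the algebraic/geometric convergence so that the lifted embeddings of different surfaces factor through a common cover and pass to genuine embeddings in the limit $N_\rho$; the dichotomy produced by Thurston's Covering Theorem (finite cover versus virtual fibration) is dispatched exactly as in the proof of Theorem \ref{thick lifted embeddings of I-bundles}, and once both issues are resolved, the three parts reduce directly to the corresponding statements of Lemma \ref{lemma from surface groups}.
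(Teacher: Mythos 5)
Your proposal correctly identifies the main ingredients — the compactness strategy via algebraic and geometric limits, Thurston's Covering Theorem to produce the dichotomy in part~(1), and Lemma~\ref{lemma from surface groups} as the engine for parts~(2) and~(3) — and your proof of part~(1) is essentially the paper's (they phrase it as a direct citation of Theorem~\ref{thick lifted embeddings of I-bundles}, whose proof is the compactness argument you sketch; note you should invoke that theorem rather than Theorem~\ref{eventually faithful convergence and bounded combinatorics} directly, since only one of the two heights for $I_F[\nu_{\sigma_0(n)}]$ relative to $\ep_0(n),\ep_1(n)$ need go to infinity).

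For parts~(2) and~(3), however, your execution has a genuine gap. You propose to lift each $f_i(n)$ to the cover $\hat N_n$ of $N_n$ corresponding to $\rho_n(\pi_1 F)$, pass to an algebraic/geometric limit to obtain embeddings of $\sigma_0,\sigma_1,\sigma_2$ into $N_\rho$, apply Lemma~\ref{lemma from surface groups} once in the limit, and then pull the resulting interpolation back to $N_n$ via the approximating maps. The difficulty is in the last step: the approximating maps are only $\CC^{1,\alpha}$-close to isometries, so the pulled-back map $\BM_F[\sigma_0(n),\sigma_1(n)]\to N_n$ restricts on the boundary circles to maps that are only \emph{close} to $f_0(n)$ and $f_1(n)$, not equal to them — yet the theorem requires the restrictions to be exactly $f_0$ and $f_1$. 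A second structural point feeds into the same problem: Lemma~\ref{lemma from surface groups} requires the target to be associated to a discrete \emph{faithful} representation of $\pi_1(F)$, so it cannot be applied in $\hat N_n$ at finite $n$ (since $\rho_n$ is only $L$-injective); this is precisely what forces you to pass to the limit before applying the lemma, and hence creates the boundary mismatch when you come back.

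The paper avoids this by never taking a limit of the boundary maps. For each $n$, they compose $f_i(n)$ with the exact inverse $\kappa_n^{-1}$ of the approximating map $N_G\to N_n$ to get $g_i(n):\sigma_i(n)\to N_G$, lift this to a $(k_0+1)$-bilipschitz embedding $\cover{g_i(n)}$ in the fixed surface group manifold $N_\rho$ (which covers $N_G$), apply Lemma~\ref{lemma from surface groups} in $N_\rho$ \emph{for each $n$}, and then push down through $N_\rho\to N_G\to N_n$. Because $\kappa_n\circ\kappa_n^{-1}=\mathrm{id}$ on the relevant compacta, this projection restricts to exactly $f_i(n)$ on the boundary. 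The moral is that your plan should be reorganized so that all the lifting, extending, and projecting is performed for each $n$ separately inside the fixed $N_\rho$, with the limiting construction used only to produce $N_\rho$ and the covering $N_\rho\to N_G$ in the first place.
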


\begin{proof}
The first part is a simple consequence of theorem 
\ref{thick lifted embeddings of I-bundles}. Let $M=I_F[\nu_0]$ 
with $\nu_0=\nu_{\sigma_0}$ and $\ep=\ep_0\cup\ep_1$. 
Our discussion in \S \ref{subsec: teichmuller}
shows that $\ep$ has $R'$-bounded combinatorics as a marking on the
boundary of the decorated $I$-bundle $M$ with $R'$ depending on $R$ 
and the topology of $F$. Then it follows that if at least one of
the distances $d_F(\nu_0,\ep_0)$ or $d_F(\nu_0,\ep_1)$ is large, and 
$\rho$ is not induced by a virtual fibration which is not a fibration, 
there is a lifted bilipschitz embedding of $\BM_M$ in the homotopy class
of $\rho$ and then obviously there is a lifted bilipschitz embedding
of $\sigma_0$. Of course, the height requirement is always satisfied
if $d_F(\ep_0,\ep_1)$ is large.
The only additional thing we need to show is that
the height requirement and the bilipschitz constant depend only on $F$ 
(and not on the choice of the markings). This follows from the 
observation that under the action of self-homeomorphisms of $F$, there
are only finitely many decorated manifolds of the type $I_F[\mu]$; even 
more the choice of generating sets for $\pi_1(F)$ with respect to 
$I_F[\mu]$ is made invariant under this action. 

The other two parts are proved again by using geometric and
algebraic limits. 
Recall from \S \ref{subsec: teichmuller} that because of
the $R$-bounded combinatorics, the Teichm\"uller geodesic
stays in a uniform thick thick part of $\CT(F)$.
Use lemma \ref{lemma from surface groups} to find a constant
$d_0$ and assume the conclusion of part (2) or (3) of 
the theorem do not hold for a sequence of counterexamples 
$(\rho_n:\pi_1(F)\to\pi_1(N_n))$ with markings $\ep_0(n),\ep_1(n),$
constants $D(n), L(n),$ and $K(n)$ tending to infinity, 
and $\sigma_0(n)<\sigma_1(n)<\sigma_2(n)$ ordered along 
$[\sigma_{\ep_0(n)},\sigma_{\ep_1(n)}]$ with successive $\CT(F)$ 
distances in $[d_0,2d_0]$. After precomposing each $\rho_n$ with 
an automorphism of $\pi_1(F)$ and passing to a subsequence, 
we can assume $\nu_0=\nu_{\sigma_0(n)}$ is independent of $n$. 
Then an argument similar to the above shows that 
$\ep(n)=\epsilon_0(n) \cup \epsilon_1(n)$ has $R'$-bounded 
combinatorics as a marking on the boundary of $I_F[\nu_0]$ and 
at least one height of $\epsilon(n)$ tends to infinity, as $n\to\infty$. 
By theorem \ref{thick lifted embeddings of I-bundles} 
after passing to a subsequence, we assume $(\rho_n)$, as a 
sequence of representations of $\pi_1(I_F[\nu_0])$, converges 
algebraically to a discrete and faithful representation $\rho$, 
where $N_\rho=\BH^3/\rho(\pi_1(F))$ is a surface group with at least one 
simply degenerate end. We also pass to a subsequence and 
assume a choice of base point for $N_n$ is made so that the 
sequence $(N_n)$ converges, in the Gromov-Hausdorff topology, 
to a hyperbolic 3-manifold $N_G$ covered by $N_\rho$. 

Suppose the lifted $k_0$-bilipschitz embeddings $f_i(n):\sigma_i(n)\to
N_n$, $i=0,1,2$, are given. When $L(n)$ is large, the $L(n)$-injectivity
of $\rho(n)$ with respect ot $\sigma_0(n)$ implies that the $\rho(n)$-image
of $\pi_1(F)$ is non-elementary.
Since $\nu_0$ has bounded length on $\sigma_i(n)$
independently of $n$, it follows that the image of $f_i(n)$ stays within
a bounded distance from the base point of $N_n$ with a bound independent of $n$.
Therefore for $n$ sufficiently large and $i=0,1,2$, 
the composition of the map $f_i(n)$ and the
approximating maps $N_n\to N_G$, provides a lifted $(k_0+1)$-bilipschitz embedding 
$g_i(n): \sigma_i(n)\to N_G$. The map 
$g_i(n)$ is in the same homotopy class as the projection $N_\rho\to N_G$ 
and therefore $g_i(n)$ lifts to a $(k_0+1)$-bilipschitz {\em embedding} 
$\cover {g_i(n)} : \sigma_i(n)\to N_\rho$ in the homotopy class determined 
by $\rho$. Our assumption about $d_0$ shows for every $n$ large, the images 
of $\cover{g_0(n)},\cover{g_1(n)},$ and $\cover{g_2(n)}$ are disjoint and the 
image of $\cover{g_1(n)}$ separates the other two. Even more there is a bilipschitz 
embedding $\BM_F[\sigma_0(n),\sigma_1(n)]\to N_\rho$ which extends the embeddings 
$\cover{g_0(n)}$ and $\cover{g_2(n)}$, and the bilipschitz constant is independent 
of $n$. Postcomposing this map with the projection $N_\rho\to N_G$ and the approximation 
map $N_G\to N_n$, we obtain a locally uniformly bilipschitz embedding of 
$\BM_F[\sigma_0(n),\sigma_1(n)]$ which extends $f_0(n)$ and $f_1(n)$. This 
construction and the usual argument using contradiction proves the second part of 
the theorem and provide the constant $K$.

On the other hand, if 
\[ H_1(n):\BM_F[\sigma_0(n),\sigma_1(n)]\to N_n \ 
{\rm and} \ H_2(n):\BM_F[\sigma_1(n),\sigma_2(n)]\to N_n\] 
also extend 
$f_0(n),f_1(n),f_2(n)$ and are locally $K$-bilipschitz immersions, for 
$n$ sufficiently large, we postcompose those with the approximating maps 
$N_n\to N_G$ and lift to obtain immersions 
\[ \BM_F[\sigma_0(n),\sigma_1(n)]\to N_\rho \quad {\rm and} \quad  
\BM_F[\sigma_1(n),\sigma_2(n)]\to N_\rho \]
that extend $\cover{g_0(n)}, \cover{g_1(n)}, \cover{g_2(n)}$. But we 
knew that the image of $\cover{g_1(n)}$ separates the images of 
$\cover{g_0(n)}$ and $\cover{g_2(n)}$; therefore the immersions into 
$N_\rho$ have to have consistent orientation. This implies that $H_1(n)$ 
and $H_2(n)$ have to have consistent orientation. This proves the third 
part of the theorem.
\end{proof}


\subsection{Extending lifted embeddings to immersions}\label{subsec: extending to immersion}

The final theorem of the section allows us, again in the presence of bounded
combinatorics and almost-injectivity, to extend
lifted embeddings on the boundaries of a model manifold
$\BM_M[\tau]$, for a decorated manifold $M$ and 
hyperbolic structure $\tau$ on $\D_0 M$, to
immersions of the entire model. Recall from \S \ref{subsec: models} 
that given conformal structure $\tau$ on $\D_0 M$ with  
$R$-bounded combinatorics, $\BM_M[\tau]$ is the model metric obtained via 
gluing $\BM_M$ and $I$-bundles $\BM_F[\sigma_{\mu_F},\tau|_F]$ 
along identical boundary components for every component
$F$ of $\D_0 M$. To simplify the notations, we use the shorter
notation $\BM_F[\mu,\tau]$ instead of $\BM_F[\sigma_{\mu_F},\tau|_F]$.

\begin{theorem}\label{bilipschitz embedding of models}
  Given a decorated 3-manifold $M$, $\forall R>0 \ \exists L'
        \ \forall d \ \exists L, K, D$ such that the following
        holds.  Suppose
\begin{enumerate}
  \item $\tau$ is a hyperbolic structure on $\D_0 M$ with $R$-bounded combinatorics and all heights at least $D$,
  \item $\rho:\pi_1(M)\to\pi_1(N)$ is an $L$-injective homomorphism for a hyperbolic $3$-manifold $N$,
  \item for every component $F\subset \D_0 M$, and every point $\sigma$ 
  of the Teichm\"uller geodesic connecting $\sigma_{\mu_F}$ and $\tau|_F$  whose distance from $\sigma_{\mu_F}$ is at
    least $d$, the induced representation 
$$\rho_{F} = \rho \circ (F\hookrightarrow M)_*$$
is
    $L'$-injective, with respect to $\sigma$, and 
  \item there is a map $g_\tau:\D_0 M\to N$ whose restriction to each
    component $F$ of $\D_0 M$ is a lifted $R$-bilipschitz embedding of
    $\tau|_F$ in the homotopy class determined by $\rho$, 
\end{enumerate}
    then there exists
    a locally $K$-bilipschitz immersion $f:\BM_M[\tau] \to N$ which
	restricts on $\D_0\BM_M[\tau]$ to $g_\tau$ and on every
	toroidal component of boundary of $\BM_M[\tau]$, it is a covering map onto 
	the boundary of a component of the thin part of $N$.
\end{theorem}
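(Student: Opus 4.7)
The immersion $f\colon \BM_M[\tau]\to N$ will be assembled from a core embedding $f_M\colon \BM_M\to N$ together with, for each boundary component $F\subset\D_0 M$, a locally bilipschitz immersion of the $I$-bundle $\BM_F[\sigma_{\mu_F},\tau|_F]$. The core embedding is produced by Theorem \ref{thick lifted embedding}, and the $I$-bundle pieces are built piecewise via Theorem \ref{interpolations} along the Teichm\"uller geodesic from $\sigma_{\mu_F}$ to $\tau|_F$.

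To produce the core embedding, apply Theorem \ref{thick lifted embedding} (or Theorem \ref{thick lifted embeddings of I-bundles} when $M$ is an $I$-bundle) to $\rho$ with the marking $\nu_\tau = \bigcup_F \nu_{\tau|_F}$ on $\D_0 M$. This marking inherits $R$-bounded combinatorics and heights at least $D$ from $\tau$, and its $\rho$-translation length is bounded in terms of $R$ and the topology of $\D_0 M$, since each $\nu_{\tau|_F}$ has bounded length on $\tau|_F$ and $g_\tau|_F$ is a lifted $R$-bilipschitz embedding. Provided $L$ and $D$ are sufficiently large, this yields a lifted $K_0$-bilipschitz embedding $f_M\colon\BM_M\to N$ in the homotopy class of $\rho$, with toroidal boundaries mapped to boundaries of thin parts as required. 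In the $I$-bundle case, the virtual fibration obstruction in Theorem \ref{thick lifted embeddings of I-bundles} is ruled out by Lemma \ref{embedding implies fibration} applied to $g_\tau|_F$, which realizes $\rho_F$ as a composition $F\to F'\hookrightarrow N$ of a finite cover followed by an embedding.

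Next, fix a boundary component $F\subset\D_0 M$ and apply Theorem \ref{interpolations} with parameters $R$ and $k_0 = \max(K_0, R)+1$ to obtain constants $K_1, L_1, D_1, d_0$; we set $L' = L_1$, which depends only on $R$ and $M$ and not on $d$. Choose waypoints $\sigma_0^F = \sigma_{\mu_F}, \sigma_1^F,\ldots,\sigma_{n_F}^F = \tau|_F$ along the Teichm\"uller geodesic with consecutive distances in $[d_0, 2d_0]$, and construct lifted bilipschitz embeddings $h_i^F\colon\sigma_i^F\to N$ at each waypoint: take $h_0^F = f_M|_F$ and $h_{n_F}^F = g_\tau|_F$, and for intermediate waypoints apply Theorem \ref{interpolations}(1) to $\rho_F$ with $\ep_0 = \mu_F$ and $\ep_1 = \nu_{\tau|_F}$. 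Hypothesis (3) provides the required $L_1$-injectivity of $\rho_F$ at waypoints of distance at least $d$ from $\sigma_{\mu_F}$; for the bounded number of closer waypoints, the needed injectivity instead comes from the $L$-injectivity of $\rho$, using that an element of $\pi_1(F)$ of word length at most $L_1$ with respect to $\sigma_i^F$ has bounded word length in $\pi_1(M)$ (with constants depending on $d$, $d_0$, and $M$), which determines how large $L$ must be chosen. The virtual-fibration-not-fibration case of Theorem \ref{interpolations}(1) is again ruled out by Lemma \ref{embedding implies fibration} via $f_M|_F$.

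With the waypoint embeddings in place, Theorem \ref{interpolations}(2) yields locally $K$-bilipschitz immersions of each segment $\BM_F[\sigma_i^F,\sigma_{i+1}^F]\to N$ extending $h_i^F$ and $h_{i+1}^F$, and Theorem \ref{interpolations}(3) ensures they carry consistent orientations along the geodesic, so their concatenation is a locally $K$-bilipschitz immersion of the entire $I$-bundle $\BM_F[\sigma_{\mu_F},\tau|_F]\to N$; attaching these to $f_M$ along each boundary component produces the desired $f\colon \BM_M[\tau]\to N$. The main obstacle is the nested quantifier structure: because $L'$ must be fixed before $d$, the injectivity at close-in waypoints cannot be supplied by hypothesis (3) and must instead be extracted from the stronger $L$-injectivity of $\rho$ via a word-length comparison across Teichm\"uller distance at most $d$, which is feasible precisely because only finitely many close-in waypoints arise. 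A secondary technical point is the self-consistency of bilipschitz constants when combining Theorem \ref{interpolations}(1) outputs with the $k_0$ hypothesis of (2), which is handled by choosing $k_0$ sufficiently large at the outset.
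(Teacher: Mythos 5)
Your construction of the end pieces is essentially the paper's: a length bound on $\mu(M)$ from Theorem \ref{thick lifted embedding}, waypoint lifted embeddings from Theorem \ref{interpolations}(1) (with the virtual-fibration alternative excluded by Lemma \ref{embedding implies fibration} applied to $g_\tau|_F$), block immersions from part (2), and consistency between consecutive blocks from part (3). Your treatment of the initial segment of length $d$ (extra waypoints, with injectivity there extracted from $L$-injectivity of $\rho$ for $L$ depending on $d$) differs from the paper, which instead starts the waypoints at distance exactly $d$ from $\sigma_{\mu_F}$ and absorbs the segment $[\sigma_{\mu_F},\sigma_1]$ into a $d$-dependent bilipschitz reparametrization of the block; both are compatible with the quantifier structure ($L,K,D$ may depend on $d$), so this part is fine.

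The genuine gap is the last sentence: ``attaching these to $f_M$ along each boundary component produces the desired $f$.'' Matching boundary values (you set $h_0^F=f_M|_F$) makes the glued map well defined and continuous, but it does not make it an immersion along the junction surface $F$ between the core $\BM_M$ and the first end block. For that you need the end block to leave the image of $f_M$ on the side opposite to the image of the core collar, i.e.\ an orientation/no-folding consistency statement at the core--end junction. Theorem \ref{interpolations}(3), which you invoke for consistency, only applies to two $I$-bundle blocks over the same surface $F$ with three waypoints spaced along a common thick Teichm\"uller geodesic; the core piece is not such a block, and no result in the paper supplies the analogous separation statement for a lifted embedding of $\BM_M$ versus an end block. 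This is exactly why the paper's proof does \emph{not} build a core embedding and glue: Theorem \ref{thick lifted embedding} is used only to get a Lipschitz map and hence the length bound $\ell_\rho(\mu(M))\le R_1$, and the extension of the end immersions $\Phi_F$ across the core is obtained by a substantial compactness argument (algebraic limit via Theorem \ref{eventually faithful convergence and bounded combinatorics}, geometric limit $N_G$, properness of the limiting end maps, Lemma \ref{immersion}, a Waldhausen-type homeomorphism statement, and a $C^1$ partition-of-unity interpolation transferring the limit homeomorphism back to the approximates, using that being an immersion is open in $C^1$). In the limit the assembled map is an honest homeomorphism onto the complement of the cusps of $N_\rho$, so the consistency you need is automatic there and is then pulled back; your proposal has no substitute for this step, so as written the claimed global immersion is not justified.
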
 

\begin{proof}
Fix $M$ and $R>0$ for the remainder of the proof. 

\subsection*{Lipschitz maps and homotopies.} Using Theorem
\ref{thick lifted embedding} and by letting 
 \[ \ep=\nu_\tau = \bigcup_{F\subset \D_0 M}\nu_{\tau|F},\]
we can choose $L_1, K_1$ such that if
$\rho$ is $L_1$-injective there is a $K_1$-Lipschitz map of 
$\BM_M$ in the homotopy class of $\rho$. 
(That theorem provides a stronger conclusion that the map 
is a lifted bilipschitz embedding, but at this stage we 
do not need that.) In particular this gives upper bounds 
$R_1\ge R$ on the lengths $\ell_\rho(\mu(M))$ in $N$.

\subsection*{Models of the ends.} 
Let $F$ be a component of $\D_0 M$. Since restriction of $g$
to $F$ is a lifted embedding, we can use lemma \ref{embedding implies fibration}
and see that if $\rho\circ(F\hookrightarrow M)_*$ is a 
virtual fibration then it must be induced by a map
that covers a fiberation. With this information, we apply Theorem
\ref{interpolations} part (1) to $\rho|_{\pi_1(F)} $ with markings
$\mu_F$ and $\ep|_F$, the length bound $R_1$ yields $K_2$, $L_2$ and
$D_2$ such that, assuming $D$ is at least $D_2$, 
for each $\sigma\in[\sigma_{\mu_F},\tau|_F]$ such that 
$\rho\circ(F\hookrightarrow M)_*$ is
$L_2$-injective with respect to $\sigma$, we have a lifted
$K_2$-bilipschitz embedding $f_\sigma: \sigma\to N$ in the
homotopy class determined by $\rho$. We may assume that $K_2>R$, 
and we choose $f_{\tau|F} = g_\tau|_F$.
From now on we will assume that $L' \ge L_2$.

Now let $K_3,L_3, D_3$ and $d_0$ be the 
constants given by Theorem \ref{interpolations} when the length bound
is $R_1$ as above, and the bilipschitz bound $k_0$ is equal to $K_2$.
We assume $L'\ge L_3\ge L_2$, and fix any $d>0$. Now as explained above
hypothesis (3) implies via Theorem \ref{interpolations} part (1) 
that a lifted $K_2$-bilipschitz map $f_\sigma$ exists for
each $\sigma\in[\sigma_{\mu_F},\tau|_F]$ whose $\CT(F)$-distance 
from $\sigma_{\mu_F}$ is at least $d$. 

Choose a sequence of points $\sigma_1, \sigma_2 , \ldots, \sigma_m =
\tau|_F$ in $[\sigma_{\mu_F},\tau|_F]$ with 
$\sigma_{\mu_F}<\sigma_1<\sigma_2<\cdots<\sigma_m$,
in the order given by the Teichm\"uller geodesic, whose successive distances in
$\CT(F)$ lie in $[d_0,2d_0]$, with $d_\CT(\sigma_{\mu_F},\sigma_1)=d$.
Then part (2) of Theorem \ref{interpolations} implies that,
if $D\ge D_3$ and
for every $i=1,\ldots, m-1$, there is a locally $K_3$-bilipschitz immersion
$\BM_F[\sigma_i,\sigma_{i+1}] \to N$ that extends $f_{\sigma_i}$ and
$f_{\sigma_{i+1}}$. Furthermore, part (3) of the same theorem shows
these immersions have consistent orientations. Thus they assemble
to an immersion of $\BM_F[\sigma_1,\sigma_m]$ into $N$.

The distance between $\sigma_1$ and $\sigma_{\mu_F}$ 
is $d$ and $\sigma_m=\tau|_F$, so there is 
a bilipschitz map $\Psi$
between $\BM_F[\mu,\tau]$ and $\BM_F[\sigma_1,\sigma_m]$. 
Thus composing our immersion with $\Psi$ and
for every component $F$ of $\D_0 M$, we get a
locally $K_4$-bilipschitz immersion 
  \[ \Phi_F: \BM_F[\mu, \tau] \to N \]
in the homotopy class determined by $\rho$ whose restriction to the
$\tau$-side of $\BM_F[\mu,\tau]$ is identical
to $g_\tau|_F$, with $K_4$ depending on $M, R$ and the constant $d$. 

\subsection*{Extension to model core}
To finish the theorem, we claim there exist $D_5,L_5$ and $K_5$
such that, provided hypothesis (3) holds for the same $L'$ as above,
(1) and (4) hold with $D\ge D_5$, and (2) holds with $L\ge L_5$, we
have a locally $K_5$-bilipschitz immersion $\BM_M[\tau]\to N$
in the homotopy class of $\rho$ and extending the maps $\Phi_F$.

Suppose, by contradiction, that for a sequence $\rho_n:\pi_1(M)\to\pi_1(N_n)$, hyperbolic structures $\tau(n)$ on $\D_0 M$ and
$K_5(n),L_5(n)$ and $D_5(n)$ all going to $\infty$, the
hypothesis of the theorem holds and the conclusion fails. 

Theorem \ref{eventually faithful convergence and bounded combinatorics} tells us that (up to subsequence and
conjugation) $(\rho_n)$ converges to a discrete and faithful
$\rho:\pi_1(M)\to \PSL_2(\BC)$, where $N_\rho = \Hyp^3/\rho(\pi_1(M))$
is a hyperbolic structure on $M$ with simply degenerate ends. Passing
to a further subsequence, we can assume the sequence of manifolds
$(N_n)$ converges to a geometric limit $N_G$ which is covered by
$N_\rho$. 
 
When $n$ is large enough so that the constants 
$K_5(n),L_5(n)$ and $D_5(n)$ are bigger than the 
corresponding constants in the previous steps, for 
every component $F$ of $\D_0 M$, we have a locally 
$K_4$-bilipschitz immersion
\[ \Phi_{F,n}: \BM_F[\mu,\tau(n)] \to N_n \]
in the homotopy class determined by $\rho$, whose 
restriction to the $\tau(n)$-boundary is identical 
to the corresponding restriction of $g_{\tau(n)}$.

Passing to further subsequences, we assume the sequence of maps
$(\Phi_{F,n})$ converges to a locally $K_4$-bilipschitz immersion 
\[ \Phi_{F,G}:\CE_F \to N_G, \]
where $\CE_F$ is a geometric
limit of the manifolds $\CE_{F,n} = \BM_F[\mu,\tau(n)]$ as
  $n\to\infty$.  
In fact, this geometric limit is identical to the subset of the 
universal curve over an infinite Teichm\"uller ray $[\sigma_{\mu_F},\lambda|_F)$
where $\lambda|_F$ is a projective measured lamination on $F$. 
Note that by lemma \ref{limits of bounded combinatorics bind}, 
since $(\tau(n))$ have $R$-bounded combinatorics and heights
tending to infinity, after passing to a subsequence, the corresponding 
markings $\nu_n=\nu_{\tau(n)}$ converge to a full binding lamination
with the same support as $\lambda = \bigcup_F\lambda|_F$. 

 For every component $F$ of $\D_0 M$, the immersion $\Phi_{F,G}$
 lifts to 
\[ \Phi_{F,\infty}:\CE_F \to N_\rho,\]
in the homotopy class determined by the restriction of $\rho$ to
$\pi_1(F)$. We claim that $\Phi_{F,\infty}$ is proper. To see this,
consider any sequence of points $p_n$ going to infinity
in $\CE_F$. Each $p_n$ is in a fiber of the
  surface bundle whose geometry is determined by a point in the
  Teichm\"uller ray with endpoint $\lambda|_F$. Hence each such
  fiber contains a curve $\gamma_n$ passing through $p_n$, such that the lengths of
  $\gamma_n$ is bounded and as curves in $F$ they converge to the
  lamination $\lambda|_F$. 
Since $\lambda$ is binding, $\lambda|_F$ is a filling Masur domain 
  lamination and as explained in \S \ref{subsec: geometry of ends}, 
  the sequence $(\gamma_n)$ must correspond to infinitely many
  homotopy classes in $M$; 
  the length bound for $(\gamma_n)$ in $\CE_F$ gives a length
  bound for their $\Phi_{F,\infty}$-images in $N_\rho$ and
  therefore these images cannot remain in a compact subset of 
  $N_\rho$. This proves the claim, and moreover that $\lambda_F$ is the
  ending lamination of the end $\CE'_F$ of $N_\rho$ associated to $F$. (When
  $M$ is an $I$-bundle there are two possible such ends of $N_\rho$ -- we shall
  return to this point later.)

We further claim that $\Phi_{F,\infty}$
restricted to a neighborhood of the end of
$\CE_F$ is a homeomorphism to its image, a neighborhood of the end
$\CE'_F$. This is an immediate
consequence of the following lemma:

\begin{lemma}\label{immersion}
Let $f:U \to V$ be a proper immersion, where both $U$ and $V$ are
copies of $F\times[0,\infty)$. Then there are compact subsets of $U$
  and of $V$ whose complements are homeomorphic via $f$
(In particular $f$ must be a homotopy equivalence). 
\end{lemma}

\begin{proof}
For a subset $J$ of $[0,\infty)$ let $U_J$ denote $F\times J$ with
respect to the product structure of $U$, and let $U_t =
U_{\{t\}}$. Define $V_J$ and $V_t$ similarly.

Since $f$ is a proper immersion, it is a covering map over $V\setminus
f(\boundary U)$. Let $V'$ be the unbounded component of 
$V\setminus f(\boundary U)$. Since $f$ is proper, $f^{-1}(V')$ is
nonempty, and hence $V'\subset f(U)$. Choose $t$ such that
$V_{[t,\infty)}\subset V'$. Now $V_t$ is covered by $H_t =
  f^{-1}(V_t)$, so $H_t$ is incompressible in $U$: 
an essential compressing disk would map under $f$ to
  an essential compression of $V_t$.
It follows, since $U\approx F\times[0,\infty)$, that $H_t$ is a union of copies of $F$,
each isotopic to a level surface $U_s$. Hence
$f^{-1}(V_{[t,\infty)})$, being a cover of $V_{[t,\infty)}$, is a disjoint union of homeomorphic copies
    of it. All of them are unbounded in $U$ since $f$ is
      continuous, and it follows that there can only be one of
      them. We conclude that $f$ is a homeomorphism from
      $f^{-1}(V_{[t,\infty)})$ to $V_{[t,\infty)}$, as desired. 
\end{proof}

Let $\BM_M[\lambda]$ be obtained from gluing $\CE_F$ to $\BM_M$
along the identical boundary components for every component $F$ of
$\D_0 M$. (This is just a simple generalization of the construction of
the models $\BM_M[\ep]$ in \S\ref{subsec: models}.) 
Since the maps $\Phi_{F,\infty}$, for all components $F$ of $\D_0 M$,
are in the homotopy class determined by $\rho$, we may extend them 
across $\BM_M$ to a map
\[ H_\infty: \BM_M[\lambda] \to N_\rho \]
in this homotopy class.
Furthermore, we may
assume that, on every toroidal component of $\D \BM_M$, $H_\infty$ is a
homeomorphism onto the boundary of the associated cusp of $N_\rho$.

We claim now that there is a compact core $C_M\subset
\BM_M[\lambda]$ such that $H_\infty$ maps $C_M$ by a homotopy
equivalence to its image $C_N$, and is a homeomorphism in its
complement. When $M$ is not a product $F\times I$, this is a consequence of
the fact that $\Phi_{F,\infty}$ is a homeomorphism on an end of each
$\CE_F$, since the map must take distinct ends to distinct ends (being
a proper homotopy equivalence). When
$M$ is $F\times I$ there is no topological obstruction to mapping both
ends to a single end. However, we know that the two components of $\lambda$
are distinct filling laminations (using the bounded combinatorics
condition on the sequence $\tau(n)$). These laminations map to the
ending laminations of the ends of $N_\rho$, which must therefore be
distinct ends. 

It then follows, applying a generalization of a theorem of
Waldhausen \cite{Wa, Jaco, Tu},
that $H_\infty|_{C_M}$ is homotopic to a homeomorphism
$C_M\to C_N$, by a 
homotopy that does not move any point on the boundary. As a result
$H_\infty$ is homotopic to a homeomorphism $\Phi_\infty$ onto the
complement of cusps of $N_\rho$ with a homotopy supported on the compact
subset $C_M$. We may further assume $\Phi_\infty$ is
$K_5$-bilipschitz using the fact that it is already $K_4$-bilipschitz
on the ends.

To complete the proof we need an interpolation argument to patch
together the homeomorphism $\Phi_\infty$ in the limit with the partial
immersions $\Phi_{F,n}$ in the sequence to obtain uniform global
immersions, and thus contradict our choice of sequence.

For brevity denote $\BM_n = \BM_M[\tau(n)] $ and
$\BM_\infty = \BM_M[\lambda]$. The geometric convergence of
$\BM_n\to\BM_\infty$ and $N_n\to N_G$ gives us comparison maps
$\sigma_n$ and $\kappa_n$ which fit into the following diagram:
\begin{equation*}
\xymatrix{
\BM_n  \ar[dd]^{Q_n} & \BM_\infty \ar[dr]^{\Phi_\infty}
\ar[l]^{\sigma_n} & \\
&            &     N_\rho \ar[dl]^{\Pi} \\
N_n & N_G \ar[l]^{\kappa_n} & 
}
\end{equation*}
and which converge in $\CC^{1,\alpha}$, for every $\alpha<1$, 
to isometries on larger and larger
compacta in $\BM_\infty$ and $N_G$, respectively. Recall from \S
\ref{subsec: models} that the models $\BM_n$ are equipped with 
Riemannian metrics on the gluing regions and therefore the limit
$\BM_\infty$ is also equipped with a Riemannian metric outside of
$C_M$. The maps $Q_n$ are to
be defined, and will be our desired immersions for $n$ large enough.

The map $\Phi_\infty$ restricted to each component $\CE_F$ of
$\BM_\infty\setminus \BM_M$ is, by construction, the lift of the 
limit (in $\CC^1$, on compact sets) of
$\kappa_n^{-1} \circ \Phi_{F,n}\circ \sigma_n$ -- equivalently
$\Pi\circ\Phi_\infty|_{\CE_F}$ is the limit of 
$\kappa_n^{-1} \circ \Phi_{F,n}\circ \sigma_n$. 

On the overlap regions $C_M \intersect \CE_F$ in $\BM_\infty$, we can
therefore use a partition of unity to interpolate between the maps
$\kappa_n^{-1}\circ \Phi_{F,n}\circ\sigma_n$ and $\Pi\circ \Phi_\infty$, to obtain maps $\Psi_n$
which converge in $\CC^1$ to $\Pi\circ\Phi_\infty$ on  compact sets, but
which are eventually equal to
$\kappa_n^{-1}\circ\Phi_{F,n}\circ\sigma_n$ on any bounded subset of 
the exterior of $C_M$.
Since the property of being an immersion is open in $\CC^1$, we may
conclude that $\Psi_n$ is an immersion on each compact set in
$\BM_\infty$,  for sufficiently large $n$.

Now we can define $Q_n$ by letting it take the values $\Phi_{F,n}$ on
each component of the exterior of $\sigma_n(C_M)$ in $\BM_n$, and
letting $Q_n =   \kappa_n \circ  \Psi_n \circ
\sigma_n^{-1}$ on $\sigma_n(C_M)$. This is an immersion in the
homotopy class determined by $\rho_n$, it satisfies
uniform bilipschitz bounds,  restricts to $g_{\tau(n)}$ on $\D_0\BM_M[\tau(n)]$,
and restricts to a covering of boundary of a component of the thin
part of $N$ on every toroidal component of $\D\BM_M[\tau(n)]$. This
contradicts the assumption and proves the theorem. 
\end{proof}


\subsection{Consistent orientation of immersions}\label{subsec: consistent orientation}
Suppose $X$ is an $(\CM,R)$ gluing with the collection $\Xi$ of pieces
and gluing involution $\Psi$. For a component $F\subset\D_0M$ of a piece
$M$ in $\Xi$, recall that $\mu_F$ is the decoration on $F$ and let $\nu_F$
denote the $\Psi$-image of the decoration of the piece $M'$ adjacent along
$F$. On $F$ we choose the hyperbolic structure $\tau_F$ which is the
midpoint of the Teichm\"uller geodesic $[\sigma_{\mu_F},\sigma_{\nu_F}]$.
The the union of all these hyperbolic structures on $\D_0 \Xi$ is
invariant under $\Psi$.

In addition assume $\rho:\pi_1(X)\to\pi_1(N)$ is a
homomorphism with $N$ a hyperbolic 3-manifold and 
there is a map $g_\tau:\D_0\Xi \to N$ which on
every component $F$ restricts to a lifted $R$-bilipschitz
embedding $g_\tau|_F:\tau_F\to N$ in the homotopy class
of $\rho$, and has the property that $g_\tau \equiv g_\tau\circ \Psi$.
Finally assume that with respect to each of the pieces
$M\subset\Xi$,
hyperbolic structure $\tau_M = 
\bigcup_{F\in\D_0 M} \tau_F$, and map $g_\tau|_{\D_0 M}$, the induced
homomorphism $\rho_M = \rho \circ (M\hookrightarrow X)_*$ satisfies the 
hypothesis and therefore conclusion of Theorem 
\ref{bilipschitz embedding of models} for uniform 
constants $R, L', d, L, K$ and $D$. 

Then we claim that the obtained 
$K$-bilipschitz immersions $f_M:\BM_M[\tau_M] \to N$ will have 
consistent orientation. This is a consequence of part (3) of 
Theorem \ref{interpolations}. More precisely, using the
notation in the above proof, in the process of the construction 
of the map $f_M$, we chose a point $\sigma_{m-1}$ in the
Teichm\"uller geodesic $[\sigma_{\mu_F},\tau_F]$ whose 
$\CT(F)$-distance to $\tau_F$ is in $[d_0,2d_0]$. Then we have 
a lifted $K_2$-bilipschitz embedding $\sigma_{m-1}\to N$
in the homotopy class of $\rho$ and the global map $f_M$
restricts to an immersion 
\[ \BM_F[\sigma_{m-1},\tau_F]\to N \]
which extends $g_\tau|_E$ and the lifted embedding of $\sigma_{m-1}$. 

If $M$ and $M'$ are adjacent along $E$, then similarly
(and after translating via $\Psi$) we have a point
$\sigma_{m-1}'$ in the Teichm\"uller geodesic 
$[\tau_F, \sigma_{\nu_F}]$, and we have a lifted 
$K_2$-bilipschitz embedding $\sigma_{m-1}'\to N$
in the homotopy class of $\rho$. Also $f_{M'}$
(after an appropriate translation with $\Psi$) restricts to 
an immersion
\[ \BM_F[\tau_F,\sigma_{m-1}']\to N\]
that extends $g_\tau|_E$ and the lifted embedding of $\sigma_{m-1}'$.
Now we apply part (3) of Theorem \ref{interpolations}
to see these immersions have to have consistent orientations
and therefore $f_M$ and $f_{M'}$ have consistent orientations. 

 As a
consequence of this and the fact that $\BM_X$ is isometric to
the union of models $\BM_M[\tau_M]$ for $M\subset\Xi$ 
with boundary identifications given by $\Psi$ and
the assumption that the lifted embeddings of the midpoint
surfaces for adjacent pieces are identified in $N$, 
i.e, $g_\tau \equiv g_\tau\circ\Psi$, we obtain a 
locally bilipschitz immersion $\BM_X\to N$ with a bilipschitz
constant that depends only on $\CM$ and the given constants.


\section{Bilipschitz models for geometrically finite manifolds}\label{sec:bilipschitz models for convex cocompact}

In this section we state and prove that our model metrics 
are uniformly good for describing geometrically finite 
structures with {\em $R$-bounded combinatorics}. The 
theorem can be generalized to a bigger class of hyperbolic
structures but for our application, we need it only for 
certain cases described below. 

Let $M$ be a decorated manifold. Recall from Ahlfors-Bers
Theory that given a hyperbolic structure $\tau$ on $\D_0 M$
there exists a unique hyperbolic structure on $M$, so that
for every component $E$ of $\D_0 M$, the end associated to $E$
is convex cocompact and the conformal structure at infinity
is given by $\tau|_E$, the restriction of $\tau$ to $E$. 
We denote this hyperbolic structure by $Q(M,\tau)$. These 
provide all {\em geometrically finite} hyperbolic structures
on $M$ with no rank 1 cusp. We use the definition of bounded
combinatorics for hyperbolic structures in \S 
\ref{subsec: teichmuller} and we say $Q(M,\tau)$ has 
$R$-bounded combinatorics if $\tau$ does.
\begin{theorem}\label{models for decorated manifolds}
  	Suppose $M$ is a decorated manifold, and $R>0$. There exist
  	$K, D$ such that if $Q(M,\tau)$ has $R$-bounded combinatorics
  	and heights $\ge D$, there exists a $K$-bilipschitz 
	embedding $\BM_M[\tau] \to Q(M,\tau)$, in the preferred
	isotopy	class of the embeddings of $M$ in $Q(M,\tau)$, and
	the image of the embedding is the complement of the cusps
	in the convex core of $Q(M,\tau)$.
\end{theorem}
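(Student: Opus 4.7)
The plan is to verify the four hypotheses of Theorem~\ref{bilipschitz embedding of models} for the discrete faithful representation $\rho : \pi_1(M) \to \pi_1(Q(M,\tau))$ underlying the geometrically finite structure, and then to promote the resulting locally bilipschitz immersion to a bilipschitz embedding onto the complement of the cusps in the convex core.

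Hypotheses (1) and (2) are immediate: (1) is the assumption on $\tau$, and faithfulness of $\rho$ makes it $L$-injective for every $L$. For hypothesis (3), fix a component $F \subset \D_0 M$ and a point $\sigma$ on the Teichm\"uller geodesic $[\sigma_{\mu_F},\tau|_F]$ with $d_{\CT(F)}(\sigma_{\mu_F},\sigma) \ge d$. If $F$ is incompressible, $\rho_F$ is faithful and the conclusion is automatic. If $F$ is compressible, $\ker\rho_F$ is the normal closure of the meridian set $\Delta(F)$. The condition (\ref{eqn: mu nearest}) in the definition of $R$-bounded combinatorics places $\mu_F$ within $R$ of a nearest point of $\Delta(F)$ to $\nu_\tau$; combined with the Masur--Minsky quasi-convexity of $\Delta(F)$ in $\CC(F)$ and the fact that $\sigma\mapsto \nu_\sigma$ fellow-travels a $\CC(F)$-geodesic from $\mu_F$ to $\nu_\tau$ (recalled in \S\ref{subsec: teichmuller}), this yields a linear lower bound
\[
d_F(\nu_\sigma, \Delta(F)) \ge c\, d_{\CT(F)}(\sigma_{\mu_F},\sigma) - C
\]
with $c,C$ depending only on $R$ and the topology of $F$. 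The Minsky--Rafi length estimates translate this into a linear lower bound on the hyperbolic length in $\sigma$ of any simple meridian, and thick-part commensurability of hyperbolic and word length (using $\epsilon_0$-thickness of $\sigma$) upgrades it to a lower bound on the word length of any meridian conjugate. Since every non-trivial element of $\ker\rho_F$ is a product of such conjugates, this yields $L'$-injectivity of $\rho_F$ with respect to $\sigma$ once $d$ is sufficiently large relative to $L'$.

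For hypothesis (4) we take $g_\tau$ to be the Epstein envelope-of-horospheres map. Identifying $\tau$ with the conformal boundary $\D_\infty Q(M,\tau)$ via Ahlfors--Bers and scaling horoballs equivariantly by the Poincar\'e density, one obtains a smooth $\rho(\pi_1(M))$-equivariant surface in $\Hyp^3$ which descends to an embedded surface in $Q(M,\tau)$ parallel to (and just exterior to) the convex core boundary. Epstein's classical estimates make this map uniformly bilipschitz on the $\epsilon_0$-thick part, so its restriction to each $\tau|_F$ is an honest $R$-bilipschitz embedding in the required homotopy class, providing the desired $g_\tau|_F$.

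With all hypotheses verified, Theorem~\ref{bilipschitz embedding of models} produces a locally $K$-bilipschitz immersion $f : \BM_M[\tau] \to Q(M,\tau)$ in the preferred homotopy class. To upgrade $f$ to the required embedding, lift to universal covers: since $\rho$ is an isomorphism, $f$ lifts to a $\pi_1(M)$-equivariant local diffeomorphism $\tilde f : \widetilde{\BM_M[\tau]} \to \Hyp^3$. Each boundary component of the domain maps under $\tilde f$ to a lift of an Epstein surface or rank-$2$ cusp torus, which is a properly embedded surface separating $\Hyp^3$ into a \emph{core side} (identified with the universal cover of the convex core minus cusps) and an exterior region. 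A standard separation argument, using that a local diffeomorphism extending these boundary embeddings into the core side cannot fold, shows that $\tilde f$ is a diffeomorphism onto the core side. Equivariantly descending, $f$ is a $K$-bilipschitz diffeomorphism onto the complement of the cusps in the convex core. The main obstacle is hypothesis (3) for compressible components: it requires carefully combining quasi-convexity of meridians, the precise form of the bounded-combinatorics condition, and thick-part length estimates to guarantee that short elements of $\pi_1(F)$ are never killed by $\rho_F$ as $\sigma$ moves deep along the Teichm\"uller geodesic.
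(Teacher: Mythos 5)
Your overall route is the same as the paper's: verify the hypotheses of Theorem \ref{bilipschitz embedding of models} for the representation underlying $Q(M,\tau)$, take for $g_\tau$ a uniformly bilipschitz parametrization of (a surface parallel to) the convex core boundary, and then upgrade the resulting locally bilipschitz immersion to an embedding using that it is a homotopy equivalence with embedded boundary. However, there is a genuine gap in your verification of hypothesis (3) for a compressible component $F$. Your curve-complex estimate $d_F(\nu_\sigma,\Delta(F))\ge c\,d_{\CT(F)}(\sigma_{\mu_F},\sigma)-C$ and the resulting length lower bound apply only to \emph{simple} closed curves that are meridians. The final inference, ``since every non-trivial element of $\ker\rho_F$ is a product of conjugates of meridians, $\rho_F$ is $L'$-injective,'' is not valid: a product of conjugates of long elements can be arbitrarily short, so normal generation of the kernel by $\Delta(F)$ gives no lower bound on the length of a general kernel element. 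The missing ingredient is the surgery argument via the Loop Theorem (this is exactly how the paper's Proposition \ref{L-injectivity for the boundary} closes the same gap): if $\beta\in\pi_1(F)$ has $\sigma$-length at most $L'$ and $\iota(\beta)=1$, one surgers the $\sigma$-geodesic representative of $\beta$ at its self-intersections to produce finitely many \emph{simple} loops of $\sigma$-length at most $L'$, at least one of which is null-homotopic in $M$, i.e.\ a meridian; this contradicts the meridian bound and yields $L'$-injectivity. Your preliminary estimate (via quasiconvexity of $\Delta(F)$ and fellow-traveling, which in effect re-proves the stability needed to transport (\ref{eqn: mu nearest}) from $\nu_\tau$ to $\nu_\sigma$) is fine; only this last reduction needs to be replaced.

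A secondary point: the paper takes $g_\tau$ to be the Bridgeman--Canary parametrization of the convex core boundary itself (the extension of the nearest-point retraction), which is what makes the final image exactly the complement of the cusps in the convex core. Your Epstein envelope-of-horospheres surface sits strictly outside the convex core, so even granting its embeddedness and uniform bilipschitz control in the thick part (which is not purely ``classical'' and should be justified at the stated level of uniformity), the image of your embedding would properly contain the convex core minus cusps, and you would still need a final adjustment (or the Bridgeman--Canary map) to match the precise conclusion of the theorem.
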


\begin{proof}
By definition of $R$-bounded combinatorics, $\tau$ is in the 
$\ep_0$-thick part of the Teichm\"uller space of $\D_0 M$. By a 
theorem of Bridgeman-Canary \cite{BC03}, there exists a 
bilipschitz parametrization $g_\tau:\tau \to Q(M,\tau)$ of
the boundary of the convex core of $Q(M,\tau)$ in the preferred
isotopy class (restricted to $\D_0 M$). (This map is induced
from the extension to the boundary at infinity of the nearest-point 
projection map to the convex core.) The bilipschitz constant 
depends on $\ep_0$, and enlarging $R$ if necessary we may assume 
it is bounded by $R$. When heights of $\tau$ are sufficiently
large, it is easy to see that the hypothesis (1), (2) and (4) of 
Theorem \ref{bilipschitz embedding of models} for the induced 
homomorphism $\rho:\pi_1(M)\to\pi_1(Q(M,\tau))$ and $g_\tau$
are satisfied. Obviously $\rho$ is injective and $g_\tau$ is
an embedding. We will show in the proposition \ref{L-injectivity 
for the boundary} that given $L'$, there exists $d$ such that 
for every component $F$ of $\D_0 M$ and $\sigma\in 
[\sigma_{\mu_F},\tau|_F]$ with $\CT(F)$-distance at least $d$ 
from $\sigma_{\mu_F}$, the homomorphism $F\hookrightarrow M$ is 
$L'$-injective with respect to $\sigma$. Hence the hypothesis
(3) of Theorem \ref{bilipschitz embedding of models} also holds
when the heights are sufficiently large. Therefore there exists
$D, K$ so that if heights of $\tau$ are at least $D$, then there
is a locally $K$-bilipschitz immersion $f:\BM_M[\tau] \to Q(M,\tau)$
which extends $g_\tau$ and restricted to every toroidal boundary
component of $\BM_M[\tau]$ is a covering map on the boundary of a 
component of the thin part of $Q(M,\tau)$. But $f$ is in the homotopy
class of $\rho$ and therefore is a homotopy equivalence, and 
also $g_\tau$ is an embedding. This
implies $f$ is an embedding and can be modified on the toroidal
boundary components so its image is the complement of
the rank 2 cusps in the convex core of $Q(M,\tau)$. Since 
restricted to the boundary, $f$ is in the preferred isotopy
class, it is a consequence of work of Waldhausen \cite{Wa} 
that $f$ is in the preferred isotopy class. This finishes
the proof of the theorem modulo the proof of the next proposition.
\end{proof}

\begin{proposition}\label{L-injectivity for the boundary}
  Given a decorated 3-manifold $M$ and $R,L'>0$, there exists $d$
  depending only on $R,L'$ and
  the topology of $\D M$, such that if
  $\sigma$ is a hyperbolic metric on a component $F$ of $\D_0 M$ with
  $R$-bounded combinatorics relative to $M$ and
  $d_{\CT(F)}(\sigma,\sigma_{\mu_F}) > d$, then the map
  $\iota:\pi_1(F)\to\pi_1(M)$ induced by the inclusion $F\hookrightarrow M$
  is $L'$-injective, with respect to $\sigma$.
\end{proposition}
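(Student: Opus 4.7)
The claim is trivial when $F$ is incompressible in $M$, since then $\iota$ is injective. Assume therefore that $F$ is a compressible component of $\partial_0 M$, so that $\Delta(F)\subset\CC(F)$ is nonempty, and fix generators of $\pi_1(F)$ subordinate to the short marking $\nu_\sigma$ as in \S\ref{subsec: approximately injective}. Since $\sigma$ is $\ep_0$-thick, $\nu_\sigma$ has bounded $\sigma$-length and each generator has bounded $\sigma$-translation length, so any nontrivial $g\in\pi_1(F)$ with $|g|\le L'$ is represented by a closed geodesic $\gamma$ on $\sigma$ of length at most $B=B(L',\ep_0)$. I argue by contradiction, supposing $\iota(g)=1$.

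Let $Y\subset F$ be the essential subsurface filled by $\gamma$; a standard thick-part estimate realizes $\partial Y$ by simple closed geodesics on $\sigma$ of length at most some $B'=B'(B,\ep_0)$, so $d_F(\partial Y,\nu_\sigma)\le C$ for a constant $C$ depending only on $B'$, $\ep_0$, and the topology of $F$. Because $Y$ is essential, $g$ is conjugate into $\pi_1(Y)$ and nontrivial there, yet $\iota(g)=1$, so the two-sided subsurface $Y\subset M$ is not $\pi_1$-injective. The Loop Theorem then produces an embedded essential disk $D\subset M$ with boundary $m\subset Y$ an essential simple loop; then $m\in\Delta(F)$, and since $m$ is either contained in the interior of $Y$ or isotopic to a component of $\partial Y$, we have $d_F(m,\partial Y)\le 1$. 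Hence
\[
  d_F(\Delta(F),\nu_\sigma)\;\le\; d_F(m,\nu_\sigma)\;\le\; C+1.
\]

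The $R$-bounded combinatorics of $\sigma$ relative to $M$ now gives $d_F(\mu_F,\nu_\sigma)\le d_F(\Delta(F),\nu_\sigma)+R\le C+R+1$, whereas the quasi-isometry between the $\ep_0$-thick part of $\CT(F)$ and $\CC(F)$ recalled in \S\ref{subsec: teichmuller} provides $d_F(\mu_F,\nu_\sigma)\ge\tfrac{1}{c}\,d_{\CT(F)}(\sigma,\sigma_{\mu_F})-c'$. Taking $d>c(C+R+1+c')$ then yields a contradiction with the hypothesis $d_{\CT(F)}(\sigma,\sigma_{\mu_F})>d$, completing the proof. The main subtlety is controlling the $\CC(F)$-position of the meridian $m$ extracted by the Loop Theorem: when $\gamma$ is non-simple, and in particular in the edge case where $\gamma$ fills $F$ so that $\partial Y=\emptyset$, one must use a more delicate tower or Dehn-lemma argument combined with the bounded intersection of $\gamma$ with $\nu_\sigma$ and the Masur-Minsky quasi-convexity of $\Delta(F)$ in $\CC(F)$ to guarantee that $m$ stays within bounded $\CC(F)$-distance of the short marking; this is the one genuine topological input beyond the combinatorial bookkeeping.
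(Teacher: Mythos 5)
Your estimate chain (bounded-length simple curve is within bounded $\CC(F)$-distance of $\nu_\sigma$; the inequality (\ref{eqn: mu nearest}) pushes $\Delta(F)$ far from $\nu_\sigma$ once $d_F(\mu_F,\nu_\sigma)$ is large; and $d_F(\mu_F,\nu_\sigma)$ is comparable to $d_{\CT(F)}(\sigma,\sigma_{\mu_F})$ by the thick-geodesic dictionary of \S\ref{subsec: teichmuller}) is exactly the right combinatorial skeleton, and it is the same one the paper uses. The problem is the topological reduction from a non-simple dying element to a simple one. You pass to the subsurface $Y$ filled by the geodesic $\gamma$ and apply the Loop Theorem to $Y\subset M$; this pins the resulting meridian $m$ near $\nu_\sigma$ only because $m$ lies in $Y$ and $\partial Y$ has controlled length. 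When $\gamma$ fills $F$ (perfectly possible, since $L'$ is arbitrary and $F$ has fixed genus), $\partial Y=\emptyset$ and the Loop Theorem applied to $F$ itself gives an essential compressing curve with no control whatsoever on its position in $\CC(F)$: it need not meet a neighborhood of $\gamma$, and quasiconvexity of $\Delta(F)$ does not help locate it near $\nu_\sigma$, since the issue is the distance from $\nu_\sigma$ to the \emph{entire} disk set, which is precisely what you are trying to bound. So the case you flag at the end as ``the one genuine topological input'' is not a technical remainder; it is the heart of the proposition, and your proposal defers it.

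The way the paper closes this gap is with a sharper form of the Loop Theorem: if the $\sigma$-geodesic representative of $\beta$ is nullhomotopic in $M$, then among the finitely many \emph{simple} closed curves obtained by surgery at the self-intersections of that geodesic, at least one is nullhomotopic in $M$. Such a surgered curve is a union of subarcs of the geodesic, hence has $\sigma$-length at most that of $\beta$, and therefore lies within the bounded distance $d_0(L')$ of $\nu_\sigma$ where you already know no meridian can live. This inherits the length bound regardless of whether $\gamma$ fills $F$, and the simple-curve case then finishes the argument exactly as in your first paragraph. If you want to keep your subsurface formulation, you would still need this surgery refinement (or an equivalent tower argument tracking that the embedded disk's boundary can be taken in a neighborhood of the singular curve) to handle $Y=F$; as written, the proof is incomplete at that point.
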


\begin{proof}
If $\alpha$ is an essential simple loop in $F$ of $\sigma$-length at most $L'$ then
its $\CC(F)$-distance from the associated marking $\nu_\sigma$ is
bounded by some $d_0$, a function of $L'$. The $R$-bounded
combinatorics condition on $\sigma$ implies
(see (\ref{eqn: mu nearest}) in
\S \ref{subsec: bounded combinatorics for decorated manifolds})
that 
$d_F(\nu_\sigma,\Delta(M)) > d_F(\nu_\sigma,\mu_F) - R,$
and hence
$$
d_F(\alpha,\Delta(M)) > d_F(\nu_\sigma,\mu_F) - R - d_0.
$$
Moreover the $R$-bounded combinatorics property for the pair
$\mu_F,\nu_\sigma$ implies 
(see \S \ref{subsec: teichmuller}) that 
$d_{\CT(F)}(\sigma,\sigma_{\mu_F})$ and 
$d_{F}(\nu_\sigma,\mu_F)$ 
are uniformly comparable. Putting these together,  there exists $d$
such that $d_{\CT(F)}(\sigma,\sigma_{\mu_F})>d$ implies that
$d_F(\alpha,\Delta(M)) > 1$, and in particular $\alpha \notin
\Delta(M)$, so that $\iota(\alpha) \ne 1$.

Now consider an arbitrary element $\beta$ of $\pi_1(M)$ of $\sigma$-length
bounded by $L'$. 
It follows from the Loop Theorem that there are a finite number of simple
closed curves $\alpha_1, \ldots, \alpha_k$, obtained by surgery on (the
$\sigma$-geodesic representative of) $\beta$, so that if
$\iota(\beta)=1$ then
$\iota(\alpha_i)=1$
 for some $i\in\{1,\ldots,k\}$. On the other hand each $\alpha_i$ has
 $\sigma$-length bounded by $L'$, so this is impossible by the previous
 paragraph. Hence the kernel of $\iota$ contains no curves of length
 less than $L'$, and the proof is complete.
\end{proof}

For future reference we state the following theorem for limits of the
hyperbolic structures discussed above. Recall from \S 
\ref{subsec: geometry of ends} that a sequence $(\rho_n)$ of representations 
converges {\em strongly} to a representation $\rho$ if the sequence 
converges both algebraically and geometrically to $\rho$. 

\begin{theorem}\label{strong limits of convex cocompacts}
	Suppose $M$ is a decorated manifold and $R>0$. Given a sequence 
	$(\tau_n)$ of hyperbolic structures on $\D_0 M$ with $R$-bounded 
	combinatorics and heights tending to infinity as $n\to\infty$, 
	after passing to a subsequence, the sequence of representations 
	induced by $Q(M,\tau_n)$ converges strongly to a discrete and 
	faithful representation $\rho$ of $\pi_1(M)$, where $N_\rho=
	\BH^3/\rho(\pi_1(M))$ is a hyperbolic structure on $M$ and 
	the end of $N_\rho$ associated to every component of $\D_0 M$ 
	is simply degenerate.
\end{theorem}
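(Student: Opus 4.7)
The plan is to combine the bilipschitz model of Theorem \ref{models for decorated manifolds} with Theorem \ref{eventually faithful convergence and bounded combinatorics} to extract an algebraic limit $\rho$, and then upgrade to strong convergence via a model-comparison argument in the style of the proof of Theorem \ref{bilipschitz embedding of models}.

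\emph{Algebraic limit.} For each sufficiently large $n$, Theorem \ref{models for decorated manifolds} yields a $K$-bilipschitz embedding $\Phi_n : \BM_M[\tau_n] \hookrightarrow Q(M,\tau_n)$ whose image is the complement of the rank-$2$ cusps in the convex core. Let $\rho_n$ be the discrete, faithful representation induced by $Q(M,\tau_n)$, and take $\ep_n = \nu_{\tau_n}$, the shortest marking on $\tau_n$. The definition of $R$-bounded combinatorics for $\tau_n$ makes $\tau_n$ uniformly $\ep_0$-thick, so $\nu_{\tau_n}$ has bounded $\tau_n$-length and $R$-bounded combinatorics as a marking on $\D_0 M$ with heights tending to infinity; transporting lengths through $\Phi_n$ gives a uniform upper bound on $\ell_{\rho_n}(\nu_{\tau_n})$. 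Theorem \ref{eventually faithful convergence and bounded combinatorics} then produces, after subsequence and conjugation, an algebraic limit $\rho$ which is discrete and faithful, with $N_\rho$ a hyperbolic structure on $M$ whose end at every component of $\D_0 M$ is simply degenerate.

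\emph{Geometric limit and covering.} Taking basepoints $y_n := \Phi_n(x_0)$ for a fixed $x_0$ in the interior of $\BM_M$, the uniform bilipschitz constant forces a uniform lower bound on the injectivity radius at $y_n$, so after a further subsequence the pointed manifolds $(N_n,y_n)$ converge geometrically to some $(N_G,y)$ and we obtain a covering $\pi : N_\rho \to N_G$. As in the proof of Corollary \ref{algebraic limit is a finite cover}, Thurston's Covering Theorem leaves only the finite-to-one possibility, the virtually-fibered alternative being ruled out by the surjectivity of each $\rho_n$ onto $\pi_1(N_n)$.

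\emph{Degree one via model comparison.} It remains to show $\pi$ has degree one, which is the main technical step. By Lemma \ref{limits of bounded combinatorics bind} the markings $\nu_{\tau_n}$ subconverge to a full binding lamination $\lambda$ on $\D_0 M$, and the models $\BM_M[\tau_n]$ correspondingly converge geometrically to a limit $\BM_\infty$ consisting of $\BM_M$ capped by semi-infinite universal-curve $I$-bundles over the Teichm\"uller rays ending at the components of $\lambda$. Composing $\Phi_n^{-1}$ with the approximating maps of the geometric convergence on larger and larger compacta, the uniform $K$-bilipschitz control produces in the limit a $K$-bilipschitz map $f_\infty : \BM_\infty \to N_G$; because each $\Phi_n$ exhausts $Q(M,\tau_n)$ minus cusps, the image of $f_\infty$ exhausts $N_G$ minus cusps. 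Lifting $f_\infty$ through $\pi$ gives $\widetilde f_\infty : \BM_\infty \to N_\rho$, and the properness argument from the proof of Theorem \ref{bilipschitz embedding of models}, relying on the binding condition (so that each $\lambda|_F$ represents the ending lamination of the corresponding end of $N_\rho$, together with Lemma \ref{immersion} applied to the ends), shows that $\widetilde f_\infty$ is a proper bilipschitz embedding exhausting $N_\rho$ minus cusps. Both $N_G$ and $N_\rho$ therefore agree with $\BM_\infty$ up to cusps, forcing $\pi$ to be a homeomorphism and proving strong convergence. The principal obstacle is verifying global injectivity and surjectivity of the limit maps, and I expect to follow the proof of Theorem \ref{bilipschitz embedding of models} closely for this, with the binding condition on $\lambda$ playing the same role it did there.
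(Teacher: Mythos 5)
Your first two steps coincide with the paper's proof: the algebraic limit is obtained from Theorem \ref{eventually faithful convergence and bounded combinatorics} (the required length bound on $\nu_{\tau_n}$ can be had even more cheaply than via the model, e.g.\ from the bilipschitz parametrization of the convex core boundary or Bers' inequality), and the finite-to-one covering $\pi:N_\rho\to N_G$ is exactly Corollary \ref{algebraic limit is a finite cover}. The divergence is in the degree-one step, and there your argument has a genuine gap. All of the ingredients you cite (properness of the end maps via the binding lamination, Lemma \ref{immersion}, the Waldhausen argument from the proof of Theorem \ref{bilipschitz embedding of models}) control the lift $\widetilde f_\infty:\BM_\infty\to N_\rho$; but to force the covering to have degree one you also need to know that $f_\infty=\pi\circ\widetilde f_\infty$ is injective (or has a point with a single preimage) \emph{in the geometric limit} $N_G$. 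Surjectivity of $f_\infty$ onto $N_G$ minus cusps together with bijectivity of $\widetilde f_\infty$ onto $N_\rho$ minus cusps only re-proves that $\pi$ is onto; it does not rule out that distinct points of the embedded cores come together in $N_G$, which is precisely what happens whenever algebraic and geometric limits differ (the cores ``wrap'' in the geometric limit). Nothing in your proposal addresses this, so the final sentence ``both $N_G$ and $N_\rho$ agree with $\BM_\infty$ up to cusps'' is not justified for $N_G$. In addition, the stated justification for surjectivity --- ``each $\Phi_n$ exhausts $Q(M,\tau_n)$ minus cusps'' --- is false: $Q(M,\tau_n)$ is convex cocompact and the image of $\Phi_n$ is only the complement of the cusps in the convex core. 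The intended conclusion is still available because the heights tend to infinity, so the distance from the fixed basepoint to the convex core boundary tends to infinity and the images do exhaust $N_G$ minus cusps in the limit; but that is the argument one must give.

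The gap can be closed in two ways. One is to supply the missing injectivity of $f_\infty$ using the special feature of your setting that the paper's Theorem \ref{bilipschitz embedding of models} does not have: each $\Phi_n$ maps into the \emph{convex core} of $N_n$, so geodesics between image points stay in the core and extrinsic distances in $N_n$ are bounded below in terms of model distances, up to bounded shortcuts through rank-two cusps whose boundary tori have uniformly bounded geometry; this prevents distinct points from colliding in the limit. The other, which is what the paper does, is to abandon the model comparison for this last step and use the short Jorgensen--Marden argument \cite{jorgensen-marden:convergence}: for $\gamma\in\pi_1(N_G)$, finiteness of the cover gives $\gamma^k=\rho(\alpha)$ for some $\alpha\in\pi_1(M)$ and $k$; geometric convergence gives $\beta_n$ with $\rho_n(\beta_n)\to\gamma$; discreteness and faithfulness of $\rho_n$ together with the Margulis lemma force $\beta_n^k=\alpha$ for large $n$, and since $\alpha$ has only finitely many roots one may take $\beta_n$ constant and conclude $\gamma\in\rho(\pi_1(M))$, i.e.\ $\pi_1(N_G)=\rho(\pi_1(M))$ and the cover is trivial. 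As written, your proposal proves algebraic convergence and the finite cover, but not strong convergence.
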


\begin{proof}
Suppose $(\tau_n)$ is as in the hypothesis and $\rho_n:\pi_1(M)\to\PSL_2(\BC)$ 
is the representation induced by $Q(M,\tau_n)$. 
By Theorem \ref{eventually faithful convergence and bounded combinatorics},
after passing to a subsequence, we can assume
$(\rho_n)$ converges algebraically to a representation $\rho$, where
$N_\rho$ is a hyperbolic structure on $M$ with the ends associated to
$\D_0 M$ simply degenerate. By Corollary \ref{algebraic limit is a finite cover}
we can assume the sequence
$Q(M,\tau_n)$ converges geometrically to $N_G$ which is finitely
covered by $N_\rho$. So it only remains to show that the covering map
$N_\rho\to N_G$ is one-to-one. We use an argument of
Jorgensen-Marden \cite{jorgensen-marden:convergence}:
Let $\gamma$ be an element of
$\pi_1(N_G)$; then there exists $k$, so that $\gamma^k=\rho(\alpha)$ for
some $\alpha\in\pi_1(M)$. Since $N_G$ is the geometric limit of
$(N_n)$, we can find $\beta_n\in\pi_1(M)$ with $(\rho_n(\beta_n))$
converging to $\gamma$. Since the sequences $(\rho_n(\beta_n^k))_n$
and $(\rho_n(\alpha))_n$ both converge to $\rho(\alpha)$ and $\rho_n$
is discrete and faithful, by the Margulis lemma, we must have
$\alpha=\beta_n^k$ for $n$ sufficiently large. But $\alpha$ has only
finitely many roots and therefore we can pass to a subsequence and
assume $\beta=\beta_n$ is independent of $n$. Then $\gamma = \lim
\rho_n(\beta)$ is in $\rho(\pi_1(M))$. This proves
$\pi_1(N_G)=\rho(\pi_1(M))$ and the covering map $N_\rho\to N_G$ is a
homeomorphism.
\end{proof}

\section{Nearly hyperbolic gluings}\label{sec:nearly hyperbolic gluings}

In Theorem \ref{existence of nearly hyperbolic metrics}
we construct negatively curved metrics on a gluing
with $R$-bounded combinatorics. These negatively curved metrics will
have curvatures as close as required to $-1$ when the heights are
sufficiently large. In fact, it turns out that the height requirements
for every boundary identification in the gluing only depends on the
two adjacent pieces and on $R$.  

The existence of negatively curved metrics obviously provides some
immediate consequences, in particular that the fundamental groups are
hyperbolic. In Corollary \ref{eventual injectivity} and
Theorem \ref{incompressibility of boundary of a gluing}, we
utilize the explicit properties of the constructed metrics to obtain
finer topological data about  approximate  injectivity,
and incompressibility, for the inclusion of a piece into such a gluing.
This will be used essentially in the forthcoming sections
to control the geometry of the true hyperbolic metric on a gluing, and
in fact to show that for sufficiently large height it is not very
different from the negatively curved metric that is produced here.  

The construction of the negatively curved metrics utilizes the
bilipschitz models of the previous section for the convex cocomapct
structures, as well as a gluing argument similar to the one used in
\cite{Na05, NS09}. 

Assume $X$ is an $\CM$-gluing possibly with
boundary and $\lambda$ is a complete marking on $\D_0 X$.
Here we do not restrict to a finite set of decorated manifolds and 
basically we assume $\CM$ is the set of all decorated manifolds. 

\begin{theorem}\label{existence of nearly hyperbolic metrics}
  Given $R, \eta>0$, for every decorated manifold $M$ there exist
  constants $h_\eta(M)$ and $K(M)$ so that the following
  holds. Suppose $X$ is a gluing possibly with boundary and $\lambda$
  is a complete marking on $\D_0 X$ which satisfy the following: 
  \begin{enumerate}
    \item $(X,\lambda)$ has $R$-bounded combinatorics,
    \item for every  adjacent pair $M,M'$ of pieces of $X$, the height for identifying a component of $\D_0 M$ and a component of $\D_0 M'$ in $X$ is at least $h_\eta(M)+h_\eta(M')$, and
    \item for every component $E$ of $\D_0 X$ which is in
      $\D_0 M$ for a piece $M$, the height of $\lambda|_E$ is at least $h_\eta(M)$.
  \end{enumerate}
  Then $X$ admits a $(-1-\eta,-1+\eta)$-pinched negatively curved metric $\theta$ and there is an embedding $\BM_X[\lambda]\to (X,\theta)$ in the isotopy class of the identity map, whose restriction to the pre-image of every piece $M$ of $X$ is $K(M)$-bilipschitz, and its image is a convex core of $(X,\theta)$.
\end{theorem}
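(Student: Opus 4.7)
The plan is to construct the nearly-hyperbolic metric $\theta$ on $X$ by gluing together, with appropriate interpolations, the genuine hyperbolic metrics produced by Theorem \ref{models for decorated manifolds} on each piece. First I select target conformal structures: for each piece $M$ of $X$ and each non-toroidal boundary component $E \subset \D_0 M$, write $\nu(M,E) = \Psi(\mu(M',E'))$ if $E$ is glued to $E'$ in an adjacent piece $M'$, or $\nu(M,E) = \lambda|_E$ if $E$ is unburied, and let $\tau_M|_E$ be the midpoint of the Teichm\"uller geodesic $[\sigma_{\mu(M,E)}, \sigma_{\nu(M,E)}]$. This symmetric choice guarantees $\tau_M|_E = \Psi^{*}(\tau_{M'}|_{E'})$ for adjacent pieces sharing $E$, and by $R$-bounded combinatorics each $\tau_M$ is uniformly thick with heights (relative to $\mu(M)$) comparable to half the corresponding heights of $X$. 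For $h_\eta(M)$ large, Theorem \ref{models for decorated manifolds} then supplies a $K(M)$-bilipschitz embedding $\phi_M:\BM_M[\tau_M] \to Q(M,\tau_M)$ onto the complement of rank-$2$ cusps in the convex core. Since the $\tau_M$-ends of $Q(M,\tau_M)$ are convex cocompact, each $\phi_M$ extends across the convex-core boundary by a uniform collar into the end, so the hyperbolic metric is available on a thin bicollar of the $\tau_M$-level surface on either side.

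The model $\BM_X[\lambda]$ decomposes as $\bigcup_M \BM_M[\tau_M]$ with the two interval-bundle halves meeting at each buried surface along the shared midpoint $\tau_M|_E = \Psi^{*}(\tau_{M'}|_{E'})$: concatenating the Teichm\"uller subsegments $[\sigma_{\mu(M,E)}, \sigma_{\tau_M|_E}]$ and $[\sigma_{\tau_M|_E}, \sigma_{\nu(M,E)}]$ recovers the segment defining $\BM_E[\mu(M,E),\nu(M,E)]$ up to uniform bilipschitz reparametrization, using that the midpoint lies in the thick part. Pulling back via $\phi_M$ yields a Riemannian metric $g_M$ on $\BM_M[\tau_M]$, and for each buried gluing surface I have two such metrics $g_M, g_{M'}$ defined on a common bicollar $C \cong E \times (-\delta,\delta)$ of the shared $\tau$-surface. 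I then define $\theta$ on $X \cong \BM_X[\lambda]$ by setting $\theta = g_M$ on the portion of each $\BM_M[\tau_M]$ away from buried gluing collars, $\theta = g_M$ on the unburied $\lambda$-boundary interval bundles (where there is no neighbor to interpolate with), and $\theta = \chi \, g_M + (1-\chi)\, g_{M'}$ on each buried collar $C$ for a smooth cutoff $\chi : C \to [0,1]$ transitioning from $1$ to $0$.

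The main obstacle is to verify that $\theta$ is $(-1-\eta,-1+\eta)$-pinched, which reduces to showing that $g_M$ and $g_{M'}$ are $C^2$-close on $C$ once the heights adjacent to $E$ are sufficiently large. Theorem \ref{models for decorated manifolds} provides only $C^0$ (bilipschitz) control, so I must upgrade it as follows. By Theorem \ref{strong limits of convex cocompacts}, as the heights of $\tau_M$ grow, $Q(M,\tau_M)$ converges strongly to a simply degenerate limit $N_M$, and on compacta the convergence is smooth (standard Cheeger-Gromov/Anderson regularity for hyperbolic manifolds with injectivity radius bounded below allows one to promote geometric convergence to $C^{k,\alpha}$-convergence, and the approximating maps in the bilipschitz model construction can be taken to converge in $C^{k,\alpha}$ on compact sets of the collar). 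The analogous statement holds for $M'$, and because the two degenerate limits carry matching ending laminations on the shared surface (they come from the same Teichm\"uller ray through $\tau_M|_E$), both $g_M$ and $g_{M'}$ converge on $C$ to the \emph{same} canonical hyperbolic metric. Consequently, for any fixed $\delta$ and any $k$, the $C^{k}$-distance of $g_M$ and $g_{M'}$ on $C$ can be made arbitrarily small by choosing $h_\eta(M)+h_\eta(M')$ large enough. A convex combination of two Riemannian metrics whose curvature tensors differ from $-1$ by $o(1)$ in $C^0$ has all sectional curvatures in $(-1-\eta,-1+\eta)$, proving the pinching.

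Finally, the embedding $\BM_X[\lambda] \hookrightarrow (X,\theta)$ is piecewise $\phi_M$ (together with the obvious identifications in collars), and is $K(M)$-bilipschitz on each piece by construction. The image is a convex core because each $\phi_M(\BM_M[\tau_M])$ is the convex core of $Q(M,\tau_M)$, unburied $\lambda$-boundary bundles carry the hyperbolic metric up to the convex-core boundary of $Q(M,\tau_M)$, and the interpolation regions are interior to $X$; the complement of the image in $(X,\theta)$ consists of the rank-$2$ cusps coming from toroidal boundary components of the pieces.
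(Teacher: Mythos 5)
Your overall strategy --- equip each piece with a convex-cocompact structure $Q(M,\tau_M)$ whose boundary data is determined by the neighboring decorations, and pass to a convex combination of the two hyperbolic metrics in a collar of each buried surface --- is the same as the paper's, but the step you yourself flag as ``the main obstacle'' is exactly where the argument breaks down. You define $g_M$ and $g_{M'}$ as pullbacks of the hyperbolic metrics under the bilipschitz model embeddings $\phi_M$ of Theorem \ref{models for decorated manifolds}, and then claim that strong convergence of $Q(M,\tau_M)$ (Theorem \ref{strong limits of convex cocompacts}) together with ``matching ending laminations'' forces $g_M$ and $g_{M'}$ to be $C^2$-close on the shared collar. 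This does not follow. The maps $\phi_M$ are produced by a compactness/contradiction argument; they are only uniformly $K$-bilipschitz with $K$ possibly far from $1$, they are in no way canonical, and nothing in their construction makes them (or the transition $\phi_{M'}^{-1}\circ\phi_M$ on the collar) converge in any $C^{k}$ topology --- your parenthetical remark that ``the approximating maps in the bilipschitz model construction can be taken to converge in $C^{k,\alpha}$'' is an unproved assertion, and it is the entire point. Even if the two hyperbolic manifolds were literally isometric near the collar, their pullbacks under two unrelated $K$-bilipschitz identifications would differ by an unknown self-map of the collar and would only be $K^2$-comparable, so the interpolation $\chi\, g_M+(1-\chi)\,g_{M'}$ carries no curvature control near $-1$. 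What is needed is a statement about \emph{maps}, not about the manifolds: a comparison map between the two structures on the gluing region that is $C^2$ (in fact $C^\infty$) close to an isometry, with the closeness improving as the height grows.

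The paper supplies exactly this in Proposition \ref{close to a quasifuchsian}. Instead of midpoint surfaces and model pullbacks, it takes $\tau(M)=\sigma_{\nu(M)}$, so the entire gluing region is modelled by the quasi-Fuchsian block $B_{M,\tau(M),F}=Q(F,\sigma_F,\tau_F)$, which is \emph{canonically} isometric, via $\Psi|_F$ and orientation-preservingly, to the block $B_{M',\tau(M'),F'}$ on the other side. It then shows that $B_{M,\tau(M),F}$ embeds in $Q(M,\tau(M))$, in the preferred isotopy class, by a map $\eta$-close to an isometry in $C^\infty$ on an arbitrarily large ball about a fixed base point, once the height is large: the proof is a geometric-limit argument in which the limits of $B_{M,\tau_n,F}$ and of $Q(M,\tau_n)$ are first shown to be bilipschitz through the common model and then \emph{isometric} by McMullen's rigidity theorem, and it is this isometry of limits, pulled back to the approximates through the base points, that yields the $C^\infty$ control. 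With the nearly isometric embeddings $T_F$ and $\hat T_F$ in hand, the transition map $\hat T_F\circ T_F^{-1}$ is $C^\infty$-close to an isometry and the bump-function interpolation is pinched (the derivative bounds on the bump function coming from the uniform injectivity radius bounds in the block). To salvage your outline you would have to replace your claim about convergence of the model maps by an argument of this kind --- an appeal to McMullen rigidity (or the Ending Lamination Theorem) for the limits, together with convergence of specifically chosen comparison maps; as written, the asserted $C^2$-closeness of $g_M$ and $g_{M'}$ is a genuine gap.
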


It follows from Theorem 
\ref{models for decorated manifolds} that if $\tau$ is a
hyperbolic structure on $\D_0 M$ with $R$-bounded combinatorics
and heights sufficiently large, there is a bilipschitz map
from $\BM_M[\tau]$ onto the complement of the cusps in the 
convex core of $Q(M,\tau)$ in the preferred isotopy class.
The bilipschitz constant depends on $M$ and $R$ and for a 
component $F$ of $\D_0 M$, this bilipschitz map 
restricts to a map $\BM_F[\sigma_F, \tau|_F] \to Q(M,\tau)$ 
which is bilipschitz onto a neighborhood of the boundary 
of the convex core of $Q(M,\tau)$ associated to $F$,
where we use the notation $\sigma_F$ to denote $\sigma_{\mu_F}$.

One can also consider the quasi-fuchsian manifold 
\[ B_{M,\tau,F} \equiv Q(F,\sigma_F,\tau|_F),\] 
which is the convex cocompact structure on
$F\times[0,1]$ with the conformal structure at infinity associated to
$F\times\{1\}$ is $\tau|_F$ and the one associated to
$F\times\{0\}$ is $\sigma_F$, where the latter is considered
with the opposite orientation. 
Again we have a bilipschitz map from
$\BM_F[\sigma_F,\tau|_F]$ to the convex core of $B_{M,\tau,F}$
and the bilipschitz constant depends on $R$ and $F$. This can be
implied from Theorem \ref{models for decorated manifolds} or from the
existing theory of surface groups. 
We fix from now on a choice of base point for $B_{M,\tau,F}$
which is equidistant from the two components of the convex core
boundaries.

Note  that there is a preferred isotopy class of embeddings
$B_{M,\tau,F}\to Q(M,\tau)$ such that the composition with the core embedding
$F\hookrightarrow B_{M,\tau,F}$ is isotopic to the inclusion $F\hookrightarrow
M\to Q(M,\tau)$, given by the preferred inclusion of $M$ as a compact core.

\begin{proposition}\label{close to a quasifuchsian}
  Given $\eta>0$ and $R$, and a decorated manifold $M$, there exists a
  constant $D$ such that if $\tau$ is a hyperbolic structure with
  $R$-bounded combinatorics on $\D_0 M$ and heights at least $D$, and
  $F$ is a component of $\D_0 M$, there is an embedding
  \[ B_{M,\tau,F} \to Q(M,\tau), \] 
in the preferred isotopy class, 
  which is $\eta$-close to an isometry in the $C^\infty$-topology on
  the neighborhood of radius $1/\eta$ centered at the base point of
$B_{M,\tau,F}$.
\end{proposition}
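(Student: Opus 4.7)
I would argue by contradiction. Let $\eta>0$, and suppose $\tau_n$ is a sequence of hyperbolic structures on $\D_0 M$ with $R$-bounded combinatorics and heights tending to infinity such that no embedding $B_{M,\tau_n,F} \to Q(M,\tau_n)$ in the preferred isotopy class is $\eta$-close in $C^\infty$ to an isometry on the $(1/\eta)$-ball about the basepoint $p_n$ of $B_{M,\tau_n,F}$.

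First, Theorem \ref{models for decorated manifolds} applied to $Q(M,\tau_n)$ produces a bilipschitz embedding of $\BM_F[\sigma_F,\tau_n|_F]$, the $F$-piece of $\BM_M[\tau_n]$, into $Q(M,\tau_n)$ along the $F$-end region; composing with the inverse of the bilipschitz model $\BM_F[\sigma_F,\tau_n|_F]\to B_{M,\tau_n,F}$ for the quasi-Fuchsian manifold $B_{M,\tau_n,F}=Q(F,\sigma_F,\tau_n|_F)$, I obtain embeddings $\Phi_n: B_{M,\tau_n,F} \to Q(M,\tau_n)$ in the preferred isotopy class with uniform bilipschitz constant $K(M,R)$. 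It remains to show that, on $(1/\eta)$-balls about $p_n$, the bilipschitz constant of $\Phi_n$ is $1+o(1)$ as $n\to\infty$; by elliptic regularity for the constant-curvature condition, this is equivalent to $C^\infty$-convergence of $\Phi_n$ to an isometry on bounded balls.

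To extract pointed geometric limits, I would normalize via the mapping class group of $F$: the midpoint $\sigma_n\in\CT(F)$ of the Teichm\"uller geodesic $[\sigma_F,\tau_n|_F]$ lies in the $\ep_0$-thick part by $R$-bounded combinatorics, so after acting by suitable $\phi_n\in\Mod(F)$ I may assume $\phi_n\cdot\sigma_n\to\sigma_\infty\in\CT(F)$ on a subsequence. The $R$-bounded combinatorics condition together with Klarreich's characterization of $\partial \CC(F)$ ensures that the geodesic directions at $\phi_n\cdot\sigma_n$ toward the two endpoints converge to filling projective measured laminations $\mu^-$ and $\mu^+$. Gromov compactness in $C^{1,\alpha}$ then yields subsequential pointed limits $(B_\infty,p_\infty)$ of $(B_{M,\tau_n,F},p_n)$ and $(Q_\infty,q_\infty)$ of $(Q(M,\tau_n),\Phi_n(p_n))$, together with a bilipschitz limit map $\Phi_\infty:B_\infty\to Q_\infty$. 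I would then identify both $B_\infty$ and $Q_\infty$ with the doubly-degenerate surface group manifold $N_{\mu^-,\mu^+}$: for $B_\infty$ this comes from the standard description of the geometry at the midpoint of a long quasi-Fuchsian convex core via the bilipschitz model; for $Q_\infty$ it follows because $\Phi_n(p_n)$ lies deep in the $F$-end of $Q(M,\tau_n)$, so all bounded-length loops through $\Phi_n(p_n)$ lie in $\pi_1(F)$ and the pointed limit coincides with the pointed limit of the $\pi_1(F)$-cover of $Q(M,\tau_n)$, whose geometry near $\Phi_n(p_n)$ is likewise governed by the midpoint of $\BM_F[\sigma_F,\tau_n|_F]$ after $\phi_n$-normalization.

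Given the identification of both limits as $N_{\mu^-,\mu^+}$, the Ending Lamination Theorem for doubly-degenerate surface groups together with pointed-framed rigidity forces the bilipschitz limit $\Phi_\infty$, which lies in the preferred isotopy class and respects basepoints and frames, to be the identity isometry. Hence $\Phi_n\to\Phi_\infty$ in $C^{1,\alpha}$ on arbitrarily large balls, and the smoothness of hyperbolic metrics upgrades this to $C^\infty$-convergence; so for $n$ sufficiently large, $\Phi_n$ is $\eta$-close to an isometry in $C^\infty$ on the $(1/\eta)$-ball about $p_n$, contradicting the choice of the sequence. The principal obstacle is the identification of the pointed limit $Q_\infty$ with $N_{\mu^-,\mu^+}$: this requires combining the strong convergence of $Q(M,\tau_n)$ from Theorem \ref{strong limits of convex cocompacts} with the uniform bilipschitz models and Klarreich's theorem to rule out interference from the other boundary components of $\D_0 M$ at points deep in the $F$-end.
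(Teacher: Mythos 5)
Your overall strategy (uniform bilipschitz models, pointed geometric limits, a rigidity theorem, and a contradiction argument) is the same skeleton as the paper's proof, but there is a genuine gap at the decisive step. You claim that it suffices to show the bilipschitz constant of the fixed model-built embeddings $\Phi_n\colon B_{M,\tau_n,F}\to Q(M,\tau_n)$ is $1+o(1)$ on bounded balls, and you try to get this by arguing that the limit map $\Phi_\infty$ ``is the identity isometry.'' Neither assertion is justified. The maps $\Phi_n$ are compositions of model maps with a fixed bilipschitz constant $K(M,R)$, and nothing in the construction improves that constant as the heights grow; correspondingly, their $\CC^{1,\alpha}$ limit $\Phi_\infty$ is merely a $K$-bilipschitz map between the two pointed limits. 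Rigidity (whether McMullen's theorem or the Ending Lamination Theorem) tells you that the two limits are \emph{isometric} and that $\Phi_\infty$ is properly homotopic to an isometry; it does not force $\Phi_\infty$ itself to be an isometry, and a bilipschitz self-map of a doubly degenerate manifold homotopic to the identity certainly need not be one. So the conclusion ``$\Phi_n\to\Phi_\infty$, hence $\Phi_n$ is eventually $\eta$-close to an isometry'' fails. (The auxiliary claim that a pointwise bilipschitz constant near $1$ is ``equivalent by elliptic regularity'' to $C^\infty$-closeness to an isometry is also unjustified, but it is moot once the main step is repaired.)

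The repair is exactly what the paper does: do not try to upgrade $\Phi_n$. Instead, take the pointed geometric limits $B_\infty$ and $Q_\infty$ of $B_{M,\tau_n,F}$ and of $Q(M,\tau_n)$ based at $\Psi_n(o_n)$, observe that \emph{both} are bilipschitz to the same limit model $\BM_\infty$ (the universal curve over a bi-infinite thick Teichm\"uller geodesic), hence bilipschitz to each other with two-sided injectivity radius bounds, and apply McMullen's rigidity theorem to get an isometry $B_\infty\to Q_\infty$ in the correct homotopy class. Then transport this isometry back to the approximates, i.e.\ define the desired embeddings as (approximating map to $B_\infty$) followed by the isometry followed by (approximating map from $Q_\infty$ to $Q(M,\tau_n)$); these new embeddings are in the preferred isotopy class and are $C^\infty$-close to isometries on arbitrarily large balls, which is all the contradiction requires. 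Note also that this route makes the issue you flag as the ``principal obstacle'' disappear: one never needs to identify $Q_\infty$ with a specific doubly degenerate manifold $N_{\mu^-,\mu^+}$ or invoke the Ending Lamination Theorem and strong convergence from Theorem \ref{strong limits of convex cocompacts}; bilipschitz equivalence of the two limits plus McMullen's rigidity suffices.
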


\begin{proof}
  For a decorated manifold $M$, a component $F$ of $\D_0 M$ and fixed
  $\eta>0$ and $R$, suppose $(\tau_n)$ is a sequence of hyperbolic
	structures on $\D_0 M$ with $R$-bounded combinatorics and heights tending to
  infinity. We claim that (restricting to a subsequence) there is a limit  of $Q(M,\tau_n)$, in the
  Gromov-Hausdorff topology, which is isometric to a limit of
  $B_{M,\tau_n,F}$.

Recall that we have embeddings 
  \[ \Phi_n: \BM_F[\sigma_F,\tau_n|_F] \to B_{M,\tau_n,F}\]
  and 
  \[ \Psi_n: \BM_F[\sigma_F,\tau_n|_F] \to Q(M,\tau_n),\]
in the preferred isotopy classes, with uniform bilipschitz
constants. If $x_n$ is the base point of $B_{M,\tau_n,F}$ then
the distance from $o_n = \Phi_n^{-1}(x_n)$ to the boundary components
of $ \BM_F[\sigma_F,\tau_n|_F]$ tends to infinity with $n$ and
therefore after passing to a subsequence we can assume
$(\BM_F[\sigma_F,\tau_n|_F], o_n)$ converge in the Gromov-Hausdorff
topology to a complete manifold without boundary $(\BM_\infty,
o_\infty)$, which in fact will be a subset of the universal
Teichm\"uller curve over a bi-infinite geodesic that 
connects filling intersecting laminations $\lambda^+$ and 
$\lambda^-$. Furthermore and after passing to a further 
subsequence, we can assume the manifolds
$B_{M,\tau_n,F}$ also converge in the Gromov-Hausdorff topology
and the maps $\Phi_n$ converge to provide a bilipschitz
homeomorphism between $(\BM_\infty,o_\infty)$ and this limit. Note
that in particular, there is a lower and upper bound for the
injectivity radius for each of these limits. Passing to 
a further subsequence we may assume the sequence of manifolds
$Q(M,\tau_n)$ with base point $y_n = \Psi_n(o_n)$ converges in the
Gromov-Hausdorff topology, and there is a bilipschitz homeomorphism
from $(\BM_\infty, o_\infty)$ to this limit too. So these limits of
$B_{M,\tau_n,F}$ and $Q(M,\tau_n)$ are bilipschitz and it follows
from McMullen's rigidity theorem for hyperbolic 3-manifolds with
injectivity radius bounds \cite{McM96} that
these two must be isometric. 

Passing this isometry back to the sequence we obtain maps
$B_{M,\tau_n,F}\to Q(M,\tau_n)$  in the preferred isotopy class arbitrarily
close (in the $C^\infty$-topology) to isometries on large neighborhoods of the base points. The
usual argument by contradiction then implies the statement of the theorem.
\end{proof}

\begin{proof}[Proof of Theorem \ref{existence of nearly hyperbolic metrics}]
Let $(X,\lambda)$ be a gluing with $R$-bounded combinatorics, obtained from a
collection $\Xi$ of copies of decorated manifolds via the gluing involution
$\Psi$. Let $\nu=\nu_{(X,\lambda)}=\Psi(\mu_X) \cup \lambda$ be the marking 
on $\D_0 \Xi$ defined in \S\ref{subsec: gluings with bounded combinatorics} 
which is the $\Psi$-image of the decorations on buried components of $\D_0\Xi$ 
and is $\lambda$ on the unburied components.
Recall that for every piece $M$ of $X$, $\nu(M)$, the restriction of 
$\nu$ to $\D_0 M$, has $R$-bounded combinatorics with respect to $M$. 
We define $\tau(M)=\sigma_{\nu(M)}$ to be the corresponding conformal
structure on $\D_0 M$ and denote its restriction to a component 
$F\subset\D_0 M$ by $\tau_F$.
Also assume for every pair of pieces $M,M'$ of $\Xi$, the height of boundary identifications between a component of $\D_0 M$ and $\D_0 M'$ is at least $h_\eta(M)+h_\eta(M')$, where the quantities $h_\eta(M)$ and $h_\eta(M')$ will be determined in what follows.
We consider the convex cocompact structure $Q(M,\tau(M))$ for
every piece of $\Xi$ and construct the negatively curved metric by
gluing these structures. 

Let $M$ and $M'$ be adjacent along
$F\subset \D_0M$ and $\Psi(F) = F'\subset \D_0M'$.
By Proposition \ref{close to a quasifuchsian}
we have embeddings
  \[ T_F: B_{M,\tau(M),F} \to Q(M,\tau(M))\] 
  and 
  \[ T_{F'}: B_{M',\tau(M'),F'} \to Q(M',\tau(M')),\]
in the preferred isotopy classes,
which are arbitrarily close to an isometry on balls of arbitrarily
large radii centered at the base points, provided that the height 
of the boundary identification is sufficiently large, depending 
on $M$ and $M'$.

Recalling that $B_{M,\tau(M),F}$ is the quasi-Fuchsian manifold
$Q(F,\sigma_F,\tau_F)$ and that the restriction $\Psi|_F$ is an 
orientation reversing homeomorphism that takes the ordered pair 
$(\sigma_F,\tau_F)$ to $(\tau_{F'},\sigma_{F'})$, we see that 
$\Psi|_F$ induces an orientation {\em preserving} isometry 
\[ B_{M,\tau(M),F} \to B_{M',\tau(M'),F'}. \]
Composing this with $T_{F'}$ we obtain an embedding
  \[ \hat T_{F}:B_{M,\tau(M),F} \to Q(M',\tau(M'))\]
which is still arbitrarily close to an isometry on a ball of
arbitrarily large radius centered at the base point of
$B_{M,\tau(M),F}$.
We use $T_F$ and $\hat T_F$ to glue $Q(M,\tau(M))$ 
and $Q(M',\tau(M'))$ and obtain a nearly hyperbolic
metric on $M\cup_{\Psi|_F} M'$. 

Using the structure of the models $\BM_F[\sigma_F,\tau_F]$,
the $R$-bounded combinatorics assumption, and the uniform bilipschitz
embedding  
$$\BM_F[\sigma_F,\tau_F]\to B_{M,\tau(M),F},$$
one obtains a lower bound on the injectivity radii in
$B_{M,\tau(M),F}$, and sees
that a neighborhood of sufficiently 
large radius (depending only on $R$ and the genus of $F$) centered at
the base point of $B_{M,\tau(M),F}$
contains a subset $V$ homeomorphic to
$F\times[0,1]$ such that $V\hookrightarrow B_{M,\tau(M),F}$
is a homotopy equivalence and the distance between the boundary
components $\D_-V$ and $\D_+ V$ of $V$ is at least 1. Moreover, the 
injectivity radius lower bound implies the existence of a smooth
bump function $\alpha: V\to [0,1]$ where $\alpha|_{\D_- V} \equiv 0$
and $\alpha|_{\D_+ V} \equiv 1$, satisfying 
uniform bounds on the norms of its
first and second derivatives 
(depending only on $R$ and the genus of $F$).
  
Consider the homeomorphic image $T_F(V)$ of $V$ in $Q(M,\tau(M))$
(respectively $\hat T_{F}(V)$ in $Q(M',\tau(M'))$): it
separates a neighborhood $U$ (resp $U'$) of the end associated to $F$
(resp $F'$) from a core of the manifold. We cut off this neighborhood and identify the remainder
via $\hat T_{F}\circ T_{F}^{-1}$, i.e. 
  \[ X_{(M,M')} = \left(Q(M,\tau(M)) \setminus U\right) \cup_{\hat T_{F}\circ
    T_F^{-1}} \left(Q(M',\tau(M')) \setminus U'\right), \] 
 obviously $X_{(M,M')}$ is a 3-manifold homeomorphic to the interior
 of $M\cup_{\Psi|_F} M'$. Moreover the fact that $T_F$ and $T_{F'}$ 
are in the preferred isotopy classes translates to imply that the
induced bilipschitz map
\[ \BM_M[\tau(M)]\to  \left( Q(M,\tau(M)) \setminus U \right) \hookrightarrow X_{(M,M')} \to M\cup_{\Psi|_F} M'\]
is in the isotopy class of the embedding $M\hookrightarrow M\cup_{\Psi|_F} M'$.

There is a natural embedding of $V$ from
 the construction, whose image we continue to call $V$. The
 complement of $V$ consists of a component $W\subset Q(M,\tau(M))$ and
 a component $W'\subset Q(M',\tau(M'))$. Extend the bump function $\theta$
to be constant on the complement of $V$, and use it
 to produce a Riemannian metric on $X$ which is a convex
 combination of the metrics $\theta_M$ of $Q(M,\tau(M))$ and $\theta_{M'}$ of  $Q(M',\tau(M'))$.
 More precisely, the new metric is 
   \[ \theta(x) = \theta_M(x) + (1-\alpha(x)) \cdot \theta_{M'}(x) \]
for any $x\in V$, and is equal to $\theta_M$ on $W$ and $\theta_{M'}$ on $W'$.  This metric is smooth, and obviously hyperbolic on
the complement of $V$. Moreover when the height of the boundary identification between $F$ and $F'$ is sufficiently large and
$\hat T_{F}\circ T_F^{-1}$ is close enough to an isometry, we can
guarantee that all the sectional curvatures are pinched in the
interval $(-1-\eta,-1+\eta)$.

Theorem \ref{existence of nearly hyperbolic metrics} follows, when we
perform such a gluing for every boundary identification in $X$. All
other requirements follow directly from the construction and the
existence of bilipschitz embeddings $\BM_M[\tau(M)]\to Q(M,\tau(M))$ for
every piece $M$.
\end{proof}

Letting $\eta=1/2$ in Theorem \ref{existence of nearly hyperbolic metrics}, it follows that if $(X,\lambda)$ is a gluing with heights satisfying the hypothesis of the theorem, then it admits a Riemannian metric $\theta_0$ with curvatures pinched in $(-3/2,-1/2)$. Moreover there is a homeomorphism $\BM_X[\lambda]\to (X,\theta_0)$ in the isotopy class of the identity map, whose restriction to the pre-image of every piece $M$ is $K(M)$-bilipschitz. Recall that the model $\BM_X[\lambda]$ divides into vertex pieces, which are copies of the pieces of $X$ and edge pieces, which are interval bundles whose diameter increase as functions of the corresponding heights. Then the image of a vertex piece associated to $M$, is a subset of $(X,\theta_0)$ with diameter bounded depending on $M$. Also if $M$ and $M'$ are adjacent, the distance between the images of their corresponding vertex pieces tends to infinity as a function of the height of the gluing between them. Standard properties of the negative curvature metric then guarantee that given $L$, if the heights of the gluings adjacent to $M$ all are assumed to be large, the homomorphism induced by the inclusion $M\hookrightarrow X$ is $L$-injective. We summarize in the following corollary which resembles results of \cite{Na05} and \cite{NS09}.

\begin{corollary}\label{eventual injectivity}
  Given $L, R>0$, for every decorated manifold $M$ there is a
  constant $h'_L(M)$ so that the following holds. Suppose $X$ is a
  gluing with $R$-bounded combinatorics, with
  the property that if $M$ and $M'$ are adjacent pieces of $X$, the height for
  the identification of a component of $\D_0 M$ and a component of
  $\D_0 M'$ is at least $h'_L(M)+h'_L(M')$. Then for every piece $M$
  of $X$, the homomorphism $\pi_1(M)\to\pi_1(X)$ induced by the
  inclusion $M\hookrightarrow X$ is $L$-injective.  
\end{corollary}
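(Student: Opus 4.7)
The plan is to apply Theorem \ref{existence of nearly hyperbolic metrics} at $\eta = 1/2$ and exploit pinched negative curvature to prevent short null-homotopies of nontrivial loops in a piece. Setting $\eta = 1/2$, I obtain from the theorem a Riemannian metric $\theta_0$ on $X$ with sectional curvatures in $(-3/2, -1/2)$ together with a homeomorphism $\Phi:\BM_X[\lambda]\to(X,\theta_0)$ in the isotopy class of the identity, restricting to a $K(M)$-bilipschitz map on the preimage of each piece. Writing $V_M$ for the image under $\Phi$ of the vertex piece of $M$, the bilipschitz control together with the model's structure implies that $V_M$ has $\theta_0$-diameter bounded by some $D(M)$ depending only on $M$, while each edge piece incident to $V_M$ has $\theta_0$-``width'' at least a quantity that grows linearly in the corresponding gluing height.

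Fix $M$ and suppose for contradiction that some nontrivial $\gamma\in\pi_1(M)$ with word length $|\gamma|\le L$ is sent to the identity in $\pi_1(X)$. Using the bilipschitz model and the bounded diameter of $V_M$, I would represent $\gamma$ by a based loop $\alpha\subset V_M$ at a basepoint $p$ of $\theta_0$-length bounded by a constant $C_1 = C_1(M,L)$ depending only on $M$ and $L$. Lifting to the universal cover $(\widetilde X, \widetilde\theta_0)$ yields a closed loop $\widetilde\alpha$ of length at most $C_1$ based at a lift $\widetilde p$, since $\gamma$ is trivial in $\pi_1(X)$. The universal cover is a complete, simply connected manifold with curvatures in $(-3/2,-1/2)$, hence CAT($-1/2$) and in particular uniquely geodesic; the cone construction from $\widetilde p$ provides an explicit null-homotopy of $\widetilde\alpha$ whose image is contained in the ball $B(\widetilde p, C_1/2)$.

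The crucial step is to choose $h'_L(M)$ large enough that $B(\widetilde p, C_1/2)$ is contained in the union $\widetilde U$ of the lift $\widetilde V_M$ of $V_M$ through $\widetilde p$ and partial lifts of the edge pieces directly incident to $\widetilde V_M$, and in particular meets no other vertex-piece lift. This is arranged by taking $h'_L(M)$ so that every edge piece incident to $V_M$, of height at least $h'_L(M) + h'_L(M')$ in $X$, lifts to edge pieces in $\widetilde X$ of $\theta_0$-width strictly greater than $C_1/2 + D(M)$. Since the edge pieces are $I$-bundles, $\widetilde U$ deformation retracts onto $\widetilde V_M$; therefore the null-homotopy of $\widetilde\alpha$ in $\widetilde U$ yields a null-homotopy of $\widetilde\alpha$ in $\widetilde V_M$. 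The covering map $\widetilde V_M \to V_M$ is injective on $\pi_1$, and sends the based homotopy class of $\widetilde\alpha$ to that of $\alpha$; thus $\alpha$ is null-homotopic in $V_M$, contradicting the nontriviality of $\gamma\in\pi_1(M)$.

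The main obstacle is verifying that $\widetilde U$ genuinely deformation retracts onto $\widetilde V_M$. This is immediate in simple configurations, but the ball $B(\widetilde p, C_1/2)$ could in principle enter two different lifts of a single edge piece adjacent to $V_M$, or traverse a short cycle in the local gluing graph (for example, when $V_M$ is self-adjacent via an edge piece, or when a neighboring piece is $V_M$ itself). Preventing this requires $h'_L(M)$ chosen not only to make each incident edge piece wide in the transverse direction but also to prevent the ball from making a loop through any edge piece. Both requirements are controlled by combinatorial properties of the gluing graph localized at $V_M$ together with the transverse-width lower bounds from the model, and can be packaged into a single choice of $h'_L(M)$ depending only on $M$, $L$, and $R$.
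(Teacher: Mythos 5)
Your proposal fleshes out exactly the argument the paper sketches just before the corollary: apply Theorem \ref{existence of nearly hyperbolic metrics} at $\eta=1/2$ to get the pinched metric $\theta_0$ and the bilipschitz model structure, use the bounded diameter of the vertex piece $V_M$ and the height-controlled width of the incident edge pieces, and then invoke ``standard properties of negative curvature'' to rule out a short null-homotopy. Your universal-cover / cone argument is the correct implementation of that last step, and the logic of the contradiction (the disk $D\subset B(\widetilde p, C_1/2)$ lands in a region that retracts to $\widetilde V_M$, so $q_*[\widetilde\alpha]=\gamma$ must be trivial) is sound.

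The only thing I would adjust is your worry about the deformation retraction. In the universal cover $\widetilde X$ the components of the preimages of the vertex and edge pieces form a tree (the Bass--Serre tree of the graph-of-groups decomposition of $\pi_1(X)$), because $\widetilde X$ is simply connected. Consequently each edge-piece lift joins two \emph{distinct} vertex-piece lifts, and a cycle in the local gluing graph of $X$ cannot persist after lifting: ``self-adjacency'' of $V_M$ becomes, in $\widetilde X$, an edge piece connecting $\widetilde V_M$ to a genuinely different translate $g\cdot\widetilde V_M$. Thus the only thing $h'_L(M)$ needs to guarantee is the transverse-width lower bound (so that $B(\widetilde p, C_1/2)$ does not reach any vertex-piece lift other than $\widetilde V_M$); once that is arranged, the region $\widetilde U$ is automatically $\widetilde V_M$ together with disjoint half-open $I$-bundle collars attached along boundary faces, which deformation retracts onto $\widetilde V_M$ with no extra hypotheses. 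Your second ``requirement'' is therefore free, and the choice of $h'_L(M)$ only has to encode the first.
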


As another corollary of the construction of the negatively curved
metrics, assume $E$ is an incompressible component of $\D_0
M$ with $\pi_1(E)$ infinite idnex in $\pi_1(M)$. We show in the next
theorem that if $M$ is a piece of a gluing $X$ with
$R$-bounded combinatorics and sufficiently large heights,
and $E$ is unburied in $X$, then $E$ is incompressible in $X$.

\begin{theorem}\label{incompressibility of boundary of a gluing}
	Suppose $\CM$ is a finite collection of decorated manifolds and
	$R>0$. There exists $D$ so that if $X$ is an $(\CM,R)$-gluing
	with heights at least $D$, $M$ is a piece of $X$ with
	an incompressible component $E\subset \D_0M$ which is
	unburied in $X$, and $\pi_1(E)$ has infinite index in
	$\pi_1(M)$, then $E$ is incompressible in $X$.
\end{theorem}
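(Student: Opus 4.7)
The plan is to argue by contradiction using a sequence of counterexamples and the strong convergence machinery. Suppose the theorem fails; then there exist $(\CM,R)$-gluings $X_n$ satisfying the hypotheses with heights at least $n$ but in which the chosen boundary component $E$ of the piece $M$ is compressible. Using that $\CM$ is finite, I may assume after subsequence that $M$ and $E$ are constant, and for each $n$ I pick an essential simple closed curve $\gamma_n\subset E$ that bounds an embedded disk in $X_n$. By the incompressibility of $E$ in $M$, $\gamma_n$ defines a nontrivial element of $\pi_1(E)\subset\pi_1(M)$ lying in the kernel of $\pi_1(M)\to\pi_1(X_n)$, while Corollary \ref{eventual injectivity} gives that this inclusion is $L_n$-injective with $L_n\to\infty$; consequently $|\gamma_n|_{\pi_1(M)}\to\infty$, which (using that bounded-length simple closed curves on $(E,\sigma_{\mu_E})$ form a finite set and correspond to bounded-word-length elements in $\pi_1(M)$) forces $\ell_E(\gamma_n)\to\infty$. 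After a further subsequence, $[\gamma_n]\to\lambda_\infty$ in $\PML(E)$.

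To extract the relevant limiting hyperbolic geometry on $M$, I would use the markings $\nu_n(M)$ from the $(\CM,R)$-gluing structure of $X_n$: by definition they have $R$-bounded combinatorics relative to $M$, and their heights tend to infinity by hypothesis. Applying Theorem \ref{strong limits of convex cocompacts} to the convex cocompact structures $Q(M,\sigma_{\nu_n(M)})$ yields, after subsequence, a strong limit representation $\rho:\pi_1(M)\to\PSL_2(\BC)$ with $N_\rho$ a hyperbolic structure on $M$ all of whose ends are simply degenerate; the ending lamination at the $E$-end is the $\PML$-limit $\lambda|_E$ of $\nu_n(M,E)=\lambda_n|_E$, which is filling by Lemma \ref{limits of bounded combinatorics bind}.

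The hard part will be closing the argument. The first step is to identify $\lambda_\infty$ with $\lambda|_E$ using the vanishing of $\gamma_n$ in $\pi_1(X_n)$: Theorem \ref{existence of nearly hyperbolic metrics} equips $X_n$ with a nearly hyperbolic metric $\theta_n$ containing a $K(M)$-bilipschitz embedded copy of $\BM_M$, which allows one to compare the geometry of $X_n$ near $M$ with that of $Q(M,\sigma_{\nu_n(M)})$; a length-function argument together with the characterization that $\ell_\rho(\lambda)=0$ exactly when $\lambda$ is the ending lamination of the simply degenerate end should then force $\lambda_\infty=\lambda|_E$. The second step extracts a Masur-domain contradiction: the finiteness of $\CM$ ensures that after a further subsequence the relative compression bodies $C_{E,X_n}\subset X_n$ are all homeomorphic to a fixed nontrivial compression body $C_E$, and the $\gamma_n$ are meridians of $C_E$, so $\lambda_\infty$ lies outside the Masur domain of $C_E$ by Lemma \ref{filling and masur domain}. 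On the other hand, the $(\CM,R)$-gluing condition on $X_n$, interpreted at the pieces adjacent to $M$ that contribute to $C_E$, propagates a condition analogous to (\ref{eqn: mu nearest}) which forces $\lambda|_E$ into the Masur domain of $C_E$, yielding the required contradiction. The most delicate point is precisely this propagation: bridging the vanishing condition on $X_n$ with the $\PSL_2(\BC)$-convergence coming from the ancillary structures $Q(M,\sigma_{\nu_n(M)})$.
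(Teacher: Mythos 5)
Your setup (the contradiction sequence, the markings $\nu_n(M)$, and the strong limit of the auxiliary structures $Q(M,\sigma_{\nu_n(M)})$ via Theorem \ref{strong limits of convex cocompacts}) agrees with the paper, but the way you propose to close the argument has a genuine gap, and it is exactly at the step you flag as ``delicate.'' The Masur-domain contradiction is not available as stated: the relative compression body of $E$ in $X_n$ is a submanifold of $X_n$ whose disk set $\Delta_n\subset\CC(E)$ depends on $n$ and is in no way pinned down by the finiteness of $\CM$ (stabilizing the abstract homeomorphism type of a compression body says nothing about which curves on $E$ actually bound disks in $X_n$), so ``$\lambda_\infty$ is a limit of meridians of a fixed $C_E$'' does not make sense, and Lemma \ref{filling and masur domain} cannot be applied. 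Worse, since $E$ is incompressible in $M$, the $(\CM,R)$-gluing conditions impose no constraint at $E$ involving any disk set -- condition (\ref{eqn: mu nearest}) is vacuous there, and the disk sets appearing in the definition of bounded combinatorics are those of the individual pieces, not of the glued manifold. Controlling the global compressions of $E$ in $X_n$ is precisely the content of the theorem, so the claimed ``propagation'' assumes the conclusion. A symptom that something must be wrong: your outline never uses the hypothesis that $\pi_1(E)$ has infinite index in $\pi_1(M)$, yet the statement is false without it (take $M=E\times[0,1]$ glued along $E\times\{1\}$ to a compression body with arbitrarily large height; the unburied $E$ is always compressible in the resulting $X$). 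In addition, your first step -- deducing $\lambda_\infty=\lambda|_E$ from the triviality of $\gamma_n$ in $\pi_1(X_n)$ -- is unsubstantiated: triviality in $\pi_1(X_n)$ is not a length bound in $Q(M,\sigma_{\nu_n(M)})$ or in the limit $N_\rho$, and no mechanism is offered to transfer it; the ``$\ell_\rho(\lambda)=0$ iff $\lambda$ is the ending lamination'' characterization is not in play because $\rho$ is not the representation in which $\gamma_n$ dies.

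For contrast, the paper's proof is geometric rather than combinatorial at this point, and the infinite-index hypothesis is where the work happens. It passes to the cover $N_E$ of the strong limit $N_\infty$ corresponding to $\pi_1(E)$: the end facing the degenerate end of $N_\infty$ lifts isometrically, while Thurston's Covering Theorem plus infinite index forces the other end of $N_E$ to be convex cocompact, so $N_E$ has a compact core $K_E\cong E\times[0,1]$ with strictly convex boundary $E\times\{0\}$. Because the nearly hyperbolic metric of Theorem \ref{existence of nearly hyperbolic metrics} contains larger and larger regions isometric to balls in $Q(M,\sigma_{\nu_n(M)})$, the geometric convergence provides maps $\psi_n:K_E\to X_n$ that are nearly local isometries; capping $E\times\{1\}$ off with a product neighborhood of the end of $X_n$ at $E$ yields a complete negatively curved $Y_n\approx E\times[0,\infty)$ with convex boundary and a locally isometric immersion $Y_n\to X_n$. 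Every nontrivial element of $\pi_1(Y_n)=\pi_1(E)$ then has a closed geodesic in $Y_n$ mapping to a closed geodesic of $X_n$, so $\pi_1(E)\to\pi_1(X_n)$ is injective for large $n$, which is the desired contradiction. If you want to salvage your approach, you would need to replace the Masur-domain step with an argument of this convexity type; the combinatorial data alone does not see the disks of $X_n$.
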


\begin{proof}
Consider a sequence $(X_n)$ of $(\CM,R)$-gluings with
heights tending to infinity, assume $M$ is a piece of
each $X_n$, $E\subset\D_0 M$ is incompressible,
$\pi_1(E)$ has infinite index in $\pi_1(M)$, and 
$E$ is unburied in $X_n$ for every $n$.
Equip $\D_0 X_n$ with a complete marking
$\lambda_n$, so that $(X_n,\lambda_n)$ has $R$-bounded combinatorics, 
and heights of $\lambda_n$ tend to infinity
with $n$. 

As before and for every $n$, we consider $\nu_n=\nu_{(X_n,\lambda_n)}$
to be the collection of markings that are obtained either
as restrictions of $\lambda$ or as images of decorations
of pieces of $X_n$ under the gluing involution. In particular
the restriciton of $\nu_n$ to $\D_0 M$ provides a complete
marking $\nu_n(M)$ with $R$-bounded combinatorics and heights
tending to infinity as $n\to\infty$. Similar to the proof of theorem
\ref{existence of nearly hyperbolic metrics}, we define 
$\tau_n(M)=\sigma_{\nu_n(M)}$ the corresponding conformal structure on 
$\D_0 M$.

We may use theorem \ref{existence of nearly hyperbolic metrics}
for the pair $(X_n,\lambda_n)$, with $n$ sufficiently large, to equip
$X_n$ with a metric whose curvatures are pinched in $(-1-\eta_n,
-1+\eta_n)$ with $\eta_n\to 0$ as $n\to\infty$. 
By Theorem \ref{strong limits of convex cocompacts}, there is a
choice of base point $x_n$ for the convex cocompact manifold 
$Q(M,\tau_n(M))$, with respect to which and after passing to
a subsequence, the sequence $Q(M,\tau_n(M))$ converges in
the Gromov-Hausdorff topology to a hyperbolic structure $N_\infty$
on $M$ with simply degenerate ends. The construction of the 
negatively curved metrics in the proof of Theorem \ref{existence
of nearly hyperbolic metrics} guarantee that for large enough $n$,
$X_n$ contains an isometric embedding of the $r_n$-neighborhood of
$x_n$ in $Q(M,\tau_n(M))$ in the preferred isotopy class, where 
$r_n\to\infty$ as $n\to\infty$. Letting $p_n$ be the image of 
$x_n$ we thus have that the pointed manifolds $(X_n,p_n)$ also 
converge in the Gromov-Hausdorff sense to $N_\infty$.

Let $N_E$ be the cover of $N_\infty$ associated to the inclusion of
$\pi_1(E)$ in $\pi_1(M)$ and hence in $\pi_1(N_\infty)$. This is a
Kleinian surface group, with a degenerate end which is an isometric
lift of the corresponding end of $N_\infty$. The infinite-index
assumption, together with Thurston's Covering Theorem implies 
that the other end of $N_E$ is convex cocompact.

Then an argument similar to Namazi-Souto \cite{NS09} shows that
$\pi_1(E)$ must inject in $\pi_1(X_n)$ for $n$ sufficiently
large. We explain the argument briefly. We can choose a compact core
$K_E$ of $N_E$ which is the complement of a product
neighborhood of the degenerate end in 1-neighborhood of the 
convex core of $N_E$. Note that $K_E$ can be identified with 
$E\times[0,1]$, and chosen so that $E\times\{0\}$ is the boundary 
of the $1$-neighborhood of the convex core of $N_E$.
Let $\psi_n:K_E\to X_n$ be the composition of the covering
$N_E\to N_\infty$ and approximating maps $N_\infty\to X_n$.
We can further assume, for $n$ large, that
$\psi_n|_{E\times\{1\}}$ is an embedding whose image 
separates
a product neighborhood $P_n$ of the end of $X_n$ associated to $E$. When $n$
is sufficiently large, the restriction of the approximating maps to
the image of $K_E$ in $N_\infty$ is nearly a local isometry. 
Hence if we pull back via $\psi_n$ the metric of $X_n$ to $K_E$, we
obtain a hyperbolic metric for which the boundary component
$E\times\{0\}$ is still strictly convex. We can attach $P_n$ to
$E\times\{1\}$ via the map $\psi_n$, and the result is a complete
hyperbolic manifold $Y_n \approx  E\times[0,\infty)$ with convex
  boundary, equipped with a locally isometric immersion
  $\psi'_n:Y_n\to X_n$. Now any nontrivial element of $\pi_1(Y_n) =
  \pi_1(E)$ has a geodesic representative in the interior of $Y_n$ which maps to
  a geodesic in $X_n$. 
 It follows that $\psi'_n$ is $\pi_1$-injective, which implies that
 $E$ is an incompressible boundary component of $X_n$.

Now the theorem follows by a typical contradiction argument. If the
theorem were to fail we would have a sequence $X_n$ with $M_n$ and
$E_n$ as in the statement, such that heights go to infinity but $E_n$ is
never incompressible. Since $\MM$ is finite we may extract a
subsequence in which $M_n$ and $E_n$ are copies of a fixed decorated
manifold $M$ with boundary $E$, and then the argument we have given
shows that $E$ is incompressible in $X_n$ after all, for large
$n$. This contradiction proves the theorem. 
\end{proof}

Note that the assumption that the image of $\pi_1(E)$ has infinite
index in $\pi_1(M)$ just rules out the possibility that $M$ is an
interval bundle. 
In fact, we can start by a finitely generated subgroup $\Gamma$ of $\pi_1(M)$
which (up to conjugacy) does not include a finite index subgroup of
a buried peripheral subgroup; then the above proof can be modified to
prove that if $X$ has sufficiently large heights then the 
homomorphism $\pi_1(M)\to \pi_1(X)$ induced by inclusion of $M$ as 
a piece, is injective on $\Gamma$. 
\section{Stability of JSJ decompositions and a priori bounds}\label{sec: stability of JSJ decomposition}

The goal of this section is to prove Theorem \ref{window frames don't break in
  a gluing} below, which gives us some a-priori control of the
geometry of the pieces of a general gluing with $R$-bounded
combinatorics and large heights. 

Given a set $\CM$ of decorated manifolds and $R>0$, we define
$\CA_\CM(R,D)$ to be the set of $\CM$-gluings $X$ possibly with
boundary with the property that the boundary identifications adjacent
to compressible boundary components have $R$-bounded combinatorics and
heights at least $D$, and such that all unburied boundary components
of a piece $M$ of $X$ are incompressible in $M$. 
Note that this last property is vacuous for our application of the following
theorem where $\D_0 X$ is empty. 

Given a manifold $M$, we use the notation $AH(M)$ to denote the subset
of the character variety of $M$ that consists of the discrete and
faithful representations of $\pi_1(M)$.  
If $M$ is a decorated  manifold with incompressible
boundary, 
and $U\subset M$ is a submanifold,
we have for each inclusion of $M$ as a piece of a gluing $X$ a restriction map
$$
r_{X,U} : AH(X) \to \XX(U),
$$ 
where $\XX(U)$ denotes the character variety of $\pi_1(U)$, induced by 
the inclusion $U\subset M \subset X$. Note that the representations of
$\pi_1(U)$ obtained in this way need not be injective.

\begin{theorem}\label{window frames don't break in a gluing}
  Suppose $\CM$ is a finite set of decorated manifolds not including
  interval bundles, and $R>0$. Then there exists $D$ so that given
  $X\in \CA_\CM(R,D)$ and $M\in\CM$ identified with a piece of $X$, the following
  holds. 
  \begin{enumerate}[\qquad (i)]
    \item If $C_E$ is a nontrivial relative compression body of $M$, the image of $r_{X,C_E}$ is contained in a compact subset of $\XX(C_E)$ independent of $X$.
    \item If $(U,P_U)$ is an acylindrical component of the JSJ decomposition of the incompressible core of $M$, the image of $r_{X,U}$ is contained in a compact subset of $\XX(U)$ independent of $X$.
    \item There is a uniform upper bound for the $\rho$-length of window frames of the incompressible core of $M$ where $\rho$ is in the image of $r_{X,M}$.
  \end{enumerate}
\end{theorem}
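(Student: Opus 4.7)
The plan is to argue by contradiction. If the conclusion fails for some $\CM, R$, then for each $n$ I can find $X_n\in\CA_\CM(R,n)$, $\rho_n\in AH(X_n)$, and a piece $M\in\CM$ of $X_n$ violating one of (i)-(iii). Using finiteness of $\CM$, I may assume after passing to a subsequence that the offending piece $M$ (and its offending submanifold $C_E$, $U$, or window frame curve) is fixed. By Corollary \ref{eventual injectivity}, for every $L$ the inclusion $\pi_1(M)\hookrightarrow\pi_1(X_n)$ is $L$-injective for $n$ large, so the restricted representations $\rho_n\circ\iota_n$ are eventually faithful on $\pi_1(M)$.

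The heart of the argument is to promote this eventual faithfulness to genuine faithfulness on a controlled enlargement. For each $n$ I would construct an intermediate submanifold $Y_n$ with $M\subset Y_n\subset X_n$, obtained by capping off each buried boundary component of $M$ by attaching the adjacent piece (together with further pieces as needed) so that $Y_n$ has incompressible boundary and $\pi_1(Y_n)\hookrightarrow\pi_1(X_n)$ is $\pi_1$-injective. The construction uses the $R$-bounded combinatorics and large-height hypotheses: for each compressible buried boundary $E$ of $M$, condition (\ref{eqn: mu nearest}) places $\mu_E$ near the meridian set $\Delta(E)$, and together with Lemma \ref{limits of bounded combinatorics bind} this forces the cap to ``kill'' the compression --- absorbing the relative compression body $C_E$ into an acylindrical piece of $Y_n$'s JSJ decomposition. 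The crucial companion is the stability statement: every acylindrical piece of the JSJ decomposition of the incompressible core of $M$ embeds as an acylindrical piece of the JSJ decomposition of $Y_n$, and every window frame of $M$'s JSJ is a window frame of $Y_n$'s JSJ.

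Given such a $Y_n$, the proof is completed by applying Thurston's ``only windows break'' theorem in the form reviewed in the appendix, whose constants depend only on the genus of $\D Y_n$, which is uniformly bounded in terms of $\CM$. Applied to the discrete faithful representation $\rho_n|_{\pi_1(Y_n)}$, it yields uniform compactness of the characters restricted to each acylindrical JSJ piece of $Y_n$, and uniform $\rho_n$-length bounds on window frames of $Y_n$. Via the JSJ compatibility established in the previous step, this immediately gives (ii) and (iii), and --- identifying $C_E$ with a submanifold of an acylindrical piece of $Y_n$ --- also (i). These uniform bounds contradict the failure hypothesis, completing the contradiction.

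The main obstacle is the stability of JSJ decompositions: the worry is that an essential annulus in $Y_n$ could straddle the gluing surface along which $M$ is capped, merging an acylindrical piece of $M$'s JSJ into an enlarged window of $Y_n$ or reshaping a window frame of $M$ into an interior curve of $Y_n$'s window. To rule this out I would use a further limiting argument: if such exotic annuli existed for infinitely many $n$, their boundary curves would give sequences on the gluing surfaces with $R$-bounded combinatorics and arbitrarily large heights, and Lemma \ref{limits of bounded combinatorics fill} would force convergence to ending laminations in Gromov boundary of the curve complex; a compactness argument paralleling the proof of Theorem \ref{incompressibility of boundary of a gluing} would then contradict the binding (filling/Masur-domain) conclusion from Lemma \ref{limits of bounded combinatorics bind}. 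This combinatorial stability argument is the main technical core of the section.
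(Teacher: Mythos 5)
Your overall skeleton---reorganize $X$ into incompressible-boundary sub-gluings $Y_n$ containing $M$, restrict $\rho_n$ to get genuinely faithful representations there, and feed these to Thurston's Only Windows Break theorem together with a JSJ-stability statement---is essentially the paper's route (its maximal-compression decomposition and Theorem \ref{only transparent windows break}). But there is a genuine gap in how you extract \emph{uniform} conclusions from Thurston's theorem. The appendix version gives a window-frame length bound whose constant depends only on the boundary, which is fine; the compactness statement for acylindrical JSJ pieces, however, is a statement about a fixed ambient manifold, with a compact set depending on that manifold, while your $Y_n$ range over infinitely many homeomorphism types as the heights grow. Worse, by the very JSJ stability you invoke (Theorem \ref{stability of JSJ}), the acylindrical piece of the JSJ of $Y_n$ is not $U$ itself but the union of $U$ with the non-transparent windows and the attached compression bodies, so it changes with $n$; nothing in Thurston's theorem makes the resulting compact sets ``independent of $X$.'' Bridging exactly this is the content of the paper's generalized statement: one uses the uniform frame-length bound to bound the lengths of the pared locus of the \emph{fixed} complement $(M',Q)$ of the transparent windows, notes that the glued-in meridians have $\rho_n$-length zero and, by bounded combinatorics and Lemma \ref{limits of bounded combinatorics fill}, converge to laminations binding $(M',Q)$, uses eventual faithfulness from Corollary \ref{eventual injectivity}, and then applies Theorem \ref{eventually faithful convergence} to $(M',Q)$ to get a compact set independent of the gluing. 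Your plan cites the appendix theorem as if it directly yields this uniformity; that step fails as stated.

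The second gap concerns (i). Absorbing $C_E$ into an acylindrical JSJ piece of $Y_n$ and restricting suffers from the same non-uniformity, and in any case does not address the components of the decomposition in which the exterior boundary of a compression body is glued to an interior or exterior boundary of \emph{another} compression body: there no incompressible-core piece, window frame, or acylindrical piece is available to quote. The paper proves (i) by a separate argument: the pulled-back decorations on $\D_e C_E$ converge to a filling Masur-domain (hence binding) lamination, so by Theorem \ref{eventually faithful convergence} it suffices to exhibit on the adjacent gluing surface a single curve of uniformly bounded $\rho$-length; producing that curve requires a trichotomy (the adjacent surface lies on an incompressible-core piece, where the frame/acylindrical bounds supply it; on the interior boundary of a compression body of larger exterior genus, where one quotes the bound already established for that body; or on the exterior boundary of another compression body, where a meridian has length zero) together with a backward induction on the genus of the exterior boundary to justify the middle case. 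Without this mechanism your proposal cannot deliver (i). A smaller discrepancy: the paper proves the needed JSJ stability not by your curve-complex argument about straddling annuli, but by doubling along the unburied boundary and using the negatively curved metrics of Theorem \ref{existence of nearly hyperbolic metrics} to conclude atoroidality, hence acylindricity of the complement of the transparent windows; your capping construction also needs the paper's bookkeeping (attach only along exterior boundaries of compression bodies) to terminate and to invoke Theorem \ref{incompressibility of boundary of a gluing}.
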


The proof of the theorem essentially uses the existence of
the negatively curved metrics constructed in \S 
\ref{sec:nearly hyperbolic gluings}, as well as Thurston's Only Windows Break Theorem. In fact, we prove a generalization of Thurston's Theorem, Theorem \ref{only transparent windows break} below, and we refer to the appendix for a new proof of Thurston's Theorem with an emphasis on the fact that the constants in the conclusion depend only on the topology of the boundary.

\subsection{Stability of JSJ decompositions}\label{subsec: stability of jsj}
We begin with a theorem  which allows
us to control the JSJ decompositions of bounded-combinatorics,
large-height compressions of a decorated manifold $M$ with incompressible
boundary, in terms of the JSJ decomposition of $M$. 

Suppose $M$ has incompressible boundary and $X$ is a {\em compression} of
$M$ (see \S\ref{subsec: compressions} for definitions). Let $(W,P)$ be
an $I$-bundle or solid torus component of the JSJ decomposition of $M$ (see
\S\ref{subsec: jsj decomposition}). We say that $(W,P)$ is {\em
  transparent} in $X$ if there is an essential annulus or M\"obius
band of $(W,P)$ all of whose boundary lies on unburied components of
$\boundary_0 M$ (see \S\ref{subsec: gluings}). 

If $(W,P)$ is an interval bundle, being transparent in $X$ means its
entire free boundary lies in the unburied part of $\D_0 M$. If $(W,P)$
is a solid torus, transparency means at least two components of $\boundary W
\setminus P$ do so. In the solid torus case, let $(W',P')$ be obtained by
pushing $W$ slightly away from the buried components of $\D_0M$, and
letting $P'$ be the closure of $\boundary W' \setminus \boundary_0
M$. After this adjustment we obtain a collection of $I$-bundles and
pared solid tori properly embedded in $M$ rel its unburied boundary,
and it may be that some 
solid torus components are now isotopic into other
components. Removing such redundancies we obtain what we call the
``induced characteristic submanifold'' of $X$. 
The following theorem states that, assuming
bounded combinatorics and large heights, this is indeed the 
characteristic submanifold of the compression, and hence determines
its JSJ decomposition:

\begin{theorem}\label{stability of JSJ}
  Suppose $M$ is a decorated manifold with incompressible
        boundary which is not an $I$-bundle. Given $R>0$ there
        exists $D$ so that, if $X$ is a compression of $M$
        with $R$-bounded combinatorics and heights at least $D$, then
        $X$ has incompressible boundary and the induced characteristic submanifold
        is the characteristic submanifold of $X$.
\end{theorem}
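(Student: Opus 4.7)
The plan is to derive both conclusions from the almost-injectivity supplied by Corollary \ref{eventual injectivity} combined with a geometric limiting argument based on the nearly hyperbolic structures of Section \ref{sec:nearly hyperbolic gluings}. First I would choose $D$ large enough that Corollary \ref{eventual injectivity} applies to $X$, regarded as a gluing over the finite family consisting of $M$ together with the finitely many compression bodies permitted in the definition of a compression, so that $\pi_1(N) \to \pi_1(X)$ is $L$-injective for every piece $N$ (with $L$ fixed at the outset of the argument). Incompressibility of $\D X$ follows: every unburied boundary component of $X$ is incompressible in its piece -- by the hypothesis that $M$ has incompressible boundary and by the fact that interior boundaries of compression bodies are incompressible in their compression body -- and almost-injectivity lifts this to $X$. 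Next I would verify that each transparent $I$-bundle or adjusted solid torus piece $(W,Q)$ inherited from the JSJ of $M$ remains essential in $X$: its free boundary lies on unburied parts of $\D_0 M$, $\pi_1(W)$ injects into $\pi_1(X)$, the annular components of $\D W$ cannot become boundary parallel by incompressibility of $\D X$, and distinct pieces remain non-isotopic for the same reason. Thus the induced characteristic submanifold is a valid JSJ structure on $X$.

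The core content is to show that every essential annulus in $X$ is properly isotopic into the induced characteristic submanifold, and I would argue by contradiction. Suppose instead we have a sequence of compressions $X_n$ with heights tending to infinity, each carrying an essential annulus $A_n$ that cannot be so isotoped. Equip $X_n$ with the nearly hyperbolic metric of Theorem \ref{existence of nearly hyperbolic metrics}, so that the piece of $X_n$ corresponding to $M$ is uniformly bilipschitz to a model $\BM_M[\tau_n]$ for a hyperbolic structure $\tau_n$ on $\D_0 M$ of $R$-bounded combinatorics and heights tending to infinity. By Theorem \ref{strong limits of convex cocompacts}, after extracting a subsequence the convex cocompact structures $Q(M,\tau_n)$ converge strongly to a hyperbolic structure $N_\infty$ on $M$ whose ends are all simply degenerate. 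Replacing $A_n$ by a least-area representative in the nearly hyperbolic metric, $R$-bounded combinatorics forces an a priori length bound on the boundary curves of $A_n$ as measured in the gluing surfaces of the bilipschitz model, hence an area bound on the portion of $A_n$ inside the $M$-piece of $X_n$. A geometric limit then produces a homotopically essential annulus in $N_\infty$ -- and hence in $M$ -- whose boundary lies on the free sides of $M$ and which, by construction, is not isotopic into the characteristic submanifold of $M$, contradicting the JSJ of $M$.

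The main obstacle will be arranging the geometric convergence of $A_n$ to a genuine essential annulus in the limit rather than letting it degenerate or escape through an end of $N_\infty$ or into one of the attached compression bodies. To prevent this I would exploit the explicit classification of essential annuli in a compression body $C$: each such annulus is either vertical in an $I$-bundle sub-piece of $C$ or has a boundary component representing a meridian of $\D_e C$. Combined with the large-height hypothesis at each gluing surface, this controls how $A_n$ can interact with each attached compression body and how its boundary curves can sit on the gluing surfaces, and ensures that the portions of $A_n$ entering the $M$-piece carry enough bounded geometric content to pass to a nontrivial annular limit in $N_\infty$ that then serves as the contradiction to the minimality of the JSJ of $M$.
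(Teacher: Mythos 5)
Your plan takes a genuinely different route from the paper, and as written it has a real gap at its core. The heart of your argument is the claim that a least\-/area representative of an essential annulus $A_n\subset X_n$ has a priori bounded boundary length ``forced by $R$-bounded combinatorics,'' and hence bounded area inside the $M$-piece. Nothing in the hypotheses gives this: in the contradiction scenario the boundary curves of $A_n$ on $\D X_n$ are completely unconstrained, and bounded combinatorics of the \emph{gluing} data says nothing about them; moreover the nearly hyperbolic metric of Theorem \ref{existence of nearly hyperbolic metrics} is complete on the interior, so a free-boundary minimization would have to be set up on the convex core boundary with convexity/mean-curvature control that you have not supplied. Even granting an area bound, the geometric limit you invoke (via Theorem \ref{strong limits of convex cocompacts}) only sees the $M$-piece: when $A_n$ genuinely enters the attached compression bodies, the limiting object is a surface with boundary on the gluing surfaces, not an essential annulus in $M$, so the advertised contradiction ``with the JSJ of $M$'' does not materialize. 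Your proposed remedy, a classification of essential annuli in a compression body $C$ as either vertical or having a meridian boundary component, is false as stated: an essential annulus can never have a boundary component bounding a disk (it would then be compressible or boundary-parallel), and handlebodies contain many essential annuli with non-meridian boundary. Finally, your first step also leans too hard on Corollary \ref{eventual injectivity}: $L$-injectivity for a fixed $L$ does not rule out compressing disks whose boundary has large word length, which is exactly why the paper proves incompressibility of $\D X$ separately in Theorem \ref{incompressibility of boundary of a gluing} by a geometric-limit argument.

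For comparison, the paper avoids all control on annuli by a doubling trick: it partitions $\D_0 M$ into buried and unburied parts, removes the transparent windows and (adjusted) transparent solid tori to get a pared manifold $(M',P)$, and observes that the double $D_{U'}M'$ along the unburied boundary is atoroidal, hence a decorated manifold; doubling the compression $X'$ then yields a gluing to which Theorem \ref{existence of nearly hyperbolic metrics} applies, so $D_{U'}X'$ carries a pinched negatively curved metric and is atoroidal, which is equivalent to $(X',P)$ being acylindrical. Reattaching the transparent pieces along $P$ then exhibits the induced characteristic submanifold as the JSJ of $X$. In other words, the essential-annulus problem is converted into an essential-torus problem, which negative curvature excludes outright; this is exactly the step your minimal-surface/limit scheme is trying to do by hand, and it is where your argument is missing the key idea.
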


\begin{proof}
It is the result of Theorem \ref{incompressibility of boundary of a
  gluing} that when $X$ has sufficiently large heights then $\D_0 X$ is
incompressible. 

Partition $\boundary_0
M$ into the subset $B$ of buried components and $U$ of unburied
components. 
Consider first the case that no I-bundle or solid torus of the JSJ
decomposition of $M$ is transparent in $X$. In other words, every
essential annulus or M\"obius band in $M$ has at least one boundary component in $B$.
Then the double $D_U M$ of $M$ along $U$ must be
atoroidal, since an essential torus in minimal position must decompose
into essential annuli whose boundaries only meet $U$. Thus
$D_U M$, together with the doubled decorations of $M$ on $B$,  is a decorated
manifold, and $D_U X$ is obtained as a compression of $D_U M$ using two copies of
the compressions of $M$. We can therefore apply 
Theorem \ref{existence of nearly hyperbolic metrics} to conclude that,
fixing $R$ and supposing the heights of the gluings in $X$ are
sufficiently large, $D_U X$ admits a negatively curved metric. 

The existence of this negatively curved metric implies that $D_U X$ is
atoroidal, and this in turn means that $X$ is acylindrical. 
In particular the JSJ decomposition of $X$ has no
I-bundles or solid tori, which proves the theorem in this case.

In the general case, let $(M',P)$ denote the pared manifold obtained
as the closure of the complement in $M$ of the transparent I-bundles and
(adjusted) transparent solid tori of the JSJ decomposition of $M$, where
$P$ denotes the annuli in $\boundary M'$
along which $M'$ was attached to those components. 
Now letting 
$U'= U\intersect M'$, we have just as before that
$D_{U'} M'$ is atoroidal, except that now it is a manifold with
peripheral tori, namely the doubles of annuli in $P$.
Note that none of
the deleted pieces meet $B$ (this is what the adjustment of the
solid tori accomplishes), and therefore $\boundary P$ is
contained in $U$. The compressions along $B$ are therefore 
undisturbed, and
Theorem \ref{existence of nearly hyperbolic metrics} 
applies. We conclude for large enough gluing heights that
$D_{U'} X'$ admits a complete negatively 
curved metric, where $X'$ is the union of $M'$ with the compression
bodies of $X$ along $B$.

It follows that $(X',P)$ is an acylindrical pared manifold. 
Reattaching the transparent JSJ components to $X'$ along $P$, we obtain
the JSJ decomposition of $X$. 
\end{proof}

{\bf Remark:} The appearance of toroidal boundary components in the
proof above, even when the original decorated manifolds are without
toroidal boundary, has been our motivation for 
choosing this level of generality for decorated manifolds. 

\subsection{Only transparent windows break}\label{subsec: only transparent windows break}
Using Theorem \ref{stability of JSJ} we prove the following, which can be thought
as a generalization of Thurston's Only Windows Break Theorem
to a class of representations of $\pi_1(M)$ that now also include some unfaithful
representations. We should point the reader to examples of
Biringer-Souto \cite{BS10} that show a more obvious generalization of
the theorem is false.

\begin{theorem}[Only Transparent Windows Break]\label{only transparent windows break}
Let $M$ be a decorated manifold with incompressible boundary which is not an $I$-bundle.
Given $R>0$ there exist $D, L$ and a compact subset $\KK_U$ of $\XX(U)$
for each component $(U,P_U)$ of the JSJ decomposition of
$M$, such that,
if $X$ is a compression of $M$ with $R$-bounded combinatorics
and heights at least $D$, then for any $\rho\in AH(X)$ the lengths of
window frames are bounded by $L$, and if $(U,P_U)$ is not a transparent window or solid torus, we have
$$
r_{X,U}(AH(X)) \subset \KK_U.
$$
\end{theorem}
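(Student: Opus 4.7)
The strategy is to apply the classical Only Windows Break Theorem (whose proof is given in the appendix, with constants depending only on the topology of $\D X$) directly to the compression $X$ itself, and then transfer its conclusions to the JSJ decomposition of $M$ via Theorem \ref{stability of JSJ}. Choosing $D$ sufficiently large, Theorem \ref{stability of JSJ} guarantees that $X$ has incompressible boundary and that its JSJ decomposition is the induced one: the transparent $I$-bundles and (adjusted) transparent solid tori of the JSJ of $M$ are precisely the windows and solid tori of $X$, while every non-transparent window or solid torus of $M$, every acylindrical component of the JSJ of $M$, and every attached compression body is absorbed into an acylindrical piece of the JSJ of $X$.

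Applying the classical Only Windows Break Theorem to $X$, we obtain a constant $L_0$ bounding the $\rho$-length of every window frame of $X$, and for each acylindrical JSJ component $V$ of $X$ a compact subset $\KK_V \subset \XX(V)$ containing $r_{X,V}(\AH(X))$. Now we transfer back: for each JSJ component $(U,P_U)$ of $M$ that is acylindrical, or is a non-transparent window or solid torus, $U$ sits inside some acylindrical component $V$ of the JSJ of $X$. The inclusion $U \hookrightarrow V$ induces a continuous restriction map $\XX(V) \to \XX(U)$, and the image of $\KK_V$ under this map is a compact set $\KK_U$ containing $r_{X,U}(\AH(X))$. For window frames of $M$, those bounding transparent windows remain window frames of $X$ by stability, so Thurston applied to $X$ directly bounds their $\rho$-length by $L_0$; those bounding non-transparent windows become specific conjugacy classes in some $\pi_1(V)$ for an acylindrical piece $V$ of $X$, and since translation length is a continuous function on $\XX(V)$, compactness of $\KK_V$ yields a uniform upper bound on their $\rho$-length.

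The main obstacle is ensuring uniformity of $L$ and of each $\KK_U$ across the whole family of admissible compressions $X$. This is resolved by the observation that compressions of $M$ have a uniformly bounded number of pieces (by the Euler characteristic constraint noted in \S\ref{subsec: compressions}) drawn from a finite collection of compression bodies, so up to homeomorphism there are only finitely many $X$ to consider; taking unions over this finite list produces uniform constants. A secondary technical point is that in the proof of Theorem \ref{stability of JSJ} the doubling construction introduces peripheral tori in $X$, so the version of Thurston's theorem employed must apply to pared manifolds whose pared locus may include tori -- this is precisely the generality carried out in the appendix, and is why the uniformity of constants in terms of boundary topology (rather than the ambient 3-manifold) is emphasized there.
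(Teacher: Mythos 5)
Your proposal has a genuine gap at the uniformity step. You claim that "compressions of $M$ have a uniformly bounded number of pieces..., so up to homeomorphism there are only finitely many $X$ to consider." This is false: while the number of pieces and their homeomorphism types are drawn from a finite list, the boundary identifications along the exterior boundaries of the compression bodies range over infinitely many isotopy classes (this is exactly what the bounded-combinatorics and height hypotheses are quantifying), and different identifications generically produce non-homeomorphic $X$, and in any case different inclusions $U\hookrightarrow M\hookrightarrow X$. Consequently the acylindrical JSJ pieces $V$ of the various $X$'s form an infinite family, and the compact sets $\KK_V\subset\XX(V)$ supplied by the "Moreover" clause of the appendix theorem live in different character varieties, with no a priori control on their images under the restriction maps $\XX(V)\to\XX(U)$. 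The first part of your argument — the bound on frames of transparent windows via the appendix theorem applied to $X$, with the constant depending only on $\D_0 X\subset\D_0 M$ — is fine, since there the dependence on topology is made explicit. But the compactness transfer for acylindrical pieces and non-transparent windows and solid tori is not uniform, and neither, therefore, is the resulting length bound for frames of non-transparent windows.

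The paper's proof avoids this by never going through the varying pieces $V$ of the JSJ of $X$. After Theorem \ref{stability of JSJ} and the quantitative window-frame bound from the appendix theorem applied to $X$, one works instead with the \emph{fixed} pared manifold $(M',Q)$: the closure in $M$ of the complement of the transparent windows and (adjusted) solid tori. Its topology is determined by $M$ together with the finite choice of which boundary components are unburied, so after passing to a subsequence in the contradiction argument one has a single group $\pi_1(M')$. The meridians of the attached compression bodies, transported to $\D M'$ via the gluing involution, have $\rho_n$-length zero and converge (using Lemma \ref{limits of bounded combinatorics fill}) to a lamination which is binding on $(M',Q)$ precisely because every essential annulus of $M'$ must run to a buried component. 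Combined with eventual faithfulness from Corollary \ref{eventual injectivity} and the window-frame bound controlling $\ell_{\rho_n}(\D Q)$, Theorem \ref{eventually faithful convergence} then gives compactness of $r_{X,M'}(AH(X))$ directly, and the claims for the individual $(U,P_U)$ and the non-transparent frames follow by restriction within the fixed $M'$. The essential point you are missing is that one needs a single compactness statement applied to a fixed fundamental group along a varying sequence of $X_n$, rather than an attempted uniformization of a family of compactness statements on varying groups $\pi_1(V_n)$.
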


\begin{proof}
Consider a sequence of compressions
$X_n$ of $M$ with $R$-bounded combinatorics and heights going to
$\infty$. For large enough heights, we have by
Theorem \ref{stability of JSJ} that $\D X_n$ is incompressible and the JSJ decomposition of $X_n$ is given by the
transparent $I$-bundles and (adjusted) transparent solid tori of $M$. Passing to a subsequence, we assume this holds for all $n$, and moreover
that the subset $U_n$ of unburied components of $\boundary_0 M$ in $X_n$
is a constant subsurface $U$. The set of transparent $I$-bundles and solid
tori is therefore constant as well. 

Let $(\rho_n)\subset AH(X_n)$ be any sequence. 
It follows from Thurston's Only Windows Break Theorem that the $\rho_n$-lengths of the window frames of $X_n$ stay bounded by a constant that depends only on the
topology of $\D X_n$. Since $\D X_n$ is just $U$, a subsurface of
$\boundary_0 M$, the upper bound does not depend on $X_n$. Note that
these window frames include the frames of transparent $I$-bundles of $M$.

Let $(M',Q)$ denote the pared manifold obtained as the closure of the complement in $M$ of the
transparent $I$-bundles and adjusted transparent solid tori, as in the proof of Theorem
\ref{stability of JSJ}. 
The upper bound for the $\rho_n$-lengths of window frames of $X_n$ in
particular implies that the $\rho_n$-lengths of all components of $Q$
remain bounded. We now wish to show that $\rho_n$ restricted to each
component of $(M',Q)$ remains bounded in its character variety.
Suppose for simplicity that $M'$ is connected.

The maps $\pi_1(M)\to\pi_1(X_n)$ are eventually injective by 
Corollary \ref{eventual injectivity}, and since
$\pi_1(M')\to\pi_1(M)$ is injective, the representations
$\rho_n|_{\pi_1(M')}$ are eventually injective as well.

For each buried component $E$ of $\D_0 M$, 
the decorations $\nu_n(M,E)$ of the
adjacent pieces in $X_n$ have $R$-bounded combinatorics and heights going to
$\infty$ with respect to the decoration $\mu(M,E)$. Hence by Lemma
\ref{limits of bounded combinatorics fill}, they converge to a filling
lamination $\lambda_E$. Let $\ep_n|_E$ denote the image under
gluing of a meridian of the adjacent compression body to $E$ in $X_n$,
which we may select so that $d_E(\ep_n,\nu_n(M,E))$ is bounded. 
Then $\ep_n|_E \to \lambda_E$ as well. 

The lamination $\lambda = \bigcup\lambda|_E$ is binding in $(M',Q)$,
because every essential annulus
in $M'$ must have at least one boundary component on
a buried component $E$ of $\D_0 M$ (since we removed the transparent
windows and solid tori), and $\lambda|_E$ is filling.
The lengths of the components of  $\ep_n$ are bounded in $\rho_n$ --
in fact they are zero.
Hence by theorem
\ref{eventually faithful convergence} the sequence $(\rho_n|_{\pi_1(M')})$ is
contained in a compact subset of the character variety of
$\pi_1(M')$. 
The case where $M'$ is disconnected is handled
in the same way, component by component.

Every component $(U,P_U)$ of the JSJ decomposition of  $M$, which is
not a transparent $I$-bundle or solid torus,  
is contained in a component of $M'$, and every window frame is either
contained in $M'$, or is the frame of a transparent window. In all these
cases we have shown that $\rho_n$ restricted to such components is
bounded in its character variety. Since this is true for an arbitrary
sequence of $X_n$ and $\rho_n$ with heights going to infinity, the
usual argument by contradiction yields a uniform bound for all
sufficiently large heights.
\end{proof}

\subsection{Boundedness in general gluings}\label{subsec: boundedness in general}
Recall that compressions of a decorated manifold $M$ are special
examples of gluings. We will use Theorem \ref{only transparent windows break}
to prove Theorem \ref{window frames don't break in a gluing},  which
establishes a similar conclusion
for representations of $\pi_1(M)$ induced by elements of $AH(X)$ with $X\in\CA_\CM(R,D)$.

\begin{proof}[Proof of Theorem \ref{window frames don't break in a gluing}.]
Given $\CM$ as in the hypothesis of the theorem, we begin by describing
a new set $\closure\CM$ of decorated manifolds, every element of which
is either a nontrivial compression body or a manifold with
incompressible boundary which is not an interval bundle.

Recall from \S \ref{subsec: compression bodies}, the unique decomposition 
of each element $M$ of $\CM$ into its incompressible core and a
union of nontrivial relative compression bodies.
We ignore components of the incompressible core which are interval
bundles. Define decorations on each remaining component as follows: each
boundary component which is also a boundary component of $M$ inherits
its decoration from $M$. For each remaining boundary component we fix
some arbitrary selection of a decoration (note that this is a finite
number of choices). Let $\closure\CM$ denote the set of all decorated
manifolds obtained in this way from the elements of $\CM$.

Each $\CM$-gluing $X$ can therefore be further decomposed into an
$\closure\CM$-gluing, and we will call this the {\em full decomposition}
of $X$. Note that
in the full decomposition, whenever two boundaries are identified and
one or both of them are compressible (i.e. exterior boundaries of
compression bodies), the decorations on {\it both} boundaries are
inherited from decorations of elements of $\CM$ containing them. 
Moreover the disk sets on these boundaries are unchanged.
Hence the height and bounded combinatorics conditions for
boundary identifications adjacent to compressible boundary components
remain the same and in particular if $X$ is in $\CA_\CM(R,D)$, then it
is also in $\CA_{\closure\CM}(R,D)$. 
  
Given an $\CM$-gluing $X$, we reorganize pieces of the full
decomposition by performing only the boundary identifications adjacent
to exterior boundaries of compression bodies. (Recall that these are
the only compressible boundary components of elements of
$\closure\CM$.) At the end of this process, every component $Y$ of the
new decomposition either contains exactly one piece $M'\in\closure\CM$
with incompressible boundary and $Y$ is a compression of $M'$ with
respect to the compression bodies in $\closure\CM$, or $Y$ is a union
of compression bodies where the exterior boundaries of exactly two
``central" ones are glued together and the exterior boundary of every
other compression body is glued to the interior boundary of some 
compression body. The latter type can also be viewed as a compression
of either of the two central compression bodies. Obviously every such
component $Y$ is a finite union of elements of $\closure\CM$. It will
turn out that the height requirement in the statement of the theorem
for the boundary identifications within $Y$ only depends on the pieces
of $Y$. We say this is the {\em maximal-compression decomposition} of
$X$ and obviously every component $Y$ of this decomposition also
belongs to $\CA_{\closure\CM}(R,D)$ if $X\in \CA_\CM(R,D)$. 
   
It follows from theorem \ref{incompressibility of boundary of a gluing} that there exists a height $h_Y$ depending on $R$ and $\CM$, so that if the boundary identifications within $Y$ have $R$-bounded combinatorics and heights at least $h_Y$, then $\D Y$ is incompressible. So we assume $D$ is larger than this height and as a result $\pi_1(Y)$ injects into $\pi_1(X)$; so, an element of $AH(X)$ gives a discrete and faithful representation of $\pi_1(Y)$, i.e. an element of $AH(Y)$.
  
Suppose $\rho\in AH(X)$ is given and $Y$ is piece of the
maximal-compression decomposition of $X$. If $Y$ is a compression of
a component $M'$ of the incompressible core of an element $M\in\CM$,
we can apply the Only Transparent
Windows Break Theorem \ref{only transparent windows break} to conclude that,
if $D$ is sufficiently large, the window frames of $M'$ will have
$\rho$-length bounded by a constant, and the induced representations
of acylindrical components of the JSJ decomposition of $M'$ stay in
compact subsets of the associated character varieties. These bounds
depend on $R$ and the subset of $\closure\CM$ whose elements are used
as pieces of $Y$. In particular we have proved claims {\em (ii)} and
{\em (iii)} for the
components of the incompressible core of pieces of $X$ and window
frames of those pieces. It  remains to prove claim {\em (i)}, that the induced
representations of the relative compression bodies stay in compact
subsets of the associated character varieties. 
  
To prove this, we need to show that for every compression body $C$ in
$\closure \CM$, the representations induced by hyperbolic structures
on elements of $\CA_\CM(R,D)$ stay in a compact subset of $\XX(C)$. We
prove this by backward induction on the genus of the exterior boundary
of $C$. Assume we already know this claim for compression bodies whose
exterior boundary has genus bigger than the genus of $\D_e C$ (note
that this holds vacuously in the base case, when the genus of $\D_e C$
is maximal in $\closure\CM)$. Suppose a sequence $X_n$ is given with
$X_n \in \CA_\CM(R,D_n)$ where $D_n\to\infty$ as $n\to\infty$ such that
$C$ is a piece of the full decomposition of $X_n$ for every $n$, and
let $\rho_n\in AH(X_n)$ also be given.  Let $Y_n$ denote the component
of the maximal-compression decomposition of $X_n$ which contains
$C$. As we explained above, for $n$ sufficiently large, say all $n$,
$\rho_n|_{\pi_1(Y_n)}$ is an element of $AH(Y_n)$. After passing to a
subsequence we can assume the exterior boundary of $C$ is glued to the
boundary $E$ of a piece $M$ of $Y_n$ with $M\in\closure\CM$ fixed, via
a boundary identification $\psi_n:\D_e C\to E$. Recall that $\D_e C$
and $E$ have inherited decorations from decorations of elements of
$\CM$, which we denote by $\mu_C$ and $\mu_E$ respectively, such that
$\mu_C$ and $\psi_n^{-1}(\mu_E)$ have $R$-bounded combinatorics and
their distance in $\CC(\D_e C)$ is at least $D_n$.
We have three
possibilities for $M$:
  \begin{enumerate}
    \item $M$ is a component of the incompressible core of an element of $\CM$,
    \item $M$ is a nontrivial relative compression body of an element of $\CM$ and $E$ is an interior boundary component of $M$ or
    \item $M$ is a nontrivial relative compression body of an element of $\CM$ and $E$ is the exterior boundary of $M$.
  \end{enumerate}
  In the first case, note that since $M$ is not an interval bundle,
the boundary component $E$ is not entirely inside the window part
of $M$. As a consequence, there is an essential simple loop $\gamma_E$
on $E$ which either can be homotoped into an acylindrical component of
the JSJ decomposition of $M$ or is homotopic to a window frame. Then it 
follows from the proof of claims {\em (ii)} and {\em (iii)} in the
conclusion of the theorem that the $\rho_n$-length of $\gamma_E$ stays bounded in
  $X_n$ independently of $n$.  
  
  In the second case, the genus of the exterior boundary of $M$ is bigger than that of $C$ and therefore the induction hypothesis shows that the induced representations of $\pi_1(M)$ stay bounded. So if we choose a fixed curve $\gamma_E$ on $E$ (say a component of the decoration on $E$) then the $\rho_n$-length of $\gamma_E$ in $X_n$ is bounded independently of $n$.
  
  Finally in the third case, $E$ is compressible in $M$ and therefore we can choose a fixed meridian $\gamma_E$. The $\rho_n$-length of $\gamma_E$ in $X_n$ is zero and is obviously bounded independently of $n$.
  
  Hence in any of the cases above we have made a choice for $\gamma_E$ whose $\rho_n$-length is bounded independently of $n$.
  The $R$-bounded combinatorics and increasing heights show, by lemma \ref{limits of bounded combinatorics bind}, that the sequence $(\psi_n^{-1}(\mu_E))_n$ converges in $\PML(\D_e C)$ to a filling Masur domain lamination $\lambda_C$ on $\D_e C$. In particular $\lambda_C$ is binding on $\D_0 C$.
  The curve $\gamma_E$ is chosen only depending on the piece $M$ and therefore has bounded curve complex distance from $\mu(E)$; hence $(\psi_n^{-1}(\gamma_E))_n$ also converges to the same lamination $\lambda_C$ and we have already seen that $\rho_n(\gamma_n)$ is bounded independently of $n$. So by theorem \ref{eventually faithful convergence}, the induced representations of $\pi_1(C)$ stay in a compact subset of $\XX(C)$.
\end{proof}

Our discussion in the above proof yields a fact which we record here
for use in the proof of the main theorem:

\begin{corollary}\label{existence of peripheral bounded length curves}
    Suppose $\CM$ is a set of decorated manifolds that does not
    include $I$-bundles, and $R>0$. There exist constants $D, L$, and
    for every component $E$ of $\D_0 M$ with $M\in\CM$, there is an
    essential simple loop $\gamma_E$, so that if $M$ is a piece of
    $X\in \CA_\CM(R,D)$ and $\rho\in AH(X)$, then the $\rho$-length of
    $\gamma_E$ is bounded by $L$. 
\end{corollary}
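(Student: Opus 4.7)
The plan is to deduce this corollary directly from Theorem \ref{window frames don't break in a gluing}, by making, for each $M\in\CM$ and each boundary component $E\subset\D_0 M$, a deliberate choice of $\gamma_E$ whose bounded length will be witnessed by one of the three conclusions of that theorem. Since $\CM$ is finite and each $M$ has finitely many boundary components, one can take $L$ and $D$ to be the maxima of the constants furnished by Theorem \ref{window frames don't break in a gluing} across the resulting finite family of configurations.

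The case split is according to whether $E$ is compressible or incompressible in $M$. If $E$ is compressible, let $\gamma_E$ be any meridian on $E$. Such a meridian is essential on $E$ but bounds a disk in $M$, hence in $X$, so $\rho(\gamma_E)=1$ and $\ell_\rho(\gamma_E)=0$ for every $\rho\in AH(X)$. If $E$ is incompressible, then $E$ is a boundary component of some component $M'$ of the incompressible core of $M$. I first observe that $M'$ itself cannot be an $I$-bundle: the boundary of a twisted $I$-bundle is a single connected closed surface, which would be forced either to coincide with $E$ (making $M=M'$ an $I$-bundle, contrary to the hypothesis that $\CM$ contains no $I$-bundles) or to be disjoint from $\D M$ (contradicting $E\subset\D M'$). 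With $M'$ not an $I$-bundle, its JSJ decomposition has genuine non-window pieces, and I examine how $E$ sits with respect to this decomposition. Either $E$ contains a window frame of $M'$, which I then take as $\gamma_E$ and to which conclusion (iii) of Theorem \ref{window frames don't break in a gluing} applies, or $E$ is disjoint from every window frame and therefore lies in the free boundary of a single acylindrical piece $(U,P_U)$, in which case any fixed essential simple loop $\gamma_E\subset E$ has $\ell_\rho(\gamma_E)$ bounded via conclusion (ii) of that theorem applied to $U$, since $\pi_1(E)\subset\pi_1(U)$ and $r_{X,U}(AH(X))$ lies in a fixed compact subset of $\XX(U)$.

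The principal issue requiring care is the exhaustiveness of the JSJ case analysis for incompressible $E$, and specifically ruling out that $E$ might sit entirely within the horizontal boundary of some window component of $M'$ without meeting any window frame. A twisted $I$-bundle window whose horizontal boundary equals the full closed surface $E$ must be all of $M'$, which has been excluded; a trivial $I$-bundle window $F\times I$ would require $F\times\{0\}$ and $F\times\{1\}$ to be disjoint subsurfaces equal to the connected $E$, forcing $E$ to be disconnected; and a thickened-annulus window has annular horizontal boundary, which cannot constitute the closed non-toroidal surface $E$. Once these three checks are in hand, the dichotomy above is exhaustive, and the corollary follows by taking the maximum of the bounds from Theorem \ref{window frames don't break in a gluing}(ii) and (iii) over the finitely many $(M,E)$.
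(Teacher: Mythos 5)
Your strategy is the same as the paper's: the paper records this corollary as a fact extracted from the proof of Theorem \ref{window frames don't break in a gluing}, where (for incompressible $E$) the key observation is that "since $M$ is not an interval bundle, the boundary component $E$ is not entirely inside the window part of $M$,'' yielding a $\gamma_E$ either homotopic to a window frame (handled by conclusion (iii)) or homotopic into an acylindrical JSJ piece (handled by conclusion (ii)); and the meridian choice for compressible $E$ is also exactly the paper's. So the decomposition, the appeal to Theorem \ref{window frames don't break in a gluing}, and the reduction by finiteness of $\CM$ all agree with the paper's route.

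That said, your exhaustiveness check for the incompressible case contains a flawed step which you yourself flag as the principal point requiring care. You claim that if $E$ sits in the horizontal boundary of a trivial $I$-bundle window $F\times I$, then "$F\times\{0\}$ and $F\times\{1\}$ [would have] to be disjoint subsurfaces equal to the connected $E$.'' Nothing forces \emph{both} horizontal boundary components to coincide with $E$; the configuration to rule out is simply $E = F\times\{0\}$, which is perfectly consistent with $E$ connected and does not yield the disconnection you invoke. The correct argument is the same one you gave for the twisted case: since $E$ is closed, $F\cong E$ is closed, so $F\times I$ has empty vertical boundary and hence meets no annulus of the JSJ family $\CA$; it is therefore a connected component of $M'$, forcing $F\times I = M'$, which is impossible because trivial $I$-bundles are excluded from the incompressible core by definition. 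You should also add a sentence disposing of the solid-torus pieces of the JSJ decomposition: their contribution to $\D_0 M'$ consists of annuli bounded by window frames, so a closed genus $\ge 2$ component $E$ with no window frame cannot meet them, and thus $E$ does lie in the boundary of a single acylindrical piece as you assert.
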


\section{Bilipschitz models for bounded type manifolds}\label{sec:bilipschitz models for bounded type manifolds}

In this section we prove the main theorem. 
Note that in the statement of the theorem and throughout this section
we always assume $X$ is a gluing with
$\D_0 X$ empty, i.e. $X$ has only toroidal boundary. It is easy to
generalize the theorem to the case when $\D_0 X$ is nonempty and
equipped with an assignment of end invariants, but for simplicity of 
presentation we avoid this generality.  

\begin{theorem}\label{bilipschitz models for bounded type manifolds}
  Let $\CM$ be a finite collection of decorated manifolds and
  $R>0$. There exist $D$ and $K$ such that, for any $(\CM,R)$-gluing
  $X$ with all heights greater than $D$, the interior of $X$ admits a
  unique hyperbolic metric $\sigma$. Moreover there exists a $K$-bilipschitz
  embedding $\BM_X\to (X,\sigma)$, in the correct isotopy class,
  whose image is the complement of the rank 2 cusps of $X$. 
\end{theorem}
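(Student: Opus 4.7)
The plan is to assemble the ingredients from the previous sections to produce, piece-by-piece, a bilipschitz model for the hyperbolic metric, then address uniqueness and the infinite case separately. First I would invoke Theorem \ref{existence of nearly hyperbolic metrics} (with $\eta=1/2$ say) to equip $X$ with a negatively curved metric once $D$ is sufficiently large in terms of $\CM$ and $R$. In the closed/finite case this immediately verifies the topological hypotheses for Perelman's geometrization (or Thurston's, since $X$ becomes Haken once it contains several pieces), producing a hyperbolic metric $\sigma$, with Mostow rigidity giving uniqueness. One should also note that by collapsing $I$-bundles via Lemma \ref{looking through}, one may reduce to the case where $X$ has no $I$-bundle pieces (paying only an $R'$-bounded-combinatorics and bilipschitz-constant penalty), so that Theorem \ref{stability of JSJ} and Theorem \ref{window frames don't break in a gluing} apply directly.

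Next I would verify the hypotheses of Theorem \ref{bilipschitz embedding of models} for each piece $M$ of $X$, with target $N=(X,\sigma)$ and $\rho$ the induced homomorphism $\pi_1(M)\to\pi_1(N)$. The $L$-injectivity of $\rho$ follows from Corollary \ref{eventual injectivity} for $D$ large. The hyperbolic structure $\tau_M$ on $\D_0 M$ is chosen as in \S\ref{subsec: consistent orientation}, namely $\tau_F$ is the midpoint of the Teichm\"uller geodesic $[\sigma_{\mu_F},\sigma_{\nu_F}]$ on each boundary component $F$; this choice is $\Psi$-invariant, so that adjacent pieces receive matching structures on their identified boundaries, which has $R$-bounded combinatorics and heights comparable to those of $(X,\lambda)$. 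The $L'$-injectivity of $\rho_F$ for points far from $\sigma_{\mu_F}$ follows from Proposition \ref{L-injectivity for the boundary} applied within the incompressible core pieces; in the compressible setting and at window frames the required $L'$-injectivity instead follows from Theorem \ref{window frames don't break in a gluing} combined with the fact that the boundary structure is obtained from a convex-cocompact limit via Theorem \ref{strong limits of convex cocompacts}. The a priori length bounds needed to construct the map $g_\tau$ on boundary come from Corollary \ref{existence of peripheral bounded length curves}, which supplies, on every boundary component, essential curves whose $\rho$-length is bounded; combined with Theorem \ref{models for decorated manifolds} applied to a limiting convex cocompact structure, this produces the lifted $R$-bilipschitz boundary embedding.

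With the hypotheses of Theorem \ref{bilipschitz embedding of models} in hand, each piece $M$ yields a locally $K$-bilipschitz immersion $\BM_M[\tau_M]\to N$ extending $g_\tau|_{\D_0 M}$. By \S\ref{subsec: consistent orientation}, these immersions assemble consistently along the gluing loci, giving a global locally $K$-bilipschitz immersion $f:\BM_X\to N$ in the correct homotopy class, covering the thin cusp boundaries of $N$ on the toroidal boundary components. To promote $f$ to an embedding, I would argue that it is proper and, by the $L$-injectivity achieved for arbitrarily large $L$, induces an isomorphism on fundamental groups; the usual Waldhausen-type argument (as in the proof of Theorem \ref{bilipschitz embedding of models}) then homotopes $f$ to a homeomorphism onto the complement of the rank 2 cusps, rel boundary. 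A final smoothing step retains the bilipschitz constant $K$.

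The main obstacle is the infinite case, where neither geometrization nor Mostow rigidity is directly available. Here I would exhaust $X$ by an increasing sequence of finite $(\CM,R)$-sub-gluings $X_n$, equipped with suitable bounded-combinatorics, large-height boundary data so the finite case applies to produce hyperbolic metrics $\sigma_n$ with uniform bilipschitz models $\BM_{X_n}\to (X_n,\sigma_n)$. Uniformity of $K$ and a choice of base points in a fixed vertex piece yields, via geometric convergence (Gromov's theorem using the injectivity radius bound coming from the thick models), a hyperbolic metric $\sigma$ on $X$ together with a $K$-bilipschitz model map. Uniqueness in the infinite case is then obtained by McMullen's rigidity theorem \cite{McM96}: any two such hyperbolic metrics have uniformly bounded geometry away from the cusps via the common model, so the natural identification between them is bilipschitz and, lifting to common covers, extends to a quasiconformal map of the sphere at infinity, which McMullen's theorem forces to be conformal.
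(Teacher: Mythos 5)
Your overall architecture is the paper's: nearly hyperbolic metric from Theorem \ref{existence of nearly hyperbolic metrics}, geometrization plus Mostow in the compact case, collapsing $I$-bundles via Lemma \ref{looking through}, midpoint surfaces $\tau_F$, Theorem \ref{bilipschitz embedding of models} on each piece, patching by \S\ref{subsec: consistent orientation}, and exhaustion plus McMullen in the infinite case. The gap is in how you verify the hypotheses of Theorem \ref{bilipschitz embedding of models}, above all hypothesis (3). You propose to get the $L'$-injectivity of $\rho_F=\rho\circ(F\hookrightarrow M)_*$ with respect to points $\sigma$ far along $[\sigma_{\mu_F},\tau_F]$ from Proposition \ref{L-injectivity for the boundary} (plus Theorem \ref{window frames don't break in a gluing} and Theorem \ref{strong limits of convex cocompacts}). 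But Proposition \ref{L-injectivity for the boundary} only controls injectivity of $\pi_1(F)\to\pi_1(M)$, and Theorem \ref{window frames don't break in a gluing} gives compactness/length bounds, not injectivity into $\pi_1(X)$. You cannot upgrade to injectivity into $\pi_1(X)$ using Corollary \ref{eventual injectivity}: that corollary is $L$-injectivity for a \emph{fixed} word metric on $\pi_1(M)$ with $L$ determined before the gluing, whereas the curves of bounded $\sigma$-length for $\sigma$ deep along the Teichm\"uller geodesic have word length in $\pi_1(M)$ growing with the height, so no advance choice of $L$ (hence of $D$) covers them — the argument is circular. The paper's key device here, which your outline omits, is the \emph{geometric} use of the pinched metric $\theta_0$: the $K_0$-bilipschitz model map $f_0:\BM_X\to(X,\theta_0)$ places each such surface $\sigma$ inside a product region of $(X,\theta_0)$ whose size grows with the distance $d$ to both ends of $[\sigma_E,\zeta_E]$, and in a negatively curved manifold a bounded-length essential curve in such a region cannot die in $\pi_1(X)$; this is what furnishes hypothesis (3) uniformly.

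A second, related weakness is hypothesis (4). The lifted $R$-bilipschitz embeddings $g_\tau$ of the midpoint surfaces must land in the \emph{unknown} hyperbolic manifold $(X,\sigma)$, so Theorem \ref{models for decorated manifolds} ``applied to a limiting convex cocompact structure'' does not produce them — the convex cocompact structures enter only into the construction of $\theta_0$, not of $\sigma$. The paper instead takes the bounded-length curves $\gamma_E,\gamma_{E'}$ from Corollary \ref{existence of peripheral bounded length curves}, assembles them into a marking on $\D I_E[\ep_E]$ with $R'$-bounded combinatorics and large heights, rules out the bad virtual-fibration alternative via Lemma \ref{embedding implies fibration}, and then applies Theorem \ref{thick lifted embeddings of I-bundles} to the representation $\pi_1(E)\to\pi_1(X,\sigma)$; the $L$-injectivity needed there again comes from the $\theta_0$-product-region argument above. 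Without these two inputs your verification of Theorem \ref{bilipschitz embedding of models} does not go through, even though the remaining steps (consistent orientation, properness via comparison with $f_0$, exhaustion and McMullen rigidity, and the isotopy statement via Waldhausen/Gabai--Meyerhoff--Thurston) are in line with the paper.
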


\begin{proof}\mbox{}
This will be a cumulation of our results in the previous sections. 
Assume $X$ is an $(\CM,R)$-gluing with $\D_0 X$ empty obtained as 
the identification space $\Xi/\Psi$ for a collection $\Xi$ of 
copies of elements of $\CM$ and the gluing involution 
$\Psi:\D_0\Xi\to\D_0\Xi$, all satisfying the properties of 
gluings mentioned in \S \ref{subsec: gluings}.


\subsection*{Existence of the nearly hyperbolic metrics}
By Theorem \ref{existence of nearly hyperbolic metrics} given
finite $\CM$ and $R>0$, there are $D_0$ and $K_0$ so that if all heights
of $X$ are greater than $D_0$, then $X$ admits a $(-3/2,-1/2)$-pinched
negatively curved metric $\theta_0$ and there is a $K_0$-bilipschitz 
embedding $f_0$ from $\BM_X$ onto the complement of the rank 2 cusps 
of $X$ equipped with $\theta_0$. The map $f_0$ is in the isotopy class
of the identity map, but we will only use the fact that it is
homotopic to the identity map. 

Assume from now on that heights of $X$ are at 
least $D_0$. Corollary \ref{eventual injectivity} implies that 
given $L$, we can choose the heights sufficiently large, so that 
the embeddings of each piece of $X$ into $X$ induces an $L$-injective 
homomorphism on the level of fundamental groups. 

As a consequence of the existence of the negatively curved metrics, when 
$X$ is compact, i.e. $\Xi$ has finitely many pieces, by Perelman's 
Geometrization Theorem the interior of $X$ admits a finite volume 
hyperbolic metric, which by Mostow rigidity is unique up to homotopy.

The existence of the hyperbolic metric and the uniqueness in the general 
case, when $\Xi$ has infinitely many components, will be a consequence 
of proving the theorem for the compact 
case and will be explained at the end. So we continue with the assumption 
that $X$ (whether compact or non-compact) admits a complete hyperbolic metric.


\subsection*{Reduction to the case without $I$-bundles}
Recall that results of \S \ref{sec: stability of JSJ decomposition} 
apply when there are no 
$I$-bundle pieces. In view of Lemma \ref{looking through} on
collapsing $I$-bundles, we may reduce to the case where
either $X$ (and $\CM$)  has no $I$-bundles, or that
$X$ is a fibered gluing (a gluing of a single $I$-bundle to
itself). In the latter case, the existence of the bilipschitz map from  
$\BM_{X}$ to $X$ with its hyperbolic metric is a consequence of work 
of Minsky in \cite{Min01}. 
(It can also be proved as a consequence of theorem \ref{interpolations}.) 

Thus, for the rest of the proof of 
Theorem \ref{bilipschitz models for bounded type manifolds} 
we may assume that 
$X$ and $\CM$ have no $I$-bundle pieces.


\subsection*{$L$-injectivity along the gluing regions}
Assume $E$ is a component of $\D_0 M$ for a piece $M$
of $X$. Recall that $\mu_E$ is the decoration
on $E$ and $\nu_E$ is the $\Psi$-image of the 
decoration of the piece adjacent along $E$. We use
$\sigma_E=\sigma_{\mu_E}$ and $\zeta_E=\sigma_{\nu_E}$
to denote the corresponding conformal structures on
$E$, then $\BM_E[\mu_E,\nu_E]=\BM_E[\sigma_E,\zeta_E]$
is the subset of $\BM_X$ associated to $E$. So if $\tau$
is a hyperbolic structure on $E$ associated to a point 
of the Teichm\"uller geodesic $[\sigma_E,\zeta_E]$,
the $K_0$-bilipschitz map $f_0$ from $\BM_X$ to $(X,\theta_0)$ 
restricts to a bilipschitz embedding of $\tau$ into 
$(X,\theta_0)$. Furthermore if the $\CT(E)$-distances 
between $\tau$ and both ends of the geodesic 
$[\sigma_E,\zeta_E]$ are at least $d$, then the image of 
this embedding is enclosed in a product region which gets 
larger as $d\to\infty$. For a given $L$, 
we can choose $d$ large enough so that the induced 
homomorphism $\pi_1(E)\to\pi_1(X)$ is $L$-injective with 
respect to $\tau$. We assume this choice of $d$ works for 
every $M\in\CM$ and every component of $\D_0 M$. 


\subsection*{Lifted embeddings of the middle surfaces}
Given $E\subset\D_0 M$ for a piece
$M$ of $X$, let $\tau_E$ denote
the midpoint of the the Teichm\"uller geodesic 
$[\sigma_E,\zeta_E]$. By the previous step given $L$, we can
choose $d$ so that if the height of the gluing is at least 
$2d$, then the homomorphism $\pi_1(E)\to\pi_1(X)$ is 
$L$-injective with respect to $\tau_E$. (Obviously 
$\Psi(\tau_E) = \tau_{\Psi(E)}$.)

Let $M'$ be the piece of $X$ containing $E'=\Psi(E)$ in its boundary.
By corollary \ref{existence of peripheral bounded 
length curves}, we can choose simple essential loops $\gamma_E$ 
and $\gamma_{E'}$ on $E$ and $E'$ respectively, so that their 
lengths are bounded uniformly for every hyperbolic structure on 
an $(\CM,R)$-gluing that contains $M$ and $M'$ as pieces and 
has sufficiently large heights. So assuming the heights of $X$
are larger than this required height, lengths of $\gamma_E$
and $\gamma_{E'}$ are bounded in $X$. 

Let $\ep_E =\nu_{\tau_E}$ be the shortest marking on $\tau_E$,
chosen in \S \ref{subsec: teichmuller}; we construct a marking $\gamma$ on the 
boundary of the decorated $I$-bundle $I_E[\epsilon_E]$,
whose restriction to one boundary is $\gamma_E$ and to the other
boundary is $\Psi(\gamma_{E'})$. 
Obviously as the height of $\Psi|_E$ tends to infinity, the 
heights of $\gamma$ tend to infinity. Also it follows from the
discussion in \S \ref{subsec: teichmuller} that
there exists $R'$ depending on $R$ and topology of $E$, so that 
$\gamma$ has $R'$-bounded combinatorics as a marking on $I_E[\ep_E]$. 
Finally the homomorphism $\pi_1(E)\to\pi_1(X)$ is induced by the 
embedding $E\hookrightarrow M \hookrightarrow X$ and therefore by 
lemma \ref{embedding implies fibration}, either this homomorphism 
is not a virtual fibration or it is a fibration and the image of 
$E$ is the fiber. In either case the hypothesis of theorem 
\ref{thick lifted embeddings of I-bundles} holds for $I_E[\ep_E]$, 
the homomorphism $\pi_1(E)\to \pi_1(X)$ and the marking $\gamma$. 
We conclude there exist constants $D_1\ge D_0$ 
and $K_1$ so that, if heights of $X$ are at 
least $D_1$, there is a lifted $K_1$-bilipschitz embedding
$\tau_E \to X$ in the homotopy class determined by the 
inclusion $E\hookrightarrow M\hookrightarrow X$. (Note that 
Theorem \ref{thick lifted embeddings of I-bundles} provides a lifted 
bilipschitz embedding of $\BM_{I_E[\ep_E]} = \BM_E[\ep_E]$, 
but we know there is a uniformly bilipschitz map 
$\tau_E\to\sigma_{\ep_E}$ isotopic to the identity on $E$ and
therefore we also have a lifted bilipschitz embedding of $\tau_E$.)
As before we assume the constants $D_1$ and $K_1$ are chosen 
in a way that they work for every pair $M, M'$ of copies
of elements of $\CM$ which appear as pieces of a $\CM$-gluing
$X$.

Repeating this for every component of $\D_0 M$, we obtain a 
hyperbolic structure $\tau_M=\bigcup_{E\subset\D_0 M} \tau_E$ 
on $\D_0 M$ and a lifted $K_1$-bilipschitz embedding 
\[ g_\tau:\tau_M\to X\] whose restriction to 
every component $E$ of $\D_0 M$ is the above lifted $K_1$-bilipschitz embedding
$\tau_E\to X$. We can repeat this for every piece of $X$
and further assume that $g_\tau|_E\equiv g_\tau|_{E'}\circ\psi_E$,
when $\psi_E:E\to E'$ is a boundary identification in $X$.


\subsection*{Applying the immersion theorem}
We next claim that with heights of $X$ sufficiently large, 
we can apply Theorem \ref{bilipschitz embedding of models} to
$M$, the homomorphism $\rho:\pi_1(M)\to\pi_1(X)$ induced by the 
inclusion $M\hookrightarrow X$, the constant 
$R''=\max\{R,K_1\}$, and the lifted $K_1$-bilipschitz embedding 
$g_\tau$ of $\tau_M$. In order to satisfy condition (3) in the
statement of Theorem \ref{bilipschitz embedding of models} for
$L'$, we choose $d$ with the property that for every
component $E$ of $\D_0 M$ and point $\sigma$ in the Teichm\"uller
geodesic $[\sigma_E, \tau_E]$, whose distance from 
$\sigma_E$ is at least $d$, the homomorphism $\pi_1(E)\to\pi_1(X)$ 
is $L'$-injective with respect to $\sigma$. (Recall that $\tau_E$ 
was the midpoint of the Teichm\"uller geodesic connecting $\sigma_E$
and $\zeta_E=\Psi(\sigma_{\Psi(E)})$. 

We can choose $D_2\ge \max\{D_1,2d\}$ large enough so that if
heights of $X$ are at least $D_2$, then by using the first step of
the proof above, the induced homomorphism $\pi_1(M)\to\pi_1(X)$ is
$L$-injective where $L$ is given in Theorem \ref{bilipschitz embedding 
of models}. Then the hypothesis of Theorem \ref{bilipschitz 
embedding of models} is satisfied for constants $R'',L',d$ and 
$L$ and we obtain a locally $K_2$-bilipschitz immersion 
$f_M:\BM_M[\tau_M]\to X$ in the homotopy class of the inclusion
$M\hookrightarrow X$ that extends $g_\tau$ and restricted 
to each toroidal component of $\BM_M[\tau_M]$ covers the boundary 
of a component of the thin part of $X$; the constant $K_2$ 
depends only on $M$ and $R''$. In fact we can assume it is 
chosen to work for every $M\in\CM$ and therefore ultimately 
it depends on the collection $\CM$ and on $R$.


\subsection*{Patching pieces of the model} Once the locally
bilipschitz immersions $f_M:\BM_M[\tau_M]\to X$ are 
constructed, we are in the situation described 
in \S \ref{subsec: consistent orientation};
as explained there, it is a consequence of part (3) of 
Theorem \ref{interpolations} that the immersions $f_M$ for 
different pieces of $X$ have consistent orientation. Hence 
patching them together yields a locally $K_2$-bilipschitz 
immersion $f: \BM_X\to X$. Since each map $f_M$ is in the 
homotopy class determined by the inclusion $M\hookrightarrow X$ 
of $M$ as a piece of $X$, the map $f$ is a homotopy 
equivalence. Moreover restricted 
to every toroidal boundary component of $X$, the map $f$ 
is a covering map to the boundary 
of a component of the thin part of $X$. Since $f$ is homotopy 
equivalence, such a component of the thin part has to be a rank 2 
cusp. So $f$ induces a homotopy equivalence from 
$\BM_X$ to the complement of the rank 2 cusps of $X$. 
When $X$ is compact it immediately follows that $f$ is
in fact a homeomorphism and provides a $K_2$-bilipschitz 
embedding $\BM_X\to X$ whose image is the complement of the
rank 2 cusps of $X$. In the general case, the same conclusion
will follow after proving that $f$ is proper. 

\subsection*{The noncompact case}
When $X$ is compact and all heights are at least $D_2$, we have shown so
far that there is a hyperbolic metric on $X$ (using geometrization) which is 
unique up to homotopy by Mostow Rigidity Theorem. We
have established a $K_2$-bilipschitz embedding
$f$ from $\BM_X$ to $X$ equipped with this hyperbolic metric
and $f$ is in the homotopy class of the identity map.

When $X$ is noncompact (and heights are at least $D_2$), the two core
elements of our proof still apply: we obtain a
$(-3/2,-1/2)$-pinched metric $\theta_0$ on $X$ together
with a $K_0$-bilipschitz embedding $f_0:\BM_X\to (X,\theta_0)$
homotopic to the identity; and {\em provided} $X$
admits a hyperbolic structure $\sigma$, we have a locally bilipschitz
immersion $f:\BM_X \to (X,\sigma)$ homotopic to the identity.
Let us show that $f$ is proper.

The composition $g=f\circ f_0^{-1}$ is a locally bilipschitz immersion from
the complement of the rank 2 cusps of $(X,\theta_0)$ to $(X,\sigma)$,
which is also homotopic to the identity.
Assume $(x_n)$ is a sequence
of points of $X$ that exit every compact set. It is clear from 
the existence of the bilipschitz map $f_0$, that based at every point of
$(X,\theta_0)$ and in particular based at every $x_n$, there exists a 
homotopically nontrivial loop $\alpha_n$ of $\theta_0$-length bounded
depending only on the collection $\CM$ and therefore independent of $n$.
Moreover, the negative curvature of $\theta_0$ and existence of a lower 
bound for the injectivity radius in $(X,\theta_0)$ outside the cusps, implies $\alpha_n$
and $\alpha_m$ are non-homotopic when $x_n$ and $x_m$ are far from
each other; so after passing to a subsequence we can assume the
sequence of loops $(\alpha_n)$ are pairwise non-homotopic. Since
$g$ is homotpic to the identity map, $g(\alpha_n)$ is homotopic
to $\alpha_n$ and therefore is not homotopic to $g(\alpha_m)$ for
$m >> n$. Therefore at most finitely many loops $g(\alpha_n)$ 
can belong to a given compact subset of $X$. This implies $g(x_n)$ 
exits every compact subset of $X$ and therefore $g$ is 
proper. Thus $f$ must be proper as well, and as we mentioned 
earlier it follows that $f$ is an embedding
whose image is the complement of the rank 2 cusps of $X$.

We can now show that a
hyperbolic metric on $X$ is unique up to homotopy if it exists.
If there are two complete hyperbolic metrics 
$\theta_1,\theta_2$ on $X$, we can use the $K_2$-bilipschitz
embeddings $f_i:\BM_X\to (X,\theta_i), i=1,2,$ to find a 
bilipschitz homeomorphism between the complements of rank 2 
cusps of $\theta_1$ and $\theta_2$. This map can be extended
to a bilipschitz map $(X,\theta_1)\to(X,\theta_2)$. There
is a global upper bound for the injectivity radius of $\BM_X$
and therefore there is one for $\theta_1$ and $\theta_2$ outside the cusps.
Then McMullen's Rigidity Theorem \cite{McM96} applies to show 
the bilipschitz map $(X,\theta_1)\to(X,\theta_2)$
is homotopic to an isometry.

In fact the existence of a hyperbolic metric on $X$ is also
a consequence of the construction of the bilipschitz maps.
We realize $X$ as a limit of compact gluings $X_n$ with $\D_0 X_n$ empty, so that
the models $\BM_{X_n}$ converge geometrically to $\BM_X$. 
This can be done by taking compact approximations, which are 
$\CM$-gluings with nonempty nontoroidal boundary and extend them in an 
arbitrary way to $\CM$-gluings $X_n$ without non-toroidal boundary with 
the same bounded combinatorics and height properties. Being compact
the hyperbolic metric on each $X_n$ is unique and we have constructed
$K_2$-bilipschitz embeddings $\BM_{X_n}\to X_n$ onto the 
complement of the rank 2 cusps of $X_n$.

It is immediate from the construction of the models $\BM_{X_n}$ that 
with an appropriate choice of base points, they converge geometrically
to the model $\BM_X$. This implies immediately that after passing
to a subsequence and choosing base points in $X_n$, obtained from images
of base points of $\BM_{X_n}$, the sequence of hyperbolic manifolds $X_n$ 
converges geometrically to a hyperbolic manifold. We can also assume the 
bilipschitz embeddings $\BM_{X_n}\to X_n$ converge to an embedding of
$\BM_X$ to this limit. The limits of rank 2 cusps of $X_n$ will be rank
2 cusps of the limit and therefore the image of every toroidal boundary
component of $\BM_X$ is the boundary of a rank 2 cusp of the limit. 
This implies immediately that the limit is homeomorphic to $X$ and we 
have equipped $X$ with a complete hyperbolic structure. 

\subsection*{The isotopy class}

It only remains to prove that 
the hyperbolic metric on $X$ is also unique up to {\em isotopy} and 
the bilipschitz map $\BM_X\to X$
is in the {\em isotopy} class of the identity map. These are both
consequences of the following lemma:

\begin{lemma}\label{homotopy implies isotopy}
	Given a collection $\CM$ of decorated manifolds and $R>0$, there
exists $D_3$, so that if $X$ is an $(\CM,R)$-gluing with heights
at least $D_3$, then every self-homeomorphism of $X$ homotopic to
the identity is isotopic to the identity. 
\end{lemma}

\begin{proof}
	When $X$ is compact and $D_3\ge D_2$, we have shown that $X$
admits a hyperbolic structure. Then the claim follows from the 
results of Gabai-Meyerhoff-Thurston \cite{GMT}. 

	So assume $X$ is noncompact. Similar to the arguments above,
we continue with the assumption that $\CM$ has no interval bundles.
Recall from the proof of Theorem
\ref{window frames don't break in a gluing} the construction of 
the maximal-compression decomposition of $X$. 
Also recall there exists $D_3>0$ 
depending on $\CM$ and $R$, so that if heights of $X$ are at least
$D_3$, then every component $Y$ of the maximal-compression decomposition
is a compact 3-manifold with incompressible boundary. 
Suppose the heights of $X$ are at least $D_3$ and $\phi:X\to X$
is a homeomorphism homotopic to the identity; then for every
component $E$ of $\D Y$, the restriction $\phi|_E$ is homotopic
to the inclusion $E\hookrightarrow X$. Since $E$ is incompressible,
by a theorem of Waldhausen \cite{Wa} $\phi|_E$ is 
isotopic to the inclusion $E\hookrightarrow X$. Therefore we can
change $\phi$ with an isotopy and assume $\phi|_E$ is the inclusion
$E\hookrightarrow X$ for every component $E$ of $\D Y$. Then the 
restriction $\phi|_Y$ is a self-homeomorphism of $Y$
which is the identity map on $\D Y$. Again using a classical
result of Waldhausen \cite{Wa} in 3-manifold topology, we
see that $\phi$ 
is isotopic to the identity map on $Y$. 
Arguing similarly for
every component of the maximal-compression decomposition proves the lemma.
\end{proof}

This concludes the proof of the theorem. 
It is worth a final remark that, as the proof of Lemma \ref{homotopy
  implies isotopy} shows,
even when $X$
is compact but the number of pieces of $X$ which are not interval bundles
is sufficiently large, depending on $\CM$, the maximal-compression decomposition
of $X$ has more than one component. Thus, assuming
heights of $X$ are at least $D_3$, we obviously see that $X$ is Haken.
In this case, then, Thurston's hyperbolization theorem applies to
establish the existence of the hyperbolic structure on $X$, and
Waldhausen's theorem gives us uniqueness up to isotopy. Hence with
this further assumption on the number of pieces we do not need
to use Perelman's proof of geometrization or Gabai-Meyeroff-Thurston.
\end{proof}

\subsection*{Boundedness of vertex representations}
We want to point out a consequence of the proof above which may be of independent interest. This basically states that for every element $M$ of a finite collection $\CM$ of decorated manifolds and $R>0$, the representations of $\pi_1(M)$, induced by hyperbolic structures on $(\CM,R)$-gluings, are contained in a compact subset of $\XX(M)$, the character variety of $\pi_1(M)$. In the case of $(\CM,R)$-gluings, this improves our result in Theorem \ref{window frames don't break in a gluing}.
  \begin{theorem}
    Given a finite collection $\CM$ and $R$ there exists $D$, so that for every $M\in\CM$, there is a compact subset of $\XX(M)$ which contains the representation of $\pi_1(M)$ induced by the hyperbolic structure on every $(\CM,R)$-gluing with heights at least $D$, which contains $M$ as a piece.
  \end{theorem}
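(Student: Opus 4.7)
The plan is to deduce this corollary directly from Theorem \ref{bilipschitz models for bounded type manifolds}. For each $M\in\CM$, fix once and for all a finite generating set $S_M$ for $\pi_1(M)$ together with based loops in $\BM_M$ representing these generators. Since $\CM$ is finite, there is a uniform upper bound $L_0$, depending only on $\CM$, on the $\BM_M$-lengths of all of these generator loops. Let $D_0$ and $K_0$ be the constants supplied by Theorem \ref{bilipschitz models for bounded type manifolds}, and take $D = D_0$.

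For any $(\CM,R)$-gluing $X$ with all heights at least $D$, the main theorem gives a $K_0$-bilipschitz embedding $f\colon \BM_X \to (X,\sigma)$ in the preferred isotopy class. Fix a piece $M$ of $X$ and a basepoint $*_M\in\BM_M$, with image $p_M = f(*_M)\in X$. Choose a lift $\tilde p_M \in \Hyp^3$ of $p_M$. For each generator $s\in S_M$ represented by a loop $\alpha_s$ of $\BM_M$-length at most $L_0$, the image $f(\alpha_s)$ is a loop in $(X,\sigma)$ based at $p_M$ of length at most $K_0 L_0$. Lifting $f(\alpha_s)$ starting from $\tilde p_M$, the endpoint is $\rho_M(s)\tilde p_M$, where $\rho_M\colon \pi_1(M)\to\PSL_2(\BC)$ is the representation induced by the inclusion $M\hookrightarrow X$ followed by the holonomy of $\sigma$. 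Hence
\[ d_{\Hyp^3}(\tilde p_M,\, \rho_M(s)\tilde p_M) \le K_0 L_0 \qquad\text{for every } s\in S_M. \]

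Conjugating $\rho_M$ so that $\tilde p_M$ coincides with a fixed basepoint $\tilde p_0\in\Hyp^3$ independent of $X$, each $\rho_M(s)$ lies in the compact set
\[ \mathcal{K} = \{\, g\in\PSL_2(\BC) : d_{\Hyp^3}(\tilde p_0, g\tilde p_0)\le K_0 L_0 \,\}. \]
Since $\rho_M$ is determined by its values on the finite generating set $S_M$, the representation $\rho_M$ (up to conjugation) lies in a compact subset of $\Hom(\pi_1(M),\PSL_2(\BC))$ depending only on $\CM$ and $R$. Passing to the GIT quotient, the character $\chi_{\rho_M}$ lies in a compact subset of $\XX(M)$ depending only on $\CM$ and $R$, as required.

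No serious obstacle arises: the entire content of the corollary is contained in the uniform bilipschitz control of the model provided by Theorem \ref{bilipschitz models for bounded type manifolds}, which converts the fixed geometry of $\BM_M$ into a uniform displacement bound for a generating set of $\pi_1(M)$ acting on $\Hyp^3$. The only mildly delicate point is the passage from bounded generator displacements to compactness in $\XX(M)$, but this is a standard consequence of the fact that elements of $\PSL_2(\BC)$ with bounded displacement at a fixed basepoint form a compact set.
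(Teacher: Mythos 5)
Your proof is correct, and it reaches the conclusion by a genuinely more direct route than the paper. You treat the statement as an immediate corollary of the full strength of Theorem \ref{bilipschitz models for bounded type manifolds}: once a uniform $K_0$-bilipschitz embedding $f:\BM_X\to(X,\sigma)$ in the correct isotopy class exists, the fixed compact block $\BM_M\subset\BM_X$ maps in with controlled distortion, so a fixed generating set of $\pi_1(M)$ has uniformly bounded displacement at a basepoint, and the character lies in a compact subset of $\XX(M)$. (The one thing you rely on implicitly, and rightly, is that $f$ is in the preferred isotopy class, so that $f|_{\BM_M}$ really does induce the composition of the inclusion $\pi_1(M)\to\pi_1(X)$ with the holonomy; Theorem \ref{bilipschitz models for bounded type manifolds} guarantees this.) The paper instead argues by contradiction using the \emph{ingredients} that went into the main theorem rather than the theorem itself: it takes a sequence of gluings $X_n$ with heights $\to\infty$, uses the a priori length bounds of Corollary \ref{existence of peripheral bounded length curves} on curves close to the decorations carried by the adjacent pieces, observes via Lemma \ref{limits of bounded combinatorics bind} that these curves converge to a binding lamination on $\D_0 M$, and then applies the compactness Theorem \ref{eventually faithful convergence} for eventually faithful sequences. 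The tradeoff is clear: your argument is shorter and needs no new geometric input, but it is downstream of the entire bilipschitz-model machinery; the paper's argument is longer but shows that the compactness statement is logically simpler than the models and would survive on the strength of the a priori bounds and the convergence theorem alone. Both are valid proofs.
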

    
\begin{proof}
	As in the proof above, we assume $X$ has no $I$-bundles, except possibly for $M$. To prove the claim by means of contradiction suppose $M\in\CM$ is given and $X_n$ is a sequence of $(\CM,R)$-gluings that contain $M$ as a piece and whose heights tend to infinity as $n\to\infty$ and each is equipped with a hyperbolic metric. Let $\rho_n:\pi_1(M)\to \PSL_2(\BC)$ denote the representation induced by $X_n$ and we let $\nu_n(M)=\nu_{X_n}(M)$ as in the definition of gluings to be the image of the decorations of the adjacent pieces under the gluing involution. Because of the assumption on the heights and bounded combinatorics and by using lemma \ref{limits of bounded combinatorics bind}, we can pass to a subsequence and assume $\nu_n(M)$ converges to a binding lamination $\lambda$ on $\D_0 M$. 
  
  By corollary \ref{existence of peripheral bounded length curves}
there exist constants $D,L$ so that for every element $M'\in\CM$,
which is not an $I$-bundle, and boundary component $E'\subset\D_0 M'$,
there exists an essential simple loop $\gamma_{E'}$, so that if $M'$
is a piece of an element $X\in \CA_{\CM}(R,D)$, then the length of
$\gamma_{E'}$ in a hyperbolic structure on $X$ is bounded by $L$. For
$n$ sufficiently large, say for all $n$, $X_n\in
\CA_{\CM}(R,D)$. Given a component $E$ of $\D_0 M$, assume in $X_n$,
$E$ is identified with a component $E'_n$ of $\D_0 M'_{E,n}$ where
$M'_{E,n}$ is also a piece of $X_n$. Then we can choose
$\gamma'_{E,n}=\gamma_{E'_n}$ on $E'_n$. The curve complex distance
between $\gamma'_{E,n}$ and $\nu_n(M,E)=\nu_n(M)|_E$ is bounded
independently of $n$, so the sequence $(\gamma'_{E,n})_n$ also
converges to $\lambda|_E$ in $\PML(E)$. Since the $\rho_n$-length of
$\gamma'_{E,n}$ is bounded independently of $n$ and the union of those
over all components of $\D_0 M$ converge to the binding lamination
$\lambda$ on $\D_0 M$, and $(\rho_n)$ is eventually faithful, we can
use theorem \ref{eventually faithful convergence} and conclude that
the sequence $(\rho_n)$ stays in a compact subset of
$\XX(M)$. Therefore $(X_n)$ could not have been a sequence of
counterexamples. This proves the theorem. 
\end{proof}

\section*{Appendix: Thurston's Only Windows Can Break Theorem}\label{sec: only windows can break}

We give a new proof of the following theorem of Thurston
\cite{ThuIII}. The arguments of this section are independent of the
other parts of the paper.
Thurston's original proof, which made an elegant use of
area growth rates for branched pleated surfaces,
was never published. 
The proof presented here uses somewhat more elementary ideas
which were also developed by Thurston and in some ways are closer to
ideas in his earlier work. We also emphasize in the statement
and proof that the constant in the conclusion only depends on the
topology of the boundary of $M$ (Thurston's proof also implies this,
though it was not explicitly stated). This dependence is crucial in 
section \ref{sec: stability of JSJ decomposition}.

\begin{theorem*}[Thurston's Only Windows Can Break Theorem]\label{only windows can break}
  Given a compact irreducible atoroidal 3-manifold $M$ with incompressible boundary,
  there exists a constant $C$ depending only on the topology of $\D_0
  M$, so that for every discrete faithful representation
  $\rho:\pi_1(M)\to \PSL_2(\BC)$ the $\rho$-length of the window frame
  of $M$ is at most $C$. 

  Moreover if $(U,Q)$ is a pared acylindrical
  component of the JSJ decomposition, then the induced representations
  of $\pi_1(U)$ stay in a compact subset of the character variety of
  $U$.
\end{theorem*}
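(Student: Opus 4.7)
Plan. I will argue by contradiction, using Morgan--Shalen rescaling along the JSJ decomposition, and then address uniformity in the boundary topology via a separate compactness step.

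For the second statement, suppose toward contradiction that there is a sequence $\rho_n \in \AH(M)$ whose restrictions to $\pi_1(U)$ leave every compact set in $\chi(U)$. Morgan--Shalen theory produces a nontrivial small action of $\pi_1(U)$ on an $\BR$-tree $T$. Each core curve of an annulus in the paring locus of $(U,Q)$ is freely homotopic in $M$ into an annulus whose core has $\rho_n$-length uniformly bounded (either because it is peripheral on a toroidal boundary component of $M$, hence parabolic, or because it lies on $\D_0 M$ and can be made into a geodesic of bounded length by a standard argument using a pleated surface realizing $\D_0 M$, with area bounded by Gauss--Bonnet in terms of the topology of $\D_0 M$). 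Consequently these cores act elliptically on $T$, and the action is relatively elliptic. But a small, nontrivial, relatively elliptic action of the fundamental group of an acylindrical pared manifold on an $\BR$-tree is impossible: the Rips machine yields a splitting of $\pi_1(U)$ contradicting acylindricity. This is exactly the mechanism deployed in the proof of Lemma \ref{triviality of an action}, where triviality was derived under the hypothesis that certain boundary subgroups act trivially.

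For the first statement, consider a sequence $\rho_n \in \AH(M)$ with $\ell_{\rho_n}(\gamma) \to \infty$ for a window frame $\gamma$. Rescaling by $\delta_n = 1/\ell_{\rho_n}(\gamma)$ and invoking Morgan--Shalen, extract a nontrivial small action of $\pi_1(M)$ on an $\BR$-tree $T$ with $\ell_T(\gamma) = 1$. Combined with the second statement, the induced action on each acylindrical piece $(U,Q)$ is trivial, since $\delta_n \ell(\rho_n|_{\pi_1(U)}) \to 0$. For each $I$-bundle component of the window, the induced action of its surface group is small with boundary curves elliptic, and Skora's theorem realizes it as dual to a measured lamination on the base surface; since the boundary curves lying on $\D_0 M$ correspond to window frames, these curves must act elliptically. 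The graph-of-groups argument from the proof of Lemma \ref{triviality of an action} now assembles these trivialities and forces the entire $\pi_1(M)$-action on $T$ to be trivial, contradicting $\ell_T(\gamma) = 1$.

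The main obstacle, and the technically delicate point, is obtaining constants depending only on the topology of $\D_0 M$. The combinatorial data of the JSJ decomposition that is visible on the boundary --- which subsurfaces of $\D_0 M$ are the horizontal boundaries of the window components, which simple closed curves are window frames, and the adjacency graph of JSJ pieces --- ranges over a finite set once $\D_0 M$ is fixed up to homeomorphism. A diagonal argument, taking a sequence of potential counterexamples $M_n$ with common $\D_0 M_n$ and extracting a subsequence in which this boundary-visible data stabilizes, reduces the problem to a single combinatorial type. The Morgan--Shalen limit tree inherits this stabilized structure, and the contradictions of the previous paragraphs go through uniformly, yielding a constant $C$ that depends only on the topology of $\D_0 M$.
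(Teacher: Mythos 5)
Your proposal is circular and has structural gaps. In proving the second statement you need the $\rho_n$-lengths of the cores of the annuli in $Q$ meeting $\D_0 M$ to be uniformly bounded, but those cores are parallel to window frames, so this is the content of the first statement --- which you then prove using the second. The ``standard argument using a pleated surface\ldots with area bounded by Gauss--Bonnet'' you invoke to close that loop does not exist: an area bound on a pleated surface gives no bound on the intrinsic length of a \emph{specific} essential curve on it (the surface can be stretched along $\gamma_1$ at no area cost). Getting around exactly this is what the paper's appendix does: for a \emph{single} representation with window-frame geodesic $\gamma^*$ of length $L$, it realizes all the curves $\gamma_i$ parallel to $\gamma_1$ by a pleated surface $\Sigma$ and, when $\gamma^*$ stays in the thick part, runs a pigeonhole on $\Sigma^{\ge\ep'}\times\Sigma^{\ge\ep'}$ (volume controlled by Gauss--Bonnet) to find two parameters at which $\gamma_1$ and $\gamma_2$ simultaneously return $\ep'$-close; this recurrence is assembled into an essential immersed annulus meeting $\gamma_1$, a topological contradiction, and the resulting bound $L<5L'$ visibly depends only on $\chi(\D M)$ and the Margulis constant.

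The Morgan--Shalen route also has an independent flaw. The cited conclusion that window frames act elliptically in the limit tree only says $\delta_n\ell_{\rho_n}(\gamma)\to 0$ for the Morgan--Shalen scaling $\delta_n$, which is fully compatible with $\ell_{\rho_n}(\gamma)\to\infty$; it does not give a length bound. Your rescaling $\delta_n=1/\ell_{\rho_n}(\gamma)$ would force $\ell_T(\gamma)=1$, but there is no reason a limiting action exists with that normalization, since other elements of $\pi_1(M)$ may blow up faster. Finally, the ``diagonal argument'' for dependence only on $\D_0 M$ cannot work: Morgan--Shalen compactness is a statement about a single fixed finitely generated group, and a sequence $M_n$ of distinct $3$-manifolds with common boundary does not produce a single limit tree, so the purported uniformity is unsupported. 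The paper gets the uniformity for free because its pigeonhole argument never passes to a sequence at all.
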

Although the statement and proof can easily be generalized to the
pared setting, it will suffice for our needs to restrict to the case
that the pared locus of $M$ consists only
of toroidal boundary components of $M$.

\begin{proof}
Let $N_\rho = \BH^3/\rho(\pi_1(M))$ and choose a component $\gamma_1$
of the window frame (recall from \S\ref{subsec: jsj decomposition} the
details of the JSJ decomposition).
Note that we can extend $\gamma_1$ to a set $\gamma_1,\gamma_2,\ldots,
\gamma_k$ with $k\ge 2$, of simple closed curves on the boundary
which are pairwise freely homotopic in $M$ but not in $\D M$. Also
each $\gamma_i$ is either a component of the window frame or is
in a toroidal component of $\D M$. To prove the first part of the
theorem, we can assume $\gamma_1$ does not represent a parabolic
element of $\rho(\pi_1(M))$, otherwise the $\rho$-length of this
component would be zero. In particular, we assume $\gamma_i, 
1\le i\le k,$ is not on a toroidal component of $\D M$. Also
the representative of the free homotopy class of $\gamma_1$ in 
$N_\rho$ is a closed geodesic $\gamma^*$ of length $L>0$. 
We need to show $L$ is bounded from above by a constant depending 
only on the topology of $\D_0 M$.

This will follow from the following claim:
\begin{claim*}
If $L$ is bigger than a constant depending on the topology of $\D_0 M$, there exists a non-contractible closed curve $\alpha_1$ on $\D_0 M$ with $i(\alpha_1,\gamma_1)\neq 0$ so that
	\begin{itemize}
		\item[(i)] either $\alpha_1$ is a boundary component of an essential immersed annulus in $M$, or 
		\item[(ii)] $\alpha_1$ is non-primitive in $M$ while it is primitive in $\D_0 M$, where by a primitive loop we mean one that represents an indivisible conjugacy class.
	\end{itemize} 
\end{claim*}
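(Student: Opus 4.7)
My strategy is to realize $\gamma_1$ geodesically on a pleated surface in $N_\rho$ and then extract $\alpha_1$ from the thick-thin decomposition of the resulting pleated metric. Let $F \subseteq \D_0 M$ be the component containing $\gamma_1$. Since $\D_0 M$ is incompressible, $F\hookrightarrow M$ is $\pi_1$-injective and $\rho$ restricts to a faithful representation of $\pi_1(F)$. The simple curve $\gamma_1$ can be completed to a filling measured geodesic lamination on $F$, and using Thurston's pleated surface theory I would realize this lamination by a map $f:(F,\sigma)\to N_\rho$ homotopic to the inclusion $F\hookrightarrow M\to N_\rho$. The induced pleated hyperbolic metric $\sigma$ has area $-2\pi\chi(F)$, bounded in terms of the topology of $\D_0 M$ alone, and $f$ carries $\gamma_1$ isometrically to $\gamma^*$, so $\ell_\sigma(\gamma_1)=L$.

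The next step is to show that if $L$ exceeds a constant depending only on $\chi(F)$, then there exists an essential simple closed curve $\alpha\subset F$ with $\ell_\sigma(\alpha)<\epsilon_0$ (for a fixed $\epsilon_0$ smaller than both the $2$- and $3$-dimensional Margulis constants) and $i(\alpha,\gamma_1)>0$. Indeed, if $\gamma_1$ were disjoint from every essential simple closed curve of $\sigma$-length below $\epsilon_0$, it would be contained in a single connected component of the $\epsilon_0$-thick part of $(F,\sigma)$, and by Mumford compactness simple closed geodesics inside such a component have length bounded in terms of $\chi(F)$ and $\epsilon_0$. Since pleated surface maps are $1$-Lipschitz, $\ell_\rho(\alpha)\le\ell_\sigma(\alpha)<\epsilon_0$, so $\rho(\alpha)$ lies in a Margulis tube or cusp $T\subset N_\rho$ whose stabilizer in $\rho(\pi_1(M))$ is abelian.

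I would then carry out an algebraic case analysis on $\alpha$. As a simple closed curve on $F$, $\alpha$ is primitive in $\pi_1(\D_0 M)$. If $\alpha$ is non-primitive in $\pi_1(M)$, set $\alpha_1:=\alpha$ to obtain conclusion (ii). Otherwise $\rho(\alpha)$ is a primitive generator of the cyclic part of $\mathrm{Stab}(T)\cap\rho(\pi_1(M))$, and I would seek a second essential simple closed curve $\beta\subset\D_0 M$, not isotopic to $\alpha$ there, with $\rho(\beta)\in\mathrm{Stab}(T)$; candidates for such a $\beta$ come either from additional components of $f^{-1}(T)\subset F$ beyond the collar of $\alpha$, or from applying the same pleated-surface argument on the components containing the partner curves $\gamma_2,\ldots,\gamma_k$ supplied by the preamble. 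Faithfulness of $\rho$ forces $\beta=\alpha^n$ in $\pi_1(M)$ for some $n\neq 0$: if $|n|=1$ then $\alpha$ and $\beta$ cobound an essential immersed annulus in $M$ and conclusion (i) holds with $\alpha_1:=\alpha$; if $|n|>1$ then $\beta$ is non-primitive in $M$ while primitive on $\D_0 M$, giving conclusion (ii) with $\alpha_1:=\beta$.

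The main obstacle will be producing the companion $\beta$ on the \emph{same} component $F$ and with $i(\beta,\gamma_1)>0$; this is the geometric heart of the ``only windows break'' phenomenon. I expect to invoke the covering theorem applied to the cyclic cover $N_\alpha\to N_\rho$ corresponding to $\langle\rho(\alpha)\rangle$, together with a careful analysis of how the preimage $f^{-1}(T)\subset F$ decomposes when $L$ is large and of how the pleated surfaces for the different $\gamma_i$ interact with $T$. The uniformity of the final bound on $L$ in terms of the topology of $\D_0 M$ alone will follow from the uniformity of the Mumford compactness bounds and the Margulis constants, both of which are topologically universal.
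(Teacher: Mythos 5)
Your proposal breaks down at its second step, and the breakdown is precisely where the real content of the claim lies. You assert that if $\gamma_1$ met no essential simple closed curve of $\sigma$-length below $\epsilon_0$, then it would lie in one component of the $\epsilon_0$-thick part of the pleated surface and hence, ``by Mumford compactness,'' have length bounded in terms of $\chi(F)$ and $\epsilon_0$. That is false: Mumford compactness bounds the systole and diameter of the thick part, but a simple closed geodesic lying entirely in the thick part of a fixed hyperbolic surface can be arbitrarily long (there are infinitely many simple closed geodesics and their lengths are unbounded). So largeness of $L$ does not force a short transverse curve, and your thick-thin dichotomy does not get off the ground. The paper's proof confronts exactly this scenario: when $\gamma^*$ (equivalently $\gamma_1,\gamma_2$ in the pleated metric) stays in the thick part, it uses the partner curve $\gamma_2$ and the essential annulus $A$ joining $\gamma_1$ to $\gamma_2$ --- both boundary curves being sent to the \emph{same} geodesic $\gamma^*$ --- and runs a volume/pigeonhole argument in the product $\Sigma^{\ge\ep'}\times\Sigma^{\ge\ep'}$ to find two parameter values at which $\gamma_1$ and $\gamma_2$ simultaneously nearly recur; the closed-up curves $\alpha_1,\alpha_2$ then cobound an immersed annulus built from the recurrence arcs together with vertical arcs of $A$, and essentiality of that annulus comes from the fact that $A$ admits no boundary compression. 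Nothing in your argument plays the role of $\gamma_2$ or of $A$ at this stage, and without them the statement is not even plausible (a single long simple boundary curve carries no such annulus in general; it is the window-frame property, i.e.\ the existence of $\gamma_2$ freely homotopic to $\gamma_1$ in $M$ but not in $\D M$, that drives the proof).

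The remainder of your plan also leaves the decisive point open: producing the companion curve $\beta$ on the same component with controlled intersection is exactly what you flag as ``the main obstacle,'' with only a hope of using the covering theorem. In the paper's treatment of the short-curve (thin) case, the companion comes for free --- $\gamma_1$ and $\gamma_2$ both hit the same Margulis tube because both map onto $\gamma^*$ --- and the delicate sub-case where $\alpha_1$ and $\alpha_2$ are freely homotopic already in $\D_0 M$ is killed by showing a vertical arc of $A$ would then be homotopic rel endpoints into $\D_0 M$, contradicting the fact that the essential annulus $A$ has no boundary compression. So both the existence of the transverse curve and the construction of the essential annulus in your outline rest on steps that are either false as stated or deferred; the argument needs the annulus $A$ and the simultaneous use of $\gamma_1$ and $\gamma_2$ from the start.
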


Before proving this claim, we show how the first conclusion of the
theorem follows from it. It is a standard topological
construction that if $\alpha_1$ is primitive in $\D_0 M$ but
non-primitive in $M$, there exists an essential immersed annulus and
$\alpha_1$ can be homotoped into a regular neighborhood of this
annulus. Recall from \S \ref{subsec: jsj decomposition} that
non-boundary-parallel annuli can be pushed via a homotopy rel $\D_0 M$
into the characteristic submanifold of the JSJ decomposition of $M$,
and therefore can be made disjoint from the window frames. This
contradicts the assumption $i(\alpha_1,\gamma_1)\neq 0$ and proves $L$
is bounded above by a constant depending on the topology of $\D_0 M$.

\begin{proof}[Proof of the claim]

Recall that a {\em pleated surface} is a map $f:S\to N$, from a
surface $S$ equipped with a complete hyperbolic metric $\sigma_f$ to a
hyperbolic 3-manifold $N$, which preserves the length of paths and
such that every point $x$ is contained in at least one open geodesic segment of
$\sigma_f$ which is mapped by $f$ to a geodesic in $N$. (See
\cite{ThuI, CEG} for more on pleated surfaces.) We say $f$ {\em
  realizes} a multi-curve $\alpha$ if the image of every component of
the geodesic representative of $\alpha$ in $\sigma_f$ is mapped to a
closed geodesic in $N$. 

There is a homotopy equivalence $\iota: M\to N_\rho$ which  induces $\rho$
on the level of fundamental groups. We say a map from a
subset of $M$ to $N_\rho$ is {\em in the homotopy class of $\rho$} if
it is homotopic to the restriction of $\iota$.  Similar to
\cite{ThuIII}, one can construct a pleated surface $f:\D_0 M\to N$ in
the homotopy class of $\rho$, that realizes
$\gamma_1\cup\gamma_2\cup\cdots\cup\gamma_k$. We let $\Sigma$ denote
the complete hyperbolic surface obtained from $\D_0 M$ equipped with
$\sigma_f$.  Obviously $f$ identifies
$\gamma_1,\gamma_2,\ldots,\gamma_k$ with $\gamma^*$. Abusing notation slightly, we parametrize $\gamma^*$ by arclength as a map
$\gamma^*:[0,L]\to N_\rho$.
We extend $f$ to
a map $F:M\to N_\rho$ in the homotopy class of $\rho$. 

Let $A\subset
M$ be an essential annulus in $M$ with boundary components $\gamma_1$
and $\gamma_2$. 
We parametrize $A$ by a map $K:[0,L]\times[1,2]\to A$,
whose restriction to $[0,L]\times\{i\}, i=1,2$, is an arclength parametrization
of $\gamma_i$ (which we will also
denote by $\gamma_i$). After homotopy, we may assume 
$F\circ K$ is constant along vertical arcs $\{t\}\times[1,2]$, or equivalently
$F\circ K$ factors as
the composition $\gamma^*\circ q_1$, where
$q_1:[0,L]\times[1,2]\to[0,L]$ is the projection to the first
factor. This is possible using the fact that $N$ is atoroidal.
Note that because $M$ has incompressible boundary and $A$ is
essential, $A$ admits no boundary compressions; in particular given
$t\in[0,L]$, the properly embedded arc $K(\{t\}\times[1,2]$ is not
homotopic rel endpoints to an arc in $\D_0 M$.

For $\ep>0$ smaller than the
Margulis constant $\ep_M$ for $\Hyp^3$ and $X$ a hyperbolic 2- or
3-manifold, let $X^{\le\ep}$ denote the closure of the $\ep$-thin part of $N$.  
We argue first in the case that either $\gamma_1$ or $\gamma_2$
intersects $\Sigma^{<\ep'}$ for some $\ep'$ depending
only on the topology of $\D_0 M$. The following lemma is an
observation of Thurston \cite{ThuI}: 

\begin{lemma*}\label{thick goes to thick}
  Given a surface $S$ and $\ep>0$, there exists a positive constant
  $\ep'\le \ep$ so that if $f: S\to N$ is a $\pi_1$-injective pleated
  map from $S$ to a hyperbolic 3-manifold $N$, and $S$ is equipped
  with the induced hyperbolic metric $\sigma_f$, the image of the
  $\ep$-thick part of $S$ does not enter the $\ep'$-thin part of $N$. 
\end{lemma*}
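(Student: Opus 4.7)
The plan is to argue by contradiction via the Margulis lemma for $\BH^3$. The key claim making the constant in the conclusion depend only on $\ep$ and the topological type of $S$ is the following: there is a constant $C=C(\ep,S)<\infty$ so that for every complete hyperbolic metric $\sigma$ on $S$ and every point $p$ in the $\ep$-thick part of $\sigma$, one can find two elements $a,b\in\pi_1(S,p)$ which generate a non-abelian subgroup and which are represented by loops at $p$ of $\sigma$-length at most $C$. Granted this, set $\ep'=\ep_M-C$ where $\ep_M$ denotes the Margulis constant of $\BH^3$, and suppose for the moment that $C<\ep_M$; the opposite regime is addressed at the end.

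With $\ep'$ so chosen, suppose for contradiction that $f\colon S\to N$ is a $\pi_1$-injective pleated map, $p$ lies in the $\ep$-thick part of $\sigma_f$, yet $f(p)$ lies in the $\ep'$-thin part of $N$. Take $a,b$ as in the claim. Since a pleated map is $1$-Lipschitz on path lengths, the loops $f(a), f(b)$ based at $f(p)$ have length at most $C$ in $N$, so the elements $f_*(a), f_*(b) \in \pi_1(N, f(p))$ have translation length at $f(p)$ bounded by $C$. The assumption that $f(p)$ lies in the $\ep'$-thin part produces a nontrivial $g\in\pi_1(N,f(p))$ with translation length at $f(p)$ at most $\ep'$. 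Since $C$ and $\ep'$ are both smaller than $\ep_M$, the three elements $f_*(a), f_*(b), g$ all lie in the Margulis subgroup at $f(p)$, which by the Margulis lemma is virtually abelian and hence abelian (a torsion-free discrete subgroup of $\Isom(\BH^3)$ is virtually abelian if and only if it is abelian). This contradicts the fact that, by $\pi_1$-injectivity of $f$ together with non-abelianness of $\langle a,b\rangle$, the images $f_*(a)$ and $f_*(b)$ generate a non-abelian subgroup of $\pi_1(N)$.

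The main obstacle is establishing the uniform constant $C(\ep,S)$. Because the area of every complete hyperbolic metric on $S$ equals $2\pi|\chi(S)|$, each compact piece of the $\ep$-thick part of $\sigma$ admits a generating set for its fundamental group of total length bounded in terms of $\ep$ and $\chi(S)$; the real difficulty is the case when $p$ lies in an annular thick component, where one must cross an adjacent Margulis tube to reach the non-abelian topology. When such a tube is narrow (core length close to $2\ep$) a direct bound suffices, but the transverse width of a tube around a very short geodesic grows without bound, and in this regime the short-generators claim fails as stated. To finish, one handles this case by a bootstrap: apply the lemma with a larger value $\ep_0$ of $\ep$ for which $C(\ep_0,S)<\ep_M$, obtaining a corresponding $\ep'_0$, and then use that short closed geodesics of $\sigma_f$ map to short closed geodesics of $N$ (again by the $1$-Lipschitz, $\pi_1$-injective structure of $f$) to transfer the thick/thin correspondence across the Margulis tubes of $\sigma_f$ into those of $N$, yielding the desired $\ep'=\ep'(\ep,S)$ in the remaining regime.
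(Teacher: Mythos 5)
Your overall strategy (bounded-length non-commuting loops at $\ep$-thick points plus the Margulis lemma) is the standard one -- the paper itself offers no proof, citing Thurston's notes -- but the quantitative core of your argument fails. The constant $C(\ep,S)$ can never be smaller than the Margulis constant: if $a,b\in\pi_1(S,p)$ generate a non-abelian subgroup, then already in the surface (the group $\pi_1(S)$ acting on $\BH^2$, or its image acting on $\BH^3$) the Margulis lemma forces at least one of them to move a lift of $p$ by at least $\ep_M$, so $C\ge\ep_M$ always. Hence the regime ``$C<\ep_M$'' in which your main argument lives is empty, $\ep'=\ep_M-C$ is nonpositive, and the assertion that $f_*(a),f_*(b),g$ ``all lie in the Margulis subgroup at $f(p)$'' is unjustified: membership there requires translation length below $\ep_M$, not below $C$. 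The correct quantification makes $\ep'$ exponentially small in $C$, via the depth of the thin part: if $f(p)$ lies in the $\ep'$-thin part, then $f(p)$ sits at depth roughly $\log(\ep_M/\ep')$ inside an $\ep_M$-Margulis tube or cusp of $N$, and a based loop at $f(p)$ of length at most $C$ cannot exit that tube once $2\log(\ep_M/\ep')-c>C$; it therefore represents an element of the abelian group of the tube. Choosing $\ep'\approx \ep_M e^{-(C+c)/2}$ forces both $f_*(a)$ and $f_*(b)$ into this abelian group, contradicting non-commutativity. (Equivalently: for every $C$ there is $\delta(C)>0$ such that a nontrivial element of translation length $<\delta$ at a point, together with any element of translation length $\le C$ at the same point, generates an elementary subgroup of a torsion-free Kleinian group.)

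The second gap is your treatment of the ``key claim.'' The claim is in fact true as stated, and your reason for doubting it is mistaken: although a collar about a very short geodesic of $\sigma_f$ is arbitrarily wide, its $\ep$-thick portion is only an annular band of width about $\log(\ep_2/\ep)$ along its outer boundary (the equidistant loop at depth $d$ from a core of length $\ell$ has length comparable to $\ell\cosh d$, so the locus where this lies between $2\ep$ and $2\ep_2$ has bounded width); the deep part of a wide collar is $\ep$-thin and irrelevant. Hence, after the harmless reduction to $\ep$ at most the two-dimensional Margulis constant $\ep_2$, every $\ep$-thick point of $\sigma_f$ lies within distance $C_1(\ep)$ of a component of the standard thick part, and each such component is $\pi_1$-injective, has negative Euler characteristic (so non-abelian fundamental group) and diameter bounded in terms of $\chi(S)$; conjugating bounded-length generators of its fundamental group by the connecting path yields the claim with $C=C(\ep,\chi(S))$. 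By contrast, the ``bootstrap'' you propose cannot be carried out: there is no $\ep_0$ with $C(\ep_0,S)<\ep_M$, for the reason given above, and ``transferring the thick/thin correspondence across the Margulis tubes'' is not an argument. So as written the proof has genuine holes both in the choice of $\ep'$ and in the fallback case, although both are repairable along the lines indicated.
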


Fixing $\ep<\ep_M$, use the above lemma with $S=\D_0 M$ to choose
$\epsilon'$. Suppose $\gamma^*$ 
intersects a component $T^{\epsilon'}$ of 
$N_\rho^{\le\ep'}$, so that
by the lemma the $f$-pre-images of a point $p$ of the intersection
gives points $p_1\in\gamma_1$ and $p_2\in\gamma_2$ which are contained
in components $V_1^\ep$ and $V_2^\ep$ of $\Sigma^{\le\epsilon}$. Hence 
there are essential simple loops $\alpha_1\subset V_1^\ep$ and
$\alpha_2\subset V_2^\ep$ based at $p_1$ and $p_2$ respectively, whose
lengths are at most $\ep$ and whose $f$-images are contained in the component
$T^\ep$ of $N_\rho^{\le\epsilon}$ which contains $T^{\epsilon'}$. 
We can assume $\ep$ is chosen small enough, so that on a hyperbolic
surface a simple geodesic that enters a component of the $\ep$-thin
part is either homotopic to the core, or has to cross it. If
$\gamma_1$ or $\gamma_2$ is homotopic to the core of a component of
$\Sigma^{\le\ep}$, then its length is at most $\ep$ and this bounds
the length of $\gamma^*$ by $\ep$. So we continue with the assumption
that $i(\gamma_1, \alpha_1)$ and $i(\gamma_2,\alpha_2)$ are
nonzero. If either one, say $\alpha_1$, is non-primitive in $M$ then
the conclusion of the above claim is satisfied; so we continue with
the assumption that both $\alpha_1$ and $\alpha_2$ are primitive in
$M$. 

If $T^\ep$ is a rank 2 cusp in $N_\rho$ then, since $\rho$ is an
isomorphism from $\pi_1(M)$ to $\pi_1(N_\rho)$, $T^\ep$ has to
correspond to a toroidal component $T$ of $\D M$. Since $F$ is a
homotopy equivalence, $\alpha_1$ can be homotoped into $T$ in $M$. So
there is an essential immersed annulus in $M$ that connects $\alpha_1$
to $T$ and we have established the claim. Therefore
we are allowed to assume $T^\ep$ is either a rank 1 cusp or is a
Margulis tube, and in any case has a cyclic fundamental group.  

Since $F$ is a homotopy equivalence and both $\alpha_1$ and $\alpha_2$ are primitive in $M$, their images $f(\alpha_1)$ and $f(\alpha_2)$ are also primitive in $N_\rho$; therefore $f(\alpha_1)$ and $f(\alpha_2)$ are freely homotopic to the generator of $\pi_1(T^\ep)$ and are freely homotopic to each other. Another appeal to the homotopy equivalence $F$ shows $\alpha_1$ and $\alpha_2$ are freely homotopic in $M$. Then either there is an immersed essential annulus providing the free homotopy between $\alpha_1$ and $\alpha_2$ and the conclusion of the claim holds, or $\alpha_1$ and $\alpha_2$ are freely homtopic in $\D_0 M$. Hence, we continue with the assumption that $\alpha_1$ and $\alpha_2$ are freely homotopic in $\D_0 M$ and in particular $V_1^\ep= V_2^\ep$. This also implies there exists an arc $\beta\subset V_1^\ep$ connecting $p_1$ and $p_2$ whose image in $T^\ep$ is a homotopically trivial loop based at $p$. To construct such an arc, start from just any arc $\beta'\subset V_1^\ep$ that connects $p_1$ and $p_2$. The $f$-image of $\beta'$ is a loop based at $p$ in $T^\ep$. 
Since $\pi_1(T^\epsilon,p)$ is generated by $f(\alpha_1)$ as a loop based at $p$, there is an interger $k$, so that the $f$-image of the concatenation $\beta=\beta'*(\alpha_1)^k$ is a homotopically trival loop based at $p$. 

Recall that the map $K:[0,L]\times[1,2]\to A$ parametrizes the annulus
$A$ in a way that $F\circ K$ is identical to the map $\gamma^*\circ
q_1$. We can assume $K(\{t\}\times[1,2])=p$ for some
$t\in[0,L]$, $K(t,1)=p_1$, and $K(t,2)=p_2$. Consider the loop
$\beta*K(\{t\}\times [1,2])$ based at $p_1$, whose $F$-image is the
same loop as the $f$-image of $\beta$ and is therefore homotopically
trivial. Because $F$ is a homotopy equivalence,
$\beta*K(\{t\}\times[1,2])$ is homotopically trivial in $M$. This will
imply however that $K(\{t\}\times[1,2])$ is homotopic (rel endpoints)
to an arc in $\D_0 M$ which we claimed is impossible. This
contradiction rules out the case that $V_1^\ep = V_2^\ep$. 

Thus from now on we may assume that $\gamma^*$ does not enter the $\ep'$-thin part
of $N_\rho$. Because the pleated map $f$ is $\pi_1$-injective and
preserves lengths of curves, this implies that both $\gamma_1$ and
$\gamma_2$ have to stay in the $\ep'$-thick part of $\Sigma$. 

The area of $\Sigma$ is $-2\pi\chi(\D M)$ and therefore the volume of
the product $\Sigma^{\ge\ep'}\times\Sigma^{\ge\ep'}$, equipped with
the product metric, is bounded from above by 
 \[ (-2\pi\chi(\D M))^2=4\pi^2\chi^2(\D M).\] 
Given $(x,y)\in
\Sigma^{\ge\ep'}\times\Sigma^{\ge\ep'}$, there are embedded disks
$B(x,\ep')$ and $B(y,\ep')$ of radius $\epsilon'$ centered at $x$ and
$y$ whose areas are equal to $4\pi \sinh^2(\epsilon'/2)$; so the
volume of the product of these disks is $16\pi^2\sinh^4(\epsilon'/2)$
and $\Sigma^{\ge\ep'}\times\Sigma^{\ge\ep'}$ contains at most
  \[ L'=\frac{4\pi^2 \chi^2(\D M)}{16\pi^2\sinh^4(\ep'^2)} = \frac{\chi^2(\D M)}{4\sinh^4(\ep'^2)} \]
pairwise disjoint products of the form $B(x,\ep')\times B(y,\ep')$. 
As above we write $\gamma_i=K(\cdot,i)$, and  in particular $f\circ\gamma_i \equiv \gamma^*$.
If $L\ge 5L'$, there are positive constants
$a_1,a_2,a_3,a_4,a_5\in[0,L]$, so that for every pair $j\neq k$,
$|a_j-a_k|\ge 1 (\mod L)$ and $B(\gamma_1(a_j),\ep')\times
B(\gamma_2(a_j),\ep')$ intersects $B(\gamma_1(a_k),\ep')\times
B(\gamma_2(a_k),\ep')$; equivalently $\gamma_i(a_j)$ and
$\gamma_i(a_k)$ are closer than $\ep'$ in $\Sigma$ for $i=1,2$. 

Since
$\gamma_1$ is embedded, this implies that for $j,k\in\{1,\ldots,5\}$,
short subsegments of $\gamma_1$ around $a_j$ and $a_k$ are
parallel. We can then find three of them, say $a_1,a_2,a_3$, so that
$\gamma_1$ crosses the
transversal geodesic arc of length $\le\ep'$, that connects
$\gamma_1(a_j)$ and $\gamma_1(a_k)$, in the same direction. Then by a
similar analysis for $\gamma_2$, we can choose two of them, say $a_1 <
a_2$, so that $\gamma_2$ also intersects the geodesic arc of length
$\le\ep'$, that connects $\gamma_2(a_1)$ and $\gamma_2(a_2)$, in the
same direction.  

Let $\mu_i$ denotes the transversal geodesic arc of length $\le\ep'$ that connects $\gamma_i(a_1)$ and $\gamma_i(a_2)$, $i=1,2$. Also let $\alpha_i = \gamma_i([a_1,a_2])*\mu_i$ be the closed curve on $\Sigma$ which is the concatenation of $\gamma_i[a_1,a_2]$ and the transversal geodesic arc $\mu_i$. When $\epsilon'$ is small and because $a_2-a_1\ge 1$, a standard argument using the geometry of the hyperbolic plane shows that every lift $\cover\alpha_i$ of $\alpha_i$ is a quasi-geodesic in $\cover\Sigma$, the universal cover of $\Sigma$. 
Moreover if we consider a lift $\cover\gamma_i$ of $\gamma_i$ which intersects $\cover \alpha_i$ in a lift of $\gamma_i([a_1,a_2])$, the geodesic $\cover\gamma_i$ separates the two limit points of $\cover\alpha_i$ in the boundary at infinity $\D_\infty \cover\Sigma$ of $\cover\Sigma$. Hence $i(\gamma_i, \alpha_i)\neq 0$. 

Since $f(\gamma_1(a_1)) = f(\gamma_2(a_1))$ and $f(\gamma_1(a_2)) = f(\gamma_2(a_2))$ are on $\gamma^*$, they belong to $N_\rho^{\ge\epsilon'}$ and therefore $f(\mu_1)$ and $f(\mu_2)$ are homotopic rel endpoints in $N_\rho$. As a result if $\delta$ is the concatenation 
$\mu_1* K(\{a_1\}\times [1,2]) * \mu_2 * K(\{a_2\}\times[1,2])$, then $F(\delta)$ is homotopically trivial in $N_\rho$. 
As before $F$ is a homotopy equivalence and therefore $\delta$ is
homotopically trivial in $M$. The concatenation of the disk in $M$
bounded by $\delta$ and the sub-rectangle $K([a_1,a_2]\times[1,2])$ of
$A$, which connects $\gamma_1([a_1,a_2])$ and $\gamma_2([a_1,a_2])$,
gives an immersed annulus $A'$ that is a free homotopy between
$\alpha_1$ and $\alpha_2$. 

We see that $A'$ is essential, because otherwise the vertical arc 
$K(\{a_1\}\times[1,2])$ would be homotopic into $\D_0M$ rel endpoints,
contradicting the fact that $A$ is essential. Since
$i(\alpha_1,\gamma_1)\ne 0$, conclusion (i) of the claim is satisfied,
provided $L\ge 5L'$. 
\end{proof}

The last statement in the conclusion of the theorem follows easily from the first and theorem \ref{eventually faithful convergence}. Suppose $(U,Q)$ is an acylindrical pared component of the JSJ decomposition of $M$ and $(\rho_n)$ is a sequence of discrete faithful representations of $\pi_1(M)$. Since $(U,Q)$ is acylindrical, the empty set is a binding lamination on $\D_0(U,Q)$. Also by the first part of the theorem, the $\rho_n$-lengths of window frames of $M$ are bounded uniformly, which implies that for every annular component of $Q$, we can select a core curve whose $\rho_n$-length is bounded uniformly for all $n$; so by theorem \ref{eventually faithful convergence} and after conjugation and passing to a subsequence, the restrictions $(\rho_n|\pi_1(U))$ of $(\rho_n)$ to $\pi_1(U)$ is convergent. This proves such restrictions stay in a compact subset of $\XX(\pi_1(U))$ as claimed.
\end{proof}


\newcommand{\etalchar}[1]{$^{#1}$}
\ifx\undefined\bysame
\newcommand{\bysame}{\leavevmode\hbox to3em{\hrulefill}\,}
\fi

\end{document}